\newcommand{\N}{\ensuremath{\mathbb N}}
\renewcommand{\epsilon}{\varepsilon}
\newcommand{\id}{\mathsf{id}}
\newcommand{\lax}{\mathsf{lax}}
\newcommand{\opl}{\mathsf{opl}}
\newcommand{\fc}{\mathfrak{F}}
\newcommand{\op}{\mathsf{op}}
\newcommand{\fin}{\mathsf{fin}}
\newcommand{\trm}{\mathbb{1}}
\newcommand{\ini}{\mathbb{0}}
\newcommand{\comma}{\downarrow}
\newcommand{\cat}[1]{\ensuremath{\mathcal{#1}}}
\newcommand{\bicat}[1]{\ensuremath{\mathbb{#1}}}
\newcommand{\intcat}[1]{\mathscr{#1}}
\newcommand{\catname}[1]{\mathsf{#1}}
\newcommand{\Set}{\catname{Set}}
\newcommand{\FinSet}{\catname{FinSet}}
\newcommand{\Grph}{\catname{Grph}}
\newcommand{\Cat}{\catname{Cat}}
\newcommand{\CAT}{\catname{CAT}}
\newcommand{\PsDbCat}{\catname{PsDbCat}}
\newcommand{\VDbCat}{\catname{VDbCat}}
\DeclareMathOperator{\Span}{\catname{Span}}
\newcommand{\SpanB}{\Span(\cat B)}
\newcommand{\Fam}{\catname{Fam}}
\newcommand{\Mat}{\catname{Mat}}
\newcommand{\Top}{\catname{Top}}
\newcommand{\Vect}{\catname{Vect}}
\newcommand{\Mnd}{\catname{Mnd}}
\newcommand{\Mod}{\catname{Mod}}
\newcommand{\FamB}{\Fam(\cat B)}
\newcommand{\FamC}{\Fam(\cat C)}
\newcommand{\VMat}{\cat V\dash\Mat}
\newcommand{\WMat}{\cat W\dash\Mat}
\newcommand{\VCat}{\cat V\dash\Cat}
\newcommand{\SVCat}{(S,\,\cat V)\dash\Cat}
\newcommand{\TVCat}{(T,\,\cat V)\dash\Cat}
\newcommand{\TWCat}{(T,\,\cat W)\dash\Cat}
\newcommand{\TtVCat}{(\overline{T},\,\cat V)\dash\Cat}
\newcommand{\SpanV}{\Span(\cat V)}
\newcommand{\CatV}{\Cat(\cat V)}
\newcommand{\CatTB}{\Cat(T,\,\cat B)}
\newcommand{\CatTV}{\Cat(T,\,\cat V)}
\newcommand{\CatSV}{\Cat(S,\,\cat V)}
\newcommand{\iso}{\cong}
\newcommand{\eqv}{\simeq}
\newcommand{\adj}{\dashv}
\newcommand{\dash}{\text{-}}
\newcommand{\lb}{\langle}
\newcommand{\rb}{\rangle}
\newcommand{\heps}{\hat \epsilon}
\newcommand{\heta}{\hat \eta}
\newcommand{\Tt}{\overline T}
\newcommand{\Ft}{\overline F}
\newcommand{\pt}{\ast}
\newcommand{\ovl}{\overline}
\newcommand{\HKl}[1]{\mathbb{H}\Kl(#1)}
\DeclareMathOperator{\Lax}{\mathsf{Lax}}
\DeclareMathOperator{\Lan}{\mathsf{Lan}}
\DeclareMathOperator{\Alg}{\mathsf{Alg}}
\DeclareMathOperator{\Kl}{\mathsf{Kl}}
\DeclareMathOperator{\Ps}{\mathsf{Ps}}
\DeclareMathOperator{\PsTAlg}{\Ps-\mathit{T}-\Alg}
\DeclareMathOperator{\PsfcAlg}{\Ps-\fc-\Alg}
\DeclareMathOperator{\LaxfcAlg}{\Lax-\fc-\Alg}
\DeclareMathOperator{\LaxidAlg}{\Lax-\id-\Alg}
\DeclareMathOperator{\LaxTAlg}{\Lax-\mathit{T}-\Alg}
\DeclareMathOperator{\LaxModTAlg}{\Lax-\Mod(\mathit{T})-\Alg}
\DeclareMathOperator{\LaxSAlg}{\Lax-\mathit{S}-\Alg}
\DeclareMathOperator{\LaxTHAlg}{\mathbb{H}\LaxTAlg}
\DeclareMathOperator{\LaxModTHAlg}{\mathbb{H}\LaxModTAlg}
\DeclareMathOperator{\LaxSHAlg}{\mathbb{H}\LaxSAlg}
\DeclareMathOperator{\LaxidHAlg}{\mathbb{H}\LaxidAlg}
\DeclareMathOperator{\LaxHAlg}{\mathbb{H}\Lax-(--)-\Alg}
\DeclareMathOperator{\HKlT}{\HKl{\mathit{T}}}
\DeclareMathOperator{\HKlTx}{\HKl{\mathit{T},\textit{x}}}
\DeclareMathOperator{\Desc}{\catname{Desc}}
\DeclareMathOperator{\dom}{\mathsf{dom}}
\DeclareMathOperator{\cod}{\mathsf{cod}}
\DeclareMathOperator{\ob}{\catname{ob}}
\DeclareMathOperator{\mult}{\mathsf{m}}
\DeclareMathOperator{\unit}{\mathsf{e}}
\DeclareMathOperator{\nat}{\mathsf{n}}
\newcommand{\m}[1]{\mult^{#1}}
\newcommand{\tm}[1]{{\tilde\mult}^{#1}}
\newcommand{\e}[1]{\unit^{#1}}
\newcommand{\n}[1]{\nat^{#1}}
\newcommand{\tn}[1]{{\tilde\nat}^{#1}}
\newcommand{\relto}{\nrightarrow}
\newtheorem{lemma}{Lemma}[section]
\newtheorem{proposition}[lemma]{Proposition}
\newtheorem{corollary}[lemma]{Corollary}
\newtheorem{theorem}[lemma]{Theorem}
\theoremstyle{definition}
\newtheorem{remark}[lemma]{Remark}
\numberwithin{equation}{section}
\def\l@section{\@tocline{1}{1pt}{1.5em}{2.5em}{}}
\begin{document}

  \title[\textsc{Generalized multicategories}]{\textbf{\textsc{Generalized
multicategories: change-of-base, embedding, and descent}}}

\author[R. Prezado]{Rui Prezado}
\author[F. Lucatelli Nunes]{Fernando Lucatelli Nunes}

\address[1,2]{University of Coimbra, CMUC, Department of Mathematics,
Portugal}
\address[2]{Utrecht University, The Netherlands}

\email[1]{ruiprezado@gmail.com}
\email[2]{f.lucatellinunes@uu.nl}

\keywords{double category, equipment, lax algebra, generalized multicategory,
effective descent morphisms, Beck-Chevalley condition, virtual equipment,
internal category, enriched category, Grothendieck descent theory, extensive
category, higher category theory}

\subjclass{18B10, 18B15, 18B50, 18D65, 18N10, 18N15}

\date{}

\iffalse
\renewcommand{\thefootnote}{\fnsymbol{footnote}} 
\footnotetext{\textit{2020 Mathematics Subject Classification}: 18B10, 18B15,
18B50, 18D65, 18N10, 18N15}
\renewcommand{\thefootnote}{\arabic{footnote}} 
\fi

  \begin{abstract}
  Via the adjunction $ - \ast \mathbb{1} \dashv \mathcal V(\mathbb{1},-)
  \colon \mathsf{Span}(\mathcal V) \to \mathcal V \text{-} \mathsf{Mat} $ and
  a cartesian monad $ T $ on an extensive category $ \mathcal V $ with finite
  limits, we construct an adjunction  $ - \ast \mathbb{1} \dashv \mathcal
  V(\mathbb{1},-) \colon \mathsf{Cat}(T,\mathcal V) \to (\overline T, \mathcal
  V)\text{-}\mathsf{Cat} $ between categories of generalized enriched
  multicategories and generalized internal multicategories, provided the monad
  $ T $ satisfies a suitable condition, which is satisfied by several
  examples.
  
  We verify, moreover, that the left adjoint is fully faithful, and preserves
  pullbacks, provided that the copower functor \mbox{$ - \ast \mathbb{1}
  \colon \mathsf{Set} \to \mathcal V $} is fully faithful. We also apply this
  result to study descent theory of generalized enriched multicategorical
  structures.

  These results are built upon the study of base-change for generalized
  multicategories, which, in turn, was carried out in the context of
  categories of horizontal lax algebras arising out of a monad in a suitable
  2-category of pseudodouble categories.
\end{abstract}

  \maketitle

  \tableofcontents

  \section*{Acknowledgements}
    We would like to thank Maria Manuel Clementino for her helpful remarks during
the elaboration of this work. 

We are also grateful to Marino Gran and Tim Van der Linden for hosting us at
UCLouvain, where this work was concluded.

Finally, we thank the anonymous referee for their suggestions and comments.

The authors acknowledge financial support by \textit{Centro de Matemática da
Universidade de Coimbra} (CMUC), funded by the Portuguese Government through
FCT/MCTES, DOI 10.54499/UIDB/00324/2020. The first author acknowledges
financial support by the grant PD/BD/150461/2019 funded by \textit{Fundação
para a Ciência e Tecnologia} (FCT). The second author acknowledges financial
support by a Fields Research Fellowship, provided by the \textit{Fields
Institute for Research in Mathematical Sciences}.

This work was also supported through the programme ``Oberwolfach Leibniz
Fellows'' by the Mathematisches Forschungsinstitut Oberwolfach in 2022.

  \section*{Introduction}
    The systematic study of the dichotomy between enriched categories and internal
categories can be traced as far back as \cite[Section 2.2]{Ver92}, where the
author studied the functor \( \VCat \to \CatV \) for presheaf categories \(
\cat V \). It was shown in \cite[Theorem 9.10]{Luc18a} that for a suitable
base category \( \cat V \), the category \( \VCat \) of enriched \( \cat V
\)-categories can be fully embedded into the category \( \CatV \) of
categories internal to \( \cat V \), enabling us to view enriched \( \cat V
\)-categories as categories with \textit{discrete} object-of-objects internal
to \( \cat V \)\footnote{This embedding result was later studied in more
detail in \cite{CFP17}.}. This observation is, for example, employed in the
study of descent theory of enriched categories (see \cite[Theorem
9.11]{Luc18a} and \cite{Pre23}). The aim of this work is to construct such an
embedding in the setting of generalized multicategories, which we recall
below.

Multicategories, defined in \cite[p. 103]{Lam69}, are structures that
generalize categories, by allowing the domains of morphisms to consist of a
finite list of objects. The most quintessential example is the multicategory
\( \Vect \), whose objects are vector spaces, and whose morphisms are
multilinear maps.  Their ``multicomposition'' and the description of the
analogous notions of associativity and identity can succinctly be described
via the free monoid monad on \( \Set \). More precisely, multicategories can
be formalized by considering the equipment \( \Span(\Set) \) of spans in \(
\Set \) (see \cite[p. 22]{Bén67}), and extending the free monoid monad to a
suitable monad \( (-)^* \) on \( \Span(\Set) \) (see \cite[Corollary
A.4]{Her00}).

\textit{Generalized multicategories} have since been developed in various
contexts, abstracting the notion of ordinary multicategories by replacing the
monad \( (-)^* \) on \( \Span(\Set) \) by a \textit{suitable notion of monad
on a pseudodouble category}. 

\textit{Enriched \(T\)-categories} were first introduced in \cite{CT03} with
the terminology \textit{\( (T, \cat V) \)-categories}. In this setting, the
category of \( (T, \cat V) \)-categories is obtained out of the so-called
\textit{lax extension} of a monad on \(\Set\) to a suitable monad on \( \VMat
\), the ubiquitous equipment of \( \cat V \)-matrices (see
\cite[Section~2]{CT03}). For instance, when \( \cat V \) is a suitable
quantale, the ultrafilter monad \( \mathfrak U \) on \(\Set\) admits a lax
extension \( \overline{\mathfrak U} \) to \( \VMat \) \cite[Section 8]{CT03}.
In particular, when \( \cat V = 2 \), we have an equivalence \( \Top \eqv
(\overline{\mathfrak U}, 2) \dash \Cat \) (first observed in \cite{Bar70}) and
\( (\overline{\mathfrak U},[0,\infty]) \dash \Cat \) is equivalent to the
category of approach spaces.

\textit{Internal \(T\)-categories} were introduced in \cite{Bur71} and
\cite{Her00}. For a category \( \cat B \) with pullbacks, the former defines
\(T\)-categories for any monad \(T\) on \( \cat B \), while the latter
considers \(T\) to be a \textit{cartesian} monad on \( \cat B \).  A cartesian
monad \(T\) on \( \cat B \) induces a strong monad on the equipment \( \SpanB
\) of spans in \( \cat B \). In this setting, we can obtain the category \(
\CatTB \) of \(T\)-categories internal to \( \cat B \). As examples, we
recover the category of ordinary multicategories by considering \(
\Cat((-)^*,\Set) \), and letting \( \fc \) be the free category monad on \(
\Grph \), we obtain the category \( \VDbCat = \Cat(\fc, \Grph) \) of virtual
double categories.

The main goal of this paper is to construct an embedding \( \TtVCat \to \CatTV
\) from a category \( \cat V \), and a monad \( T \) on \( \cat V \),
satisfying suitable properties. This endeavor is motivated by generalizing the
study of the descent theoretical results in \cite[Section 9]{Luc18a} to the
multicategorical setting; namely, the reflection of \textit{effective descent
morphisms} (see \cite{JT94}, \cite[Section 3]{Luc21}) along such an embedding
\( \TtVCat \to \CatTV \). Paired with the results of \cite{PL23}, we obtain
sufficient conditions for effective descent morphisms in various classes of
categories of enriched multicategories \cite{CT03}. This expands our
understanding of effective descent functors between categorical structures --
a survey on this topic can be found in \cite{Pre24x}.

To this end, it is desirable to work in a general setting where these various
notions of generalized multicategories can be uniformly studied and compared
with one another. This was, in part, accomplished by the work of \cite{CS10},
where the notion of \textit{\(T\)-monoids} was introduced, unifying the
several approaches to the theory of generalized multicategories.  To be
precise, these \(T\)-monoids are the \textit{horizontal lax algebras} induced
by a monad \( T = (\bicat E, T, e, m) \) in the 2-category \( \VDbCat \) of
virtual double categories, lax functors and vertical transformations.  These
objects have a natural structure of a virtual double category, which we denote
here by \( \LaxTHAlg \). 

This general setting ought to provide us an ``internalization'' functor \(
\TtVCat \to \CatTV \) obtained from the comparison \( \VMat \to \SpanV \), and
the induced monad \(T\) on \(\SpanV \). However, \cite{CS10} does not provide
a notion of \textit{change-of-base} induced by an appropriate notion of
morphism \( S \to T \) of monads, where \( S = (\bicat D, S, e, m) \) is
another monad in \( \VDbCat \). We remark this was left as future work in
\cite[{}4.4]{CS10}.

It should be noted that the study of change-of-base functors has been studied
in each specific setting of generalized multicategories. In \cite[Section
6.7]{Lei04}, the author provides such constructions for the internal case, and
\cite[Sections 5, 6]{CT03} treats two particular families of monad morphisms
for the enriched case. To establish a relationship between the enriched and
internal structures, we expand on the work of these authors, with the goal of
providing a convenient environment to produce and study such a functor \(
\TtVCat \to \CatTV \) from simpler tools.

We must mention that our approach diverges from the techniques and tools
developed in \cite{CS10}. Firstly, we must restrict our scope from virtual
double categories to pseudodouble categories, as we need to work with
\textit{(op)lax horizontal transformations}, which require horizontal
composition to be defined. Secondly, instead of using (op)cartesian 2-cells
and their universal properties, we opted to use a ``mate theory'' of conjoints
and companions to prove our results, mostly to obtain explicit formulas.
Lastly, this is far from the full scope of the project mentioned in
\cite[{}4.4]{CS10}, as we merely study the underlying categories and functors
of the 2-dimensional structures formed by these horizontal lax algebras.
Instead, we fall back on an \textit{ad-hoc} approach for the natural
transformations between the functors induced by monad (op)lax morphisms,
leaving a treatment of the complete story for future work. 

\subsection*{Outline of the paper:}

We begin by reviewing the notion of \textit{pseudodouble category} in
Section~\ref{sect:psdbcats}, first introduced in \cite{GP99}, and the two
dimensional structures formed by these. For pseudodouble categories \( \bicat
D, \bicat E \), the structures consisting of
\begin{itemize}[label=--]
  \item
    \textit{lax functors} \( \bicat D \to \bicat E \) as 0-cells,
  \item
    \textit{vertical transformations} as vertical\footnote{In accordance with
    \cite{Lei04, CS10}, we take the vertical composition and identities to be
    strictly associative and unital.} 1-cells,
  \item
    \textit{(op)lax horizontal transformations} as horizontal 1-cells,
  \item
    \textit{generalized modifications} as 2-cells.
\end{itemize}
are, by Theorem~\ref{thm:psdbcat.closed}, pseudodouble categories \(
\Lax_\lax(\bicat D,\bicat E) \) (\( \Lax_\opl(\bicat D, \bicat E) \)).  We
also have a third double category \( \PsDbCat \)
(Proposition~\ref{thm:dbcat.psdbcat}) consisting of
\begin{itemize}[label=--]
  \item
    \textit{pseudodouble categories} as 0-cells,

  \item
    \textit{(op)lax} functors as (vertical) horizontal 1-cells,

  \item
    \textit{generalized vertical transformations} as 2-cells.
\end{itemize}
The pseudodouble categories that concern our study are the following:
\begin{itemize}[label=--]
  \item
    the pseudodobule category \( \VMat \) of \( \cat V \)-matrices for
    suitable monoidal categories \( \cat V \), 
  \item
    the pseudodouble category \( \SpanB \) of spans in \( \cat B \), for
    categories \( \cat B \) with pullbacks,
  \item
    the double category of lax \(T\)-algebras, for \( T \) a pseudomonad on a
    2-category \( \bicat B \).
\end{itemize}
We will furthermore review the double categorical structure of the last item.

Let \( \cat V \) be a distributive category with finite limits.
Section~\ref{sect:spanmat} is devoted to studying the pseudodouble categories
\( \VMat \) and \( \SpanV \), and the (op)lax functors induced by the
adjunction \( - \pt \trm \adj \cat V(\trm,-) \colon \cat V \to \Set \). We
confirm these functors induce \(\Cat\)-graph morphisms
\begin{equation*}
  - \pt \trm \colon \VMat \to \SpanV \quad \text{and} \quad
  \cat V(\trm,-) \colon \SpanV \to \VMat, 
\end{equation*}
which give us an adjunction \( - \pt \trm \adj \cat V(\trm,-) \) in the
2-category \( \Grph(\Cat) \) (Lemma~\ref{lem:grph.cat.adj}). We also prove
that \( - \pt \trm \colon \VMat \to \SpanV \) defines an oplax functor of
pseudodouble categories (Proposition~\ref{thm:copower.oplax}). Using
techniques from the following couple of sections, we obtain the following
\begin{equation*}
  \begin{tikzcd}
    \VMat \ar[r,bend left,"- \pt \trm"{name=A}]
    & \SpanV \ar[l,bend left,"\cat V(\trm{,}-)"{name=B,below}]
    \ar[from=A,to=B,phantom,"\adj" {anchor=center, rotate=-90}]
  \end{tikzcd}
\end{equation*}
which is a generalized notion of adjunction -- a \textit{conjunction} -- in
the double category \( \PsDbCat \).

Section~\ref{sect:conj.comp} aims to recall the notions of ``adjoint'' in
pseudodouble categories: \textit{conjoints} and \textit{companions}. These
were first introduced in \cite{GP04}, under different terminology. We provide
an explicit description of ``mate theory'' for these objects (also studied in
\cite{GP04, Shu11, GGV24}), analogous to the mate theory for adjunctions. We
also take the opportunity to work out some known results for three reasons:
first, to fix technical notation for subsequent sections; second, to serve as
examples on their use; and finally, to keep this work self-contained.  This
Section culminates in our first contribution, crucial to construct functor
between categories of lax horizontal algebras,
Theorem~\ref{thm:conjoint.closed}. It states that, if \( \bicat E \) is
\textit{conjoint closed}, then so is \( \Lax_\lax(\bicat D, \bicat E) \). 

In Section~\ref{sect:pstalg}, we explicitly establish an equivalence
(Proposition~\ref{thm:psdbcat.as.ps-t-alg}) between the double category \(
\PsDbCat \) and the double category of pseudo-algebras for the free internal
\( \Cat \)-category 2-monad on the 2-category \( \Grph \), with the goal of
making the tools of two-dimensional algebra \cite{BKP89, Kel74, Luc16}
available to the theory of pseudodouble categories. In particular, via
doctrinal adjunction \cite[Theorem 1.4.11]{LucTh}, we conclude that \( \cat
V(\trm,-) \colon \SpanV \to \VMat \) is a lax functor, and is the conjoint of
\( - \pt \trm \colon \VMat \to \SpanV \) in the double category \( \PsDbCat \).

After recalling the notion of horizontal lax algebra from \cite{CS10}, in
Section~\ref{sect:base.change} we prove Theorem~\ref{thm:base.change}; it
states that any monad lax morphism \( (G,\psi) \colon T \to S \) in \(
\PsDbCat_\lax \) induces a \textit{change-of-base} functor \( G_! \colon
\LaxTHAlg \to \LaxSHAlg \), and any monad oplax morphism \( (F,\phi) \colon S
\to T \) satisfying a suitable conditon also induces a change-of-base functor
\( F_! \colon \LaxSHAlg \to \LaxTHAlg \). We close this section by comparing
our constructions with the change-of-base functors for generalized
multicategories considered in \cite{Lei04} and \cite{CT03}.

In Section~\ref{sect:induced.adjunction}, we consider a conjunction 
\begin{equation*}
  \begin{tikzcd}
    S \ar[r,bend left,"{(F,\,\phi)}"{name=A}]
    & T \ar[l,bend left,"{(G,\,\psi)}"{name=B,below}]
    \ar[from=A,to=B,phantom,"\adj" {anchor=center, rotate=-90}]
  \end{tikzcd}
\end{equation*}
in the double category \( \Mnd(\PsDbCat_\lax) \), and we proceed to the
existence of an adjunction
\begin{equation*}
  \begin{tikzcd}
    \LaxSHAlg \ar[r,bend left,"F_!"{name=A}]
    & \LaxTHAlg \ar[l,bend left,"G_!"{name=B,below}]
    \ar[from=A,to=B,phantom,"\adj" {anchor=center, rotate=-90}]
  \end{tikzcd}
\end{equation*}
between the induced change-of-base functors; this is
Theorem~\ref{thm:lax.alg.adj}. We also study the conditions for invertibility
of unit and counit of such an adjunction, stated in
Lemma~\ref{lem:counit.iso.when} and Corollary~\ref{cor:unit.iso.when}.
Finally, after instanciating these results to the settings considered both in
\cite{Lei04} and \cite{CT03, HST14}, we take the opportunity to point out some
of obstacles to the double pseudofunctoriality of \( \LaxHAlg \).

We devote Section~\ref{sect:ext.cats} to the study of extensive categories
\cite{CLW93}.  When \( \cat C \) is a lextensive category, we provide a
description of its \textit{free coproduct completion} \cite{CLW93, BJ01,
Web07, Pre23, LV24}, denoted \( \FamC \), via Artin glueing
(Lemma~\ref{lem:famc.comma}), from which we deduce that the coproduct functor
\( \sum \colon \FamC \to \cat C \) preserves finite limits. Studying limits of
fibered categories \cite{Gra66}, we obtain Theorem~\ref{thm:pb.index.coprod}:
it confirms that, in a lextensive category, the coproduct of a
``pullback-indexed'' family of pullback diagrams is itself a pullback diagram.
This result is extensively employed, as illustrated in the remaining results
of this section as well as subsequent ones.

The final groundwork is laid down in Section~\ref{sect:disc.morph}. Via a
``structure transfer''-type of result (Proposition~\ref{prop:induced.monad}),
we are able to construct a monad \( \Tt \) on \( \VMat \) from a monad \(T\)
on \( \SpanV \), which is, in turn, induced by a cartesian monad \(T\) on a
lextensive category \( \cat V \) \cite{Her00}. In fact, we obtain a conjunction 
\begin{equation}
  \label{intro:vmat.adj}
  \begin{tikzcd}
    (\Tt, \VMat)
      \ar[r,bend left,"{(- \pt \trm,\, \heps_{T(-\pt \trm)})}"{name=A}]
    & (T, \SpanV)
      \ar[l,bend left,
        "{(\cat V(\trm,-),\, \cat V(\trm,T\heps))}"{name=B,below}]
    \ar[from=A,to=B,phantom,"\adj" {anchor=center, rotate=-90}]
  \end{tikzcd}
\end{equation}
in the double category \( \Mnd(\PsDbCat_\lax) \). However, only under a
suitable condition does this induce an adjunction
\begin{equation}
  \label{intro:tvcat.adj}
  \begin{tikzcd}
    \TtVCat \ar[r,bend left,"- \pt \trm"{name=A}]
    & \CatTV. \ar[l,bend left,"\cat V(\trm{,}-)"{name=B,below}]
    \ar[from=A,to=B,phantom,"\adj" {anchor=center, rotate=-90}]
  \end{tikzcd}
\end{equation}
The goal of this Section is to study this extra condition. In the case \( -
\pt \trm \colon \Set \to \cat V \) is fully faithful, we obtain
Theorem~\ref{thm:heps.strong.conj}, characterizing this condition in terms of
a notion of \textit{fibrewise discreteness} of a monad. Finally, we check that
most of the commonly studied cartesian monads on lextensive categories~\( \cat
V \) are fibrewise discrete, provided \( - \pt \trm \colon \Set \to \cat V \)
is fully faithful.

Section~\ref{sect:embedding} contains our main results. Let \( \cat V \) be a
lextensive category such that \( - \pt \trm \colon \Set \to \cat V \) is fully
faithful, and let \(T\) be a fibrewise discrete, cartesian monad on \( \cat V
\). We also denote the induced monad on \( \SpanV \) by \(T\). Via
Theorem~\ref{thm:lax.alg.adj}, we obtain the (ordinary)
adjunction~\eqref{intro:tvcat.adj} from the conjunction~\eqref{intro:vmat.adj}
in \( \PsDbCat_\lax \) (Theorem~\ref{thm:main.result}).  

We then apply Theorem~\ref{thm:main.result} to study \textit{effective descent
morphisms} in categories of enriched categorical structures in
Section~\ref{sect:descent}. Under an additional technical condition (satisfied
by most of the examples we provided), we confirm that \( \TtVCat \) is
precisely the full subcategory of \( \CatTV \) with a \textit{discrete}
object-of-objects (Theorem~\ref{thm:tvcat.disc.objs}), generalizing \cite[9.10
Theorem]{Luc18a} and \cite[Corollary 4.5]{CFP17}. Via this description, we
confirm that \( - \pt \trm \colon \TtVCat \to \CatTV \) reflects effective
descent morphisms (Lemma~\ref{lem:refl.eff.desc}), and, with the results of
\cite{PL23} pertaining to effective descent morphisms in internal categorical
structures, we provide criteria for an enriched \( (\Tt, \cat V) \)-functor to
be effective for descent (Theorem~\ref{thm:eff.desc.criteria}). We finalize
the section by studying the above examples.

Finally, we leave some final remarks in Section~\ref{sect:epilogue}. Namely,
we sketch the construction of a change-of-base functor \( \LaxTHAlg \to
\LaxModTHAlg \) induced by the embedding \( \bicat D \to \Mod(\bicat D) \)
\cite[Section 5.3]{Lei04} and a normal lax monad \( T \) on \( \bicat D \),
and various avenues for future work, among which we have the problem of
monadicity of categories of horizontal lax algebras, some comments on the
categorical properties of the categories of generalized multicategories,
topics about double fibrations \cite{CLPS22} and descent theory, and finally
further comments on other notions of change-of-base.

  \section{Structure of double categories}
    \label{sect:psdbcats}
    Double categories were first defined in~\cite{Ehr63}, and pseudodouble
categories were later introduced in~\cite[Section 7]{GP99}, which generalize
double categories by requiring the associativity and identity axioms for
vertical morphisms to hold only up to coherent isomorphisms. Other closely
related double-dimensional structures are: \cite{Woo82, Woo85}, which introduced
the notion of \textit{(proarrow) equipment}, which is an important class of
pseudodouble categories; \cite[Section~5.2]{Lei04} for the \textit{unbiased}
analogue of pseudodouble categories; \cite{Mar06}, studying
\textit{pseudocategories} internal to a 2-category; \cite{Fio07}, which
inspired the 2-cell notation present in this work; \cite{CS10}, which studies
the more general notion of \textit{virtual double category}; \cite{LS12},
which studies a notion of \( \mathscr{F} \)-categories. 

The original convention of \cite{GP99} is transposed in \cite{Lei04, Fio07,
Shu08, CS10, GGV24}, among others; that is, the roles of the vertical and
horizontal morphisms were interchanged in the latter references. For this
reason, we follow the convention that
\begin{itemize}[label=--]
  \item
    \textit{vertical 1-cells} satisfy associativity and unity axioms
    \textit{strictly}, and
  \item
    \textit{horizontal 1-cells} satisfy associativity and unity axioms
    \textit{up to coherent 2-cells}.
\end{itemize}

After reviewing the notion of \textit{pseudodouble category}, we also recall
the notion of \textit{(op)lax functor} between pseudodouble categories, and
the notion of \textit{generalized vertical transformation} \cite[{}2.2]{GP04}
(therein called ``\textit{roughly speaking -- horizontal transformations}''),
of which \textit{icons} \cite{Lac10} between lax functors of bicategories are
a special case. 

We also introduce the notion of \textit{(op)lax horizontal transformations},
and the corresponding notion of modifications, which is the natural
generalization of lax natural transformations and modifications from
bicategory theory \cite{Gra69, Gra74} to the setting of pseudodouble
categories. Furthermore, we study the (pseudodouble) categorical structures
formed by these objects, which form our basic vocabulary for the remainder of
this work.

\subsection{Pseudodouble categories:}

A pseudodouble category~\(\bicat D\) consists of:
\begin{itemize}[label=--]
  \item
    A category~\( \bicat D_0 \), whose objects are \textit{0-cells}, 
    whose morphisms are \textit{vertical 1-cells}, whose composition is
    denoted by~\( \circ \), and whose identities at 0-cells~\(x\) are denoted
    by~\( \id_x \). Composition and identities of vertical 1-cells in \(
    \bicat D_0 \) are said to be \textit{vertical} as well.

  \item
    A category~\( \bicat D_1 \), whose objects are \textit{horizontal
    1-cells}, whose morphisms are \textit{2-cells}, whose composition is
    denoted by~\( \circ \), and whose identities at horizontal 1-cells~\( r \)
    are denoted by~\( \id_r \). Composition and identities of 2-cells in \(
    \bicat D_1 \) are said to be \textit{vertical} as well.

  \item
    \textit{Vertical} domain and codomain functors~\( \dom,\, \cod \colon
    \bicat D_1 \to \bicat D_0 \).

  \item
    A \textit{unit} functor~\( 1 \colon \bicat D_0 \to \bicat D_1 \). For each
    0-cell~\(x\), the unit horizontal 1-cell is denoted~\( 1_x \), and
    likewise for vertical 1-cells and their respective unit 2-cells.

  \item
    A \textit{horizontal composition} functor~\( - \cdot - \colon \bicat D_2
    \to \bicat D_1 \), where~\( \bicat D_2 \), given by pullback of~\( \dom \)
    and~\( \cod \), is the category of \textit{composable pairs} of horizontal
    1-cells and 2-cells.
\end{itemize}
This data must satisfy~\( \dom(1_x) = \cod(1_x) = x \) for 0-cells~\(x\), and
analogously for vertical 1-cells, and~\( \dom(s \cdot r) = \dom(r) \),~\(
\cod(s \cdot r) = \cod(s) \) for each composable pair~\( r, s \) of horizontal
1-cells, and analogously for composable pairs of 2-cells. 

If~\( \phi \colon r \to s \) is a 2-cell (a morphism in~\( \bicat D_1 \))
such that~\( f = \dom(\phi) \colon w \to y \) and~\( g = \cod(\phi) \colon
x \to z \), we depict~\( \phi \) as in Diagram \eqref{eq:2-cell.diag}.
\begin{equation}
  \label{eq:2-cell.diag}
  \begin{tikzcd}
    w \ar[d,"f",swap] \ar[r,"r"{name=A}]
      & x \ar[d,"g"] \\
    y \ar[r,"s"{name=B},swap] & z
    \ar[from=A,to=B,phantom,"\phi"]
  \end{tikzcd}
\end{equation}
We say~\( \phi \) is a \textit{globular} 2-cell if~\( f \) and~\( g \) are
identities. Given a category~\( \bicat A \), we say a natural
transformation~\( \phi \colon F \to G \) of functors~\( \bicat A \to \bicat
D_1 \) is a \textit{natural 2-cell}. Furthermore, we say it is
\textit{globular} if~\( \phi_a \) is a globular 2-cell for every object~\(a\)
in~\( \bicat A \), and that \( \phi \) is \textit{invertible} if it is a
natural isomorphism.

We also have data given by:
\begin{itemize}[label=--]
  \item
    globular, natural, invertible 2-cells
    \begin{equation*}
      \lambda \colon 1_{\cod(-)} \cdot - \to -,
      \qquad 
      \rho \colon - \cdot 1_{\dom(-)} \to - 
    \end{equation*} 
    of functors~\( \bicat D_1 \to \bicat D_1 \), the \textit{left} and
    \textit{right unitors}, respectively. 

  \item
    and a globular, natural, invertible 2-cell~\( \alpha \colon (-_1 \cdot
    -_2) \cdot -_3 \to -_1 \cdot (-_2 \cdot -_3) \) of functors~\( \bicat D_3
    \to \bicat D_1 \), the \textit{associator}, where~\( \bicat D_3 \) is the
    category of \textit{composable triples}.
\end{itemize}
These must also satisfy the following \textit{coherence conditions}:
\begin{enumerate}[label=(\alph*),noitemsep]
  \item
    \label{enum:unit.unit.red}
    We have~\( \gamma_{1_x} = \id_{1_x \cdot 1_x} \), where we define~\(
    \gamma = \rho^{-1} \circ \lambda \). 

  \item
    \label{enum:left.left.red}
    The following diagram commutes
    \begin{equation*}
      \begin{tikzcd}[column sep=10]
        (1_z \cdot s) \cdot r \ar[rr,"\alpha_{r,s,1_z}"] 
                              \ar[rd,"\lambda_s \cdot \id_r",swap]
        && 1_z \cdot (s \cdot r) \ar[ld,"\lambda_{s \cdot r}"] \\
        & s \cdot r
      \end{tikzcd}
    \end{equation*}
    for each pair of horizontal 1-cells~\(r \colon x \to y \),~\( s \colon y
    \to z \).

  \item
    \label{enum:right.right.red}
    The following diagram commutes
    \begin{equation*}
      \begin{tikzcd}[column sep=10]
       & s \cdot r  \\
       (s\cdot r) \cdot 1_x \ar[rr,"\alpha_{1_x,r,s}",swap] 
                            \ar[ru,"\rho_{s\cdot r}"] 
       && s \cdot (r \cdot 1_x) \ar[lu,"\id_s \cdot \rho_r",swap]
      \end{tikzcd}
    \end{equation*}
    for each pair of horizontal 1-cells~\(r \colon x \to y \), \( s \colon y
    \to z \).

  \item
    \label{enum:unit.coherence}
    The following diagram commutes
    \begin{equation*} 
      \begin{tikzcd}[column sep=10]
        (s\cdot 1_y) \cdot r \ar[rr,"\alpha_{r,1_y,s}"] 
                             \ar[rd,"\rho_s \cdot \id_r",swap]
        && s \cdot (1_y \cdot r) \ar[ld,"\id_s \cdot \lambda_r"] \\
        & s \cdot r
      \end{tikzcd}
    \end{equation*}
    for each pair of horizontal 1-cells~\(r \colon x \to y \), \(s \colon y
    \to z\).

  \item
    \label{enum:assoc.coherence}
    The following diagram commutes
    \begin{equation*}
      \begin{tikzcd}[column sep=30]
        ((t \cdot s) \cdot r) \cdot q \ar[r,"\alpha_{q,r,t\cdot s}"] 
                                      \ar[d,"\alpha_{r,s,t} \cdot \id_q",swap] 
        & (t \cdot s) \cdot (r \cdot q) \ar[r,"\alpha_{r \cdot q,s,t}"] 
        & t \cdot (s \cdot (r \cdot q)) \\
        (t \cdot (s \cdot r)) \cdot q \ar[rr,"\alpha_{q,s\cdot r,t}",swap]
        && t \cdot ((s \cdot r) \cdot q) 
          \ar[u,"\id_t \cdot \alpha_{q,r,s}",swap]
      \end{tikzcd}
    \end{equation*}
    for each quadruple of composable horizontal 1-cells~\( q,r,s,t \).
\end{enumerate}
Unless there is a need for disambiguation, subscripts will be omitted.  If~\(
\lambda, \rho \) and~\( \alpha \) are the identity transformations, we say~\(
\bicat D \) is a \textit{double category}, or \textit{strict double category}
for emphasis.

\begin{remark}
  The coherence conditions described earlier are essentially analogous to
  those given for monoidal categories \cite{Mac63, Kel64}. Thus, the argument
  presented in \cite{Kel64} can be applied here, demonstrating that the
  coherence axioms~\ref{enum:unit.unit.red}--\ref{enum:right.right.red} can be
  deduced from the other two. We state and prove in the proposition that
  follows.
\end{remark}

\begin{proposition}
  \label{redundant.axioms}
  The coherence conditions~\ref{enum:unit.unit.red}, \ref{enum:left.left.red}
  and~\ref{enum:right.right.red} are redundant.
\end{proposition}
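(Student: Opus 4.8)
The plan is to derive the three coherence conditions \ref{enum:unit.unit.red}, \ref{enum:left.left.red}, \ref{enum:right.right.red} from the remaining axioms, namely the two ``unit-in-the-middle'' triangle \ref{enum:unit.coherence}, the pentagon \ref{enum:assoc.coherence}, together with naturality of $\lambda$, $\rho$, $\alpha$ and their invertibility. This is the double-categorical analogue of Kelly's classical result that in the definition of a monoidal category the triangle involving the right and left unit on the outer factors, as well as the $I\otimes I$ normalization, follow from the pentagon and the single middle triangle. I would organize the argument in three steps, mirroring that proof.

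First, I would prove \ref{enum:left.left.red} (and symmetrically \ref{enum:right.right.red}). For horizontal $1$-cells $r\colon x\to y$, $s\colon y\to z$, consider the composite $(1_z\cdot s)\cdot r$. Apply the pentagon \ref{enum:assoc.coherence} with the quadruple $(r,s,1_z,1_z)$ — or more efficiently, feed the expression $((1_z\cdot 1_z)\cdot s)\cdot r$ into the pentagon and into naturality of $\alpha$ in the first variable along $\gamma_{1_z}=\rho^{-1}\circ\lambda\colon 1_z\to 1_z\cdot 1_z$, using the middle triangle \ref{enum:unit.coherence} for the object $z$. Chasing this diagram and cancelling the isomorphism $\alpha$ (it is invertible) yields the triangle \ref{enum:left.left.red}; the mirror-image chase with $r\cdot(1_x\cdot 1_x)$ and \ref{enum:unit.coherence} at $x$ gives \ref{enum:right.right.red}. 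The only subtlety is keeping track of which copy of $1$ is being ``expanded'' and invoking naturality of $\alpha$ at the right morphism.

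Second, with \ref{enum:left.left.red} and \ref{enum:right.right.red} in hand I would establish \ref{enum:unit.unit.red}, i.e. $\gamma_{1_x}=\id_{1_x\cdot 1_x}$. Setting $r=s=1_x$ in \ref{enum:left.left.red} expresses $\lambda_{1_x}\colon 1_x\cdot 1_x\to 1_x$ in terms of $\alpha$ and $\lambda_{1_x}$ on a factor; doing the same in \ref{enum:right.right.red} and the middle triangle \ref{enum:unit.coherence} at $x$ with $r=s=1_x$ relates $\lambda_{1_x}$ and $\rho_{1_x}$ as endomorphisms of $1_x\cdot 1_x$ precomposed with $\alpha$. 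Combining these identities and cancelling the invertible $\alpha$'s forces $\rho_{1_x}^{-1}\circ\lambda_{1_x}=\id$, which is exactly $\gamma_{1_x}=\id$. (This is the standard ``$\lambda_I=\rho_I$'' lemma from the monoidal case, transported verbatim.)

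The main obstacle is purely bookkeeping: unlike in the monoidal setting there is no ambient symmetry to shortcut the two halves, the unitors are indexed by $\dom$ and $\cod$ so one must be careful that $\lambda$ uses $1_{\cod}$ and $\rho$ uses $1_{\dom}$, and the naturality squares for $\alpha$ must be applied to $2$-cells of the form $\gamma_{1}\cdot\id$ or $\id\cdot\gamma_{1}$ rather than to generic $2$-cells. I do not expect any genuinely new idea beyond the monoidal-category proof; accordingly I would present the derivation of \ref{enum:left.left.red} in reasonable detail, note that \ref{enum:right.right.red} is strictly dual, and then give the short computation yielding \ref{enum:unit.unit.red}, citing Kelly's coherence paper for the template.
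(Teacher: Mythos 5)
Your proposal is correct and follows essentially the same route as the paper's proof: \ref{enum:left.left.red} is extracted from the pentagon \ref{enum:assoc.coherence} at the quadruple $(r,s,1_z,1_z)$ together with the middle triangle \ref{enum:unit.coherence} and naturality of $\alpha$, its horizontal dual gives \ref{enum:right.right.red}, and the standard Kelly-style cancellation (setting $r=s=1_x$ and using that $\lambda,\rho$ are natural isomorphisms) then yields $\lambda_{1_x}=\rho_{1_x}$, i.e.\ \ref{enum:unit.unit.red}. The only slip is typographical: $\gamma_{1_z}=\rho^{-1}\circ\lambda$ has type $1_z\cdot 1_z\to 1_z\cdot 1_z$ rather than $1_z\to 1_z\cdot 1_z$, and the naturality squares for $\alpha$ in the paper's diagram are taken along $\rho_{1_z}$ and $\lambda_{1_z}$, but this does not affect the argument.
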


\begin{proof}
  First, observe that~\ref{enum:left.left.red} is the horizontal dual
  of~\ref{enum:right.right.red}, so it is sufficient to
  verify~\ref{enum:unit.unit.red} and~\ref{enum:left.left.red}.

  We may obtain~\ref{enum:unit.unit.red} from the remaining conditions: we
  have an equality of 2-cells~\( (1_x \cdot 1_x) \cdot 1_x \to 1_x \)
  \begin{equation*}
    \lambda \circ (\lambda \cdot 1) 
      = \lambda \circ \lambda \circ \alpha 
      = \lambda \circ (1 \cdot \lambda) \circ \alpha
      = \lambda \circ (\rho \cdot 1)
  \end{equation*}
  by~\ref{enum:left.left.red}, naturality of~\( \lambda \),
  and~\ref{enum:unit.coherence}. We deduce that~\( \lambda \cdot 1 = \rho
  \cdot 1 \), and since~\( \rho \) is a natural isomorphism, we conclude
  that~\( \lambda = \rho \).

  To prove~\ref{enum:left.left.red} given only~\ref{enum:unit.coherence}
  and~\ref{enum:assoc.coherence}, we consider the following diagram:
  \begin{equation*}
    \begin{tikzcd}
      & 1_z \cdot (1_z \cdot (s\cdot r))  
      	\ar[d,"\id \cdot \lambda" description] \\
      & 1_z \cdot (s \cdot r)          \\
      1_z \cdot ((1_z \cdot s) \cdot r)   
      	\ar[uur,"\id \cdot \alpha",bend left] 
      	\ar[ur, "\id \cdot (\lambda \cdot \id)" description]
        \ar[d,  "\alpha^{-1}",swap] 
      & (1_z \cdot s) \cdot r           
      	\ar[u,  "\alpha" description] 
      & (1_z \cdot 1_z) \cdot (s \cdot r) 
      	\ar[uul,"\alpha",bend right,swap] 
      	\ar[ul, "\rho \cdot \id" description]     \\
      (1_z \cdot (1_z \cdot s)) \cdot r
      	\ar[rr, "\alpha^{-1} \cdot \id",swap]
      	\ar[ur, "(\id \cdot \lambda) \cdot \id" description] 
      && ((1_z \cdot 1_z) \cdot s) \cdot r 
      	\ar[ul, "(\rho \cdot \id) \cdot \id" description] 
      	\ar[u,  "\alpha",swap]
    \end{tikzcd}
  \end{equation*}
  Except for the top left triangle, every inner polygon commutes either
  by~\ref{enum:unit.coherence} or by naturality of~\( \alpha \). The outer
  pentagon is an instance of~\ref{enum:assoc.coherence}, so we conclude that
  the top left triangle commutes. Since~\(\lambda\) is a natural isomorphism,
  the result follows.
\end{proof}

\subsection{Lax functors:}

Let~\( \bicat D \), \( \bicat E \) be double categories. A lax functor~\( F
\colon \bicat D \to \bicat E \) consists of:
\begin{itemize}[label=--]
  \item
    a pair of functors~\( F_0 \colon \bicat D_0 \to \bicat E_0 \) and~\( F_1
    \colon \bicat D_1 \to \bicat E_1 \), whose subscripts we will often omit, 

  \item
    a globular natural 2-cell~\( \e F_x \colon 1_{Fx} \to F(1_x) \) for each
    0-cell~\(x\) (unit comparison),

  \item
    a globular natural 2-cell~\( \m F_{f,g} \colon Fg \cdot Ff \to F(g\cdot f)
    \) for each composable pair of horizontal 1-cells~\( f,g \) (composition
    comparison),
\end{itemize}
satisfying the following properties:
\begin{itemize}[label=--]
  \item
    \( \dom(Fr) = F(\dom(r)) \) and~\( \cod(Fr) = F(\cod(r)) \) for every
    horizontal 1-cell~\(r\) and likewise for 2-cells,

  \item
    unit comparsion coherence: the following diagrams commute
    \begin{equation*}
%     \label{unit.comparison}
      \begin{tikzcd}
        1_{Fy} \cdot Fr \ar[d,"\lambda",swap] \ar[r,"\e F \cdot Fr"]
        & F1_y \cdot Fr \ar[d,"\m F"]  \\
        Fr \ar[r,"F\lambda^{-1}",swap] & F(1_y \cdot r) 
      \end{tikzcd}
      \quad
      \begin{tikzcd}
        Fr \cdot 1_{Fx} \ar[d,"\rho",swap] \ar[r,"\id \cdot \e F"]
        & Fr \cdot F1_x \ar[d,"\m F"] \\
        Fr \ar[r,"F\rho^{-1}",swap] & F(r \cdot 1_x)
      \end{tikzcd}
    \end{equation*}
    for every horizontal 1-cell~\( r \colon x \to y \),

  \item
    composition comparison coherence: the following diagram commutes
    \begin{equation*}
%     \label{composition.comparison}
      \begin{tikzcd}
        (Ft \cdot Fs) \cdot Fr \ar[d,"\alpha",swap] \ar[r,"\m F \cdot \id"]
        & F(t \cdot s) \cdot Fr \ar[r,"\m F"] 
        & F((t \cdot s) \cdot r) \ar[d,"F\alpha"] \\
        Ft \cdot (Fs \cdot Fr) \ar[r,"\id \cdot \m F",swap] 
        & Ft \cdot F(s \cdot r) \ar[r,"\m F",swap]
        & F(t \cdot (s \cdot r))
      \end{tikzcd}
    \end{equation*}
    for every composable triple of horizontal 1-cells~\( r, s, t \).
\end{itemize}

Dually, an \textit{oplax functor}~\( F \colon \bicat B \to \bicat C\) is the
horizontally dual notion, obtained by reversing the direction of the natural
2-cells \( \e F \) and \( \m F \). 

If~\( \e F \) is invertible, then we say~\(F\) is \textit{normal}, and if
both~\( \e F, \m F \) are invertible, then we say~\(F\) is a \textit{strong
functor} (which can be seen both as a lax and an oplax functor).  If \( \e F,
\m F \) are identities, we say \(F\) is a \textit{strict functor}.

\begin{proposition}[]
  \label{prop:dbcat.vert}
  Composition of (op)lax functors is well-defined, associative, and has
  identities, and restricts to strong functors, as well as strict functors. 

  In other words, pseudodouble categories are the objects of the categories \(
  \PsDbCat_\lax \) and \( \PsDbCat_\opl \), whose morphisms are lax functors
  and oplax functors, respectively. There are also common subcategories of
  strong (strict) functors.
\end{proposition}

\begin{proof}
  The identity functor \( \id_{\bicat D} \) on a pseudodouble
  category~\(\bicat D\) is given by the pair of identity functors on~\( \bicat
  D_0 \) and~\( \bicat D_1 \), and the coherences are identities as well.
  Thus, the coherence conditions are trivially satisfied, so \( \id_{\bicat D}
  \) is a strong functor.

  For lax functors~\(F \colon \bicat C \to \bicat D\) and~\(G \colon \bicat D
  \to \bicat E\), define the composite lax functor~\(GF\) to be given by 
  \begin{itemize}[label=--]
    \item
      \( (GF)_0 = G_0 F_0 \), 
    \item
      \( (GF)_1 = G_1 F_1 \), 
    \item
      \( \e{GF} = G\e F \circ \e G \), 
    \item
      \( \m{GF} = G\m F \circ \m G \).
  \end{itemize}
  To verify~\(GF\) is a lax functor, first we note that~\( \e{GF} \) and~\(
  \m{GF} \) are globular natural transformations, since~\(G\) is a lax
  functor. Next, we observe that the following diagrams
  \begin{equation*}
    \begin{tikzcd}[column sep=large,row sep=large]
      \cdot          \ar[dd,"\lambda",swap]
                     \ar[r,"\e G \cdot GFf"] 
        & \cdot      \ar[r,"G\e F \cdot GFf"] 
                     \ar[d,"\m G",swap] 
        & \cdot      \ar[d,"\m G"] \\
        & \cdot      \ar[r,"G(\e F \cdot Fr)",swap]
        & \cdot      \ar[d,"G(\m F)"] \\
      \cdot          \ar[ur,"G\lambda^{-1}" description] 
                     \ar[rr,"GF\lambda^{-1}",swap] 
        && \cdot
       \end{tikzcd}
    \qquad
    \begin{tikzcd}[column sep=large,row sep=large]
      \cdot          \ar[d,"\m G",swap]
        & \cdot      \ar[d,"\m G"]
                     \ar[l,"GFf \cdot G\e F",swap] 
        & \cdot      \ar[l,"GFf \cdot \e G",swap] 
                     \ar[dd,"\rho"]            \\
      \cdot          \ar[d,"G(\m F)",swap]
        & \cdot      \ar[l,"G(Fr \cdot \e F)",swap] \\
      \cdot
        && \cdot     \ar[ul,"G\rho^{-1}" description]  
                     \ar[ll,"GF\rho^{-1}"]
    \end{tikzcd}
  \end{equation*}
  \begin{equation*}
    \begin{tikzcd}[column sep=large, row sep=large]
      && \cdot \ar[rd,"G(\m F \cdot Fr)"]      \\
      &   \cdot \ar[rd,"G\m F \cdot GFr",swap]
                 \ar[ru,"\m G"]
      && \cdot \ar[rd,"G\m F"]                 \\
           \cdot \ar[ru,"\m G \cdot GFr"]
                 \ar[d,"\alpha",swap]
      && \cdot \ar[ru,"\m G",swap]
                 \ar[d,"G\alpha"]
      && \cdot \ar[d,"GF\alpha"]              \\
           \cdot \ar[rd,"GFt \cdot \m G",swap]
      && \cdot \ar[rd,"\m G"]
      && \cdot                                \\
      &   \cdot \ar[ru,"GFt \cdot G\m F"]
                 \ar[rd,"\m G",swap]
      && \cdot \ar[ru,"G\m F",swap]            \\
      && \cdot \ar[ru,"G(Ft \cdot \m F)",swap]
    \end{tikzcd}
  \end{equation*}
  commute, since every inner polygon commutes: either by coherence (of
  both~\(F\) and~\(G\)) or by naturality of~\(\m G\). These show that the
  coherence conditions for \( GF \) hold.

  Finally, note that the identity functors are the units for lax functor
  composition, and this operation is also associative. All verifications occur
  in suitable categories: function composition on the category of sets,
  functor composition on~\( \Cat \), and 2-cell composition on the
  hom-categories (plus the composition preservation by the functors between
  them).
\end{proof}

\begin{remark}
  For (op)lax functors \(F \colon \bicat C \to \bicat D\) ,\(G \colon \bicat D
  \to \bicat E \), we note that if ~\(\e F, \e G\) (respectively, \(\m F, \m G
  \)) are both isomorphisms (identities), then so is~\( \e{GF} \)
  (respectively,~\( \m{GF} \)). Thus, we obtain subcategories of \(
  \PsDbCat_\lax \) and \( \PsDbCat_\opl \) with the same objects, and strong
  (strict) functors.
\end{remark}

\subsection{Vertical transformations}

We fix lax functors~\( H \colon \bicat A \to \bicat B \), \( K \colon \bicat C
\to \bicat D \) and oplax functors~\( F \colon \bicat A \to \bicat C \) and~\(
G \colon \bicat B \to \bicat D \).  A \textit{generalized vertical
transformation}~\( \phi \), depicted as
\begin{equation*}
  \begin{tikzcd}
    \bicat A \ar[d,"F",swap] \ar[r,"H"{name=A}]
      & \bicat B \ar[d,"G"] \\
    \bicat C \ar[r,"K"{name=B},swap] & \bicat D
    \ar[from=A,to=B,phantom,"\phi"]
  \end{tikzcd}
\end{equation*}
so that the vertical domain, codomain are~\(F\), ~\(G\) respectively, and
horizontal domain, codomain given by~\( H \),~\( K \) respectively, is given
by  
\begin{itemize}[label=--]
  \item
    a natural transformation~\( \phi_0 \colon G_0H_0 \to K_0F_0 \),

  \item
    and a natural transformation~\( \phi_1 \colon G_1H_1 \to K_1F_1 \),
\end{itemize}
whose subscripts will often be omitted, satisfying~\( \dom(\phi_r) =
\phi_{\dom(r)} \) and~\( \cod(\phi_r) = \phi_{\cod(r)} \) for every horizontal
1-cell \(r\), subject to the following coherence conditions
\begin{equation*}
  \begin{tikzcd}[column sep=10]
    & 1_{GHx} \ar[rr,"1_{\phi_x}"]
      && 1_{KFx} \ar[rd,"\e K_{Fx}"] \\
    G1_{Hx} \ar[ru,"\e G_{Hx}"] \ar[rd,"G\e H_x",swap]
      &&&& K1_{Fx} \\
    & GH1_x \ar[rr,"\phi_{1_x}",swap] 
      && KF1_x \ar[ru,"K\e F_x",swap]
  \end{tikzcd}
  \quad
  \begin{tikzcd}[column sep=7]
    & GHs \cdot GHr \ar[rr,"\phi_s \cdot \phi_r"]
      && KFs \cdot KFr \ar[rd,"\m K_{Fs,Fr}"] \\
    G(Hs \cdot Hr) \ar[ru,"\m G_{Hs,Hr}"] \ar[rd,"G\m H_{s,r}",swap]
      &&&& K(Fs \cdot Fr) \\
    & GH(s \cdot r) \ar[rr,"\phi_{s\cdot r}",swap]
      && KF(s \cdot r) \ar[ru,"K\m F_{s,r}",swap]
  \end{tikzcd}
\end{equation*}
for every 0-cell \(x\) and every composable pair \(r,s\) of horizontal
1-cells.

We say~\( \phi \) is a vertical transformation between lax functors if~\(
F=\id \) and~\( G = \id \), and a vertical transformation between oplax
functors if \( H = \id \) and \( K = \id \). 

\begin{proposition}
  \label{lem:catgrph.psdbcat}
  Lax functors and generalized vertical transformations form a category, and
  the vertical domain, codomain operations define functors to~\( \PsDbCat_\opl
  \).
\end{proposition}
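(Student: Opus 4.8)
The plan is to give composition and identities of generalized vertical transformations explicitly, check the category axioms by pasting, and then read off functoriality of $\dom$ and $\cod$. Throughout I view a generalized vertical transformation $\phi$ with horizontal domain $H\colon\bicat A\to\bicat B$, horizontal codomain $K\colon\bicat C\to\bicat D$, vertical domain $F\colon\bicat A\to\bicat C$ and vertical codomain $G\colon\bicat B\to\bicat D$ as a morphism $\phi\colon H\to K$. For a lax functor $H\colon\bicat A\to\bicat B$, let $\id_H$ have vertical legs $\id_{\bicat A},\id_{\bicat B}$ and components $(\id_H)_0=\id_{H_0}$, $(\id_H)_1=\id_{H_1}$; the two coherence hexagons hold trivially because, by Proposition~\ref{prop:dbcat.vert}, $\id_{\bicat A}$ and $\id_{\bicat B}$ are strong with identity comparison cells, so on either side of each hexagon all edges but the one carrying $\e H$ (resp.\ $\m H$) are identities.

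For composition, take $\phi\colon H\to K$ as above and $\psi\colon K\to L$ with $L\colon\bicat C'\to\bicat D'$ and vertical legs $F'\colon\bicat C\to\bicat C'$, $G'\colon\bicat D\to\bicat D'$. I define $\psi\phi\colon H\to L$ to have vertical legs the composite oplax functors $F'F$ and $G'G$ — well defined by the oplax version of Proposition~\ref{prop:dbcat.vert} — and components the vertical pastings
\begin{equation*}
  (\psi\phi)_i \;=\; \Bigl(\, G'_iG_iH_i \xrightarrow{\;G'_i\phi_i\;} G'_iK_iF_i \xrightarrow{\;\psi_iF_i\;} L_iF'_iF_i \,\Bigr),\qquad i\in\{0,1\}.
\end{equation*}
Each $(\psi\phi)_i$ is a natural transformation with the required source and target, and the conditions $\dom\cdot(\psi\phi)_1=(\psi\phi)_0\cdot\dom$, $\cod\cdot(\psi\phi)_1=(\psi\phi)_0\cdot\cod$ follow from the corresponding conditions for $\phi$ and $\psi$ and from $\dom\circ G'_1=G'_0\circ\dom$, $\dom\circ F_1=F_0\circ\dom$ (and dually for $\cod$).

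The substantive step is that $\psi\phi$ again satisfies the unit and composition coherence hexagons, now relative to the legs $F'F$ and $G'G$. I would verify this by a pasting argument in the hom-categories: apply the functor $G'_1$ to the hexagon for $\phi$, evaluate the hexagon for $\psi$ at the objects $F_0x$ and at the comparison cells $\e F,\m F$ of the horizontal domain leg $F$, and glue these two diagrams along their shared edges (the $K$-edges coincide on the nose), filling the remaining regions with the naturality squares of $\e{G'}$ and $\m{G'}$ against the morphisms $\phi_0,\phi_1$, with the naturality of $\psi_0,\psi_1$ against $\e F,\m F$, and with the composition formulas $\e{F'F}=\e{F'}\circ F'\e F$, $\m{F'F}=\m{F'}\circ F'\m F$, $\e{G'G}=\e{G'}\circ G'\e G$, $\m{G'G}=\m{G'}\circ G'\m G$ for the comparison cells of the composite legs (obtained from the construction in the proof of Proposition~\ref{prop:dbcat.vert} applied to oplax functors). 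The outer boundary of the assembled diagram is precisely the hexagon for $\psi\phi$. I expect this to be the main obstacle, but only as bookkeeping of the six comparison cells $\e{G'},\m{G'},\e F,\m F,\e K,\m K$ — no conceptual difficulty arises. Associativity of the composition and the two unit laws then hold strictly, since the leg-composites are associative and unital by Proposition~\ref{prop:dbcat.vert} and the component-composites are so by the interchange law for natural transformations, exactly as in the last paragraph of the proof of Proposition~\ref{prop:dbcat.vert}.

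Finally, set $\dom(H)=\bicat A$, $\cod(H)=\bicat B$ on objects and $\dom(\phi)=F$, $\cod(\phi)=G$ on morphisms. Because the vertical legs of a generalized vertical transformation are by definition oplax functors, these assignments land in $\PsDbCat_\opl$; and from the above, $\dom(\id_H)=\id_{\bicat A}$, $\cod(\id_H)=\id_{\bicat B}$, $\dom(\psi\phi)=F'F=\dom(\psi)\circ\dom(\phi)$ and $\cod(\psi\phi)=G'G=\cod(\psi)\circ\cod(\phi)$. Hence $\dom$ and $\cod$ are functors to $\PsDbCat_\opl$, as claimed.
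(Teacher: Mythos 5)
Your proposal is correct and follows essentially the same route as the paper's proof: identical identity and composition formulas $(\psi\phi)_i=\psi_iF_i\circ G'_i\phi_i$, the same pasting of $G'$ applied to $\phi$'s hexagon, $\psi$'s hexagon at the image 1-cells, and the naturality squares of $\m{G'}$ against $\phi$ and of $\psi$ against $\m F$, with associativity and unitality read off componentwise. The only blemish is the phrase ``evaluate the hexagon for $\psi$ \ldots at the comparison cells $\e F,\m F$'' --- the hexagon is evaluated at the 1-cells $Fr,Fs$, while $\e F,\m F$ enter only through the naturality squares you list separately --- but this does not affect the argument.
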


\begin{proof}
  The identity vertical transformation on a lax functor~\(F\) is given by the
  pair~\( \id_{F_0} \) and~\( \id_{F_1} \), which trivially satisifies the
  conditions, and has identity functors as vertical domain and codomain.

  Let \( \phi \), \( \psi \) be the generalized vertical transformations
  depicted below:
  \begin{equation*}
    \begin{tikzcd}
      \bicat A \ar[r,"P"{name=A}] \ar[d,"F",swap]
        & \bicat B \ar[d,"G"] \\
      \bicat C \ar[r,"Q"{name=B},swap] 
        & \bicat D
      \ar[from=A,to=B,phantom,"\phi"]
    \end{tikzcd}
    \qquad
    \begin{tikzcd}
      \bicat C \ar[r,"Q"{name=A}] \ar[d,"H",swap]
        & \bicat D \ar[d,"K"] \\
      \bicat E \ar[r,"R"{name=B},swap] 
        & \bicat F
      \ar[from=A,to=B,phantom,"\psi"]
    \end{tikzcd}
  \end{equation*}
  We define their \textit{vertical composition} \( \psi \circ \phi \) to be
  given by the natural transformation ~\( (\psi \circ \phi)_i = \psi_iF_i
  \circ K_i\phi_i \) for~\( i=0,1 \). Since 
  \begin{equation*}
    \begin{tikzcd}
      && \cdot \ar[rr,"K\phi_s \cdot K\phi_r"]
      && \cdot \ar[rr,"\psi_{Fs} \cdot \psi_{Fr}"]
      && \cdot \ar[rd,"\m R"] \\
      & \cdot \ar[rr,"K(\phi_s \cdot \phi_r)",swap] \ar[ru,"\m K"]
      && \cdot \ar[ru,"\m K",swap] \ar[rd,"K\m Q"]
      &&&& \cdot \\
      \cdot \ar[ru,"K\m G"] \ar[rd,"KG\m P",swap]
      &&&& \cdot \ar[rr,"\psi_{Fs \cdot Fr}"] 
      && \cdot \ar[ru,"R\m H",swap] \\
      & \cdot \ar[rr,"K\phi_{s \cdot r}",swap]
      && \cdot \ar[ru,"KQ\m F"] \ar[rr,"\psi_{F(s\cdot r)}",swap]
      && \cdot \ar[ru,"RH\m F",swap]
    \end{tikzcd}
  \end{equation*}
  and
  \begin{equation*}
    \begin{tikzcd}
      && \cdot \ar[rr,"1_{K\phi}"]
      && \cdot \ar[rr,"1_{\psi_F}"]
      && \cdot \ar[rd,"\e R"] \\
      & \cdot \ar[rr,"K(1_\phi)",swap] \ar[ru,"\e K"]
      && \cdot \ar[ru,"\e K",swap] \ar[rd,"K\e Q"]
      &&&& \cdot \\
      \cdot \ar[ru,"K\e G"] \ar[rd,"KG\e P",swap]
      &&&& \cdot \ar[rr,"\psi_{1_F}"] 
      && \cdot \ar[ru,"R\e H",swap] \\
      & \cdot \ar[rr,"K\phi_1",swap]
      && \cdot \ar[ru,"KQ\e F"] \ar[rr,"\psi_{F1}",swap]
      && \cdot \ar[ru,"RH\e F",swap]
    \end{tikzcd}
  \end{equation*}
  are commutative diagrams, we conclude that~\( \psi \circ \phi \) is a
  generalized vertical transformation, with vertical domain~\( HF \) and
  codomain~\( KG \). Hence, if~\( \phi \) and~\( \psi \) are globular, then so
  is~\( \psi \circ \phi \), so we obtain a subcategory of lax functors and
  vertical transformations.

  Associativity and identity are obtained via componentwise calculation on the
  underlying natural transformations. 
\end{proof}

\begin{proposition}[{\cite[{}2.2]{GP04}}]
  \label{thm:dbcat.psdbcat}
  We have a strict double category~\( \PsDbCat \) with pseudodouble
  categories as 0-cells, lax and oplax functors respectively as horizontal and
  vertical 1-cells, and generalized vertical transformations as 2-cells.
\end{proposition}

\begin{proof}
  The underlying~\( \Cat \)-graph of \( \PsDbCat \) is described in
  Proposition~\ref{lem:catgrph.psdbcat}. 

  For an oplax functor~\(F \colon \bicat A \to \bicat D \), the identity
  natural transformation on~\(F\) defines a generalized vertical
  transformation~\( 1_F \colon 1_{\id} \to 1_{\id} \), whose vertical domain
  and codomain is~\(F\).
  
  If~\( \phi \) and~\( \psi \) are the generalized vertical transformations
  depicted below:
  \begin{equation*}
    \begin{tikzcd}
      \bicat A \ar[r,"P"{name=A}] \ar[d,"F",swap]
        & \bicat B \ar[d,"G"] \\
      \bicat D \ar[r,"Q"{name=B},swap] 
        & \bicat E
      \ar[from=A,to=B,phantom,"\phi"]
    \end{tikzcd}
    \qquad
    \begin{tikzcd}
      \bicat B \ar[r,"R"{name=A}] \ar[d,"G",swap]
        & \bicat C \ar[d,"H"] \\
      \bicat E \ar[r,"S"{name=B},swap] 
        & \bicat F
      \ar[from=A,to=B,phantom,"\psi"]
    \end{tikzcd}
  \end{equation*}
  we denote their \textit{horizontal composition} by~\( \psi \cdot \phi \),
  and is defined by~\( (\psi \cdot \phi)_i = S_i\phi_i \circ \psi_iP_i \) for
  \( i=0,1 \).  Since
  \begin{equation*}
    \begin{tikzcd}
      & \cdot \ar[rr,"\psi_{Ps} \cdot \psi_{Pr}"]
      && \cdot \ar[rr,"S\phi_s \cdot S\phi_r"]
               \ar[rd,"\m S"]
      && \cdot \ar[rd,"\m S"] \\
      \cdot \ar[ru,"\m H"] \ar[rd,"H\m R",swap]
      &&&& \cdot \ar[rr,"S(\phi_s \cdot \phi_r)"] 
      && \cdot \ar[rd,"S\m Q"] \\
      & \cdot \ar[rr,"\psi_{Ps \cdot Pr}"]
              \ar[rd,"HR\m P",swap]
      && \cdot \ar[ru,"S\m G"] \ar[rd,"SG\m P"]
      &&&& \cdot \\
      && \cdot \ar[rr,"\psi_{P(s \cdot r)}",swap]
      && \cdot \ar[rr,"S\phi_{s \cdot r}",swap]
      && \cdot \ar[ru,"SQ\m F",swap]
    \end{tikzcd}
  \end{equation*}
  and
  \begin{equation*}
    \begin{tikzcd}
      & \cdot \ar[rr,"1_{\psi P}"]
      && \cdot \ar[rr,"1_{S \phi}"]
               \ar[rd,"\e S"]
      && \cdot \ar[rd,"\e S"] \\
      \cdot \ar[ru,"\e H"] \ar[rd,"H\e R",swap]
      &&&& \cdot \ar[rr,"S1_\phi"]
      && \cdot \ar[rd,"S\e Q"] \\
      & \cdot \ar[rr,"\psi_{1_P}"]
              \ar[rd,"HR\e P",swap]
      && \cdot \ar[ru,"S\e G"] \ar[rd,"SG\e P"]
      &&&& \cdot \\
      && \cdot \ar[rr,"\psi_{P1}",swap]
      && \cdot \ar[rr,"S\phi_1",swap]
      && \cdot \ar[ru,"SQ\e F",swap]
    \end{tikzcd}
  \end{equation*}
  are commutative diagrams, we conclude~\( \psi \cdot \phi \) is a generalized
  vertical transformation, with vertical domain~\( F \) and codomain~\( H \).
  Associativity and identity conditions hold, via componentwise calculation on
  the underlying natural transformations, so the associator and unitor 2-cells
  are taken to be identites.
\end{proof}

\subsection{Horizontal transformations:}

Let~\(F,\, G \colon \bicat D \to \bicat E\) be lax functors.  A \textit{lax
horizontal transformation}~\( \phi \colon F \to G \) is given by data
\begin{itemize}[label=--]
  \item
    a functor~\( \phi \colon \bicat D_0 \to \bicat E_1 \),

  \item
    a globular natural 2-cell~\( \n \phi_r \colon Gr \cdot \phi_x \to
    \phi_y\cdot Fr \) for each horizontal 1-cell \(r \colon x \to y \),
\end{itemize}
satisfying the following coherence conditions:
\begin{itemize}[label=--]
  \item
    The following diagram commutes
    \begin{equation*}
      \begin{tikzcd}
        1_{Gx} \cdot \phi_x \ar[d,"\e G \cdot \id",swap]
                            \ar[r,"\gamma"] 
        & \phi_x \cdot 1_{Fx} \ar[d, "\id \cdot \e F"] \\
        G1_x \cdot \phi_x \ar[r,"\phi_{1_x}",swap] & \phi_x \cdot F1_x
      \end{tikzcd}
    \end{equation*}
    for all 0-cells~\(x\).

  \item
    The following diagram commutes
    \begin{equation*}
      \label{octagon.transformation.coherence}
      \begin{tikzcd}
        & G(s\cdot r) \cdot \phi_x 
            \ar[r,"\n \phi_{s\cdot r}"] 
        & \phi_z \cdot F(s\cdot r) \\
        (Gs \cdot Gr) \cdot \phi_x 
            \ar[d,"\alpha",swap] 
            \ar[ur,"\m G \cdot \id"] 
        &&& \phi_z \cdot (Fs \cdot Fr) \ar[ul,"\id \cdot \m F",swap] \\
        Gs \cdot (Gr \cdot \phi_x) 
            \ar[rd,"\id \cdot \n \phi_r",swap]
        &&& (\phi_z \cdot Fs) \cdot Fr 
            \ar[u,"\alpha",swap] \\
        & Gs \cdot (\phi_y \cdot Fr) 
            \ar[r,"\alpha^{-1}",swap] 
        & (Gs \cdot \phi_y) \cdot Fr 
            \ar[ur,"\n \phi_s \cdot \id",swap]
      \end{tikzcd}
    \end{equation*}
    for each pair~\(r \colon x \to y \), \( s \colon y \to z \) of horizontal
    1-cells.
\end{itemize}
\textit{Oplax horizontal transformations} \( \phi \) between (op)lax functors
are obtained by reversing the direction of the natural 2-cell \( \n\phi \). An
(op)lax horizontal transformation is said to be a \textit{strong horizontal
transformation} \cite{GP99, Mar06} (we note these references follow the
opposite convention; they are called \textit{vertical} therein) if \( \n\phi
\) is invertible -- hence strong horizontal transformations are lax and
oplax.

\begin{remark}
  Let \( F,\, G \colon \bicat A \to \bicat B \) be lax functors between
  bicategories \( \bicat A, \bicat B \), seen as vertically trivial
  pseudodouble categories. A lax horizontal transformation \( \phi \colon F
  \to G \) corresponds to the notion of lax natural transformations
  \cite{Gra69, Gra74} (therein, called \textit{2-natural transformations}, and
  \textit{quasi-natural transformations}, respectively). In \cite{Str72b}, one
  also finds \textit{oplax} natural transformations.

  Though the notion of (op)lax horizontal transformation for pseudodouble
  categories is not easily found in the literature, the coherence conditions
  for the unit horizontal transformations and for the horizontal composition
  of horizontal transformations are analogous to the (op)lax natural
  transformations for bicategories.
\end{remark}

\begin{proposition}[Unit horizontal 1-cell]
  \label{lem:lht.hu}
  Let~\( F \colon \bicat D \to \bicat E \) be a lax functor. The data
  \begin{enumerate}[label=(\roman*)]
    \item
      \label{enum:unit.zcells}
      \( (1_F)_x = 1_{Fx} \) for each 0-cell~\(x\),
    \item
      \label{enum:unit.vcells}
      \( (1_F)_f = 1_{Ff} \) for each vertical 1-cell~\(f\),
    \item
      \label{enum:unit.globnat}
      \( \n{1_F}_r = \gamma_{Fr} \) for each horizontal 1-cell~\(r\),
  \end{enumerate}
  defines a strong horizontal transformation~\( 1_F \colon F \to F \).
\end{proposition}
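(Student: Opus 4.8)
The plan is to check that the prescribed data underlies a lax horizontal transformation $1_F \colon F \to F$, and then to note that it is automatically strong. So I would take $\phi$ to be the composite of $F_0 \colon \bicat D_0 \to \bicat E_0$ with the horizontal unit functor $1 \colon \bicat E_0 \to \bicat E_1$; this is exactly the assignment $x \mapsto 1_{Fx}$, $f \mapsto 1_{Ff}$ of \ref{enum:unit.zcells}, \ref{enum:unit.vcells}, and it is a functor because it is a composite of functors, so there is nothing to check for \ref{enum:unit.zcells}--\ref{enum:unit.vcells}. For \ref{enum:unit.globnat}, I would observe that $\n{1_F} := \gamma_{F_1(-)}$ is the whiskering by $F_1$ of the globular natural isomorphism $\gamma = \rho^{-1} \circ \lambda$ of endofunctors of $\bicat E_1$; since $\dom \circ F_1 = F_0 \circ \dom$ and $\cod \circ F_1 = F_0 \circ \cod$, its source and target are the required functors $F_1 \cdot \phi_{\dom}$ and $\phi_{\cod} \cdot F_1$, globularity is inherited, and it is an isomorphism because $\lambda$ and $\rho$ are. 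This last point is what will promote $1_F$ from lax to strong at the end.

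It then remains to verify the two coherence axioms for a lax horizontal transformation. For the unit square, instantiating $G = F$ and $\phi_x = 1_{Fx}$, the top edge $\gamma$ becomes $\gamma_{1_{Fx}}$, which is the identity by coherence axiom \ref{enum:unit.unit.red}; the square then collapses to the single equation relating $\gamma_{F1_x}$, $\e F_x \cdot \id$ and $\id \cdot \e F_x$, which is precisely naturality of $\gamma$ (equivalently, of $\lambda$ and $\rho$) with respect to the globular $2$-cell $\e F_x \colon 1_{Fx} \to F1_x$. This is immediate.

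The composition coherence octagon is the substantive step. Instantiated at $1_F$, its two $\m F$-labelled edges (namely $\m F \cdot \id$ into $F(s \cdot r) \cdot 1_{Fx}$ and $\id \cdot \m F$ into $1_{Fz} \cdot F(s \cdot r)$) can be stripped off using naturality of $\gamma$ with respect to the globular $2$-cell $\m F_{s,r} \colon Fs \cdot Fr \to F(s \cdot r)$, reducing the whole diagram to an identity about the unitors and associator of $\bicat E$ applied to the composable pair $(Fs,\,Fr)$: that the canonical unitor isomorphism $1_{Fz} \cdot (Fs \cdot Fr) \cong (Fs \cdot Fr) \cdot 1_{Fx}$ agrees with the one obtained by re-associating, inserting $\gamma_{Fs}$ and $\gamma_{Fr}$ on the two halves, and re-associating back. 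I would prove this remaining identity by a diagram chase in the spirit of the proof of Proposition \ref{redundant.axioms}, fed by naturality of $\alpha$, $\lambda$, $\rho$ together with the mixed unit coherence \ref{enum:unit.coherence} and the left/right unit reductions \ref{enum:left.left.red}, \ref{enum:right.right.red}; this is the only place where a genuine, if routine, computation is needed, and I expect it to be the main obstacle. Once both axioms hold, $1_F$ is a lax horizontal transformation, and since $\n{1_F}$ is pointwise invertible it is simultaneously oplax, hence strong, which is the assertion.
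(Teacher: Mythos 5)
Your proposal is correct and follows essentially the same route as the paper: the underlying functor is $1 \circ F_0$, the comparison is the whiskering of $\gamma$ by $F_1$, the unit axiom reduces to naturality of $\gamma$ with respect to $\e F$, and the composition octagon reduces, via naturality of $\gamma$ with respect to $\m F$, to the identity $\alpha^{-1} \circ (\id \cdot \gamma) \circ \alpha \circ (\gamma \cdot \id) \circ \alpha^{-1} = \gamma$, which the paper likewise derives from the unit and associativity coherence axioms (its equation \eqref{eq:coh.assoc.gam}). Invertibility of $\gamma$ then upgrades lax to strong exactly as you conclude.
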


\begin{proof}
  The data~\ref{enum:unit.zcells} and~\ref{enum:unit.vcells} tell us that the
  underlying functor of~\( 1_F \) is the composite
  \begin{equation*}
    \begin{tikzcd}
      \bicat D_0 \ar[r,"F_0"] & \bicat E_0 \ar[r,"1"] & \bicat E_1
    \end{tikzcd}
  \end{equation*}
  and the datum~\ref{enum:unit.globnat} tells us that, for each horizontal
  1-cell \( r \colon x \to y \), the underlying 2-cell \( \n{1_F}_r \) is
  given by
  \begin{equation*}
    \begin{tikzcd}
      1_{Fy} \cdot Fr \ar[r,"\lambda"]
        & Fr \ar[r,"\rho^{-1}"]
        & Fr \cdot 1_{Fx}
    \end{tikzcd}
  \end{equation*}
  which is a composite of globular, natural, invertible 2-cells, so the data is
  well-defined. 

  We're left with checking coherence. First, we observe that
  \begin{equation*}
    \begin{tikzcd}
      1_{Fx} \cdot 1_{Fx} \ar[r,"\gamma"] \ar[d,"\id \cdot \e F",swap] 
        & 1_{Fx} \cdot 1_{Fx} \ar[d,"\e F \cdot \id"] \\
      1_{Fx} \cdot F1_x \ar[r,"\gamma",swap]
        & F1_x \cdot 1_{Fx}
    \end{tikzcd}
  \end{equation*}
  commutes by naturality, giving the comparison coherence diagram for the
  unit. Now, note that we have
  \begin{equation}
    \label{eq:coh.assoc.gam}
    \alpha^{-1} \circ (\id \cdot \gamma) 
                \circ \alpha
                \circ (\gamma \cdot \id)
                \circ \alpha^{-1} = \gamma,
  \end{equation}
  by~\ref{enum:left.left.red}, \ref{enum:unit.coherence}
  and~\ref{enum:right.right.red}, so that the following diagram commutes
  \begin{equation*}
    \begin{tikzcd}
      & 1_{Fz} \cdot F(s \cdot r) \ar[r,"\gamma"]
      & F(s \cdot r) \cdot 1_{Fx} \\
      1_{Fz} \cdot (Fs \cdot Fr) \ar[ur,"\id \cdot \m F"]
                                 \ar[d,"\alpha^{-1}",swap]
                                 \ar[rrr,"\gamma"]
      &&& (Fs \cdot Fr) \cdot 1_{Fx} \ar[ul,"\m F \cdot \id",swap] \\
      (1_{Fz} \cdot Fs) \cdot Fr \ar[rd,"\gamma \cdot \id",swap] 
      &&& Fs \cdot (Fr \cdot 1_{Fx}) \ar[u,"\alpha",swap] \\
      & (Fs \cdot 1_{Fy}) \cdot Fr \ar[r,"\alpha",swap] 
      & Fs \cdot (1_{Fy} \cdot Fr) \ar[ru,"\id \cdot \gamma",swap] 
    \end{tikzcd}
  \end{equation*}
  by naturality of~\( \gamma \). Since~\( \n{1_F}_r \) is invertible for each
  horizontal 1-cell \(r\), we conclude that~\( 1_F \colon F \to F \) is a
  strong horizontal transformation.
\end{proof}

\begin{proposition}[Horizontal composition]
  \label{lem:lht.hc}
  Let~\( \phi \colon F \to G \), \( \psi \colon G \to H \) be lax horizontal
  transformations, where~\(F,G,H \colon \bicat D \to \bicat E \) are lax
  functors. The data
  \begin{enumerate}[label=(\roman*)]
    \item
      \label{enum:comp.zcells}
      \( (\psi \cdot \phi)_x = \psi_x \cdot \phi_x \colon Fx \to Hx \) for
      each 0-cell~\(x\),
    \item
      \label{enum:comp.vcells}
      \( (\psi \cdot \phi)_f = \psi_f \cdot \phi_f \) for each vertical
      1-cell~\(f\),
    \item
      \label{enum:comp.globnat}
      \( \n{\psi \cdot \phi}_r = \alpha^{-1} \circ (\id \cdot \n\phi_r) \circ
      \alpha \circ (\n\psi_r \cdot \id) \circ \alpha^{-1} \) for each
      horizontal 1-cell~\(r\)
  \end{enumerate}
  defines a lax horizontal transformation~\( \psi \cdot \phi \colon F \to H
  \).
\end{proposition}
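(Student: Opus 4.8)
The plan is to verify, in order, that the displayed data (i) underlies a functor $\bicat D_0 \to \bicat E_1$ carrying a globular natural transformation of the prescribed type, and that it satisfies (ii) the unit coherence and (iii) the composition coherence of a lax horizontal transformation $\psi \cdot \phi \colon F \to H$, which I abbreviate $\theta$. It is worth observing at the outset that the formula for $\n{\psi\cdot\phi}$ has precisely the shape of the identity \eqref{eq:coh.assoc.gam} — which expresses $\gamma_{a\cdot b}$ in terms of $\gamma_a$, $\gamma_b$ and associators — with $\n\psi_r$ and $\n\phi_r$ in place of $\gamma_a$ and $\gamma_b$; this observation drives the verifications below, and it specializes correctly to $\n{1_F}_r = \gamma_{Fr}$ of Proposition \ref{lem:lht.hu} when $\phi = \psi = 1_F$. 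For (i): the underlying functor of $\theta$ is the composite of $\langle \psi, \phi \rangle \colon \bicat D_0 \to \bicat E_2$ with horizontal composition $\bicat E_2 \to \bicat E_1$; the pairing is legitimate because $\dom \circ \psi = G_0 = \cod \circ \phi$, and from $\dom \circ \phi = F_0$ and $\cod \circ \psi = H_0$ we obtain $\dom \circ \theta = F_0$ and $\cod \circ \theta = H_0$, so that $\theta_x = \psi_x \cdot \phi_x \colon Fx \to Hx$ is indeed defined. Likewise $\n\theta$ is a composite of whiskerings of the globular natural transformations $\n\phi$, $\n\psi$ with the globular natural isomorphisms $\alpha^{\pm 1}$, hence is a globular natural transformation $H \cdot \theta_{\dom} \to \theta_{\cod} \cdot F$ of functors $\bicat D_1 \to \bicat E_1$.

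For (ii), I would expand $\n\theta_{1_x}$ and use naturality of $\alpha$ to slide the whiskered comparison cells $\e H$, $\e G$, $\e F$ past the associators; substituting the unit coherence of $\psi$ for the subword $\n\psi_{1_x} \circ (\e H \cdot \id)$ and then that of $\phi$ for the subword $\n\phi_{1_x} \circ (\e G \cdot \id)$ rewrites $\n\theta_{1_x} \circ (\e H \cdot \id)$ as $(\id \cdot \e F) \circ B$, where $B = \alpha^{-1} \circ (\id_{\psi_x} \cdot \gamma_{\phi_x}) \circ \alpha \circ (\gamma_{\psi_x} \cdot \id_{\phi_x}) \circ \alpha^{-1}$. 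By \eqref{eq:coh.assoc.gam} applied to the composable pair $(\psi_x, \phi_x)$ we have $B = \gamma_{\psi_x \cdot \phi_x} = \gamma_{\theta_x}$, whence $\n\theta_{1_x} \circ (\e H \cdot \id) = (\id \cdot \e F) \circ \gamma_{\theta_x}$ — exactly the unit coherence square for $\theta$.

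For (iii), the main step: the composition coherence for $\theta$ unwinds to the identity
\[
  \n\theta_{s \cdot r} \circ (\m H \cdot \id) \;=\; (\id \cdot \m F) \circ \alpha \circ (\n\theta_s \cdot \id) \circ \alpha^{-1} \circ (\id \cdot \n\theta_r) \circ \alpha
\]
of $2$-cells $(Hs \cdot Hr) \cdot \theta_x \to \theta_z \cdot (Fs \cdot Fr)$, and substituting the definition of $\n\theta$ turns each side into a long composite whose only non-associator constituents are instances of $\n\phi$ (at $r$, $s$, $s \cdot r$), of $\n\psi$ (at $r$, $s$, $s \cdot r$), and of $\m F$, $\m H$. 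I would prove this identity by a pasting argument. One first reassociates freely — legitimate by repeated use of the pentagon \ref{enum:assoc.coherence} and naturality of $\alpha$, i.e., by the coherence theorem for the horizontal bicategory underlying $\bicat E$ — and then recognizes the left-hand side, modulo reassociation, as the composite of three pieces: the composition coherence (octagon) for $\psi$ at $(s, r)$, whiskered on the right by $\phi_x$, $\phi_y$, $\phi_z$ as appropriate, which trades $\n\psi_{s \cdot r} \circ (\m H \cdot \id)$ for the expression in $\n\psi_s$, $\n\psi_r$ and $\m G$; the octagon for $\phi$ at $(s, r)$, whiskered on the left by $\psi_x$, $\psi_y$, $\psi_z$, which absorbs $\m G$ and produces the expression in $\n\phi_s$, $\n\phi_r$ and $\m F$; and naturality of $\alpha$ together with functoriality of horizontal composition, used to interchange the disjointly supported factors $\n\psi_s$, $\n\psi_r$ with $\n\phi_s$, $\n\phi_r$ and recombine them into $\n\theta_s$ and $\n\theta_r$. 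What is left is precisely the right-hand side. The sole genuine difficulty is bookkeeping the many distinct instances of the associator; invoking coherence — or, equivalently, drawing the full pasting diagram once and checking that each of its faces is the pentagon, one of the two octagons, a naturality square, or an instance of the interchange law — renders this routine.
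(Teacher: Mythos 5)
Your proposal is correct and follows essentially the same route as the paper: functoriality of horizontal composition for the underlying data, the identity \eqref{eq:coh.assoc.gam} combined with the unit coherences of $\phi$ and $\psi$ for the unit square, and a pasting of the two composition-coherence octagons with interchange/naturality squares and associativity coherence for the composition square. The only difference is presentational — you appeal to bicategorical coherence to dispose of the associator bookkeeping where the paper draws the full pasting diagram explicitly.
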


\begin{proof}
  Due to functoriality of horizontal composition and of the underlying
  functors of~\( \phi \) and~\( \psi \), it is enough to point out that~\(
  \phi_x \) and~\( \psi_x \) are a composable pair of horizontal 1-cells to
  make sure the data~\ref{enum:comp.zcells} and~\ref{enum:comp.vcells} define
  a functor~\( \bicat D_0 \to \bicat E_1 \). 

  Furthermore, note that the datum~\ref{enum:comp.globnat} is a composite of
  globular natural transformations, so it is enough to verify the coherence
  conditions are satisfied. 

  We note the following diagram, in which we have supressed the horizontal
  1-cells,
  \begin{equation*}
    \begin{tikzcd}[column sep=huge,row sep=large]
      \cdot     \ar[d,"\e H \cdot \id",swap] 
                \ar[r,"\alpha^{-1}"]
        & \cdot \ar[d,"(\e H \cdot \id) \cdot \id" description]
                \ar[r,"\gamma \cdot \id"]
        & \cdot \ar[d,"(\id \cdot \e G) \cdot \id" description]
                \ar[r,"\alpha"]
        & \cdot \ar[d,"\id \cdot (\e G \cdot \id)" description]
                \ar[r,"\id \cdot \gamma"]
        & \cdot \ar[d,"\id \cdot (\id \cdot \e F)" description]
                \ar[r,"\alpha^{-1}"]
        & \cdot \ar[d,"\id \cdot \e F"] \\
      \cdot     \ar[r,"\alpha^{-1}",swap]
        & \cdot \ar[r,"\n\psi_1 \cdot \id",swap]
        & \cdot \ar[r,"\alpha",swap]
        & \cdot \ar[r,"\id \cdot \n\phi_1",swap]
        & \cdot \ar[r,"\alpha^{-1}",swap]
        & \cdot
    \end{tikzcd}
  \end{equation*} 
  commutes, by naturality of~\( \alpha \) and unit comparison coherence for~\(
  \psi \) and~\( \phi \). By~\ref{eq:coh.assoc.gam}, the top composite is~\(
  \gamma \), so this confirms unit comparsion coherence for~\( \psi \cdot \phi
  \).

  The next diagram\footnote{See also \cite[Theorem 2]{Mar06}, where we find a
  proof that horizonal composition for \textit{strong horizontal
  transformatons} is well-defined, where we find similar diagrams.}, verifies
  composition coherence for~\( \psi \cdot \phi \): it is a pasting of
  composition coherences for~\(\psi\) and~\(\phi\), a naturality square from
  the functoriality of~\( \cdot \), and the remaining diagrams are coherence
  and naturality of~\( \alpha \).

  \noindent\makebox[\textwidth][c]{
    \begin{tikzcd}[ampersand replacement=\&,row sep=large]
      \&\&\&     
        \cdot \ar[rr,"\m H \cdot \id"] 
              \ar[dd,"\alpha^{-1}"] 
              \ar[dll,"\alpha",swap]                          \&\&     
        \cdot \ar[dd,"\alpha^{-1}"]                          
              \ar[rrrr,"\n {\psi \cdot \phi}_{s\cdot r}",dashed] \&\&\&\& 
        \cdot                                                 \&\&     
        \cdot \ar[ll,"\id \cdot \m F",swap]                    \\
      \&
        \cdot \ar[ddl,"\id \cdot \alpha^{-1}"]
    %         \ar[dddddddddddrrrr,"\id \cdot n^{\psi \cdot \phi}_r",
    %             dashed,bend right=70,near end,swap]         
      \&\&\&\&\&\&\&\&\&\&\&\& 
        \cdot \ar[ull,"\alpha",swap]                                          \\
      \&\&\&
        \cdot \ar[rr,"(\m H \cdot \id) \cdot \id"]  
              \ar[dl,"\alpha \cdot \id"]                   \&\& 
        \cdot \ar[dr,"\n \psi_{s\cdot r} \cdot \id"]          \&\&\&\&
        \cdot \ar[uu,"\alpha^{-1}"]                           \&\& 
        \cdot \ar[uu,"\alpha^{-1}"]
              \ar[ll,"\id \cdot (\id \cdot \m F)",swap]                  \\ 
        \cdot \ar[ddd,"\id \cdot (\n \psi_r \cdot \id)"]       \&\& 
        \cdot \ar[ll,"\alpha"]   
              \ar[ddd,"(\id \cdot \n \psi_r) \cdot \id"]       \&\&\&\& 
        \cdot \ar[rr,"\alpha"]                                \&\&
        \cdot \ar[ur,"\id \cdot \n \phi_{s\cdot r}"]          \&\&\&\& 
        \cdot \ar[ul,"\id \cdot \alpha"]                   \&\& 
        \cdot \ar[ll,"\alpha"]            
              \ar[uul,"\alpha^{-1} \cdot \id"]                             \\\\\\
        \cdot \ar[rr,"\alpha^{-1}"]             
              \ar[ddr,"\id \cdot \alpha",swap]                      \&\& 
        \cdot \ar[dr,"\alpha^{-1} \cdot \id"]              \&\&\&\& 
        \cdot \ar[rr,"\alpha",swap]  
              \ar[uuu,"(\id \cdot \m G) \cdot \id"]      \&\&    
        \cdot \ar[dr,"\id \cdot \alpha"]   
              \ar[uuu,"\id \cdot (\m G \cdot \id)",swap] \&\&\&\& 
        \cdot \ar[uuu,"\id \cdot (\n \phi_s \cdot \id)"]       \&\& 
        \cdot \ar[ll,"\alpha"]   
              \ar[uuu,"(\id \cdot \n \phi_s) \cdot \id"]                       \\
      \&\&\&
        \cdot \ar[rr,"(\n \psi_s \cdot \id) \cdot \id"]            
              \ar[ddrr,"\alpha"]                              \&\& 
        \cdot \ar[ur,"\alpha \cdot \id"]             
              \ar[drr,"\alpha"]                               \&\&\&\& 
        \cdot \ar[rr,"\id \cdot (\id \cdot \n \phi_r)"]        \&\& 
        \cdot \ar[ur,"\id \cdot \alpha^{-1}"]                              \\
      \&
        \cdot \ar[drrrr,"\alpha^{-1}"] 
              \ar[dddrr,"\id \cdot (\id \cdot \n \phi_r)",swap] \&\&\&\&\&\& 
        \cdot \ar[drr,"\id \cdot \n \phi_r"] 
              \ar[urr,"\alpha"]                               \&\&\&\&\&\& 
        \cdot \ar[uur,"\alpha \cdot \id",swap]                                  \\
      \&\&\&\&\& 
        \cdot \ar[drr,"\id \cdot \n \phi_r",swap] 
              \ar[urr,"\n \psi_s \cdot \id"]       \&\&\&\& 
        \cdot \ar[uurr,"\alpha"]
              \ar[urrrr,"\alpha^{-1}"]                                        \\
      \&\&\&\&\&\&\& 
        \cdot \ar[urr,"\n \psi_s \cdot \id",swap] 
              \ar[drrrr,"\alpha^{-1}"]                                        \\
      \&\&\&
        \cdot \ar[drr,"\id \cdot \alpha^{-1}",swap] 
              \ar[urrrr,"\alpha^{-1}"]                        \&\&\&\&\&\&\&\& 
        \cdot \ar[uuurr,"(\n \psi_s \cdot \id) \cdot \id",swap]                     \\
      \&\&\&\&\& 
        \cdot \ar[rrrr,"\alpha^{-1}",swap]                         \&\&\&\& 
        \cdot \ar[urr,"\alpha^{-1} \cdot \id",swap]                
    %          \ar[uuuuuuuuuuurrrr,"n^{\psi \cdot \phi}_s \cdot \id",
    %             dashed,bend right=70,near start,swap]       
    \end{tikzcd}
  }
  Hence, we have confirmed composition coherence for~\( \psi \cdot \phi \),
  concluding the proof.
\end{proof}

\subsection{Modifications}

Let~\(F,\,G,\,H,\,K \colon \bicat C \to \bicat D \) be lax functors and let~\(
\zeta \colon F \to H \), \( \xi \colon G \to K \) be oplax horizontal
transformations, and let~\( \phi \colon F \to G \), \( \psi \colon H \to K \)
be vertical transformations. A \textit{modification (for oplax horizontal
transformations)}~\( \Gamma \colon \zeta \to \xi \), depicted as
\begin{equation}
  \begin{tikzcd}
    F \ar[r,"\zeta",""{name=U}] \ar[d,"\phi",swap] 
    & H \ar[d,"\psi"] \\
    G \ar[r,"\xi",""{name=D},swap] & K
    \ar[phantom, from=U, to=D, "\Gamma"]
  \end{tikzcd}
\end{equation}
is a natural transformation~\( \Gamma \colon \zeta \to \xi \) on the
underlying functors~\( \zeta , \xi \colon \bicat D_0 \to \bicat E_1 \)
such that 
\begin{equation}
  \label{eq:modif.cond}
  \begin{tikzcd}
    \zeta_y \cdot Fr \ar[r,"\Gamma_y \cdot \phi_r"]
                      \ar[d,"\n \zeta_r",swap] 
    & \xi_y \cdot Gr  \ar[d,"\n \xi_r"] \\
    Hr \cdot \zeta_x \ar[r,"\psi_r \cdot \Gamma_x",swap]
    & Kr \cdot \xi_x 
  \end{tikzcd}
\end{equation}
commutes for all horizontal 1-cells~\(r \colon x \to y \). We say~\( \phi \)
and~\( \psi \) are respectively the vertical domain and codomain of~\( \Gamma
\).

By reversing the direction of \( \n\zeta \), \( \n\xi \), we obtain the
analogous notion of modification for \textit{lax horizontal transformations}.

\begin{proposition}
  \label{prop:dbcat.horiz}
  We have a category~\( \Lax_\opl(\bicat D, \bicat E) \) with oplax horizontal
  transformations of lax functors~\( \bicat D \to \bicat E \) as objects, and
  the respective modifications as morphisms. Moreover, the vertical domain and
  codomain operations define functors to the category of lax functors and
  vertical transformations.
\end{proposition}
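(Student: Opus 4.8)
The plan is to work inside the ambient category whose morphisms are arbitrary natural transformations between functors $\bicat D_0 \to \bicat E_1$, in which the modifications sit as a (non-full) subclass of morphisms: indeed, by definition a modification $\zeta \to \xi$ is such a natural transformation subject to the extra condition \eqref{eq:modif.cond}. Associativity and the unit laws will then be inherited verbatim from that ambient category (they hold componentwise), so the only real content is that this subclass is closed under identities and composition, and that the vertical domains and codomains behave as expected.

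First I would treat identities. For an oplax horizontal transformation $\zeta \colon F \to H$, I claim the identity natural transformation $\id_\zeta$ on the underlying functor $\zeta \colon \bicat D_0 \to \bicat E_1$ is a modification $\zeta \to \zeta$ with vertical domain $\id_F$ and vertical codomain $\id_H$. Since $(\id_F)_r = \id_{Fr}$ and $(\id_H)_r = \id_{Hr}$ for every horizontal $1$-cell $r$, and since horizontal composition $\cdot$ is a functor, the top and bottom edges of the square \eqref{eq:modif.cond} for $\Gamma = \id_\zeta$ are identities, so both composites collapse to $\n\zeta_r$ and the square commutes.

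Next I would treat composition. Let $\Gamma \colon \zeta \to \xi$ be a modification with vertical domain $\phi \colon F \to G$ and codomain $\psi \colon H \to K$, and let $\Gamma' \colon \xi \to \omega$ be a modification with vertical domain $\phi' \colon G \to G'$ and codomain $\psi' \colon K \to K'$. I define $\Gamma' \circ \Gamma$ to be the vertical composite of natural transformations $\zeta \to \omega$. By Proposition \ref{lem:catgrph.psdbcat} the composites $\phi' \circ \phi$ and $\psi' \circ \psi$ are again vertical transformations, with $(\phi' \circ \phi)_r = \phi'_r \circ \phi_r$ and $(\psi' \circ \psi)_r = \psi'_r \circ \psi_r$. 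To check \eqref{eq:modif.cond} for $\Gamma' \circ \Gamma$ relative to this domain and codomain, I would paste the square for $\Gamma$ on top of the square for $\Gamma'$: the two squares glue along $\n\xi_r$, and the outer rectangle becomes exactly \eqref{eq:modif.cond} for $\Gamma' \circ \Gamma$ once one uses functoriality (the interchange law) of $\cdot$ to rewrite $(\Gamma'_y \cdot \phi'_r) \circ (\Gamma_y \cdot \phi_r) = (\Gamma'_y \circ \Gamma_y) \cdot (\phi'_r \circ \phi_r)$ on the top edge and $(\psi'_r \cdot \Gamma'_x) \circ (\psi_r \cdot \Gamma_x) = (\psi'_r \circ \psi_r) \cdot (\Gamma'_x \circ \Gamma_x)$ on the bottom. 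Associativity and the unit laws hold because they do so for natural transformations; this yields the category $\Lax_\opl(\bicat D, \bicat E)$.

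Finally, for the ``moreover'' part: sending an object $\zeta \colon F \to H$ to $F$ and a modification $\Gamma$ to its vertical domain $\phi$ defines a functor from $\Lax_\opl(\bicat D, \bicat E)$ to the category of lax functors and vertical transformations of Proposition \ref{lem:catgrph.psdbcat}; it preserves identities since $\id_\zeta$ has vertical domain $\id_F$, and preserves composition since $\Gamma' \circ \Gamma$ was constructed with vertical domain $\phi' \circ \phi$. The codomain case is entirely dual. The main (though modest) obstacle is purely the bookkeeping of whiskered composites through the interchange law in the composition step; there is no conceptual difficulty.
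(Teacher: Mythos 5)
Your proposal is correct and follows essentially the same route as the paper's proof: identities and composites of modifications are taken at the level of the underlying natural transformations, the modification condition for a composite is verified by pasting the two squares along $\n\xi_r$ (with the interchange law, which the paper leaves implicit, handling the whiskered edges), and associativity, unit laws, and functoriality of the vertical (co)domain are inherited componentwise.
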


\begin{proof}
  Let~\( \zeta \colon F \to G \) be an oplax transformation of lax functors~\(
  \bicat D \to \bicat E \). We take the identity modification~\( \id_\zeta \)
  on~\( \zeta \) to be given by identity natural transformation on the
  underlying functor of~\( \zeta \), whose vertical domain and codomain are
  taken to be the identity vertical transformations~\( \id_F \) and~\( \id_G
  \), respectively. The instance of the diagram \eqref{eq:modif.cond} for~\(
  \id_\zeta \) is trivially commutative.

  Let~\( \Gamma, \Xi \) be modifications given by
  \begin{equation*}
    \begin{tikzcd}
      F \ar[d,"\phi",swap] \ar[r,"\zeta"{name=A}]
        & G \ar[d,"\psi"] \\
      H \ar[d,"\theta",swap] \ar[r,"\xi" description,""{name=B}]
        & K \ar[d,"\omega"] \\
      L \ar[r,"\chi"{name=C},swap]
        & M 
      \ar[from=A,to=B,"\Gamma" description,phantom]
      \ar[from=B,to=C,"\Xi" description,phantom]
    \end{tikzcd}
  \end{equation*}
  We define the composite~\( \Xi \circ \Gamma \) to be the vertical
  composition of the underlying natural transformations. Since
  \begin{equation*}
    \begin{tikzcd}[row sep=large]
      \zeta_y \cdot Fr \ar[d,"\n \zeta_r",swap] 
                       \ar[r,"\Gamma_y \cdot \phi_r"]
        & \xi_y \cdot Hr \ar[d,"\n \xi_r" description] 
                         \ar[r,"\Xi_y \cdot \theta_r"]
        & \chi_y \cdot Lr \ar[d,"\n \chi_r"] \\
      Gr \cdot \zeta_x \ar[r,"\psi_r \cdot \Gamma_x",swap]
        & Kr \cdot \xi_x \ar[r,"\omega_r \cdot \Xi_x",swap]
        & Mr \cdot \chi_x
    \end{tikzcd}
  \end{equation*}
  commutes for all horizontal 1-cells~\( r \colon x \to y \), we confirm~\(
  \Xi \circ \Gamma \colon \phi \to \chi \) is a modification with vertical
  domain~\( \theta \circ \phi \) and codomain~\( \omega \circ \psi \). 

  Associativity and identity properties are inherited from natural
  transformations, and functoriality of vertical domain and codomain is an
  immediate consequence.
\end{proof}

\begin{theorem}
  \label{thm:psdbcat.closed}
  Let~\( \bicat D \), \( \bicat E \) be double categories.~\( \Lax_\opl(\bicat
  D, \bicat E) \) has the structure of a pseudodouble category, with lax
  functors as 0-cells, vertical transformations as vertical 1-cells, oplax
  horizontal transformations as horizontal 1-cells, and the respective
  modifications as 2-cells. 

  Dually, \( \Lax_\lax(\bicat D, \bicat E) \) is a pseudodouble category with
  the same 0-cells and vertical 1-cell, and lax horizontal transformations as
  horizontal 1-cells and the respective modifications as 2-cells.
\end{theorem}

\begin{proof}
  The underlying categories of cells are provided in
  Propositions~\ref{prop:dbcat.vert} and~\ref{prop:dbcat.horiz}. moreover, the
  latter has provided the vertical domain and codomain functors. 

  We have defined the horizontal unit functor on objects in
  Proposition~\ref{lem:lht.hu}. For a vertical transformation~\( \phi \colon F
  \to G \), we define~\( 1_\phi \) to be the modification with underlying
  natural transformation~\( 1 \cdot \phi_0 \), with vertical domain~\( 1_F \)
  and codomain~\( 1_G \); note that
  \begin{equation*}
    \begin{tikzcd}
      1_F \cdot Fr \ar[r,"1_\phi \cdot \phi_r"] \ar[d,"\gamma",swap]
        & 1_G \cdot Gr \ar[d,"\gamma"] \\
      Fr \cdot 1_F \ar[r,"\phi_r \cdot 1_\phi",swap] 
        & Gr \cdot 1_G
    \end{tikzcd}
  \end{equation*}
  commutes by naturality of~\( \gamma \). Since this is just whiskering
  with~\( 1 \colon \bicat E_0 \to E_1 \), this describes a functor.

  We have defined the horizontal composition functor on objects in
  Proposition~\ref{lem:lht.hc}. For modifications~\( \Gamma \) and~\( \Xi \)
  as depicted below
  \begin{equation*}
    \begin{tikzcd}
      F \ar[d,"\phi",swap] \ar[r,"\zeta"{name=A}]
        & H \ar[d,"\psi"] \ar[r,"\xi"{name=B}]
        & L \ar[d,"\chi"] \\
      G \ar[r,"\theta"{name=C},swap]
        & K \ar[r,"\omega"{name=D},swap]
        & M
      \ar[from=A,to=C,"\Gamma" description,phantom]
      \ar[from=B,to=D,"\Xi" description,phantom]
    \end{tikzcd}
  \end{equation*}
  we define~\( \Xi \cdot \Gamma \) to be the horizontal composition of the
  underlying natural transformations. This is a modification, since the
  following diagram commutes
  \begin{equation}
    \begin{tikzcd}
      (\xi_y \cdot \zeta_y) \cdot Fr \ar[r,"\alpha"] 
          \ar[d,"(\Xi_y \cdot \Gamma_y) \cdot \phi_r" description]
      & \xi_y \cdot (\zeta_y \cdot Fr) \ar[r,"\id \cdot n^\zeta_r"]
          \ar[d,"\Xi_y \cdot (\Gamma_y \cdot \phi_r)" description]
      & \xi_y \cdot (Hr \cdot \zeta_x) \ar[r,"\alpha^{-1}"]
          \ar[d,"\Xi_y \cdot (\psi_r \cdot \Gamma_x)" description]
      & \ldots \\
      (\omega_y \cdot \theta_y) \cdot Gr \ar[r,"\alpha"]
      & \omega_y \cdot (\theta_y \cdot Gr) \ar[r,"\id \cdot n^\theta_r"]
      & \omega_y \cdot (Kr \cdot \theta_x) \ar[r,"\alpha^{-1}"]
      & \ldots \\
      \ldots \ar[r,"\alpha^{-1}"]
      & (\xi_y \cdot Hr) \cdot \zeta x \ar[r,"n^\xi_r \cdot \id"]
          \ar[d,"(\Xi_y \cdot \psi_r) \cdot \Gamma_x" description]
      & (Lr \cdot \xi_x) \cdot \zeta_x \ar[r,"\alpha"]
          \ar[d,"(\chi_r \cdot \Xi_x) \cdot \Gamma_x" description]
      & Lr \cdot (\xi_x \cdot \zeta_x) 
          \ar[d,"\chi_r \cdot (\Xi_x \cdot \Gamma_x)" description] \\
      \ldots \ar[r,"\alpha^{-1}"]
      & (\omega_y \cdot Kr) \cdot \theta_x \ar[r,"n^\omega_r \cdot \id"]
      & (Mr \cdot \omega_x) \cdot \theta_x \ar[r,"\alpha"]
      & Mr \cdot (\omega_x \cdot \theta_x)
    \end{tikzcd}
  \end{equation}
  and has vertical domain~\( \phi \) and codomain~\( \chi \).

  Since horizontal composition in~\( \bicat E \) is functorial, we obtain
  functoriality of horizontal composition of modifications. Moreover, both the
  horizontal unit and horizontal composition have the required behaviour with
  respect to vertical domains and codomains.

  We're left with providing the unitors and associator, and the respective
  proofs that these satisfy the required coherence conditions.
  To do so, we define~\( \lambda_\zeta \colon 1_H \cdot \zeta \to \zeta \) to
  be given by~\( \lambda_{\zeta_x} \colon 1_{Hx} \cdot \zeta_x \to \zeta_x \),
  and~\( \rho_\zeta \) is similarly defined. These are globular modifications,
  as the following diagrams commute
  \begin{equation*}
    \begin{tikzcd}
      (1_{Hy} \cdot \zeta_y) \cdot Fr \ar[d,"\alpha",swap]
        \ar[r,"\lambda_{\zeta_y} \cdot \id"]
      & \zeta_y \cdot Fr \ar[ddddd,"\n\zeta_r"] \\
      1_{Hy} \cdot (\zeta_y \cdot Fr) \ar[d,"\id \cdot \n\zeta_r",swap] \\
      1_{Hy} \cdot (Hr \cdot \zeta_x) \ar[d,"\alpha^{-1}",swap] \\
      (1_{Hy} \cdot Hr) \cdot \zeta_x \ar[d,"\gamma \cdot \id",swap] \\
      (Hr \cdot 1_{Hx}) \cdot \zeta_x \ar[d,"\alpha",swap] \\
      Hr \cdot (1_{Hx} \cdot \zeta_x) 
        \ar[r,"\id \cdot \lambda_{\zeta_x}",swap]
      & Hr \cdot \zeta_x 
    \end{tikzcd}
    \qquad
    \begin{tikzcd}
      (\zeta_y \cdot 1_{Fy}) \cdot Fr \ar[d,"\alpha",swap]
        \ar[r,"\rho_{\zeta_y} \cdot \id"]
      & \zeta_y \cdot Fr \ar[ddddd,"\n\zeta_r"] \\
      \zeta_y \cdot (1_{Fy} \cdot Fr) \ar[d,"\id \cdot \gamma",swap] \\
      \zeta_y \cdot (Fr \cdot 1_{Fx}) \ar[d,"\alpha^{-1}",swap] \\
      (\zeta_y \cdot Fr) \cdot 1_{Fx} \ar[d,"\n\zeta_r \cdot \id",swap] \\
      (Hr \cdot \zeta_x) \cdot 1_{Fx} \ar[d,"\alpha",swap] \\
      Hr \cdot (\zeta_x \cdot 1_{Fx}) \ar[r,"\id \cdot \rho_{\zeta_x}",swap]
      & Hr \cdot \zeta_x 
    \end{tikzcd}
  \end{equation*}
  by naturality of~\( \lambda, \rho \), and coherence.

  Finally, we let~\( \pi \colon L \to P \) be another oplax horizontal
  transformation. We define~\( \alpha \colon (\pi \cdot \xi) \cdot \zeta \to
  \pi \cdot (\xi \cdot \zeta) \) to be given at~\(x\) by~\( \alpha \colon
  (\pi_x \cdot \xi_x) \cdot \zeta_x \to \pi_x \cdot (\xi_x \cdot \zeta_x) \).
  This is also a natural isomorphism, and is a globular modification since the
  following diagram commutes:
  \begin{equation*}
    \begin{tikzcd}
      ((\pi_y \cdot \xi_y) \cdot \zeta_y) \cdot Fr
        \ar[rr,"\alpha \cdot \id"]
        \ar[d,"\alpha",swap] 
      && (\pi_y \cdot (\xi_y \cdot \zeta_y)) \cdot Fr 
        \ar[d,"\alpha"] \\
      (\pi_y \cdot \xi_y) \cdot (\zeta \cdot Fr)
        \ar[d,"\id \cdot \n\zeta_r",swap]
        \ar[rrd,"\alpha"]
      && \pi_y \cdot ((\xi_y \cdot \zeta_y) \cdot Hr)
        \ar[d,"\id \cdot \alpha"] \\
      (\pi_y \cdot \xi_y) \cdot (Hr \cdot \zeta_x)
        \ar[rrd,"\alpha"]
        \ar[d,"\alpha^{-1}",swap]
      && \pi_y \cdot (\xi_y \cdot (\zeta_y \cdot Hr))
        \ar[d,"\id \cdot (\id \cdot \n\zeta_r)"] \\
      ((\pi_y \cdot \xi_y) \cdot Hr) \cdot \zeta_x)
        \ar[d,"\alpha \cdot \id",swap]
      && \pi_y \cdot (\xi \cdot (Hr \cdot \zeta_x))
        \ar[d,"\id \cdot \alpha^{-1}"] \\
      (\pi_y \cdot (\xi_y \cdot Hr)) \cdot \zeta_x 
        \ar[rr,"\alpha"]
        \ar[d,"(\id \cdot \n\xi_r) \cdot \id"]
      && \pi_y \cdot ((\xi_y \cdot Hr) \cdot \zeta_x)
        \ar[d,"\id \cdot (\n\xi_r \cdot \id)"] \\
      (\pi_y \cdot (Lr \cdot \xi_x)) \cdot \zeta_x
        \ar[d,"\alpha^{-1} \cdot \id",swap]
        \ar[rr,"\alpha"]
      && \pi_y \cdot ((Lr \cdot \xi_x) \cdot \zeta_x)
        \ar[d,"\id \cdot \alpha"] \\
      ((\pi_y \cdot Lr) \cdot \xi_x) \cdot \zeta_x
        \ar[d,"(\n\pi_r \cdot \id) \cdot \id",swap]
        \ar[rrd,"\alpha"]
      && \pi_y \cdot (Lr \cdot (\xi_x \cdot \zeta_x))
        \ar[d,"\alpha^{-1}"] \\
      ((Pr \cdot \pi_x) \cdot \xi_x) \cdot \zeta_x
        \ar[d,"\alpha \cdot \id",swap]
        \ar[rrd,"\alpha"]
      && (\pi_y \cdot Lr) \cdot (\xi_x \cdot \zeta_x)
        \ar[d,"\n\pi_r \cdot \id"] \\
      (Pr \cdot (\pi_x \cdot \xi_x)) \cdot \zeta_x
        \ar[d,"\alpha",swap]
      && (Pr \cdot \pi_x) \cdot (\xi_x \cdot \zeta_x)
        \ar[d,"\alpha"] \\
      Pr \cdot ((\pi_x \cdot \xi_x) \cdot \zeta_x)
        \ar[rr,"\id \cdot \alpha",swap]
      && Pr \cdot (\pi_x \cdot (\xi_x \cdot \zeta_x))
    \end{tikzcd}
  \end{equation*}
  which is obtained by pasting coherence pentagons and naturality squares
  of~\( \alpha \).
 
  By checking componentwise, we find that~\( \lambda, \rho \) and~\( \alpha
  \) satisfy the desired coherence conditions.
\end{proof}

\subsection{Examples:}
\label{subsect:ex.psdbcats}

The pseudodouble categories studied in this body of work are:
\begin{itemize}[label=--]
  \item
    The pseudodouble category~\( \VMat \) of~\( \cat V \)-matrices, for
    distributive monoidal categories~\( \cat V \); that is, a monoidal
    category~\( \cat V \) with coproducts, which are preserved by the tensor
    product, \textit{e.g.}, \cite{BCSW83, CT03, CS10}.
    
  \item
    The pseudodouble category~\( \SpanB \) of spans of morphsims in~\( \cat B
    \), for~\( \cat B \) a category with pullbacks, \textit{e.g.},
    \cite{Bén67, Her00, CS10}).

  \item
    The pseudodouble categories~\( \Lax_\lax(\bicat D, \bicat E) \) and~\(
    \Lax_\opl(\bicat D,\bicat E) \) for pseudodouble categories~\( \bicat D
    \), \( \bicat E \) (Theorem~\ref{thm:psdbcat.closed}),

  \item
    The double categories~\( \LaxTAlg \), \( \PsTAlg \) of lax and
    pseudo~\(T\)-algebras, for~\(T\) a pseudomonad on a 2-category~\( \bicat B
    \), \textit{e.g.}, \cite{BKP89, Luc18b, Luc19}.

  \item
    In particular, the double category~\( \Mnd(\bicat B) = \LaxidAlg \) of
    monads in a 2-category~\( \bicat B \), \textit{e.g.} \cite{Str72a},
    \cite[p. 33]{LucTh}.
\end{itemize}

We shall specify the double categorical structure of~\( \LaxTAlg \). First,
recall that we have 2-categories~\( \LaxTAlg_\lax \) and~\( \LaxTAlg_\opl \)
whose 0-cells are lax~\(T\)-algebras, with their (op)lax morphisms and their
respective 2-cells \cite{LucTh}; however, there is a notion of generalized
2-cell which subsumes both structures.

We will be taking the vertical 1-cells to be the oplax morphisms and the
horizontal 1-cells to be the lax morphisms. Let~\( (h,\phi) \colon
(w,a,\eta,\mu) \to \bicat (x,b,\eta,\mu) \),~\( (k,\psi) \colon (y,c,\eta,\mu)
\to (z,d,\eta,\mu) \) be lax~\(T\)-algebra lax morphisms and~\( (f,\zeta)
\colon (w,a,\eta,\mu) \to (y,c,\eta,\mu) \), \((g,\xi) \colon (x,b,\eta,\mu)
\to (z,d,\eta,\mu) \) be lax~\(T\)-algebra oplax morphisms. A
\textit{generalized lax~\(T\)-algebra 2-cell}
\begin{equation*}
  \begin{tikzcd}
    (w,a,\eta,\mu) \ar[d,"{(f,\zeta)}",swap]
                          \ar[r,"{(h,\phi)}"{name=A}]
      & (x,b,\eta,\mu) \ar[d,"{(g,\psi)}"] \\
    (y,c,\eta,\mu) \ar[r,"{(k,\xi)}"{name=B},swap]
      & (z,d,\eta,\mu)
    \ar[from=A,to=B,"\omega",phantom]
  \end{tikzcd}
\end{equation*}
consists of a 2-cell~\( \omega \colon g \cdot h \to k \cdot f \) satisfying
the following coherence condition
\begin{equation*}
  \begin{tikzcd}
    & g \cdot b \cdot Th \ar[ld,"g\cdot \phi",swap] 
                         \ar[rd,"\xi \cdot Th"] \\
    g \cdot h \cdot a \ar[d,"\omega \cdot a",swap]
    && d \cdot Tg \cdot Th \ar[d,"d \cdot \omega^T"] \\
    k \cdot f \cdot a \ar[rd,"k \cdot \zeta",swap]
    && d \cdot Tk \cdot Tf \ar[ld,"\psi \cdot Tf"] \\
    & k \cdot c \cdot Tf
  \end{tikzcd}
\end{equation*}
where we write~\( \omega^T = (\m T)^{-1} \circ T\omega \circ \m T \).
Horizontal and vertical composition is defined as expected: to be explicit,
we consider generalized lax~\(T\)-algebra 2-cells~\( \theta, \sigma \) given
by
\begin{equation*}
  \begin{tikzcd}
    (y,c,\eta,\mu) \ar[r,"{(k,\xi)}"{name=B}]
                   \ar[d,"{(f',\phi')}",swap]
    & (z,d,\eta,\mu)  \ar[d,"{(g',\psi')}"] \\
    (x',b',\eta,\mu) \ar[r,"{(m,\pi)}"{name=A},swap]
      & (z',d',\eta,\mu) 
    \ar[from=B,to=A,"\theta",phantom] 
  \end{tikzcd}
  \quad
  \begin{tikzcd}
    (x,b,\eta,\mu) \ar[d,"{(g,\psi)}",swap] 
                   \ar[r,"{(h',\phi')}"{name=A}]
    & (w',a',\eta,\mu) \ar[d,"{(l,\chi)}"] \\
    (z,d,\eta,\mu) \ar[r,"{(k',\xi')}"{name=B},swap]
    & (y',c',\eta,\mu) 
    \ar[from=A,to=B,"\sigma",phantom]
  \end{tikzcd}
\end{equation*}
and we define~\( \theta \circ \omega = (\theta \cdot f) \circ (g' \cdot
\omega) \) and~\( \sigma \cdot \omega = (k' \cdot \omega) \circ (\sigma \cdot
h) \).  These provide a double categorical structure to~\( \LaxTAlg \),
provided the coherence conditions are satisfied for~\( \theta \circ \omega \)
and~\( \sigma \cdot \omega \), which are given by the commutativity of the
following diagrams:
\begin{equation*}
  \begin{tikzcd}
    & g' \cdot g \cdot b \cdot Th \ar[ld,"g' \cdot g \cdot \phi",swap] 
                                  \ar[rd,"g' \cdot \xi \cdot Th"] \\
    g' \cdot g \cdot h \cdot a \ar[d,"g' \omega \cdot a",swap]
    && g' \cdot d \cdot Tg \cdot Th \ar[d,"g' \cdot d \cdot \omega^T",swap] 
                                    \ar[rd,"\psi' \cdot Tg \cdot Th"] \\
    g' \cdot  k \cdot f \cdot a \ar[rd,"g' \cdot k \cdot \zeta"]
                                \ar[d,"\theta \cdot f \cdot a",swap]
    && g' \cdot d \cdot Tk \cdot Tf \ar[ld,"g' \cdot \psi \cdot Tf"] 
                                    \ar[rd,"\psi'\cdot Tk\cdot Tf",swap]
    & d \cdot Tg' \cdot Tg \cdot Th \ar[d,"d \cdot Tg' \cdot \omega^T"] \\
    m \cdot f' \cdot f \cdot a \ar[rd,"m \cdot f' \cdot\zeta",swap]
    & g' \cdot k \cdot c \cdot Tf \ar[d,"\theta \cdot c \cdot Tf"]
    && d \cdot Tg' \cdot Tk \cdot Tf \ar[d,"d \cdot \theta^T \cdot Tf"] \\
    & m \cdot f' \cdot c \cdot Tf \ar[rd,"m \cdot \phi' \cdot Tf",swap]
    && d \cdot Tm \cdot Tf' \cdot Tf \ar[ld,"\pi \cdot Tf' \cdot Tf"] \\
    && m \cdot b' \cdot Tf' \cdot Tf
  \end{tikzcd}
\end{equation*}
\begin{equation*}
  \begin{tikzcd}
    && l \cdot a' \cdot Th' \cdot Th 
        \ar[ld,"l \cdot \phi' \cdot Th",swap]
        \ar[rd,"\chi \cdot Th' \cdot Th"] \\
    & l \cdot h' \cdot b \cdot Th
        \ar[ld,"l \cdot h' \cdot \phi",swap]
        \ar[d,"\sigma \cdot b \cdot Th"]
    && c' \cdot Tl \cdot Th' \cdot Th
        \ar[d,"c' \cdot \sigma^T \cdot Th"] \\
    l \cdot h' \cdot h \cdot a 
        \ar[d,"\sigma \cdot h \cdot a",swap]
    & k' \cdot g \cdot b \cdot Th 
        \ar[ld,"k'\cdot g \cdot \phi"]
        \ar[rd,"k' \cdot \psi \cdot Th",swap]
    && c' \cdot Tk' \cdot Tg \cdot Th
        \ar[ld,"\xi' \cdot Tg \cdot Th",swap]
        \ar[d,"c' \cdot Tk' \cdot \omega^T"] \\
    k' \cdot g \cdot h \cdot a
        \ar[d,"k' \cdot \omega \cdot a",swap]
    && k' \cdot d \cdot Tg \cdot Th 
        \ar[d,"k' \cdot d \cdot \omega^T",swap]
    & c' \cdot Tk' \cdot Tk \cdot Tf
        \ar[ld,"\xi' \cdot Tk \cdot Tf"] \\
    k' \cdot k \cdot f \cdot a
        \ar[rd,"k' \cdot k \cdot \zeta",swap]
    && k' \cdot d \cdot Tk \cdot Tf \ar[ld,"k' \cdot \psi \cdot Tf"] \\
    & k' \cdot k \cdot c \cdot Tf
  \end{tikzcd}
\end{equation*}
with analogous definitions for~\( \theta^T \) and~\( \sigma^T \), plus a
couple of omitted coherence conditions which confirm that~\( (\theta^T \cdot
Tf) \circ (Tg' \cdot \omega^T) = (\theta \circ \omega)^T \) and~\( (Tk' \cdot
\omega^T) \circ (\sigma^T \cdot Th) = (\sigma \cdot \omega)^T \).

  \section{Spans versus matrices}
    \label{sect:spanmat}
    Let \( \cat V \) be a distributive, cartesian monoidal category with finite
limits, whose terminal object is denoted by \( \trm \). Our starting point is
the adjunction
\begin{equation}
  \label{eq:start.adj}
  \begin{tikzcd}
    \Set \ar[r,bend left,"-\pt \trm"{above,name=A}]
      & \cat V \ar[l,bend left,"{\cat V(\trm,-)}"{below,name=B}]
    \ar[from=A,to=B,phantom,"\adj" {anchor=center, rotate=-90}]
  \end{tikzcd}
\end{equation}
whose unit and counit we denote by \( \heta, \heps \) respectively. Here, \( -
\pt \trm \) is the functor left adjoint to \( \cat V(\trm, -) \), taking each
object \(X\) to its copower by \( \trm \). Explicitly, it is given on objects
by \( X \mapsto X \pt \trm = \sum_{x \in X} \trm \).

After fixing some notation regarding \( \VMat \) and \( \SpanV \), we confirm
that \eqref{eq:start.adj} induces an adjunction in the 2-category \(
\Grph(\Cat) \) of internal \( \Cat \)-graphs
\begin{equation*}
  - \pt \trm \colon \VMat \to \SpanV
  \qquad \cat V(\trm,-) \colon \SpanV \to \VMat
\end{equation*}
and we will furthermore confirm that \( - \pt \trm \) defines an oplax
functor.

Together with the tools and terminology provided in
Sections~\ref{sect:conj.comp}~and~\ref{sect:pstalg}, we will be able to deduce
that \( \cat V(\trm,-) \) is a lax functor, and that we have a conjunction in
the double category \( \PsDbCat \). The unit and counit may be depicted as
follows
\begin{equation*}
  \begin{tikzcd}
    \VMat \ar[r,equal,""{name=A}] \ar[d,"-\pt \trm",swap]
      & \VMat \ar[d,equal] \\
    \SpanV \ar[r,"{\cat V(\trm,-)}"{name=B},swap]
      & \VMat
    \ar[from=A,to=B,phantom,"\heta" description]
  \end{tikzcd}
  \quad
  \begin{tikzcd}
    \SpanV \ar[r,"{\cat V(\trm,-)}"{name=B}]
           \ar[d,equal]
      & \VMat \ar[d,"-\pt \trm"] \\
    \SpanV \ar[r,equal,""{name=A}] & \SpanV
    \ar[from=B,to=A,phantom,"\heps" description]
  \end{tikzcd}
\end{equation*}
alluding to the fact that these are generalized vertical transformations in \(
\PsDbCat \).

\subsection*{Notation for $\SpanV$:} 

The \( \Cat \)-graph \( \SpanV \) is succintly defined as \( [l \leftarrow m
\rightarrow r, \cat V] \rightrightarrows \cat V \), whose underlying functors
are the evaluations at \(l\) and \(r\). Throughout this work, we opt to denote
spans \( p \colon X \relto Y \) in \( \cat V \) as the following diagram
\begin{equation*}
  \begin{tikzcd}
    X & M_p \ar[l,"l_p",swap] \ar[r,"r_p"] & Y
  \end{tikzcd}
\end{equation*}
and a 2-cell \( \theta \) will be denoted as a morphism \( M_p \to M_q \)
making both of the following squares commute:
\begin{equation*}
  \begin{tikzcd}
    X \ar[d,"f",swap]
      & M_p \ar[l,"l_p",swap] \ar[d,"\theta"]
            \ar[r,"r_p"] 
      & Y \ar[d,"g"] \\
    W & M_q \ar[l,"l_q"] \ar[r,"r_q",swap]
      & Z
  \end{tikzcd}
\end{equation*}
The unit span \( 1_X \colon X \relto X \) is defined on objects by \( M_{1_X}
= X \) and \( l_{1_x} = r_{1_x} = \id_X \), and on morphisms \( f \colon X \to
Y \) by \( l_f = r_f = f \).

Let \( q \colon Y \relto Z \) be another span in \( \cat V \). We write the
pullback which defines \( q \cdot p \) as
\begin{equation*}
  \begin{tikzcd}
    M_{q \cdot p} \ar[r,"\pi_0"] \ar[d,"\pi_1",swap] 
                  \ar[rd,"\ulcorner",phantom,very near start]
      & M_q \ar[d,"l_q"] \\
    M_p \ar[r,"r_p",swap] & Y
  \end{tikzcd}
\end{equation*}
so that we have \( l_{q \cdot p} = l_p \circ \pi_1 \) and \( r_{q \cdot p} =
r_q \circ \pi_0 \). By abuse of notation, we may refer to instances of such
pullback diagrams as \( M_{q \cdot p} \).

The unitors \( \lambda \colon 1 \cdot p \to p \) and \( \rho \colon p \cdot 1
\to p \) in \( \SpanV \) are given by the pullback projections \( \pi_1 \colon
M_{1 \cdot p} \to M_p \) and \( \pi_0 \colon M_{p \cdot 1} \to M_p \),
respectively.

Given a third span \( r \colon Z \relto W \), note that the universal
property of the pullback \( M_{q \cdot p} \) guarantees the existence of a
unique map \( \pi_2 \colon M_{(r\cdot q) \cdot p} \to M_{q \cdot p} \) such
that \( \pi_1 \circ \pi_2 = \pi_1 \) and \( \pi_1 \circ \pi_0 = \pi_0 \circ
\pi_2 \):
\begin{equation*}
  \begin{tikzcd}
    M_{(r \cdot q) \cdot p} \ar[rdd,"\pi_1",swap,bend right]
                            \ar[r,"\pi_0"]
                            \ar[rd,dashed,"\pi_2" description]
      & M_{r \cdot q} \ar[rd,"\pi_1"] \\
    & M_{q \cdot p} \ar[r,"\pi_0"] \ar[d,"\pi_1",swap] 
                    \ar[rd,phantom,"\ulcorner",very near start]
    & M_q \ar[d,"l_q"] \\
    & M_p \ar[r,"r_p",swap]
    & Y
  \end{tikzcd}
\end{equation*}
With this, the associator \( \alpha \colon (r\cdot q) \cdot p \to r \cdot (q
\cdot p) \) may be defined as the unique map such that \( \pi_1 \circ \alpha =
\pi_2 \) and \( \pi_0 \circ \alpha = \pi_0 \circ \pi_0 \), via the universal
property of the pullback \( M_{r \cdot (q \cdot p)} \):
\begin{equation*}
  \begin{tikzcd}
    M_{(r \cdot q) \cdot p} \ar[rrd,"\pi_0 \circ \pi_0", bend left=15]
                            \ar[rdd,"\pi_2",bend right=25,swap]
                            \ar[rd,"\alpha" description, dashed] \\
    & M_{r \cdot (q \cdot p)} \ar[r,"\pi_0"] \ar[d,"\pi_1",swap] 
                              \ar[rd,phantom,"\ulcorner",very near start]
    & M_r \ar[d,"l_r"] \\
    & M_{q \cdot p} \ar[r,"r_q \circ \pi_0",swap] & Z
  \end{tikzcd}
\end{equation*}

\subsection*{Notation for $\VMat$:}

Let \( p \colon U \relto V \) be a \( \cat V \)-matrix. We denote by \( p(u,v)
\in \cat V \) the value of \(p\) at the pair \((u,v) \in U\times V \). A
2-cell of \( \cat V \)-matrices 
\begin{equation*}
  \begin{tikzcd}
    U \ar[r,"p"{name=A}] \ar[d,"f",swap]
      & V \ar[d,"g"] \\
    W \ar[r,"q"{name=B},swap] 
      & X
    \ar[from=A,to=B,"\theta",phantom]
  \end{tikzcd}
\end{equation*}
consists of a family of morphisms \( \theta_{u,v} \colon p(u,v) \to q(fu,gv)
\) in \( \cat V \), for \( u \in U \), \( v \in V \) and functions \( f \colon
U \to W \) and \( g \colon V \to X \). 

Given another 2-cell
\begin{equation*}
  \begin{tikzcd}
    W \ar[r,"q"{name=B}] 
      \ar[d,"h",swap]
      & X  \ar[d,"k"] \\
    Y \ar[r,"r"{name=C},swap]
    & Z
    \ar[from=B,to=C,"\omega",phantom]
  \end{tikzcd}
\end{equation*}
the composite \( \omega \circ \theta \) is given at \( u,v \) by the composite
of
\begin{equation*}
  \begin{tikzcd}
    p(u,v) \ar[r,"\theta_{u,v}"]
    & q(fu,gv) \ar[r,"\omega_{fu,gv}"]
    & r(hfu,kgv),
  \end{tikzcd}
\end{equation*}
exhibiting the structure of \( \VMat \) as an internal \( \Cat \)-graph.

Given \( u, u' \in U \), we write \( [u=u'] \) for the set that is a singleton
if \( u=u' \) and empty otherwise. Note that if we have a function \( f \colon
U \to V \), then there is a unique morphism \( [u=u'] \to [fu=fu'] \). With
this, the unit \( \cat V \)-matrix \( 1_U \colon U \relto U \) is defined by
\( 1_U(u,u') = [u = u'] \pt \trm \) for a set \(U\), and \( 1_f \) is given by
\( 1_f(u,u') \colon [u=u'] \pt \trm \to [fu=fu']\pt \trm \) for a function \(
f \colon U \to V \).

Recall that if \( t \colon V \relto W \) is another \( \cat V \)-matrix, we
have
\begin{equation*}
  (t \cdot p)(u,w) = \sum_{v \in V} t(v,w) \times p(u,v) 
\end{equation*}
which is the composition of \( \cat V \)-matrices. The composition for 2-cells
is defined likewise.

The unitors and associators are then given by taking coproducts over the
unitors and associators for the cartesian monoidal structure of \(
\cat V \).

% Do I need to be more specific with the notation here wrt to the proof at the
% end of the section?

\subsection*{Lifting the adjunction to $\Grph(\Cat)$:}

For a \( \cat V \)-matrix \(p \colon X \relto Y\), we define \( M_{p\pt \trm}
= \sum_{x,y} p(x,y) \), and we define \( p \pt \trm \colon X \pt \trm \relto Y
\pt \trm \) to be the span given by taking the coproduct of \( p(x,y) \to \trm
\) indexed by \( X\times Y \); this gives a morphism \( M_{p\pt \trm} \to X\pt
\trm \times Y \pt \trm \) (see Diagram \eqref{eq:defn.left.adj} below, which
is commutative by the universal property of the coproduct), whose composite
with the projections determine \( l_{p \pt \trm} \) and \( r_{p \pt \trm} \).

\begin{equation}
  \label{eq:defn.left.adj}
  \begin{tikzcd}[column sep=large]
    p(x,y) \ar[r] \ar[d]
      & \trm \ar[d,"{\lb \heta x,\heta y \rb}"] \\
    M_{p \pt \trm} \ar[r,"\lb l_{p \pt \trm}{,}\,r_{p \pt \trm} \rb",swap]
      & X \pt \trm \times Y \pt \trm
  \end{tikzcd}
\end{equation}

We write \( \hat \pi_0 \colon (q\cdot p)\pt \trm \to q \pt \trm \)
(respectively, \( \hat \pi_1 \colon (q \cdot p)\pt \trm \to p \pt \trm\)) for
the coproducts of the projections \( q(v,w) \times p(u,v) \to q(v,w) \)
indexed by \( U\times V \times W \to V \times W \) (respectively, \( q(v,w)
\times p(u,v) \to p(u,v) \) indexed by \( U \times V \times W \to U \times V
\)).

For a span \( s \colon V \relto W \) in \( \cat V \), we define the \( \cat V
\)-matrix \( \cat V(\trm,s) \colon \cat V(\trm,V) \relto \cat V(\trm,W) \) to
be given at \( v,w \) by the following pullback:

\begin{equation}
  \label{eq:defn.right.adj}
  \begin{tikzcd}
    \cat V(\trm,p)(v,w) \ar[rr] \ar[d] 
                     \ar[rrd,phantom,"\ulcorner",very near start]
      && \trm \ar[d,"{\lb v,w \rb}"] \\
    M_s \ar[rr,"{\lb l_s,r_s \rb}",swap] && V \times W
  \end{tikzcd}
\end{equation}
and if we have a 2-cell of spans \( \theta \colon s \to t \):
\begin{equation*}
  \begin{tikzcd}
    V \ar[d,"f",swap]
      & M_s \ar[l,"l_s",swap] \ar[d,"\theta"]
            \ar[r,"r_s"] 
      & W \ar[d,"g"] \\
    X & M_t \ar[l,"l_t"] \ar[r,"r_t",swap]
      & Y
  \end{tikzcd}
\end{equation*}
then \( \cat V(\trm,\theta) \) is the 2-cell uniquely determined by pullback
as follows:
\begin{equation*}
  \begin{tikzcd}[column sep=large]
    \cat V(\trm,s)(v,w) \ar[r,"{\cat V(\trm,\theta)(v,w)}",dashed]
                     \ar[d]
                     \ar[rd,"\ulcorner",phantom,very near start]
      & \cat V(\trm,t)(fv,gw) \ar[r] \ar[d]
                     \ar[rd,"\ulcorner",phantom,very near start]
      & \trm \ar[d,"{\lb fv,gw \rb}"] \\
    M_s \ar[r,"\theta",swap] 
      & M_t \ar[r,"{\lb l_t,r_t \rb}",swap]
      & X \times Y
  \end{tikzcd}
\end{equation*}
We observe that \( l_t \circ \theta = f \circ l_s \) and \( r_t \circ \theta =
f \circ r_s \).

We extend \( \heta \), \( \heps \) to \(\VMat\) and \(\SpanV\): for a \(\cat
V\)-matrix \(p \colon X \relto Y\), we define \( \heta_p \colon p \to \cat
V(\trm,p\pt \trm) \) at \(x,y\) to be given by the dashed arrow
\begin{equation}
  \label{eq:defn.unit}
  \begin{tikzcd}
    p(x,y) \ar[rrd,bend left=15] \ar[ddr,bend right=25] \ar[rd,dashed] \\
      & \cat V(\trm, p\pt \trm)(\heta x,\,\heta y) \ar[r] \ar[d] 
                        \ar[rd,phantom,"\ulcorner",very near start]
      & \trm \ar[d,"{\lb \heta x,\, \heta y \rb}"] \\
      & M_{p \pt \trm} \ar[r,swap,"{\lb l_{p\pt \trm},\,r_{p\pt \trm} \rb}"]
      & X\pt \trm \times Y \pt \trm
  \end{tikzcd}
\end{equation}

For a span \( s \colon V \relto W \), we let \( \heps_s \colon \cat V(\trm,s)
\pt \trm \to s \) to be given by taking the coproduct of
\eqref{eq:defn.right.adj} indexed by 
\begin{equation*}
  \begin{tikzcd}
    \cat V(\trm,V) \times \cat V(\trm,W) \ar[r,equal] \ar[d]
      & \cat V(\trm,V) \times \cat V(\trm,W) \ar[d] \\
    \trm \ar[r,equal] & \trm 
  \end{tikzcd}
\end{equation*}
which yields a commutative square
\begin{equation}
  \label{eq:defn.counit}
  \begin{tikzcd}
    M_{\cat V(\trm,s)\pt \trm} \ar[r] \ar[d,"\heps_s",swap]
      & \cat V(\trm,V) \pt \trm \times \cat V(\trm,W)\pt \trm 
        \ar[d,"{\lb \heps_V,\,\heps_W \rb}"] \\
    M_a \ar[r,swap,"{\lb l_s,\,r_s \rb}"]
      & V \times W
  \end{tikzcd}
\end{equation}

By taking the coproduct of \eqref{eq:defn.unit} over the diagram
\begin{equation*}
  \begin{tikzcd}
    X \times Y \ar[rd,equal] 
               \ar[rrd,bend left=15,equal] 
               \ar[rdd,bend right=25] \\
      & X \times Y \ar[r,equal] \ar[d] & X \times Y \ar[d] \\
      & \trm \ar[r,equal] & \trm
  \end{tikzcd}
\end{equation*}
we conclude that \( \heps_{p \pt \trm} \circ \heta_p \pt \trm = \id_{p \pt
\trm} \) for all \( \cat V \)-matrices \(p\).  Moreover, by considering the
following diagram
\begin{equation*}
  \begin{tikzcd}[column sep=large]
    \cat V(\trm,s)(v,w) \ar[r] \ar[rd,bend right=25]
      & \cat V(\trm,\cat V(\trm,s) \pt \trm)(\heta v, \heta w) 
          \ar[r,"\cat V({\trm,}\heps_s)_{\heta v,\heta w}"]
          \ar[d]
      & \cat V(\trm,s)(v,w) \ar[r] \ar[rd,"\ulcorner",near start,phantom] 
                         \ar[d]
      & \trm \ar[d,"{\lb v,w \rb}"] \\
      & M_{\cat V(\trm,s) \pt \trm} \ar[r,"\heps_s",swap]
      & M_s \ar[r,"{\lb l_s,r_s \rb}",swap]
      & V \times W
  \end{tikzcd}
\end{equation*}
we conclude \( \cat V(\trm,\heps_s) \circ \eta_{\cat V(\trm,s)} = \id_{\cat
V(\trm,s)} \) for all spans \(s\) in \( \cat V \). Hence, we have confirmed
that:
\begin{proposition}
  \label{lem:grph.cat.adj}
  We have an adjunction \( - \pt \trm \adj \cat V(\trm,-) \colon \SpanV \to
  \VMat \) in \( \Grph(\Cat) \).
\end{proposition}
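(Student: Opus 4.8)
The plan is to recall exactly what an adjunction in the $2$-category $\Grph(\Cat)$ consists of, and then to observe that every required piece has already been produced in the preceding subsections. An internal $\Cat$-graph is a diagram $(-)_1 \rightrightarrows (-)_0$ in $\Cat$; a morphism is a pair of functors strictly commuting with $\dom$ and $\cod$; and a $2$-cell from a morphism $(F_0,F_1)$ to $(G_0,G_1)$ is a pair of natural transformations $\alpha_0\colon F_0 \to G_0$, $\alpha_1\colon F_1 \to G_1$ satisfying $\dom\cdot\alpha_1 = \alpha_0\cdot\dom$ and $\cod\cdot\alpha_1 = \alpha_0\cdot\cod$. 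So to prove the proposition I must check three things: that $-\pt 1$ and $\cat V(1,-)$ are $\Cat$-graph morphisms; that the data of \eqref{eq:defn.unit} and \eqref{eq:defn.counit} assemble into $\Grph(\Cat)$-$2$-cells $\heta\colon \id \to \cat V(1,-\pt 1)$ and $\heps\colon (-\pt 1)\circ\cat V(1,-) \to \id$; and that the two triangle identities hold, each one being a pair of equations, one in the functor category over the $0$-parts and one over the $1$-parts.

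First I would verify the morphism clause. On $0$-parts, $-\pt 1\colon (\VMat)_0 = \Set \to \cat V = (\SpanV)_0$ is the copower functor of \eqref{eq:start.adj} and $\cat V(1,-)$ is its right adjoint; on $1$-parts, the assignment $p \mapsto p\pt 1$ on matrices extends to a functor on $2$-cells by taking the coproduct of their components, functorial by the universal property of coproducts, and commutation with $\dom,\cod$ is precisely the commutativity of \eqref{eq:defn.left.adj} together with naturality of $\heta$ on $\Set$. The checks for $\cat V(1,-)$ are formally dual, via the universal property of the defining pullbacks \eqref{eq:defn.right.adj} and naturality of $\heps$. It is worth stressing that, because $\Grph(\Cat)$ remembers no horizontal composition, none of the comparison coherences of an (op)lax functor enters here; those appear only later, when these $\Cat$-graph morphisms are upgraded, e.g.\ in Proposition \ref{thm:copower.oplax}.

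Next I would check that $\heta$ and $\heps$ are $\Grph(\Cat)$-$2$-cells. Their $0$-components are the unit and counit of \eqref{eq:start.adj}; their $1$-components are the $2$-cells $\heta_p$ of \eqref{eq:defn.unit} and $\heps_p$ of \eqref{eq:defn.counit}. Compatibility with $\dom$ and $\cod$ is read off those defining diagrams, as $\heta_p$ lives over $(\heta_X,\heta_Y)$ and $\heps_p$ over $(\heps_V,\heps_W)$. The real content is naturality of the $1$-components: for a matrix $2$-cell $\theta\colon p \to q$ one wants $\cat V(1,\theta\pt 1)\circ\heta_p = \heta_q\circ\theta$, and dually $\heps_q\circ(\cat V(1,\theta)\pt 1) = \theta\circ\heps_p$ for a span $2$-cell. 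Each is a diagram chase: one pastes the square defining $\heta$ (resp.\ $\heps$), the naturality square of $\theta\pt 1$ (resp.\ of $\cat V(1,\theta)$), and invokes the universal property of the pullback \eqref{eq:defn.right.adj} (resp.\ of the coproduct, using that $\cat V$ is distributive so that coproducts interact well with the pullbacks cutting out $\cat V(1,-)$) to identify the two legs.

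Finally, the triangle identities. Over the $0$-parts they are literally the triangle identities of the base adjunction $-\pt 1 \adj \cat V(1,-)\colon \Set \to \cat V$ of \eqref{eq:start.adj}, which hold by hypothesis. Over the $1$-parts they are exactly the two equalities $\heps_{p\pt 1}\circ(\heta_p\pt 1) = \id_{p\pt 1}$ and $\cat V(1,\heps_s)\circ\heta_{\cat V(1,s)} = \id_{\cat V(1,s)}$ established just above, by taking the coproduct of \eqref{eq:defn.unit} and by the pullback pasting displayed before the statement. Assembling the two halves of each triangle identity finishes the proof. The only place where genuine work beyond the base adjunction is required is the naturality of $\heta$ and $\heps$ in the $1$-part, and that is precisely where distributivity of $\cat V$ — stability of coproducts under the pullbacks defining $\cat V(1,-)$ — is used; I expect this step to be the main obstacle.
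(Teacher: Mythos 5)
Your proposal is correct and follows essentially the same route as the paper: define the $\Cat$-graph morphisms and the unit/counit $2$-cells via \eqref{eq:defn.left.adj}--\eqref{eq:defn.counit}, note the $0$-part triangle identities come from the base adjunction \eqref{eq:start.adj}, and verify the $1$-part triangle identities by the coproduct of \eqref{eq:defn.unit} and the pullback pasting, exactly as in the displays preceding the statement. The only minor quibble is that naturality of the $1$-components needs only the universal properties of the defining pullbacks and coproducts, not distributivity of $\cat V$, which enters later (e.g.\ in Proposition \ref{thm:copower.oplax} and Lemma \ref{lem:1.strong}) rather than here.
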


\subsection*{Coherence for $- \pt \trm$:}

Now, we check that \( - \pt \trm \) is a normal oplax functor: for a set
\(X\), consider the pullback diagram
\begin{equation}
  \label{eq:defn.e}
  \begin{tikzcd}
    \left[x=y\right] \ar[r] \ar[d,"\delta_{x,y}",swap]
    & \trm \ar[d,"{\lb x,\,y \rb}"] \\
    X \ar[r,"\Delta",swap]
      & X \times X
  \end{tikzcd}
\end{equation}
so that \( \delta_{x,x} = x \) and \( \delta_{x,y} \) is uniquely determined
when \( x\neq y \). Now, we consider the image of the diagram
\eqref{eq:defn.e} under \(- \pt \trm\), and take its coproduct indexed by:
\begin{equation}
  \begin{tikzcd}
    \label{eq:index.e}
    X \times X \ar[r,equal] \ar[d] & X \times X \ar[d] \\
    \trm \ar[r] & \trm 
  \end{tikzcd}
\end{equation}
This yields us \( \e{-\pt \trm}_X \); since \( [x = y] \pt \trm \iso 0 \) for
\( x \neq y \), we conclude that this 2-cell is invertible.

Moreover, given a function \( f \colon X \to Y \), we observe that the
following diagram
\begin{equation*}
  \begin{tikzcd}
    \left[x = y\right] \pt \trm \ar[r,"1_{f,x,y}"] \ar[d] 
      & \left[fx = fy\right] \pt \trm \ar[d] \\
    X \pt \trm \ar[r,"f\pt \trm"] & Y \pt \trm
  \end{tikzcd}
\end{equation*}
commutes, as it is the image via \( - \pt \trm \) of a commutative diagram in
\(\Set\). Taking the coproduct over \eqref{eq:index.e} confirms naturality of
\( \e{-\pt \trm} \).

For \( \cat V \)-matrices \( p \colon X \relto Y \), \( q \colon Y \relto Z
\), \( \m{-\pt \trm} \) is depicted in Diagram \eqref{eq:defn.m} below by a
dashed arrow, and is uniquely determined by the universal property of the
pullback square:
\begin{equation}
  \label{eq:defn.m}
  \begin{tikzcd}[column sep=small,row sep=small]
    M_{(p \cdot q)\pt \trm}
      \ar[rrrd,bend left=15,"\hat \pi_0"] 
      \ar[dddr,bend right=25,"\hat \pi_1",swap] 
      \ar[rd,dashed,"\m{-\pt \trm}"] \\
    & M_{(p\pt \trm) \cdot (q\pt \trm)} \ar[rr,"\pi_0"] \ar[dd,"\pi_1",swap] 
                               \ar[rrdd,phantom,"\ulcorner",very near start]
      && M_{q \pt \trm} \ar[dd,"l_{q\pt \trm}"] \\ \\
    & M_{p \pt \trm} \ar[rr,swap,"r_{p\pt \trm}"] && Y \pt \trm
  \end{tikzcd}
\end{equation}

We consider the following horizontally composable 2-cells of \( \VMat \):
\begin{equation*}
  \begin{tikzcd}
    U \ar[r,"p_0"{name=A}]
      \ar[d,"f",swap]
      & V \ar[r,"p_1"{name=C}] 
          \ar[d,"g" description]
      & W \ar[d,"h"] \\
    X \ar[r,"q_0"{name=B},swap]
      & Y \ar[r,"q_1"{name=D},swap]
      & Z
    \ar[from=A,to=B,"\zeta_0",phantom]
    \ar[from=C,to=D,"\zeta_1",phantom]
  \end{tikzcd}
\end{equation*}
We have
\begin{align*}
  \pi_j \circ \m{-\pt \trm}_{q_0,q_1} \circ ((\zeta_1 \cdot \zeta_0) \pt \trm)
    &= (\pi_j \pt \trm) \circ ((\zeta_1 \cdot \zeta_0) \pt \trm) \\
    &= (\zeta_j \pt \trm) \circ (\pi_j \pt \trm) \\
    &= (\zeta_j \pt \trm) \circ \pi_j \circ \m{-\pt \trm}_{p_0,p_1} \\
    &= \pi_j \circ ((\zeta_1\pt \trm) \cdot (\zeta_0\pt \trm)) 
             \circ \m{-\pt \trm}_{p_0,p_1}
\end{align*}
for \( j = 0, 1 \). We obtain naturality via the universal property of the
pullback \( M_{(q_1 \pt \trm) \cdot (q_0 \pt \trm)} \).

To verify the unit comparison coherence of \( - \pt \trm \), we let \( p
\colon X \relto Y \) be a span in \( \cat V \), and we consider the composite
\begin{equation*}
  \begin{tikzcd}
    \sum_{x,y,z} 1_Y(y,z) \times p(x,y)
      \ar[r,"\m{-\pt \trm}"]
    & M_{(1_Y \pt \trm) \cdot (p \pt \trm)}
      \ar[r,"\e{-\pt \trm} \cdot \id"]
    & M_{1_{Y \pt \trm} \cdot (p \pt \trm)}
      \ar[r,"\lambda"]
    & \sum_{x,y} p(x,y).
  \end{tikzcd}
\end{equation*}
By definition, \( \lambda \colon M_{1_{Y \pt \trm} \cdot (p \pt \trm)} \to
M_{p \pt 1} \) is simply the pullback projection, thus \( \lambda \circ (\e{-
\pt \trm} \cdot \id) = \pi_1 \) is the pullback projection \( M_{(1_Y \pt
\trm) \cdot (p \pt \trm)} \to M_{p \pt \trm} \), and therefore \( \lambda
\circ (\e{-\pt \trm} \cdot \id) \circ \m{-\pt \trm} = \pi_1 \circ \m{- \pt
\trm} = \hat \pi_1 \) by \eqref{eq:defn.m}. But \( \hat \pi_1 \) itself is the
coproduct of \( 1_Y(y,z) \times p(x,y) \to p(x,y) \) indexed by the projection
\( X \times Y \times Z \to Y \times Z \), which is just \( \lambda \pt \trm
\). A similar argument confirms the right unitor case.

Now, we're left with verifying the composition comparison coherence of \( -
\pt \trm \). For the remainder of the section, we will denote horizontal
composition simply by concatenation. For an easier understanding of the
calculations, we provide the following diagram:
\begin{equation*}
  \begin{tikzcd}[column sep=small]
    ((ts)r) \pt \trm \ar[r] \ar[dddd]
      & ((ts)\pt \trm)(r \pt \trm) \ar[r] \ar[d]
      & ((t\pt \trm)(s\pt \trm))(r\pt \trm) \ar[rr] \ar[rddd] 
      && r \pt \trm \ar[dr]
    \\
      & (ts) \pt \trm \ar[rrdd]
      && (s\pt \trm)(r\pt \trm) \ar[ur] \ar[dr]
      && X \pt \trm
    \\
      &&&& s \pt \trm \ar[ur] \ar[dr]
    \\
      & (sr) \pt \trm \ar[uurr]
      && (t\pt \trm)(s\pt \trm) \ar[ur] \ar[dr]
      && Y \pt \trm
    \\
    (t(sr)) \pt \trm \ar[r]
      & (t\pt \trm)((sr) \pt \trm) \ar[r] \ar[u]
      & (t\pt \trm)((s\pt \trm)(r\pt \trm)) \ar[rr] \ar[uuur,crossing over]
      && t \pt \trm \ar[ur]
    \ar[from=1-3,to=4-4,crossing over]
    \ar[from=1-3,to=4-4,crossing over]
    \ar[from=1-3,to=5-3,crossing over]
  \end{tikzcd}
\end{equation*}

First, we verify that \( \m{-\pt \trm} \circ (\pi_1 \pt \trm) \circ (\alpha
\pt \trm) = \pi_1 \circ \alpha \circ (\m{-\pt \trm} \cdot \id) \circ \m{-\pt
\trm} \) as 2-cells \( ((ts)r)\pt \trm \to (s \pt \trm)(r\pt \trm) \). We have 
\begin{align*}
  \pi_0 \circ \m{-\pt \trm} \circ \hat \pi_1 \circ (\alpha \pt \trm)
    &= \hat \pi_0 \circ \hat \pi_1 \circ (\alpha \pt \trm) \\
    &= \hat \pi_1 \circ \hat \pi_0 \\
    &= \pi_1 \circ \m{-\pt \trm} \circ \pi_0 \circ \m{-\pt \trm} \\
    &= \pi_1 \circ \pi_0 \circ (\m{-\pt \trm} \cdot \id) \circ \m{-\pt \trm} \\
    &= \pi_0 \circ \pi_1 \circ \alpha 
             \circ (\m{-\pt \trm} \cdot \id) \circ \m{-\pt \trm} \\
  \pi_1 \circ \m{-\pt \trm} \circ \hat \pi_1 \circ (\alpha \pt \trm)
    &= \hat \pi_1 \circ \hat \pi_1 \circ (\alpha \pt \trm) \\
    &= \hat \pi_1 \\
    &= \pi_1 \circ \m{-\pt \trm} \\
    &= \pi_1 \circ (\m{-\pt \trm} \cdot \id) \circ \m{-\pt \trm} \\
    &= \pi_1 \circ \pi_1 \circ \alpha \circ (\m{-\pt \trm} \cdot \id) 
             \circ \m{-\pt \trm} ,
\end{align*}
and then we apply the universal property of the pullback \( M_{(s\pt
\trm)(r\pt \trm)} \). With this, we finish our proof: note that
\begin{align*}
  \pi_0 \circ \alpha \circ (\m{-\pt \trm} \cdot \id) \circ \m{-\pt \trm}
    &= \pi_0 \circ \pi_0 \circ (\m{-\pt \trm}\cdot\id) \circ \m{-\pt \trm} \\
    &= \pi_0 \circ \m{-\pt \trm} \circ \pi_0 \circ \m{-\pt \trm} \\
    &= (\pi_0 \pt \trm) \circ (\pi_0 \pt \trm) \\
    &= (\pi_0 \pt \trm) \circ (\alpha \pt \trm) \\
    &= \pi_0 \circ \m{-\pt \trm} \circ (\alpha \pt \trm) \\
    &= \pi_0 \circ (\id \cdot \m{-\pt \trm}) 
             \circ \m{-\pt \trm} \circ (\alpha \pt \trm) \\
  \pi_1 \circ (\id \cdot \m{-\pt \trm}) \circ \m{-\pt \trm} \circ (\alpha \pt \trm) 
    &= \m{-\pt \trm} \circ \pi_1 \circ \m{-\pt \trm} \circ (\alpha \pt \trm) \\
    &= \m{-\pt \trm} \circ (\pi_1 \pt \trm) \circ (\alpha \pt \trm) \\
    &= \pi_1 \circ \alpha \circ (\m{-\pt \trm} \cdot \id) \circ \m{-\pt \trm}
\end{align*}
then we apply the universal property of \( M_{(t\pt \trm)((s \pt \trm)(r \pt \trm))} \).
This concludes the proof of:
\begin{proposition}
  \label{thm:copower.oplax}
  \( - \pt \trm \colon \VMat \to \SpanV \) is a normal oplax functor.
\end{proposition}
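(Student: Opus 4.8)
The plan is to assemble the constructions already carried out in this section into the data of an oplax functor and then verify the coherence axioms. First I recall the data: on $0$-cells and vertical $1$-cells, $(-\pt1)_0 \colon \VMat_0 \to \SpanV_0$ is the copower functor $-\pt1 \colon \Set \to \cat V$; on horizontal $1$-cells it sends a $\cat V$-matrix $p \colon X \relto Y$ to the span $p \pt 1$ with apex $M_{p\pt1} = \sum_{x,y} p(x,y)$ of Diagram~\eqref{eq:defn.left.adj}, and on $2$-cells it sends $\theta$ to the induced map on coproducts, compatibility with $\dom$ and $\cod$ being immediate from the definition of $l_{p\pt1}$ and $r_{p\pt1}$. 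The comparison $\e{-\pt1}$ is obtained by taking the coproduct over the index~\eqref{eq:index.e} of the image under $-\pt1$ of the pullback square~\eqref{eq:defn.e}, and $\m{-\pt1} \colon (p\cdot q)\pt1 \to (p\pt1)(q\pt1)$ is the unique map into the pullback $M_{(p\pt1)(q\pt1)}$ determined by the outer commuting square of Diagram~\eqref{eq:defn.m}.

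Next I check that $\e{-\pt1}$ and $\m{-\pt1}$ are globular natural transformations. Globularity holds because, by construction, both restrict to identities on the relevant domains and codomains. Naturality of $\e{-\pt1}$ follows by applying $-\pt1$ to the evident commuting square in $\Set$ associated to a function $f \colon X \to Y$ and taking coproducts over~\eqref{eq:index.e}. Naturality of $\m{-\pt1}$ is checked by composing with the two legs $\pi_0, \pi_1$ of the pullback $M_{(q_1\pt1)(q_0\pt1)}$ and using the relations $\pi_j \circ \m{-\pt1} = \pi_j \pt 1 = \hat\pi_j$ together with the naturality of the coproduct projections; this is precisely the computation one runs for a horizontally composable pair of $2$-cells $\zeta_0 \colon p_0 \to q_0$, $\zeta_1 \colon p_1 \to q_1$.

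It remains to verify the coherence axioms. For the two unit comparisons, the key observation is that the unitor $\lambda \colon M_{1_{Y\pt1}(p\pt1)} \to M_{p\pt1}$ in $\SpanV$ is the pullback projection, so that $\lambda \circ (\e{-\pt1}\cdot\id) = \pi_1$ and hence $\lambda \circ (\e{-\pt1}\cdot\id)\circ\m{-\pt1} = \pi_1\circ\m{-\pt1} = \hat\pi_1$ by~\eqref{eq:defn.m}; but $\hat\pi_1$ is exactly $\lambda\pt1$, which gives the square, and the right unitor case is symmetric. For the composition comparison, both composites in question are $2$-cells $((ts)r)\pt1 \to (t\pt1)((s\pt1)(r\pt1))$, so to prove they agree it suffices to compare them after composing with the three nested legs of the iterated pullback $M_{(t\pt1)((s\pt1)(r\pt1))}$, namely $\pi_0$, $\pi_0\circ\pi_1$, $\pi_1\circ\pi_1$. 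Using the defining equation of $\m{-\pt1}$, the identities $\pi_j\circ\m{-\pt1} = \pi_j\pt1$, the description of $\alpha$ in $\SpanV$ via the universal property of pullbacks, and the fact that $-\pt1$ preserves commuting diagrams in $\Set$, each such comparison collapses to an equality of composites of the form $(\pi_j\pt1)\circ(\pi_k\pt1)$, which holds automatically. Finally, $-\pt1$ is \emph{normal}: since $[x=y]\pt1 \iso 0$ for $x \neq y$, the unit comparison $\e{-\pt1}_X$ is invertible.

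The main obstacle is the composition comparison coherence. Because it is an equality of $2$-cells valued in a triple pullback, it cannot be read off directly and must instead be reduced leg-by-leg to identities the copower functor manifestly satisfies; the bulk of the work is the bookkeeping of keeping track of which projection of which pullback is being computed, in particular distinguishing the span-composition projections $\pi_j$ from the coproduct projections $\hat\pi_j = \pi_j\pt1$, and repeatedly invoking the universal properties that define $\m{-\pt1}$ and the associator in $\SpanV$.
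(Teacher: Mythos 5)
Your proposal is correct and follows essentially the same route as the paper's own argument: the same definitions of $\e{-\pt 1}$ and $\m{-\pt 1}$, the same naturality checks via coproducts and pullback universal properties, the identification $\lambda \circ (\e{-\pt 1}\cdot\id)\circ\m{-\pt 1} = \hat\pi_1 = \lambda\pt 1$ for the unit coherence, and the leg-by-leg reduction through the projections of the iterated pullback for the composition coherence. No gaps to report.
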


  \section{Conjoints and companions}
    \label{sect:conj.comp}
    The notion of ``adjunction'' and the associated mate theory generalizes to two
(dual) internal notions in (pseudo)double category theory, first noted by
\cite{GP04} under the terminology \textit{orthogonal companions} and
\textit{adjoints} for the internalized adjunctions, and \textit{orthogonal
flipping} for the mate theory. 

These were concepts were later studied by various authors, with differing
terminologies. the \textit{holonomies} and \textit{foldings} of \cite{Fio07}
are, respectively, the orthogonal companions/adjoints, and the orthogonal
flipping of \cite{GP04}. In \cite{Shu08, CS10}, we find the use of
\textit{companions} and \textit{conjoints} in place of orthogonal
companions/adjoints, and their mate theory appears as \cite[Corollary
7.21]{CS10}. The work of \cite{DPP10} adapts to the terminology of companions
and conjoints, and considers their mate theory in \cite[Definition
3.22]{DPP10}, under the terminology \textit{Beck-Chevalley cells}. We also
have the mate correspondence for conjoints and companions in \cite[Proposition
5.13]{Shu11}, and more recently, \cite[Proposition 2.9]{GGV24}.

In this section, following the terminology of \cite{CS10, DPP10, Shu11}, we
review the notions of \textit{conjoints} and \textit{companions} in
(pseudo)double categories. We also give an account of their mate theory (Lemma
\ref{lemma:conjoint.mate.theory}), and review their behaviour with lax
functors. Our goal is to state, and prove, Theorem \ref{thm:conjoint.closed},
which was not found in previous works\footnote{A similar result has since
appeared in the preprint \cite{Pat24}, in their study of cartesian structures
of pseudodouble categories.}, which is an important tool for the developments
in Sections~\ref{sect:base.change} and~\ref{sect:induced.adjunction}.

Let \( \bicat D \) be a pseudodouble category, and let \(f \colon a \to b \)
be a vertical 1-cell, \(r \colon b \relto a \) be a horizontal 1-cell.  We say
that \(r\) is the \textit{conjoint} of \(f\) if there exist 2-cells
\begin{equation*}
  \begin{tikzcd}
    a \ar[d,"f",swap] \ar[r,"1"{name=U}] 
      & a \ar[d,equal] \\
    b \ar[r,"r"{name=V},swap] & a
    \ar[from=U,to=V,phantom,"\eta" description]
  \end{tikzcd}
  \qquad
  \begin{tikzcd}
    b \ar[r,"r"{name=U}] \ar[d,equal] 
      & a \ar[d,"f"] \\
    b \ar[r,"1"{name=V},swap] & b
    \ar[from=U,to=V,phantom,"\epsilon" description]
  \end{tikzcd}
\end{equation*}
such that \( \epsilon \circ \eta = 1_f \) and \( \eta \cdot \epsilon =
\rho^{-1} \circ \lambda \). We say \( \eta \), \( \epsilon \) are the
\textit{unit}, \textit{counit} of the conjoint, respectively. Also denote by
\textit{companion} the horizontally dual notion of conjoint; we denote the
\textit{unit} and \textit{counit} 2-cells of a companion as \( \nu \colon 1
\to r \) and \( \delta \colon r \to 1 \), respectively.

In any pseudodouble category \( \bicat D \), the identity vertical 1-cell on
any 0-cell \(x\) always has both a companion and a conjoint; in both cases, it
is given by the horizontal unit \( 1_x \), with unit and counit given by \(
\id_{1_x} = 1_{\id_x} \), which trivially satisfies all four conditions.
Unless otherwise specified, \( 1_x \) will be our fixed choice of
companion/conjoint for \( \id_x \).

We say that \( \bicat D \) is \textit{conjoint (companion) closed} if every
vertical 1-cell of \( \bicat D \) has a conjoint (companion). For instance,
\textit{equipments} may be defined as the pseudodouble categories which are
both conjoint and companion closed (see \cite[Theorem A.2]{Shu08}), of which
\( \SpanV \) and \( \VMat \) are examples.

Let \(T\) be a pseudomonad on a 2-category \( \bicat B \), and consider the
double category of lax \(T\)-algebras as described in Section
\ref{sect:psdbcats}. Our next result, Proposition \ref{prop:doct.adj}, given
in \cite[Theorems 1.4.11 and 1.4.14]{LucTh}, originally stated for strict
\(T\)-algebras in \cite{Kel74}, may be used to characterize conjoints and
companions in \( \LaxTAlg \). Since this is just a restatement of the results
of \cite[Chapter 1]{LucTh}, we omit the argument.

\begin{proposition}[Doctrinal adjunction]
  \label{prop:doct.adj}
  Let \( (f, g, \eta, \epsilon) \) be an adjunction in a 2-category \( \bicat
  B \). There is a bijection between 2-cells \( \zeta \) making \( (f,\zeta)
  \) into an lax \(T\)-algebra oplax morphism and 2-cells \( \xi \) making \(
  (g,\xi) \) into a lax \(T\)-algebra lax morphism.

  Moreover,  \( (f, \zeta) \) is the conjoint of \( (g, \xi) \) in \( \LaxTAlg
  \) if and only if \( \zeta \) and \( \xi \) correspond to each other via the
  aforementioned bijection, and \(f\) has a companion if and only if \( \zeta
  \) is invertible; in which case, its companion is \( (f,\zeta^{-1}) \). 
\end{proposition}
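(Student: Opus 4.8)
\emph{The result is the extension to lax $T$-algebras (and pseudomonads) of Kelly's doctrinal adjunction \cite{Kel74}, repackaged in the double category $\LaxTAlg$.} Since this is mostly a restatement, the plan is to recall the mate construction, recognize its defining equation as the coherence condition for a generalized lax $T$-algebra $2$-cell, and then deduce the conjoint and companion assertions formally; the only substantial computations can be cited wholesale.

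\emph{The bijection.} Given a lax $T$-algebra lax morphism structure $\xi$ on $g$, I would define the mate structure $\zeta$ on $f$ by pasting the adjunction unit $\eta$, the pseudofunctoriality constraint $\m T$ of the pseudomonad (relating $Tg\cdot Tf$ with $T(gf)$), the $2$-cell $\xi$, and the adjunction counit $\epsilon$; the inverse assignment $\zeta\mapsto\xi$ is the evident dual pasting built from $\eta$, $\epsilon$ and $\m T$. That the two assignments are mutually inverse is the usual mate calculus: two applications of the triangle identities for $f\adj g$, together with naturality and the coherence of the pseudomonad. The one genuinely computational point is that $\zeta$ satisfies the oplax $T$-morphism axioms exactly when $\xi$ satisfies the lax ones; this is a diagram chase driven by the unit and associativity coherences of $T$ and of the algebras involved, and is precisely \cite[Theorems 1.4.11 and 1.4.14]{LucTh} (see also \cite{Kel74}), so I would invoke it rather than reproduce it. If one insisted on a self-contained argument, this chase would be the main obstacle.

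\emph{The conjoint.} Now suppose $\zeta$ and $\xi$ are mates. I would lift the adjunction unit and counit to generalized lax $T$-algebra $2$-cells $\hat\eta$, $\hat\epsilon$ of $\LaxTAlg$, filling the squares whose vertical edges are $(f,\zeta)$ and identities and whose horizontal edges are the appropriate horizontal units and $(g,\xi)$. The crux is that the coherence hexagon demanded of $\hat\eta$ as a generalized $2$-cell, once its slots are occupied by $\zeta$, $\xi$ and the trivial structures on the horizontal units, collapses to exactly the pasting equation expressing that $\zeta$ is the mate of $\xi$, and similarly for $\hat\epsilon$; so $\hat\eta$, $\hat\epsilon$ are well-defined precisely under the mate condition, and verifying this identification is really the only work specific to the double-categorical setting. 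Since a generalized lax $T$-algebra $2$-cell is an honest $2$-cell of $\bicat B$ subject to a \emph{property}, two such cells with equal boundary and equal underlying $2$-cell coincide; hence the conjoint equations $\hat\epsilon\circ\hat\eta = 1_{(f,\zeta)}$ and $\hat\eta\cdot\hat\epsilon = \rho^{-1}\circ\lambda$ need only be checked on underlying data, where they are precisely the triangle identities for $f\adj g$. Conversely, if $(g,\xi)$ is a conjoint of $(f,\zeta)$ with unit $\hat\eta$ and counit $\hat\epsilon$, then forgetting the $T$-structure turns the conjoint equations into triangle identities for an adjunction between $f$ and $g$; adjunction data for a fixed pair of $1$-cells being unique, these underlying cells must be $\eta$ and $\epsilon$, and the coherence of $\hat\eta$ then forces $\xi$ to be the mate of $\zeta$.

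\emph{The companion.} If $\zeta$ is invertible, inverting its coherence cells exhibits $(f,\zeta^{-1})$ as a lax $T$-morphism, and the identity $2$-cells $1_f$ lift to generalized $2$-cells --- the coherence is again automatic --- realizing the companion unit and counit, so $(f,\zeta^{-1})$ is a companion of $(f,\zeta)$. Conversely, given any companion of $(f,\zeta)$, its companion unit and counit read on underlying data force the underlying $1$-cell of the companion to be $f$ (up to isomorphism, hence without loss of generality equal to $f$) and the corresponding $2$-cells of $\bicat B$ to be identities; feeding this back into the coherence of the companion unit and counit forces the accompanying lax structure on $f$ to be a two-sided inverse of $\zeta$, so $\zeta$ is invertible and, by the conjoint part, the companion is $(f,\zeta^{-1})$. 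This last clause is the double-categorical rendering of the classical fact that $f$ underlies a pseudomorphism of $T$-algebras if and only if $\zeta$ is invertible.
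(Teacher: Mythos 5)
Your proposal is correct and follows the same route as the paper, which in fact omits the argument entirely, deferring to \cite{Kel74} and \cite[Theorems 1.4.11 and 1.4.14]{LucTh} exactly as you do for the computational core of the mate correspondence. The extra detail you supply --- observing that generalized lax \(T\)-algebra 2-cells are determined by their underlying 2-cells, so the conjoint and companion equations reduce to the triangle identities in \(\bicat B\) --- is sound and merely makes explicit what the paper leaves implicit.
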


As is the case with ordinary adjunctions in a 2-category, there is also a
notion of \textit{mate theory} for conjoints (and dually, companions), which
we present in Lemma \ref{lemma:conjoint.mate.theory}. Results along these
lines were already present in \cite[{}1.6]{GP04}, as well as \cite[Corollary
7.21]{CS10} and \cite[Propositions 5.13 and 5.19]{Shu11}. We have decided to
provide a slightly different statement and proof: our goal is to provide
explicit formulas as an aid for calculations involving conjoints and
companions, abundant in this work.

\begin{lemma}
  \label{lemma:conjoint.mate.theory}
  Let \( (f,f^*,\eta,\epsilon) \) and \( (g,g^*,\eta,\epsilon) \) be
  conjoints, and consider 2-cells
  \begin{equation}
    \label{eq:pair.of.2.cells}
    \begin{tikzcd}
      u \ar[r,"r"{name=U}] \ar[d,"k \circ f",swap]
      & x \ar[d,"g \circ h"] \\
      w \ar[r,"s"{name=D},swap] & z
      \ar[from=U,to=D,phantom,"\zeta" description]
    \end{tikzcd}
    \qquad
    \begin{tikzcd}
      v \ar[d,"k",swap] \ar[r,"r \cdot f^*"{name=V}] &
      x \ar[d,"h"] \\
      w \ar[r,"g^* \cdot s"{name=E},swap] & y
      \ar[from=V,to=E,phantom,"\xi" description]
    \end{tikzcd}
  \end{equation}
  Then the following are equivalent:
  \begin{enumerate}[label=\emph{(\alph*)}]
    \setlength\itemsep{3mm}
    \item
      \label{enum:form.xi}
      \( \xi = \rho \circ \Big(\big((\eta \circ 1_h) 
                              \cdot \zeta\big) 
                              \cdot (1_k \circ \epsilon)\Big)
                    \circ \alpha^{-1} 
                    \circ \lambda^{-1} \),

    \item
      \label{enum:form.zeta}
      \( \zeta = \lambda \circ (\epsilon \cdot \id_s) \circ \xi
                        \circ (\id_r \cdot \eta) \circ \rho^{-1} \),
    \item
      \label{enum:form.eps}
      \( \lambda \circ (\epsilon \cdot \id_s) \circ \xi
          = \rho \circ \big(\zeta \cdot (1_k \circ \epsilon)\big) \),

    \item
      \label{enum:form.eta}
      \( \xi \circ (\id_r \cdot \eta) \circ \rho^{-1}
          = \big((\eta \circ 1_h) \cdot \zeta\big) \circ \lambda^{-1} \)
  \end{enumerate}
  In particular, the sets of 2-cells as given in \eqref{eq:pair.of.2.cells}
  are in pairwise correspondence, explicitly given by the formulas
  \ref{enum:form.xi} and \ref{enum:form.zeta}. Pairs of such 2-cells are said
  to be \textit{mates} or under \textit{mate correspondence}.
\end{lemma}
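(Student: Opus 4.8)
The plan is to establish the cycle of implications $\ref{enum:form.xi} \Rightarrow \ref{enum:form.eta} \Rightarrow \ref{enum:form.zeta} \Rightarrow \ref{enum:form.eps} \Rightarrow \ref{enum:form.xi}$, or rather to prove $\ref{enum:form.xi} \Leftrightarrow \ref{enum:form.eta}$ and $\ref{enum:form.zeta} \Leftrightarrow \ref{enum:form.eps}$ as ``reformulations'' that merely transpose a unitor/associator to the other side, and then close the loop by using the conjoint triangle identities. More concretely, I would first observe that \ref{enum:form.xi} and \ref{enum:form.eta} are literally equivalent: postcomposing \ref{enum:form.eta} with $\rho$ and using that $\rho, \lambda, \alpha$ are invertible globular isomorphisms rearranges the displayed equation into the formula for $\xi$ in \ref{enum:form.xi} — this is pure bookkeeping with the unitors and associator, no conjoint axioms needed. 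Symmetrically, \ref{enum:form.zeta} and \ref{enum:form.eps} are interderivable by transposing $\lambda$ and $(\epsilon \cdot \id_s)$, both of which are (vertically) invertible 2-cells.

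The substance is in passing between the ``$\xi$-side'' and the ``$\zeta$-side'', i.e. \ref{enum:form.xi}$\Leftrightarrow$\ref{enum:form.zeta}. Assume \ref{enum:form.xi}; I would substitute that expression for $\xi$ into the right-hand side of \ref{enum:form.zeta}, obtaining a pasting built from $\zeta$, the unit $\eta$ and counit $\epsilon$ of both conjoints $f^*$ and $g^*$, and a string of unitors and associators. Using naturality of $\alpha, \lambda, \rho$ to slide $\zeta$ past the structural isomorphisms, the pasting reorganizes so that the $f$-conjoint contributes $\epsilon \circ \eta$ on one horizontal leg and the $g$-conjoint contributes $\epsilon \circ \eta$ on the other; by the triangle identity $\epsilon \circ \eta = 1_{(-)}$ for a conjoint, both collapse to identity 2-cells on $f$ and $g$ respectively, and what remains is exactly $\zeta$ (after the unitors cancel). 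The converse, substituting \ref{enum:form.zeta} into \ref{enum:form.xi}, is handled the same way but invokes the \emph{other} conjoint triangle identity $\eta \cdot \epsilon = \rho^{-1} \circ \lambda$: here the $\eta$'s and $\epsilon$'s meet ``the wrong way round'' along horizontal composites, and that axiom is precisely what is needed to straighten them into $\gamma = \rho^{-1}\circ\lambda$, which then gets absorbed by the coherence of $\bicat D$ (Propositions \ref{redundant.axioms} / the coherence axioms \ref{enum:unit.coherence}, \ref{enum:assoc.coherence}).

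I expect the main obstacle to be exactly this second direction — managing the pasting diagram where horizontal composites of units and counits are interleaved with associators, and verifying that the net effect of $\eta \cdot \epsilon = \gamma$ together with the pentagon/triangle coherence of $\bicat D$ is to leave $\xi$ unchanged. This is the step where one genuinely needs a (moderately large) commuting diagram, and where keeping track of the domains and codomains of the various $\alpha$'s is delicate; everything else is formal manipulation of invertible structural cells. Once \ref{enum:form.xi}$\Leftrightarrow$\ref{enum:form.zeta} is in hand, the four-way equivalence follows from the two trivial reformulations above, and the final ``in particular'' clause — that the two sets of $2$-cells are in bijection via the explicit formulas — is immediate, since \ref{enum:form.xi} expresses $\xi$ as a function of $\zeta$ and \ref{enum:form.zeta} expresses $\zeta$ as a function of $\xi$, and the equivalence \ref{enum:form.xi}$\Leftrightarrow$\ref{enum:form.zeta} says these two assignments are mutually inverse.
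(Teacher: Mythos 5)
There is a genuine gap: the two links you declare to be ``pure bookkeeping'' are exactly where half of the content of the lemma lives. You claim \ref{enum:form.xi} and \ref{enum:form.eta} are literally equivalent by transposing invertible structural cells, and likewise for \ref{enum:form.zeta} and \ref{enum:form.eps}. But the cells that would have to be transposed are not the unitors and associator: they are \( \id_r \cdot \eta \), \( \epsilon \cdot \id_s \), \( 1_k \circ \epsilon \) and \( \eta \circ 1_h \), none of which is invertible, since \( \eta \) and \( \epsilon \) are the unit and counit of a conjoint, not isomorphisms. Concretely, substituting \ref{enum:form.xi} into the left-hand side of \ref{enum:form.eta} and sliding \( \id_r \cdot \eta \) through \( \lambda^{-1} \) and \( \alpha^{-1} \) by naturality leaves the factor \( (1_k \circ \epsilon) \circ \eta \), which collapses to \( 1_{k \circ f} \) only via the triangle identity \( \epsilon \circ \eta = 1_f \); the converse direction needs \( \eta \cdot \epsilon = \rho^{-1} \circ \lambda \). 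The same happens for \ref{enum:form.zeta} \( \Leftrightarrow \) \ref{enum:form.eps}. Indeed, if \ref{enum:form.xi} \( \Leftrightarrow \) \ref{enum:form.eta} were formal, equation \ref{enum:form.eta} would determine \( \xi \) uniquely from \( \zeta \) for purely structural reasons, which is false: that uniqueness (the opcartesianness of the unit) is precisely what the conjoint identities buy you, and is the point of the lemma.

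Your description of the step you identify as substantive --- substitute one formula into the other, slide \( \zeta \) past the structural cells by naturality, and cancel using the two conjoint identities together with coherence --- is essentially right and matches the flavour of the paper's computations. But the paper distributes the work differently: it proves the cycle \ref{enum:form.eps} \( \Rightarrow \) \ref{enum:form.zeta} \( \Rightarrow \) \ref{enum:form.eta} \( \Rightarrow \) \ref{enum:form.xi} \( \Rightarrow \) \ref{enum:form.eps}, with each of the four implications consuming exactly one conjoint identity (either \( \epsilon \circ \eta = 1 \) or \( \eta \cdot \epsilon = \rho^{-1} \circ \lambda \), for the appropriate one of the two conjoints) plus naturality and coherence; no single step is free. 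To repair your argument you must upgrade the two ``reformulation'' steps to implications of this kind, at which point you will have essentially reproduced the paper's proof.
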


\begin{proof}
  We will prove that (c) \( \to_\text{i} \) (b) \( \to_\text{ii} \) (d) \(
  \to_\text{iii} \) (a) \( \to_\text{iv} \) (c).
  \begin{enumerate}[label=(\roman*)]
    \item
      Since \( \epsilon \circ \eta = 1_f \), we have
      \begin{align*}
        \lambda \circ (\epsilon \cdot \id_q) \circ \xi 
                \circ (\id_p \cdot \eta) \circ \rho^{-1}
          &= \rho \circ \big(\zeta \cdot (1 \circ \epsilon)\big) 
                 \circ (\id_p \cdot \eta) \circ \rho^{-1} \\
          &= \rho \circ \big((\zeta \circ \id_p) 
                            \cdot (1 \circ \epsilon \circ \eta)\big)
                 \circ \rho^{-1}
          = \zeta
      \end{align*}

    \item
      Since \( \eta \cdot \epsilon = \rho^{-1} \circ \lambda \), we have
      \begin{align*}
        \big((\eta \circ 1) \cdot \zeta\big) \circ \lambda^{-1}
          &= \Big((\eta \circ 1) \cdot 
                \big(\lambda 
                       \circ (\epsilon \cdot \id_q)
                       \circ \xi
                       \circ (\id_q \cdot \eta)
                       \circ \rho^{-1}) \big)\Big) \circ \lambda^{-1} \\
          &= (\id_s \cdot \lambda) 
              \circ (\eta \cdot (\epsilon \cdot \id_q))
              \circ \big(1 \cdot 
                        (\xi \circ (\id_p \cdot \eta) 
                             \circ \rho^{-1})\big) 
              \circ \lambda^{-1} \\
          &= (\id_s \cdot \lambda) 
                \circ \alpha 
                \circ (\rho^{-1} \cdot \id_q) \circ (\lambda \cdot \id_q)
                \circ \alpha^{-1} \circ \lambda^{-1}
                \circ \xi \circ (\id_p \cdot \eta) \circ \rho^{-1} 
        \end{align*}
        and coherence guarantees 
        \begin{equation*}
          (\id \cdot \lambda) \circ \alpha 
                              \circ (\rho^{-1} \cdot \id)
                              \circ (\lambda \cdot \id)
                              \circ \alpha^{-1} 
                              \circ \lambda^{-1} = \id,
        \end{equation*}
        as desired.

    \item
      Since \( \eta \cdot \epsilon = \rho^{-1} \circ \lambda \), we have
      \begin{align*}
        \big((\eta \circ 1) \cdot \zeta\big) \cdot (1 \circ \epsilon)
          &= \big(\xi \circ (\id_p \cdot \eta) 
                      \circ \rho^{-1} \circ \lambda\big) 
              \cdot \epsilon \\
          &= (\xi \cdot 1) 
              \circ ((\id_p \cdot \eta) \cdot \epsilon)
              \circ ((\rho^{-1} \circ \lambda) \cdot \id_{g^*}) \\
          &= (\xi \cdot 1) \circ \alpha^{-1}
                           \circ (\id_p \cdot (\rho^{-1} \circ \lambda))
                           \circ \alpha
                           \circ ((\rho^{-1} \circ \lambda) \cdot \id_{g^*})
      \end{align*}
      and coherence guarantees
      \begin{equation*}
        \rho \circ \alpha^{-1} 
             \circ (\id \cdot (\rho^{-1} \circ \lambda)) 
             \circ \alpha
             \circ ((\rho^{-1} \circ \lambda) \cdot \id)
             \circ \alpha^{-1}
             \circ \lambda^{-1} = \id,
      \end{equation*}
      as desired.

    \item
      Since \( \epsilon \circ \eta = 1_g \), we have
      \begin{align*}
        (\epsilon \cdot \id_q) \circ \xi
          &= (\epsilon \cdot \id_q) 
              \circ \rho
              \circ \big(((\eta \circ 1) \cdot \zeta) 
                            \cdot (1 \circ \epsilon)\big)
              \circ \alpha^{-1}
              \circ \lambda^{-1} \\
          &= \rho \circ ((\epsilon \cdot \id_q) \cdot \id_1)
                  \circ (((\eta \circ 1) \cdot \zeta) \cdot (1 \circ \epsilon))
                  \circ \alpha^{-1}
                  \circ \lambda^{-1} \\
          &= \rho \circ ((1 \cdot \zeta) \cdot \epsilon)
                  \circ \alpha^{-1}
                  \circ \lambda^{-1} \\
          &= \rho \circ \alpha^{-1} \circ \lambda^{-1}
                  \circ (\zeta \cdot \epsilon)
      \end{align*}
      and coherence guarantees
      \begin{equation*}
        \rho \circ \alpha^{-1} \circ \lambda^{-1} = \lambda^{-1} \circ \rho,
      \end{equation*}
      as desired.
  \end{enumerate}
\end{proof}

\begin{remark}
  \label{rem:mate.corresp}
  Once more, we consider the pair of 2-cells \( \zeta, \xi \) given in
  \eqref{eq:pair.of.2.cells}. We will consider the following specialized
  instances of the mate correspondence:
  \begin{enumerate}[label=(\roman*)]
    \item
      \label{enum:degen.i}
      For \( k = \id \), (c) becomes \( (\epsilon \cdot \id) \circ \xi =
      \gamma^{-1} \circ (\zeta \cdot \epsilon) \).
    \item
      \label{enum:degen.ii}
      For \( h = \id \), (d) becomes \( \xi \circ (\id \cdot \eta) = (\eta
      \cdot \zeta) \circ \gamma^{-1} \).
    \item
      \label{enum:degen.iii}
      For \( s=1 \), (c) becomes \( \epsilon \circ \theta = \rho \circ
      (\zeta \cdot (1_k \circ \epsilon)) \), where \( \theta = \rho \circ \xi
      \).
    \item
      \label{enum:degen.iv}
      For \( r=1 \), (d) becomes \( \theta \circ \eta = ((\eta \circ 1_h)
      \cdot \zeta) \circ \lambda^{-1} \), where \( \theta = \xi \circ
      \lambda^{-1} \).
    \item
      \label{enum:degen.v}
      For \( f=\id \), both (b) and (c) become \( \zeta = \lambda \circ
      (\epsilon \cdot \id) \circ \theta \), where \( \theta = \xi \circ
      \rho^{-1} \).
    \item
      \label{enum:degen.vi}
      For \( g=\id \), both (b) and (d) become \( \zeta = \theta \circ (\id
      \cdot \eta) \circ \rho^{-1} \), where \( \theta = \lambda \circ \xi  \).
  \end{enumerate}
  And by combining these, we may obtain simpler forms. For example,
  \ref{enum:degen.v} and \ref{enum:degen.iii} (respectively,
  \ref{enum:degen.vi} and \ref{enum:degen.iv})) provide the result that the
  counit (unit) of a conjunction is a cartesian (opcartesian) 2-cell in the
  sense of \cite{Shu08, CS10}.

  The combination of \ref{enum:degen.iii} and \ref{enum:degen.iv} is mainly
  used under the hypothesis that we have a commutative square \( k \circ f = g
  \circ h \) of vertical 1-cells, that is, \( \zeta = \id \). In this case,
  the unit \( 1_{g \circ h} = 1_{k \circ f} \) has two mates; they are said to
  be the \textit{mates of the commutative square} \( k \circ f = g \circ h \),
  and are the unique 2-cells \( \theta \), \( \omega \), respectively
  satisfying
  \begin{equation*}
    \epsilon \circ \theta = 1_k \circ \epsilon
    \quad\text{and}\quad
    \theta \circ \eta = \eta \circ 1_h
  \end{equation*}
  \begin{equation*}
    \epsilon \circ \omega = 1_g \circ \epsilon
    \quad\text{and}\quad
    \omega \circ \eta = \eta \circ 1_f
  \end{equation*}
  In practice, we will mention ``the'' mate of a commutative square \( k
  \circ f = g \circ h \), and we let context determine which mate is being
  considered.
\end{remark}

We proceed to review well-known \cite{CS10, GP04, DPP10, Shu08, Shu11}, yet
fundamental results about companions and conjoints. Our aim is to demonstrate
the applications of their mate theory, while fixing notation to use for later
reference in Sections \ref{sect:base.change} and
\ref{sect:induced.adjunction}.

Let \( F \colon \bicat D \to \bicat E \) be a lax functor of conjoint closed
pseudodouble categories,  and let \(f\) is a vertical 1-cell in \(\bicat D\).
We denote the mate of \( F\eta \circ \e F \) obtained via \ref{enum:degen.vi}
by \( \sigma^F_f \colon (Ff)^* \to F(f^*) \):
\begin{equation*}
% \label{eq:conjoint.comp}
  \begin{tikzcd}[column sep=large]
    \cdot \ar[r,"1"{name=A}] \ar[d,"Ff",swap] & \cdot \ar[d,equal] \\
    \cdot \ar[r,"(Ff)^*" description, ""{name=B}] \ar[d,equal] 
      & \cdot \ar[d,equal] \\
    \cdot \ar[r,"F(f^*)"{name=C},swap] & \cdot 
    \ar[from=A,to=B,phantom,"\eta" description]
    \ar[from=B,to=C,phantom,"\sigma^F_f" description]
  \end{tikzcd}
  =
  \begin{tikzcd}[column sep=large]
    \cdot \ar[r,"1"{name=A}] \ar[d,equal] & \cdot \ar[d,equal] \\
    \cdot \ar[r,"F1" description,""{name=B}] \ar[d,"Ff",swap]
      & \cdot \ar[d,equal] \\
    \cdot \ar[r,"F(f^*)",""{name=C},swap] & \cdot 
    \ar[from=A,to=B,phantom,"\e F" description]
    \ar[from=B,to=C,phantom,"F\eta" description]
  \end{tikzcd}
\end{equation*}

We say that \(F\) \textit{preserves the conjoint} of \(f\) if \( \sigma^F_f \)
is an invertible 2-cell; we say \(F\) \textit{preserves conjoints} if \(
\sigma^F_f \) is invertible for all vertical 1-cells \(f\). We can show that:

\begin{lemma}
  Let \(F \colon \bicat D \to \bicat E \) be a lax functor of conjoint closed
  pseudodouble categories. The following are equivalent:
  \begin{enumerate}[label=(\alph*)]
    \item
      \label{enum:f.conjoint.id}
      \(F\) preserves conjoints of identities,
    \item
      \label{enum:f.conjoint}
      \(F\) preserves all conjoints,
    \item
      \label{enum:f.normal}
      \(F\) is normal.
  \end{enumerate}
\end{lemma}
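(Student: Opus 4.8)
The plan is to reduce the whole equivalence to the normality condition (c) and isolate the implication (c) $\Rightarrow$ (b) as the only real work. Since identities are among the vertical 1-cells, (b) $\Rightarrow$ (a) is immediate. For (a) $\Leftrightarrow$ (c), I would compute $\sigma^F$ at an identity: the chosen conjoint of $\id_x$ is the horizontal unit $1_x$ with unit and counit both equal to $\id_{1_x}$, and $F\id_x=\id_{Fx}$, so $(F\id_x)^{*}=1_{Fx}$ with unit $\id_{1_{Fx}}$, while $F((\id_x)^{*})=F1_x$. Feeding this into the pasting equality that defines $\sigma^F_{\id_x}$, the two unit 2-cells collapse to identities and one is left with $\sigma^F_{\id_x}=F(\id_{1_x})\circ\e F_x=\e F_x$. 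Thus $F$ preserves the conjoint of $\id_x$ precisely when $\e F_x$ is invertible, and ranging over all 0-cells $x$ this is exactly normality; hence (a) $\Leftrightarrow$ (c).

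For (c) $\Rightarrow$ (b), assume $\e F$ is invertible and fix a vertical 1-cell $f\colon a\to b$ with chosen conjoint $(f,f^{*},\eta,\epsilon)$. The idea is to transport this conjoint structure along $F$: set $\bar\eta:=F\eta\circ\e F_a\colon 1_{Fa}\to F(f^{*})$ and $\bar\epsilon:=(\e F_b)^{-1}\circ F\epsilon\colon F(f^{*})\to 1_{Fb}$, where normality is exactly what makes $\bar\epsilon$ available. I claim $(Ff,F(f^{*}),\bar\eta,\bar\epsilon)$ is a conjoint of $Ff$. The first axiom $\bar\epsilon\circ\bar\eta=1_{Ff}$ is quick: functoriality of $F_1$ gives $\bar\epsilon\circ\bar\eta=(\e F_b)^{-1}\circ F_1(\epsilon\circ\eta)\circ\e F_a=(\e F_b)^{-1}\circ F_1(1_f)\circ\e F_a$, and naturality of $\e F$ at the morphism $f$ of $\bicat D_0$ rewrites this as $1_{Ff}$. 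The second axiom $\bar\eta\cdot\bar\epsilon=\rho^{-1}\circ\lambda$ is obtained by pasting the two unit-comparison coherence squares of the lax functor $F$ (which express $\m F\circ(\e F\cdot\id)$ and $\m F\circ(\id\cdot\e F)$ in terms of $\lambda$ and $\rho$) onto the $F$-image of the identity $\eta\cdot\epsilon=\rho^{-1}\circ\lambda$ holding in $\bicat D$, together with naturality of $\m F$, $\lambda$, $\rho$. Granting the claim, conjoints of a fixed vertical 1-cell are unique up to a canonical globular isomorphism compatible with the units (an easy consequence of the mate correspondence of Lemma~\ref{lemma:conjoint.mate.theory}); and since $\sigma^F_f$ is by construction the mate of $\bar\eta$ relative to the chosen conjoint $(Ff)^{*}$ — equivalently, the unique globular 2-cell satisfying $\sigma^F_f\circ\eta_{(Ff)^{*}}=\bar\eta$ — it coincides with that canonical comparison, hence is invertible. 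As $f$ was arbitrary, $F$ preserves all conjoints.

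The main obstacle is the verification of the second conjoint axiom for $(\bar\eta,\bar\epsilon)$: it is a genuine diagram chase through the unitors of $\bicat E$ and the comparison cells $\e F$, $\m F$ of $F$, in which one must use the unit-comparison coherences to absorb the unavoidable occurrence of $\m F$ (note $\m F$ itself need not be invertible — only $\e F$ is). Equivalently, one could exhibit the inverse of $\sigma^F_f$ directly as the mate of $\bar\epsilon$ with respect to $(Ff)^{*}$ and check that both composites are identities using the conjoint identities $\epsilon\circ\eta=1_f$, $\eta\cdot\epsilon=\rho^{-1}\circ\lambda$, their $F$-images, the coherence of $F$, and invertibility of $\e F$; this is of comparable length. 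Everything else — the computation of $\sigma^F_{\id}$, the first conjoint axiom, and the appeal to uniqueness of conjoints — is routine.
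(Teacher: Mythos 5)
Your proposal is correct and follows essentially the same route as the paper: the identification \( \sigma^F_{\id_x} = \e F_x \) gives (a)\,\(\Leftrightarrow\)\,(c), and for (c)\,\(\Rightarrow\)\,(b) the paper takes precisely the alternative you mention at the end — exhibiting the inverse of \( \sigma^F_f \) directly as the mate \( \chi^F \) of \( (\e F)^{-1} \circ F\epsilon \) and checking both composites via the conjoint identities and the unit-comparison coherences of \(F\). Your packaging via transport of conjoint structure plus uniqueness of conjoints is an equivalent reorganization of the same computations.
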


\begin{proof}
  We begin by showing that any lax functor \(F\) satisfies the identity \(
  F\epsilon \circ \sigma^F = \e F \circ \epsilon \), for we have
  \begin{equation*}
    F\epsilon \circ \sigma^F \circ \eta = F\epsilon \circ F\eta \circ \e F
      = F1_f \circ \e F = \e F \circ 1_{Ff} = \e F \circ \epsilon \circ \eta,
  \end{equation*}
  so the desired equation follows by \ref{enum:degen.vi}.

  Moreover, whenever \( \sigma^F_f \) is invertible, the following relations
  hold:
  \begin{align*}
    \epsilon \circ (\sigma^F)^{-1} \circ F\eta \circ \e F
      &= \epsilon \circ \eta = 1_f, \\
    \e F \circ \epsilon \circ (\sigma^F)^{-1} \circ F\eta 
      &= F\epsilon \circ F\eta = F1_f.
  \end{align*}
  Hence, if \(\sigma^F_\id \) is invertible for all 0-cells, we conclude that
  \(\e F\) is invertible; this confirms \ref{enum:f.conjoint.id} \( \to \)
  \ref{enum:f.normal}.

  Now, if we assume \(F\) is normal, we let \( \chi^F \) be the unique 2-cell
  such that \( \epsilon \circ \chi^F = (\e F)^{-1} \circ F\epsilon \),
  obtained via \ref{enum:degen.v}. From this, it is clear that \( \chi^F \circ
  \sigma^F = \id \), since
  \begin{equation*}
    \epsilon \circ \chi^F \circ \sigma^F
      = (\e F)^{-1} \circ F\epsilon \circ \sigma^F = \epsilon,
  \end{equation*}
  and
  \begin{align*}
    \rho^{-1} \circ \sigma^F \circ \chi^F \circ \lambda
      &= (\sigma^F \cdot \id) \circ \rho^{-1} 
                              \circ \lambda \circ (\id \cdot \chi^F) \\
      &= (\sigma^F \cdot \id) \circ (\eta \cdot \epsilon)
                              \circ (\id \cdot \chi^F) \\
      &= (\sigma^F \circ \eta) \cdot (\epsilon \circ \chi^F) \\
      &= (F\eta \circ \e F) \cdot ((\e F)^{-1} \circ F\epsilon) \\
      &= (\id \cdot (\e F)^{-1}) \circ (F\eta \cdot F\epsilon)
                                \circ (\e F \cdot \id) \\
      &= \rho^{-1} \circ F\rho \circ \m F \circ (F\eta \cdot F\epsilon)
                   \circ (\e F \cdot \id) \\
      &= \rho^{-1} \circ F\rho \circ F(\eta \cdot \epsilon) \circ \m F
                   \circ (\e F \cdot \id) \\
      &= \rho^{-1} \circ \lambda
  \end{align*}
  confirms that \( \chi^F \) is the inverse of \( \sigma^F \). We have shown
  that \ref{enum:f.normal} \( \to \) \ref{enum:f.conjoint}, and of course,
  \ref{enum:f.conjoint.id} is a consequence of \ref{enum:f.conjoint}.
\end{proof}

For the case of companions, we write \( \tau^F_f \colon (Ff)_! \to F(f_!) \)
for the mate of \( F\nu \circ \e F \), and we say that \(F\) \textit{preserves
the companion} of \(f\) if \( \tau^F \) is invertible. The horizontally dual
result states that \(F\) preserves companions iff \(F\) is normal. Thus, we
obtain the result that these three conditions are equivalent for lax functors
between pseudodouble categories (\cite[Proposition 3.8]{DPP10}).

\begin{lemma}
  \label{lem:conjoint.eq.normal}
  Let \(F \colon \bicat D \to \bicat E \) be a lax functor between conjoint
  closed pseudodouble categories, and let \(r \colon x \relto y\), \(f \colon
  z \to y\) be horizontal, vertical 1-cells respectively. Then the 2-cell
  \begin{equation*}
    \begin{tikzcd}
      (Ff)^* \cdot Fr \ar[r,"\sigma^F \cdot \id"]
      & F(f^*) \cdot Fr \ar[r,"\m F"]
      & F(f^* \cdot r)
    \end{tikzcd}
  \end{equation*}
  is invertible. In particular, \( \m F \colon F(f^*) \cdot Fr \to F(f^* \cdot
  r) \) is invertible for all such \(r,f\) if and only if \(F\) is normal. 
\end{lemma}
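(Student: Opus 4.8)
The plan is to produce an explicit two-sided inverse for the 2-cell in question, which I abbreviate
\begin{equation*}
  \Phi \;=\; \m F_{f^*, r} \circ (\sigma^F_f \cdot \id_{Fr}) \colon (Ff)^* \cdot Fr \to F(f^* \cdot r).
\end{equation*}
Write $\eta, \epsilon$ for the unit and counit of the conjoint $f^*$ in $\bicat D$, and $\eta', \epsilon'$ for those of $(Ff)^*$ in $\bicat E$, and set
\begin{equation*}
  \overline{\epsilon} \;=\; \lambda \circ (\epsilon \cdot \id_r) \colon f^* \cdot r \to r,
  \qquad
  \overline{\epsilon}' \;=\; \lambda \circ (\epsilon' \cdot \id_{Fr}) \colon (Ff)^* \cdot Fr \to Fr,
\end{equation*}
the canonical 2-cells exhibiting these composites as the restrictions $r(\id,f)$ and $Fr(\id,Ff)$ (cf.\ Remark \ref{rem:mate.corresp}). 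The candidate inverse I would write down is
\begin{equation*}
  \Psi \;=\; (\eta' \cdot F\overline{\epsilon}) \circ \lambda^{-1} \colon F(f^* \cdot r) \to (Ff)^* \cdot Fr,
\end{equation*}
which is a globular 2-cell because $\eta' \cdot F\overline{\epsilon}$ is.

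The first thing to check is the identity $F\overline{\epsilon} \circ \Phi = \overline{\epsilon}'$. This is a short chase combining naturality of $\m F$ (applied to $\epsilon \cdot \id_r$), the relation $F\epsilon \circ \sigma^F_f = \e F \circ \epsilon'$ established in the previous lemma, and the left unit comparison coherence of $F$ (which rewrites $\m F_{1_y, r} \circ (\e F \cdot \id_{Fr})$ as $F\lambda^{-1} \circ \lambda$). Dually, using the conjoint triangle identity $\epsilon' \circ \eta' = 1_{Ff}$ together with naturality of $\lambda$, one verifies directly that $\overline{\epsilon}' \circ \Psi = F\overline{\epsilon}$. Now the mate correspondence for the conjoint $(Ff)^*$ — the degenerate case \ref{enum:degen.v} of Lemma \ref{lemma:conjoint.mate.theory} — tells us that postcomposition with $\overline{\epsilon}'$ is injective on globular endomorphisms of $(Ff)^* \cdot Fr$; since $\overline{\epsilon}' \circ (\Psi \circ \Phi) = F\overline{\epsilon} \circ \Phi = \overline{\epsilon}' = \overline{\epsilon}' \circ \id$, this forces $\Psi \circ \Phi = \id$.

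For the other composite I would expand $\Phi \circ \Psi$ directly. Applying the interchange law, the defining equation $\sigma^F_f \circ \eta' = F\eta \circ \e F$ of $\sigma^F_f$, naturality of $\m F$, and once more the left unit comparison coherence, the composite collapses to $F\bigl((\eta \cdot \overline{\epsilon}) \circ \lambda^{-1}\bigr)$. It then remains to observe that $(\eta \cdot \overline{\epsilon}) \circ \lambda^{-1} = \id_{f^* \cdot r}$ \emph{already in $\bicat D$} — this is the point at which the conjoint structure of $f^*$ is genuinely used: postcomposing with $\overline{\epsilon}$ and invoking $\epsilon \circ \eta = 1_f$ and naturality of $\lambda$ gives $\overline{\epsilon} \circ (\eta \cdot \overline{\epsilon}) = \overline{\epsilon} \circ \lambda$, and then $\overline{\epsilon}$ may be cancelled exactly as in the previous paragraph. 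Hence $\Phi \circ \Psi = F(\id) = \id$, so $\Phi$ is invertible.

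The ``in particular'' is then a corollary. If $F$ is normal, the previous lemma makes $\sigma^F_f$ invertible for every vertical $f$, hence $\sigma^F_f \cdot \id_{Fr}$ is invertible and $\m F_{f^*, r} = \Phi \circ (\sigma^F_f \cdot \id_{Fr})^{-1}$ is invertible. Conversely, if $\m F_{f^*, r}$ is invertible for all such $r, f$, then so is $\sigma^F_f \cdot \id_{Fr}$ by the statement just proved, and a straightforward argument using the left unit comparison coherence (specialising $f = \id_y$, where $\sigma^F_{\id_y} = \e F_y$) together with the coherence of the unitors yields that $\e F$ is invertible, i.e.\ $F$ is normal. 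I expect the main obstacle to lie in the bookkeeping of the two ``collapsing'' computations — inserting the unit comparison coherences and the conjoint triangle identities at the right stages — together with the purely $\bicat D$-internal identity $\eta \cdot \overline{\epsilon} = \lambda$, which is the conceptual reason restrictions of this shape are preserved by an arbitrary lax functor.
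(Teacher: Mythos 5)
Your proof of the displayed invertibility is essentially the paper's own argument: your candidate inverse $\Psi = (\eta' \cdot F\overline{\epsilon}) \circ \lambda^{-1}$ is exactly the 2-cell the paper calls $l^F$ (the mate of $F\theta$, where $\theta = \overline{\epsilon}$ is the mate of $\id_{f^* \cdot r}$), and both verifications proceed the same way --- one composite is identified with the identity by cancelling against the counit via the mate correspondence, the other collapses, after the unit comparison coherence and $\sigma^F_f \circ \eta' = F\eta \circ \e F$, to $F$ applied to the purely $\bicat D$-internal identity $(\eta \cdot \overline{\epsilon}) \circ \lambda^{-1} = \id_{f^* \cdot r}$. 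That part is correct and matches the paper.

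The gap is in the converse of the ``in particular'' (which the paper asserts without argument and never actually uses downstream --- only the invertibility of $\m F \circ (\sigma^F \cdot \id)$ is invoked later). From the invertibility of $\m F_{f^*,r}$ you correctly get that $\sigma^F_f \cdot \id_{Fr}$, and in particular $\e F_y \cdot \id_{Fr}$, is invertible for every $r \colon x \relto y$; but no amount of ``unitor coherence'' strips off the whiskering: horizontal composition with a fixed cell $Fr$ need not reflect isomorphisms, and the only choice of $r$ for which $Fr$ would be a horizontal unit is $r = 1_y$, where $F1_y \iso 1_{Fy}$ is precisely what you are trying to prove. In fact the implication fails at this level of generality: take $\bicat D$ terminal and $\bicat E$ the one-object, vertically trivial pseudodouble category on $(\mathbf{Ab}, \otimes_{\mathbb Z}, \mathbb Z)$ (both are conjoint closed, with $1_*$ the conjoint of $\id_*$); a lax functor $\bicat D \to \bicat E$ is a unital ring, $\m F$ its multiplication and $\e F$ its unit, and for $R = \mathbb Q$ the multiplication $\mathbb Q \otimes_{\mathbb Z} \mathbb Q \to \mathbb Q$ is invertible while $\mathbb Z \to \mathbb Q$ is not. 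So the backward implication needs extra hypotheses (or should be read as the paper uses it, namely only through the always-invertible composite $\m F \circ (\sigma^F \cdot \id)$); your forward direction, via invertibility of $\sigma^F_f$ for normal $F$, is fine.
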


\begin{proof}
  We claim the inverse \( l^F \) is given by the mate of \( F\theta \) via 
  \begin{equation*}
    l^F =
    \begin{tikzcd}
      \cdot \ar[rr,"F(f^* \cdot r)"{name=A}] \ar[d,equal]
      && \cdot \ar[d,equal] \\
      \cdot \ar[r,"F(f^* \cdot r)" description,""{name=B}]
            \ar[d,equal]
      & \cdot \ar[r,"1" description,""{name=C}] \ar[d,"Ff" description]
      & \cdot \ar[d,equal] \\
      \cdot \ar[r,"Fr"{name=D},swap]
      & \cdot \ar[r,"(Ff)^*"{name=E},swap] & \cdot
      \ar[from=A,to=2-2,phantom,"\lambda^{-1}" description]
      \ar[from=B,to=D,phantom,"F\theta" description]
      \ar[from=C,to=E,phantom,"\eta" description]
    \end{tikzcd},
    \quad\text{where}\quad
    \theta = 
    \begin{tikzcd}
      \cdot \ar[r,"r"{name=A}] \ar[d,equal]
        & \cdot \ar[d,equal] \ar[r,"f^*"{name=B}]
        & \cdot \ar[d,"f"] \\
      \cdot \ar[r,""{name=C},"r" description] \ar[d,equal]
        & \cdot \ar[r,""{name=D},"1" description] 
        & \cdot \ar[d,equal] \\
      \cdot \ar[rr,"r"{name=E},swap] && \cdot
      \ar[from=A,to=C,phantom,"=" description]
      \ar[from=B,to=D,phantom,"\epsilon" description]
      \ar[from=2-2,to=E,phantom,"\lambda" description]
    \end{tikzcd}.
  \end{equation*}
  Note that \( l^F \) is the mate of \( F\theta \), and \( \theta \) is the
  mate of \( \id_{f^* \cdot r} \), via \ref{enum:degen.iv} and
  \ref{enum:degen.v}, respectively.  Now, note that
  \begin{align*}
    (\epsilon \cdot \id) \circ l^F \circ \m F \circ (\sigma^F \cdot \id)
      &= \lambda^{-1} \circ F\theta \circ \m F \circ (\sigma^F \cdot \id) \\
      &= \lambda^{-1} \circ F\lambda \circ \m F \circ (F\epsilon \cdot \id) 
                      \circ (\sigma^F \cdot \id) \\
      &= \lambda^{-1} \circ F\lambda \circ \m F \circ (\e F \cdot \id)
                      \circ (\epsilon \cdot \id) = \epsilon \cdot \id \\
    \m F \circ (\sigma^F \cdot \id) \circ l^F
      &= \m F \circ (F\eta \cdot F\theta) 
              \circ (\e F \cdot \id) \circ \lambda^{-1} \\
      &= F(\eta \cdot \theta) \circ \m F 
                              \circ (\e F \cdot \id) \circ \lambda^{-1} 
       = \id
  \end{align*}
  So, the result follows by the mate correspondence. 
\end{proof}

In a conjoint closed pseudodouble category \(\bicat D\), let \(f,g\) be
composable vertical 1-cells with conjoints \( f^* \) and \( g^* \), and let \(
\pi \colon f^* \cdot g^* \to (g\circ f)^* \) be the mate of \( 1_g \circ
\epsilon \colon f^* \to 1 \). Via \ref{enum:degen.i} and \ref{enum:degen.iii},
we obtain:
\begin{equation}
  \label{eq:v2h.comp}
  \begin{tikzcd}[column sep=large,row sep=small]
    \cdot \ar[r,"f^* \cdot g^*"{name=A}] \ar[d,equal]
    & \cdot \ar[d,equal] \\
    \cdot \ar[r,""{name=B},"(g\circ f)^*" description] \ar[d,equal]
    & \cdot \ar[d,"g \circ f"] \\
    \cdot \ar[r,"1"{name=C},swap] & \cdot
    \ar[from=A,to=B,phantom,"\pi" description]
    \ar[from=B,to=C,phantom,"\epsilon" description]
  \end{tikzcd}
  =
  \begin{tikzcd}[column sep=large]
    \cdot \ar[r,"g^*"{name=A}] \ar[d,equal]
      & \cdot \ar[r,"f^*"{name=X}] \ar[d,"g" description]
      & \cdot \ar[d,"g \circ f"] \\
    \cdot \ar[r,""{name=B},"1" description] \ar[d,equal] 
      & \cdot \ar[r,""{name=Y}, "1" description] 
      & \cdot \ar[d,equal] \\
    \cdot \ar[rr,"1"{name=Z},swap] && \cdot
    \ar[from=A,to=B,phantom,"\epsilon" description]
    \ar[from=X,to=Y,phantom,"1_g \circ \epsilon" description]
    \ar[from=2-2,to=Z,phantom,"\rho" description]
  \end{tikzcd}
\end{equation}

We can also define a 2-cell \((g \circ f)^* \to f^* \cdot g^* \) as the mate
of \( \eta \circ 1_f \), which can be shown to be the inverse of \(\pi\),
using a method similar to the proof of Lemma \ref{lem:conjoint.eq.normal}; as
this is not needed, we omit the details.

Now that we have fixed the notation we will need for the rest of the paper,
we end the section with Theorem \ref{thm:conjoint.closed}, to justify the
value of conjoint and companion closed pseudodouble categories.

\begin{theorem}
  \label{thm:conjoint.closed}
  Let \( \bicat D, \bicat E \) be pseudodouble categories. If \( \bicat E \)
  is conjoint closed, then so is \( \Lax_\lax(\bicat D,\bicat E) \).  Dually,
  if \( \bicat E \) is companion closed, then so is \( \Lax_\opl(\bicat D,
  \bicat E) \). 
\end{theorem}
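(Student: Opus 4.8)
\emph{Plan.} We prove the first statement; the second follows by the dual argument, using companions of the components $\phi_x$ in $\bicat E$ and oplax horizontal transformations in place of conjoints and lax ones. So assume $\bicat E$ is conjoint closed and let $\phi \colon F \to G$ be a vertical $1$-cell of $\Lax_\lax(\bicat D,\bicat E)$, that is, a vertical transformation between lax functors $F, G \colon \bicat D \to \bicat E$, with underlying natural transformations $\phi_0 \colon F_0 \to G_0$ and $\phi_1 \colon F_1 \to G_1$. The strategy is to build a conjoint $\phi^* \colon G \to F$ pointwise: for each $0$-cell $x$ of $\bicat D$ fix a conjoint $(\phi_x)^* \colon Gx \relto Fx$ of the vertical $1$-cell $\phi_x$ in $\bicat E$, with unit $\eta_x$ and counit $\epsilon_x$.

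First I would assemble the underlying functor $\phi^* \colon \bicat D_0 \to \bicat E_1$: it sends a $0$-cell $x$ to $(\phi_x)^*$ and a vertical $1$-cell $u \colon x \to y$ of $\bicat D$ to the $2$-cell $(\phi_x)^* \to (\phi_y)^*$ with vertical sides $Gu, Fu$ obtained as the mate of the commuting square $Gu \circ \phi_x = \phi_y \circ Fu$ (a naturality square of $\phi_0$); functoriality of $\phi^*$ amounts to mates of commuting squares composing, which is immediate from the characterization of such mates in Remark \ref{rem:mate.corresp}. Next, for a horizontal $1$-cell $r \colon x \to y$ of $\bicat D$, I would take the globular comparison $\n{\phi^*}_r \colon Fr \cdot (\phi_x)^* \to (\phi_y)^* \cdot Gr$ to be the mate, in the sense of Lemma \ref{lemma:conjoint.mate.theory}, of the component $\phi_r \colon Fr \to Gr$ of $\phi_1$; since the vertical sides $\phi_x, \phi_y$ of $\phi_r$ are precisely the $1$-cells whose conjoints appear, this mate is globular, as a lax horizontal transformation $G \to F$ requires. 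Globular naturality of $\n{\phi^*}$ then follows from naturality of $\phi_1$ together with the naturality of the mate correspondence in the transported $2$-cell.

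The substance of the proof is checking that $\phi^*$ satisfies the two coherence conditions of a lax horizontal transformation: the unit comparison square, built from $\e F, \e G$ and $\gamma$, and the composition octagon, built from $\m F, \m G$ and the associators. These are exactly the mate-translates of the unit and composition coherences that $\phi$ satisfies as a vertical transformation, so I would derive each by feeding $\phi$'s coherence diagrams through Lemma \ref{lemma:conjoint.mate.theory} and Remark \ref{rem:mate.corresp} and rewriting with the conjoint identities $\epsilon \circ \eta = 1$, $\eta \cdot \epsilon = \rho^{-1} \circ \lambda$ and the coherence axioms of $\bicat E$. This translation --- the double-categorical counterpart of the doctrinal-adjunction correspondence of Proposition \ref{prop:doct.adj} --- is the main obstacle; it is a long but purely mechanical pasting-diagram chase rather than a new idea.

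Finally, I would take the unit $\heta \colon 1_F \to \phi^*$ and counit $\heps \colon \phi^* \to 1_G$ of the purported conjunction to be the modifications whose underlying natural transformations have components $\eta_x$ and $\epsilon_x$; verifying the modification condition \eqref{eq:modif.cond} for each reduces, after substituting $\n{1_F}_r = \gamma_{Fr}$ (Proposition \ref{lem:lht.hu}) and the formula above for $\n{\phi^*}_r$, to instances of the mate correspondence and the conjoint axioms. The two defining equations $\heps \circ \heta = 1_\phi$ and $\heta \cdot \heps = \rho^{-1} \circ \lambda$ of a conjoint are then equalities of modifications in $\Lax_\lax(\bicat D,\bicat E)$; since vertical composition, horizontal composition and the unitors there are all computed componentwise on underlying natural transformations (Propositions \ref{prop:dbcat.vert}, \ref{prop:dbcat.horiz} and \ref{thm:psdbcat.closed}), each holds as soon as it holds at every $0$-cell $x$, where it is exactly the equation $\epsilon_x \circ \eta_x = 1_{\phi_x}$, respectively $\eta_x \cdot \epsilon_x = \rho^{-1} \circ \lambda$, defining the conjoint $(\phi_x)^*$ in $\bicat E$. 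Hence every vertical $1$-cell of $\Lax_\lax(\bicat D,\bicat E)$ has a conjoint, which proves the theorem.
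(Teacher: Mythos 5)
Your proposal is correct and follows essentially the same route as the paper's proof: a pointwise conjoint $\phi^*_x$ of each component, $\phi^*$ on vertical 1-cells as the mate of the naturality square of $\phi_0$, the comparison $\n{\phi^*}_r$ as the mate of $\phi_r$, the lax-transformation coherences obtained as mate-translates of the vertical-transformation coherences of $\phi$, and the unit/counit assembled as modifications whose conjoint equations are checked componentwise. The only difference is one of detail: the paper carries out the mate computations explicitly via the formulas of Lemma \ref{lemma:conjoint.mate.theory} and Remark \ref{rem:mate.corresp}, whereas you defer them as a mechanical pasting chase.
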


\begin{proof}
  Fix a vertical transformation \( \phi \colon F \to G \) where \( F,G \colon
  \bicat D \to \bicat E \) are lax functors. For each 0-cell \(x\), we write
  \begin{equation*}
    \begin{tikzcd}
      Fx \ar[d,"\phi_x",swap] \ar[r,"1"{name=U}] 
        & Fx \ar[d,equal] \\
      Gx \ar[r,"\phi^*_x"{name=V},swap] & Fx
      \ar[from=U,to=V,phantom,"\eta_x" description]
    \end{tikzcd}
    \quad
    \begin{tikzcd}
      Gx \ar[r,"\phi^*_x"{name=U}] \ar[d,equal] 
        & Fx \ar[d,"\phi_x"] \\
      Gx \ar[r,"1"{name=V},swap] & Gx
      \ar[from=U,to=V,phantom,"\epsilon_x" description]
    \end{tikzcd}
  \end{equation*}
  for the 2-cells satisfying \( \epsilon_x \circ \eta_x = 1_{\phi_x} \) and
  \( \eta_x \cdot \epsilon_x = \rho^{-1} \circ \lambda \), so that \( \phi^*_x
  \) is the conjoint of \( \phi_x \) for all \(x\).

  Define \( \phi^*_f \colon \phi^*_x \to \phi^*_y \) to be the mate of \(
  1_{Ff} \) via \ref{enum:degen.ii}, so that \( \phi^*_f \circ \eta_x = \eta_y
  \circ 1_{Ff} \) and \( \epsilon_y \circ \phi^*_f = 1_{Gf} \circ \epsilon_x
  \). Moreover, note that
  \begin{equation*}
    \phi_g^* \circ \phi^*_f \circ \eta_x 
      = \phi^*_g \circ \eta_y \circ 1_{Ff}
      = \eta_z \circ 1_{Gf} \circ 1_{Ff} = \eta_z \circ 1_{G(g\circ f)},
  \end{equation*}
  so we conclude that \( \phi^*_{g\circ f} = \phi_g^* \circ \phi_f^* \) by
  mate correspondence.  Similarly, we have \( \phi^*_{\id_x} = \id_{\phi_x}^* \).

  Next, we consider the map \( r \mapsto Fr \cdot \phi^*_x \), where \( r
  \colon x \relto y \) is a horizontal 1-cell. It is functorial: for 2-cells
  \begin{equation}
    \label{2.cells}
    \begin{tikzcd}
      x \ar[r,"r"{name=U}] \ar[d,"f",swap] & y \ar[d,"g"] \\
      w \ar[r,"s"{name=V},swap] & z
      \ar[from=U,to=V,phantom,"\theta" description]
    \end{tikzcd}
    \qquad
    \begin{tikzcd}
      u \ar[r,"q"{name=U}] \ar[d,"h",swap] & v \ar[d,"k"] \\
      x \ar[r,"r"{name=V},swap] & y
      \ar[from=U,to=V,phantom,"\xi" description]
    \end{tikzcd}
  \end{equation}
  we have 
  \begin{equation*}
    F(\theta \circ \chi) \cdot \phi^*_{h \circ f} 
      = (F\theta \circ F\chi) \cdot (\phi^*_h \circ \phi^*_f) 
      = (F\theta \cdot \phi^*_h) \circ (F\chi \cdot \phi^*_f),
  \end{equation*}
  and \( F(\id) \cdot \phi^*_{\id} = \id \), as desired. Analogously, \( r
  \mapsto \phi^*_y \cdot Gr \) is also functorial.

  We define \( \n{\phi^*}_r \colon Fr \cdot \phi^*_x \to \phi^*_y \cdot Gf \)
  to be the mate of
  \begin{equation*}
    \begin{tikzcd}
      Fx \ar[d,"\phi_x",swap] \ar[r,"Fr"{name=U}] 
        & Fy \ar[d,"\phi_y"] \\
      Gx \ar[r,"Gr"{name=V},swap] & Gy
      \ar[from=U,to=V,phantom,"\phi_r" description]
    \end{tikzcd}
  \end{equation*}
  via \ref{enum:degen.i}. We claim this data makes \( \phi^* \) into a lax
  horizontal transformation \( G \relto F \).

  Given a 2-cell \( \theta \) as in the left diagram of \eqref{2.cells}, we
  have \( \phi_s \circ F\theta = G\theta \circ \phi_r \), since \( \phi \) is
  a vertical transformation. The following pairs
  \begin{align*}
    G\theta \circ \phi_r 
      \quad\text{and}\quad \n{\phi^*}_s \circ (F\theta \cdot \phi^*_f), \\
    \phi_s \circ F\theta 
      \quad\text{and}\quad (\phi^*_g \cdot G\theta) \circ \n{\phi^*}_r 
  \end{align*}
  are mates, so that we have
  \begin{equation*}
    \n{\phi^*}_s \circ (F\theta \cdot \phi^*_f) 
      = (\phi^*_g \cdot G\theta) \circ \n{\phi^*}_r,
  \end{equation*}
  giving naturality. To confirm this,
  \begin{align*}
    \lambda \circ (\epsilon \cdot \id) 
            \circ \n{\phi^*}_s 
            \circ (F\theta \cdot \phi^*_f)
      &= \rho \circ (\phi_s \cdot \epsilon)
              \circ \circ (F\theta \cdot \phi^*_f) \\
      &= \rho \circ \big((\phi_s \circ F\theta) 
                            \cdot (\epsilon \circ \phi^*_f)\big) \\
      &= \rho \circ \big((\phi_s \circ F\theta)
                            \cdot (1 \circ \epsilon)\big), \\
    (\phi^*_g \cdot G\theta) \circ \n{\phi^*}_r
                             \circ (\id \cdot \eta)
                             \circ \rho^{-1} 
      &= (\phi^*_g \cdot G\theta) \circ (\eta \cdot \phi_r) 
                                 \circ \lambda^{-1} \\
      &= \big( (\phi^*_g \circ \eta) \cdot (G\theta \circ \phi_r)\big)
          \circ \lambda^{-1} \\
      &= \big( (\eta \circ 1) \cdot (G\theta \circ \phi_r) \big) 
          \circ \lambda^{-1}
  \end{align*}

  Now, we note that \( \phi_{1_x} \circ \e F_x = \e G_x \circ 1_{\phi_x}
  \) and \( \phi_{s \cdot r} \circ \m F = \m G \circ (\phi_s \cdot \phi_r) \).
  We shall deduce that the coherence diagrams for \(\phi^*\) commute by taking
  the mates of these commutative squares, thereby confirming that \( \phi^* \)
  is a lax horizontal transformation. Via \ref{enum:degen.i}, we will prove
  that the following pairs
  \begin{align*}
    \m G \circ (\phi_s \cdot \phi_r) \quad&\text{and}\quad
    (\id \cdot \m G) \circ \alpha \circ (\phi^*_s \cdot \id)
                    \circ \alpha^{-1} 
                    \circ (\id \cdot \phi^*_r) \circ \alpha \\
    \phi_{s \cdot r} \circ \m F \quad&\text{and}\quad
      \n{\phi^*}_{s \cdot r} \circ (\m F \cdot \id) \\
    \phi_{1_x} \circ \e F \quad&\text{and}\quad
      \n{\phi^*}_{1_x} \circ (\e F \cdot \id) \\
    \e G \circ 1_{\phi_x} \quad&\text{and}\quad
      (\id \cdot \e G) \circ \rho^{-1} \circ \lambda
  \end{align*}
  are under mate correspondence. The last three are one-liners, respectively:
  \begin{align*}
    \lambda \circ (\epsilon \cdot \id)
            \circ \n{\phi^*}_{s \cdot r}
            \circ (\m F \cdot \id)
      &= \rho \circ (\phi_{s \cdot r} \cdot \epsilon)
            \circ (\m F \cdot \id)
       = \rho \circ ((\phi_{s\cdot r} \circ \m F) \cdot \epsilon), \\
    \lambda \circ (\epsilon \cdot \id)
            \circ \n{\phi^*}_{1_x} \circ (\e F \cdot \id)
      &= \rho \circ (\phi_{1_x} \cdot \epsilon)
              \circ (\e F \cdot \id)
       = \rho \circ ((\phi_{1_x} \circ \e F) \cdot \id) \\
    \lambda \circ (\epsilon \cdot \id) 
            \circ (\id \cdot \e G)
            \circ \rho^{-1} \circ \lambda
            \circ (\id \cdot \eta) \circ \rho^{-1}
      &= \e G \circ \lambda \circ (\epsilon \cdot \id)
             \circ \rho^{-1} \circ \eta
       = \e G \circ 1_{\phi_x}.
  \end{align*}
  For the first pair, observe that
  \begin{align*}
    \lambda \circ (\epsilon \cdot \id)
            \circ (\id \cdot \m G)
            \circ \alpha
      &= \lambda \circ (1 \cdot \m G) 
                 \circ (\epsilon \cdot \id)
                 \circ \alpha \\
      &= \m G \circ \lambda
             \circ \alpha
             \circ ((\epsilon \cdot \id) \cdot \id) \\
      &= \m G \circ (\lambda \cdot \id) 
             \circ ((\epsilon \cdot \id) \cdot \id) \\
    ((\epsilon \cdot \id) \cdot \id) \circ (\n{\phi^*}_s \cdot \id)
                                     \circ \alpha^{-1}
      &= (((\epsilon \cdot \id) \circ \n{\phi^*}_s) \cdot \id) 
              \circ \alpha^{-1} \\
      &= ((\gamma^{-1} \circ (\phi_s \cdot \epsilon)) \cdot \id)
            \circ \alpha^{-1} \\
      &= (\gamma^{-1} \cdot \id) 
            \circ ((\phi_s \cdot \epsilon) \cdot \id)
            \circ \alpha^{-1} \\
      &= (\lambda^{-1} \cdot \lambda) 
            \circ (\phi_s \cdot (\epsilon \cdot \id)) \\
    (\phi_s \cdot (\epsilon \cdot \id)) 
            \circ (\id \cdot \n{\phi^*}_r)
            \circ \alpha
      &= ( \phi_s \cdot ((\epsilon \cdot \id) \circ \n{\phi^*}_r))
            \circ \alpha \\
      &= (\phi_s \cdot (\gamma^{-1} \circ (\phi_r \cdot \epsilon)))
            \circ \alpha \\
      &= (\id \cdot \gamma^{-1}) 
            \circ (\phi_s \cdot (\phi_r \cdot \epsilon))
            \circ \alpha \\
      &= (\id \cdot \gamma^{-1}) 
            \circ \alpha 
            \circ ((\phi_s \cdot \phi_r) \cdot \epsilon)
  \end{align*}
  and pasting the expressions above together verifies the claim.

  Finally, note that 
  \begin{align*}
    \n{\phi^*}_r \circ (\id_{Fr} \cdot \eta_x) 
      &= (\eta_y \cdot \phi_r) \circ \gamma^{-1} \\
    (\epsilon_y \cdot \id_{Gr}) \circ \n{\phi^*}_r
      &= \gamma \circ (\phi_r \cdot \epsilon_x)
  \end{align*}
  are immediate consequences of mate correspondence. Thus, \( \eta \) and \(
  \epsilon \) define modifications, and pointwise evaluation confirms that \(
  \phi^* \) is the conjoint of \( \phi \).
\end{proof}

We say that a vertical transformation \( \phi \) \textit{has a strong conjoint
(companion)} if its conjoint (companion) in the appropriate pseudodouble
category is a strong horizontal transfomation; that is, if \( \n{\phi^*} \)
(\( \n{\phi_!} \)) is an invertible natural transformation. The notion of a
vertical transformation \( \phi \) having a strong companion (conjoint) is
present in \cite[A.4]{CS10}; therein, the terminology is
\textit{(co)horizontally strong}.

To provide a class of examples, recall from \cite[A.6]{CS10} that for a
natural transformation \( \phi \colon F \to G \) between pullback-preserving
functors \( F \colon \cat B \to \cat C \) on categories with pullbacks, the
induced vertical transformation \( \hat \phi \colon \hat F \to \hat G \)
between the induced strong functors \( \hat F, \hat G \colon \Span(\cat B) \to
\Span(\cat C) \) has a strong conjoint if and only if it has a strong
companion, if and only if \( \phi \) is a cartesian natural transformation.

We also have the following the result:
\begin{lemma}
  \label{lem:right.conjoint.invert}
  Let \( \phi \colon F \to G \) be a vertical transformation of lax
  functors \( F, G \colon \bicat D \to \bicat E \), let \( H \colon \bicat E
  \to \bicat F \) be another lax functor. We assume \( \bicat E \) is
  conjoint closed, and that \( \phi \) has a strong conjoint.

  \( H\phi \) has a strong conjoint if and only if \( \m H \circ (\id \cdot
  \sigma^H) \colon HFr \cdot (H\phi_x)^* \to H(Fr \cdot \phi^*_x) \) is
  invertible for all \(x\) and all \(r\).
\end{lemma}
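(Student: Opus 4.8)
The plan is to compare the conjoint structure of $H\phi$ with the image under $H$ of the conjoint structure of $\phi$, the link between the two being the comparison $2$-cells $\sigma^H$ together with the laxity constraint $\m H$. Recall from the construction in Theorem~\ref{thm:conjoint.closed} that the conjoint $\phi^*$ of $\phi$ in $\Lax_\lax(\bicat D,\bicat E)$ has as its component at a $0$-cell $x$ the conjoint $\phi^*_x$ of $\phi_x$, and has structure $2$-cell $\n{\phi^*}_r\colon Fr\cdot\phi^*_x\to\phi^*_y\cdot Gr$ the mate of $\phi_r$ via \ref{enum:degen.i}; the hypothesis that $\phi$ has a strong conjoint is precisely that every $\n{\phi^*}_r$ is invertible. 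Throughout, we use that each $H\phi_x$ admits a conjoint $(H\phi_x)^*$ in $\bicat F$ --- without which neither side of the asserted equivalence is defined --- so that the construction of Theorem~\ref{thm:conjoint.closed} likewise produces the conjoint $(H\phi)^*$ of the whiskered vertical transformation $H\phi\colon HF\to HG$, with components $(H\phi_x)^*$ and with structure $2$-cells $\n{(H\phi)^*}_r\colon HFr\cdot(H\phi_x)^*\to(H\phi_y)^*\cdot HGr$ the mates of $(H\phi)_r=H\phi_r$. Since $H\phi$ has a strong conjoint exactly when every $\n{(H\phi)^*}_r$ is invertible, it suffices to prove, for each horizontal $1$-cell $r\colon x\relto y$, that $\n{(H\phi)^*}_r$ is invertible if and only if $\m H\circ(\id_{HFr}\cdot\sigma^H_{\phi_x})$ is.

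The crux is the commutativity of the square of globular $2$-cells
\begin{equation*}
  \begin{tikzcd}[column sep=huge]
    HFr\cdot(H\phi_x)^* \ar[r,"\n{(H\phi)^*}_r"]
      \ar[d,"\m H\circ(\id\cdot\sigma^H_{\phi_x})",swap]
    & (H\phi_y)^*\cdot HGr \ar[d,"\m H\circ(\sigma^H_{\phi_y}\cdot\id)"] \\
    H(Fr\cdot\phi^*_x) \ar[r,"H\n{\phi^*}_r",swap]
    & H(\phi^*_y\cdot Gr)
  \end{tikzcd}
\end{equation*}
Granting this, the equivalence is immediate: the right-hand vertical is invertible by Lemma~\ref{lem:conjoint.eq.normal}, applied to the lax functor $H$ with the vertical $1$-cell $\phi_y$ and the horizontal $1$-cell $Gr$; the bottom horizontal is invertible because $\n{\phi^*}_r$ is invertible by hypothesis and lax functors preserve isomorphisms; hence in the identity
\[
  \n{(H\phi)^*}_r=\bigl(\m H\circ(\sigma^H_{\phi_y}\cdot\id)\bigr)^{-1}\circ H\n{\phi^*}_r\circ\bigl(\m H\circ(\id\cdot\sigma^H_{\phi_x})\bigr)
\]
extracted from the square, the left-hand side is invertible if and only if $\m H\circ(\id\cdot\sigma^H_{\phi_x})$ is. Quantifying over all $r\colon x\relto y$ and all $0$-cells $x$ yields the statement.

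To establish the square I would invoke the uniqueness of mates (Lemma~\ref{lemma:conjoint.mate.theory}). Since $\m H\circ(\sigma^H_{\phi_y}\cdot\id)$ is invertible, commutativity amounts to the identity $\bigl(\m H\circ(\sigma^H_{\phi_y}\cdot\id)\bigr)^{-1}\circ H\n{\phi^*}_r\circ\bigl(\m H\circ(\id\cdot\sigma^H_{\phi_x})\bigr)=\n{(H\phi)^*}_r$; as the right-hand side is, by construction, the mate of $H\phi_r$ relative to the conjoints of $H\phi_x$ and $H\phi_y$ in $\bicat F$, it is enough to verify that the left-hand side obeys the mate-defining relation of \ref{enum:degen.i}. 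This is done by post-composing with $\epsilon\cdot\id$, where $\epsilon$ is the counit of the conjoint $(H\phi_y)^*$, and unwinding the result using: the identity $H\epsilon\circ\sigma^H=\e H\circ\epsilon$ valid for any lax functor (established earlier in this section); the explicit form of the inverse $l^H$ of $\m H\circ(\sigma^H\cdot\id)$ exhibited in the proof of Lemma~\ref{lem:conjoint.eq.normal}; the relation characterizing $\n{\phi^*}_r$ as the mate of $\phi_r$; the naturality of $\m H$; and the coherence axioms. Threading $\sigma^H$, $\m H$ and $\e H$ through this chain of unitors and associators is the main obstacle; everything else is formal.
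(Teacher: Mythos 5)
Your proof is correct and follows essentially the same route as the paper: the commuting square you display is precisely the paper's equation \eqref{eq:hphi.2.cell}, established there too by mate correspondence, with the conclusion then drawn from Lemma~\ref{lem:conjoint.eq.normal} (right vertical) and functoriality of $H$ (bottom horizontal). The only difference is that you verify the square by post-composing with the counit $\epsilon\cdot\id$ whereas the paper pre-composes with the unit $\id\cdot\eta$; these are dual, equally valid characterizations of the mate.
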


\begin{proof}
  We shall verify that
  \begin{equation}
    \label{eq:hphi.2.cell}
    \begin{tikzcd}[column sep=large]
      \cdot \ar[r,"(H\phi_x)_*"]
            \ar[d,equal]
      & \cdot \ar[r,"HFr"] \ar[d,phantom,"\n{(H\phi)^*}_r"] 
      & \cdot \ar[d,equal] \\
      \cdot \ar[r,"HGr" description]
            \ar[d,equal]
      & \cdot \ar[r,"(H\phi_y)^*" description, ""{name=A}]
              \ar[d,equal]
      & \cdot \ar[d,equal] \\
      \cdot \ar[r,"HGr" description]
            \ar[d,equal]
      & \cdot \ar[r,"H(\phi^*_y)" description, ""{name=B}] 
              \ar[d,phantom,"\m H"]
      & \cdot \ar[d,equal] \\
      \cdot \ar[rr,"H(\phi^*_y \cdot Gr)",swap]
      &\,& \cdot
      \ar[from=A,to=B,phantom,"\sigma^H"]
    \end{tikzcd}
    =
    \begin{tikzcd}[column sep=large]
      \cdot \ar[r,"(H\phi_x)_*"{name=A}]
            \ar[d,equal]
      & \cdot \ar[r,"HFr"] \ar[d,equal]
      & \cdot \ar[d,equal] \\
      \cdot \ar[r,"H(\phi^*_x)" description, ""{name=B}]
            \ar[d,equal]
      & \cdot \ar[r,"HFr"] 
      & \cdot \ar[d,equal] \\
      \cdot \ar[rr,"H(Fr \cdot \phi^*_x)" description,""{name=C}] 
            \ar[d,equal]
      & \,
      & \cdot \ar[d,equal] \\
      \cdot \ar[rr,"H(\phi_y \cdot HGr)"{name=D},swap]
      &\,& \cdot
      \ar[from=A,to=B,phantom,"\sigma^H"]
      \ar[from=C,to=D,phantom,"H\n{\phi^*}_r"]
      \ar[from=2-2,to=C,phantom,"\m H"]
    \end{tikzcd}
  \end{equation}
  for all \(r\) and \(x\), from which our result follows as a consequence of
  Lemma \ref{lem:conjoint.eq.normal}. Note that
  \begin{align*}
    \m H \circ (\sigma^H \cdot \id)
         \circ \n{(H\phi)^*}_r
         \circ (\id \cdot \eta)
    &= \m H \circ (\sigma^H \cdot \id)
            \circ (\eta \cdot H\phi_r)
            \circ \gamma^{-1} \\
    &= \m H \circ (H\eta \cdot H\phi_r)
            \circ (\e H \cdot \id)
            \circ \gamma^{-1} \\
    &= H(\eta \cdot \phi_r) \circ \m H
                            \circ (\e H \cdot \id)
                            \circ \gamma^{-1} \\
    &= H(\eta \cdot \phi_r) \circ H\lambda^{-1} \circ \rho \\
    H\n{\phi^*}_r \circ \m H \circ (\id \cdot \sigma^H)
                             \circ (\id \cdot \eta)
    &= H\n{\phi^*}_r \circ \m H \circ (\id \cdot H\eta)
                                \circ (\id \cdot \e H) \\
    &= H\n{\phi^*}_r \circ H(\id \cdot \eta) \circ \m H
                     \circ (\id \cdot \e H) \\
    &= H(\eta \cdot \phi_r) \circ H\gamma^{-1} \circ \m H
                     \circ (\id \cdot \e H) \\
    &= H(\eta \cdot \phi_r) \circ H\lambda^{-1} \circ \rho
  \end{align*}
  so \eqref{eq:hphi.2.cell} holds by mate correspondence.
\end{proof}
This invertibility condition is satisfied, for instance, by strong functors,
and by Barr extensions of monads on \( \Set \); see \cite[{}1.10.2(2)]{HST14}.

  \section{Double categories as pseudo-algebras}
    \label{sect:pstalg}
    We refer to \cite[{}4.4]{GP04} for the notion of equivalence of double
categories. This section is devoted to proving the following result:

\begin{proposition}
  \label{thm:psdbcat.as.ps-t-alg}
  We have an equivalence of double categories \( \PsDbCat
  \eqv \PsfcAlg \), where \(\fc=(\fc,m,e)\) is the free internal category
  2-monad on \( \Grph(\Cat) \), and \( \PsfcAlg \) is the sub-double category
  of \( \LaxfcAlg \) consisting of the pseudo-\(\fc\)-algebras.
\end{proposition}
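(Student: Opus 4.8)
The plan is to give, in each dimension of the two double categories, an explicit comparison between pseudodouble-categorical data and pseudo-$\fc$-algebra data, and then to check that these comparisons respect all the double-categorical structure. The starting point is an explicit description of the free internal category $2$-monad $\fc$ on the $2$-category $\Grph(\Cat)$: for a $\Cat$-graph $D_1 \rightrightarrows D_0$ the graph $\fc D$ has the same category of objects $D_0$, and $(\fc D)_1 = \coprod_{n \ge 0} D_1 \times_{D_0} \cdots \times_{D_0} D_1$ is the category of composable strings of horizontal $1$-cells and $2$-cells (empty strings included, these will produce the identities); the multiplication $m$ concatenates strings and the unit $e$ is the inclusion of length-one strings. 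It is classical that strict $\fc$-algebras are exactly internal categories in $\Cat$, i.e. (strict) double categories; the whole point of the argument is to lift this identification to the pseudo, lax and oplax levels, and to morphisms and $2$-cells.

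For $0$-cells: given a pseudo-$\fc$-algebra $(D,a)$, compatibility of $a \colon \fc D \to D$ with $e$ forces $a_0 = \id_{D_0}$, so the data is a functor $a_1 \colon (\fc D)_1 \to D_1$. Restricting $a_1$ to strings of length $0$ and $2$ yields the horizontal unit and the horizontal composition; the invertible structure constraints of the pseudo-algebra, restricted to the appropriate strings in $\fc\fc D$ and $\fc D$, yield the associator and the unitors; and the pseudo-algebra coherence axioms restrict to the coherence conditions of a pseudodouble category — here one invokes Proposition \ref{redundant.axioms}, since a pseudo-algebra structure literally only asserts the pentagon- and triangle-type identities. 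Conversely, a pseudodouble category determines a functor on $(\fc D)_1$ by sending a length-$n$ string to a chosen bracketing of its iterated horizontal composite, with structure constraints given by the corresponding iterated instances of $\alpha$, $\lambda$, $\rho$; that this is coherent — bracketing-independent up to canonical isomorphism, and satisfying the pseudo-algebra axioms — is a Mac Lane-style coherence statement, and this is precisely where the \emph{freeness} of $\fc$ does the work.

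For $1$-cells and $2$-cells one argues the same way. A lax $\fc$-algebra lax morphism $(F,\ovln F) \colon (D,a) \to (E,b)$ carries a structure $2$-cell $\ovln F \colon b \circ \fc F \Rightarrow F_1 \circ a$; restricting $\ovln F$ to strings of length $0$ and $2$ produces exactly the comparison transformations $\e F$ and $\m F$ of a lax functor, the lax-morphism axioms restrict to the lax-functor axioms, and the converse is again forced by freeness (the value of $\ovln F$ on longer strings is determined by $\e F$, $\m F$ and the coherences). Dually, oplax $\fc$-algebra morphisms correspond to oplax functors, and the generalized lax $\fc$-algebra $2$-cells described in Section \ref{subsect:ex.psdbcats} correspond componentwise to generalized vertical transformations, the coherence condition on a generalized $2$-cell matching the two hexagons in the definition of a generalized vertical transformation. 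This yields the comparisons on $0$-, horizontal-, vertical- and $2$-cells.

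Finally one checks that these comparisons assemble into a functor of double categories which is an equivalence: horizontal composition of lax/oplax functors agrees with composition of lax/oplax algebra morphisms (Proposition \ref{prop:dbcat.vert} and its counterpart inside $\LaxfcAlg$), vertical composition and units of generalized vertical transformations match those of generalized algebra $2$-cells (Proposition \ref{lem:catgrph.psdbcat}), and the unitors and associators are trivial on both sides; one gets an equivalence rather than an isomorphism exactly because of the bracketing choice made above. The main obstacle is the coherence argument in the converse direction — promoting the binary-and-nullary data of a pseudodouble category to a genuine pseudo-$\fc$-algebra structure and verifying the pseudo-algebra axioms — together with the bookkeeping needed to keep the horizontal/vertical and lax/oplax conventions of Section \ref{sect:psdbcats} aligned with the $2$-categorical conventions for $\fc$-algebras; the remainder is a direct, if lengthy, unwinding of definitions.
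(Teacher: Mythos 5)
Your proposal follows the same overall strategy as the paper: set up the $2$-monad $\fc$ explicitly, match the data dimension by dimension (length-$0$ and length-$2$ strings giving the horizontal unit and composition, the pseudo-algebra constraints giving $\lambda$, $\rho$, $\alpha$; restriction of the structure $2$-cell of a lax morphism giving $\e F$ and $\m F$; generalized algebra $2$-cells matching generalized vertical transformations), and then check compatibility with composition. The one real divergence is in how the central difficulty is discharged. You outsource the passage from binary-and-nullary data back to a full pseudo-$\fc$-algebra structure to ``a Mac Lane-style coherence statement,'' whereas the paper proves exactly this by hand: it fixes the left-nested bracketing $a(r_1,\dots,r_{n+1}) = r_{n+1}\cdot a(r_1,\dots,r_n)$, defines $\eta$ and $\mu$ by (double) induction, and verifies the pseudo-algebra axioms, the lax-morphism axioms, full faithfulness, and essential surjectivity (via an explicit invertible morphism $(\id,\gamma)$ comparing a given pseudo-algebra with the one induced by its extracted binary data) by lengthy inductive computation --- this constitutes the bulk of the proof. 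Your appeal to coherence is plausible and could in principle be made precise via general $2$-monad coherence results, but as stated it is an assertion rather than an argument, and it is slightly delicate because the statement ``unbiased pseudo-$\fc$-algebras are the same as biased pseudodouble categories'' is essentially the proposition being proved; if you go this route you should cite a specific coherence theorem applicable to pseudo-algebras over the $2$-monad $\fc$ rather than to monoidal categories or bicategories. One smaller imprecision: the pseudo-algebra unit constraint only forces $a_0 \cong \id_{D_0}$, not $a_0 = \id_{D_0}$, so the extraction of the underlying $\Cat$-graph requires a (harmless) replacement up to isomorphism; since the target is an equivalence rather than an isomorphism of double categories this is not a problem, but ``forces'' overstates it.
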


The proof is laid out as follows:
\begin{enumerate}[label=\textbf{\Roman*}]
  \item
    \label{enum:2.monad}
    We recall the definition of \(\fc\), verifying it is a 2-monad.

  \item
    \label{enum:dbcat2fcalg}
    We provide a construction of a pseudo-\(\fc\)-algebra from a given
    pseudodouble category.

  \item
    \label{enum:functor}
    We provide a construction of (op)lax morphisms of pseudo-\(\fc\)-algebras
    from given (op)lax functors of pseudodouble categories. Moreover, we
    verify this construction defines a functor \( \PsDbCat_\lax \to
    \PsfcAlg_\lax \) (and dually, \( \PsDbCat_\opl \to \PsfcAlg_\opl \)).

  \item
    \label{enum:ff}
    We prove the aforementioned functors are fully faithful. 

  \item
    \label{enum:es}
    We prove the aforementioned functors are essentially surjective.

  \item
    \label{enum:bij}
    Let \( H \colon \bicat A \to \bicat B \) and \( K \colon \bicat C \to
    \bicat D \) be lax functors, and let \( F \colon \bicat A \to \bicat C \)
    and \( G \colon \bicat B \to \bicat D \) be oplax functors, and consider
    the induced pseudo-\(\fc\)-algebra lax and oplax morphisms (as in
    \eqref{eq:fill.2-cells}). Given a 2-cell \( \omega \colon GH \to KF \) of
    internal \( \Cat \)-graphs, we prove that \( \omega \) is a generalized
    vertical transformation if and only if \( \omega \) is a generalized
    2-cell of pseudo \(\fc\)-algebras.
\end{enumerate}

\begin{remark}
  To clarify how the statements \ref{enum:2.monad}--\ref{enum:bij} give our
  desired result, we recall the characterization of equivalence of double
  categories \cite[Theorem 4.5]{GP04}. Namely, this result states that a
  strong functor \( F \colon \bicat D \to \bicat E \) of pseudodouble
  categories is an equivalence if and only if the underlying functor \( F_1
  \colon \bicat D_1 \to \bicat E_1 \) on horizontal 1-cells and 2-cells is an
  equivalence of categories.

  Items \ref{enum:dbcat2fcalg}, \ref{enum:functor} and \ref{enum:bij}
  guarantee that we indeed have a strict functor \( \PsDbCat \to \PsfcAlg \)
  between double categories. The correspondence between generalized vertical
  transformations and generalized 2-cells of pseudo-\( \fc \)-algebras is
  given by \ref{enum:bij}, and is the identity, which is certainly (strictly)
  functorial both horizontally and vertically.

  Indeed, we recall from Proposition \ref{thm:dbcat.psdbcat} that we are
  taking the horizontal 1-cells to be the lax functors. Let 
  \begin{equation*}
    F \colon \bicat A \to \bicat B, \quad  G \colon \bicat C \to \bicat D,
  \end{equation*}
  be lax functors, and write
  \begin{equation*}
    \overline F \colon \overline{\bicat A} \to \overline{\bicat B},
    \quad \overline G \colon \overline{\bicat C} \to \overline{\bicat D},
  \end{equation*}
  for the associated pseudo-\(\fc\)-algebra lax morphisms. Let \( \theta \) be
  a generalized 2-cell of pseudo-\( \fc \)-algebras
  \begin{equation*}
    \begin{tikzcd}
      \overline{\bicat A}
        \ar[r,"\overline F"{name=A}] \ar[d,"P",swap] 
      & \overline{\bicat B} \ar[d,"Q"] \\
      \overline{\bicat C}
        \ar[r,"\overline G"{name=B},swap]
      & \overline{\bicat D} 
      \ar[from=A,to=B,"\theta",phantom]
    \end{tikzcd}
  \end{equation*}
  where \( P, Q \) are the underlying oplax morphisms of pseudo-\( \fc
  \)-algebras. Since \( \PsDbCat_\opl \to \PsfcAlg_\opl \) is fully faithful
  by \ref{enum:ff}, there are unique oplax functors \( H \colon \bicat A \to
  \bicat C,\, K \colon \bicat B \to \bicat D \) such that \(P = \overline H, Q
  = \overline K \), respectively. Now, by \ref{enum:bij}, \( \theta \) is
  identical to a generalized vertical 2-cell
  \begin{equation*}
    \begin{tikzcd}
      \bicat A
        \ar[r,"F"{name=A}] \ar[d,"H",swap] 
      & \bicat B \ar[d,"K"] \\
      \bicat C
        \ar[r,"G"{name=B},swap]
      & \bicat D
      \ar[from=A,to=B,"\theta",phantom]
    \end{tikzcd}
  \end{equation*}
  so the strict functor \( \PsDbCat \to \PsfcAlg \) is full and faithful in
  the terminology of \cite[{}4.4]{GP04}.

  Now, we let \( P \colon \bicat X \to \bicat Y \) be a lax \( \fc \)-morphism
  between lax \( \fc \)-algebras \( \bicat X, \, \bicat Y \). Since the
  functor \( \PsDbCat_\lax \to \PsfcAlg_\lax \) is essentially surjective by
  \ref{enum:es}, there exist pseudodouble categories \( \bicat A \), \( \bicat
  B \) and invertible pseudomorphisms of pseudo-\( \fc \)-algebras \(
  \overline{\bicat A} \iso \bicat X \), \( \bicat Y \iso \overline{\bicat B}
  \), and by taking the composite we recover a lax functor \( F \colon \bicat
  A \to \bicat B \) via \ref{enum:ff}.  There is an invertible generalized
  2-cell \( \overline{F} \iso P \) induced by the isomorphisms \(
  \overline{\bicat A} \iso \bicat X \) and \( \overline{\bicat B} \iso \bicat
  Y \), thereby confirming that the strict double functor \( \PsDbCat \to
  \PsfcAlg \) is essentially surjective in the terminology of
  \cite[{}4.4]{GP04}.
\end{remark}

% Step one
\subsection*{Step {\ref{enum:2.monad}}:}
We begin by recalling that \( \Grph(\Cat) \) is the functor 2-category \(
[\cdot_1 \rightrightarrows \cdot_0, \Cat] \), whose 2-cells \( \theta \colon F
\to G \) are pairs of natural transformations \( \theta_i \colon F_i \to G_i
\) for \( i = 0,\, 1 \) such that \( d_j \cdot \theta_1 = \theta_0 \cdot d_j
\) for \( j=0,\,1 \).

Since \( \Cat \) is a lextensive category, we can define the free
internal category monad \( \fc = (\fc,m,e) \) on the underlying category of \(
\Grph(\Cat) \); we can obtain an explicit description of \( \fc(\bicat G) \)
for each internal \( \Cat \)-graph
\begin{equation*}
  \bicat G =
  \begin{tikzcd}
    & \bicat G_1 \ar[ld,"d_1",swap] \ar[rd,"d_0"] \\
    \bicat G_0 && \bicat G_0
  \end{tikzcd}
\end{equation*}
by considering the free monoid monad on the monoidal category \( \cat V =
\Span(\Cat)(\bicat G_0,\bicat G_0) \); indeed, we apply \cite[Theorem
23.4]{Kel80}, noting that, by extensivity, the induced tensor product on \(
\cat V \) preserves coproducts on both variables. See also the proof of Lemma
\ref{lem:free.monoid.cartesian} for the expression.

To extend \(\fc\) to a 2-monad, let \( \theta \colon F \to G \) be a 2-cell in
\( \Grph(\Cat) \). We define \( \fc\theta \) by letting \( (\fc\theta)_0 =
\theta_0 \) and \( (\fc\theta)_1 \colon (\fc F)_1 \to (\fc G)_1 \) is given at
a composable string of horizontal arrows \( r_1, \ldots, r_n \) by
\begin{equation*}
  (\fc\theta)_{r_1,\ldots,r_n} = (\theta_{r_1}, \ldots, \theta_{r_n}),
\end{equation*}
which is a horizontally composable string of 2-cells, and is given at the
empty string by \( (\fc\theta)_{()} = \theta_0 \).

We must check \( (\fc\theta)_1 \) is natural; indeed, if \( \phi_i \colon r_i
\to s_i \) is a horizontally composable string of 2-cells, then \(
\theta_{s_i} \circ F\phi_i = G\phi_i \circ \theta_{r_i} \) for all \(i\), so
\begin{align*}
  (\fc\theta)_{s_1, \ldots, s_n} \circ (\fc F)(\phi_1, \ldots, \phi_n)
    &= (\theta_{s_1}, \ldots, \theta_{s_n})
        \circ (F\phi_1, \ldots, F\phi_n) \\
    &= (G\phi_1, \ldots, G\phi_n)
        \circ (\theta_{r_1}, \ldots, \theta_{r_n}) \\
    &= (\fc G)(\phi_1,\ldots,\phi_n) \circ (\fc\theta)_{r_1,\ldots,r_n},
\end{align*} 
and since \( \theta_0 \) is already natural, there is nothing to check for
\(n=0\).

Finally, note that \( d_1 \cdot (\fc\theta)_{r_1,\ldots,r_n} =
d_1(\theta_{r_1}, \ldots, \theta_{r_n}) = d_1(\theta_{r_1}) = \theta_{d_1r_1}
= \theta_{d_1(r_1, \ldots, r_n)} \), and likewise \( d_0 \cdot (\fc\theta)_1 =
\theta_0 \cdot d_0 \). 

To verify \(\fc\) is a 2-functor, we must prove we have strict preservation of
vertical and horizontal composition of 2-cells. Therefore, let \( \omega
\colon G \to H \) and \( \xi \colon K \to L \) be 2-cells, with \( K,\, L \)
composable with \(F,\, G\) respectively. We have \( \fc(\omega \circ \theta)_0
= \fc(\omega)_0 \circ \fc(\theta)_0 \) and \( \fc(\xi \cdot \theta)_0 =
\fc(\xi)_0 \cdot \fc(\theta)_0 \). Moreover, given a composable string of
horizontal arrows \( r_1, \ldots, r_n \), we have
\begin{align*}
  \fc(\omega \circ \theta)_{r_1, \ldots, r_n} 
    &= ((\omega \circ \theta)_{r_1}, \ldots, (\omega \circ \theta)_{r_n}) \\
    &= (\omega_{r_1}, \ldots, \omega_{r_n}) 
          \circ (\theta_{r_1}, \ldots, \theta_{r_n}) \\
    &= \fc(\omega)_{r_1, \ldots, r_n} \circ \fc(\theta)_{r_1, \ldots, r_n}, \\
  \fc(\xi \cdot \theta)_{r_1, \ldots, r_n}
    &= ((\xi \cdot \theta)_{r_1}, \ldots, (\xi \cdot \theta)_{r_n}) \\
    &= (\xi_{r_1}, \ldots, \xi_{r_n})
        \cdot (\theta_{r_1}, \ldots, \theta_{r_n}) \\
    &= \fc(\xi)_{r_1, \ldots, r_n} \cdot \fc(\theta)_{r_1, \ldots, r_n},
\end{align*}
as desired. Nothing needs to be done to verify that \( m, e \) are 2-natural
transformations.

% Step two
\subsection*{Step {\ref{enum:dbcat2fcalg}}:}

A pseudodouble category consists of a graph of categories \( \bicat D =
(\bicat D_1 \rightrightarrows \bicat D_0) \), with vertical domain and
codomain functors.  The algebra structure \( a \colon \fc\bicat D \to \bicat D
\) is the identity on 0-cells and vertical 1-cells. We define \( a() = 1 \)
(at 0-cells), and if \(a\) is defined for \( \bicat D^{(n)} \), we define 
\begin{equation*}
  a(r_1, \ldots, r_{n+1}) = r_{n+1} \cdot a(r_1, \ldots, r_n).
\end{equation*}
We let \( \eta \colon \id \to a \cdot e \) be the identity on \( \id_{\bicat
D_0} \), and \( \eta_r \colon r \to a(r) \) is given by 
\begin{equation*}
  \rho^{-1} \colon r \to r \cdot 1 = r \cdot a() = a(r).
\end{equation*}

We define \( \mu \colon a \cdot \fc a \to a \cdot m \) to be the identity on
\( \id_{\bicat D_0} \), and on \( \fc\fc\bicat D_1 \to \fc\bicat D_1 \) by
double induction:
\begin{align*}
  \mu_{()} &= \id, \\
  \mu_{k_1,\ldots,k_n,0} 
    &= \mu_{k_1, \ldots, k_n} \circ \lambda, \\
  \mu_{k_1,\ldots,k_{n+1}+1} 
    &= (\id \cdot \mu_{k_1, \ldots, k_{n+1}}) \circ \alpha
\end{align*}
where 
\begin{equation*}
  \mu_{k_1, \ldots, k_n} \colon a(a(r_{1,1}, \ldots, r_{1,k_1}),
                                  \ldots, a(r_{n,1}, \ldots, r_{n,k_n}))
                         \to a(r_{1,1}, \ldots, r_{n,k_n}).
\end{equation*}

To prove that \( (\bicat D,a,\eta,\mu) \) is a pseudo-\(\fc\)-algebra, we must
verify that
\begin{align*}
  \mu_m \circ \eta_{a(r_1, \ldots, r_m)} &= \id, \\
  \mu_{1,\ldots,1} \circ a(\eta_{r_1}, \ldots, \eta_{r_m}) &= \id, \\
  \mu_{j_{1,1}, \ldots, j_{n,k_n}} \circ \mu_{k_1, \ldots, k_n}
    &= \mu_{\hat j_{1,k_1}, \ldots, \hat j_{n,k_n}}
        \circ a(\sigma_1, \ldots, \sigma_n)
\end{align*}
where we use the following abbreviations:
\begin{align*}
  \hat j_{i,k_i} &= \sum_{p=1}^{k_i} j_{i,p} \\
  \sigma_p &= \mu_{j_{p,1}, \ldots, j_{p,k_p}}
\end{align*}
For the first and second, we argue by induction. When \(m=0\), the first
becomes \( \lambda_1 \circ \rho^{-1}_1 = \id \), and the second trivializes.
If we assume the equations hold for some \(m\), then
\begin{equation*}
  \mu_{m+1} \circ \rho^{-1}
    = (\id \cdot \mu_m) \circ \alpha \circ \rho^{-1}
    = (\id \cdot \mu_m) \circ (\id \cdot \rho^{-1}) = \id,
\end{equation*}
and
\begin{align*}
  \mu_{1,\ldots,1,1} \circ a(\rho^{-1}, \ldots, \rho^{-1}, \rho^{-1})
    &= (\id \cdot \mu_{1,\ldots,1}) \circ (\id \cdot \lambda)
      \circ \alpha \circ (\rho^{-1} \cdot \id) 
      \circ (\id \cdot a(\rho^{-1}, \ldots, \rho^{-1})) \\
    &= (\id \cdot \mu_{1,\ldots,1}) 
          \circ (\id \cdot a(\rho^{-1}, \ldots, \rho^{-1})) \\
    &= \id
\end{align*}
For the third, we use triple induction. If \(n=0\), it trivializes, so we
assume it holds for some \(n\). If \(k_{n+1}=0 \), we have
\begin{align*}
  \mu_{j_{1,1},\ldots,j_{n,k_n}}
    \circ \mu_{k_1, \ldots, k_n, k_{n+1}}
    &= \mu_{j_{1,1}, \ldots, j_{n,k_n}}
        \circ \mu_{k_1, \ldots, k_n} \circ \lambda \\
    &= \mu_{\hat j_{1,k_1}, \ldots, \hat j_{n,k_n}}
        \circ a(\sigma_1, \ldots, \sigma_n) \circ \lambda \\
    &= \mu_{\hat j_{1,k_1}, \ldots, \hat j_{n,k_n}, 0}
        \circ a(\sigma_1, \ldots, \sigma_n, \id)  \\
    &= \mu_{\hat j_{1,k_1}, \ldots, \hat j_{n,k_n}, \hat j_{n+1,k_{n+1}}}
        \circ a(\sigma_1, \ldots, \sigma_n, \sigma_{n+1}).
\end{align*}
Now, we assume it holds for some \( k_{n+1} \). If \( j_{n+1,k_{n+1}+1} = 0\),
we have
\begin{align*}
  \mu_{j_{1,1}, \ldots, j_{n+1,k_{n+1}}, 0}
    \circ \mu_{k_1, \ldots, k_n, k_{n+1}+1}
    &= \mu_{j_{1,1}, \ldots, j_{n+1,k_{n+1}}} \circ \lambda 
         \circ (\id \cdot \mu_{k_1, \ldots, k_{n+1}}) \circ \alpha \\
    &= \mu_{j_{1,1}, \ldots, j_{n+1,k_{n+1}}}
         \circ \mu_{k_1, \ldots, k_{n+1}} \circ \lambda \circ \alpha \\
    &= \mu_{\hat j_{1,k_1}, \ldots, \hat j_{n+1,k_{n+1}}}
         \circ a(\sigma_1, \ldots, \sigma_{n+1}) \circ (\lambda \cdot \id) \\
    &= \mu_{\hat j_{1,k_1}, \ldots, \hat j_{n+1,k_{n+1}+1}}
         \circ a(\sigma_1, \ldots, (\sigma_{n+1} \circ \lambda)),
\end{align*}
and finally, if we assume it holds for some \( j_{n+1,k_{n+1}+1} \), then we
have
\begin{align*}
  \mu_{j_{1,1}, \ldots, j_{n+1,k_{n+1}+1}+1}
    \circ \mu_{k_1, \ldots, k_{n+1}+1}
    &= (\id \cdot \mu_{j_{1,1}, \ldots, j_{n+1,k_{n+1}+1}}) \circ \alpha
        \circ (\id \cdot \mu_{k_1, \ldots, k_{n+1}}) \circ \alpha \\
    &= (\id \cdot \mu_{j_{1,1}, \ldots, j_{n+1,k_{n+1}+1}}) 
        \circ (\id \cdot (\id \cdot \mu_{k_1, \ldots, k_{n+1}})) 
        \circ \alpha \circ \alpha \\
    &= (\id \cdot \mu_{j_{1,1}, \ldots, j_{n+1,k_{n+1}+1}}) 
        \circ (\id \cdot (\id \cdot \mu_{k_1, \ldots, k_{n+1}})) 
        \circ (\id \cdot \alpha) \circ \alpha \circ (\alpha \cdot \id) \\
    &= (\id \cdot \mu_{j_{1,1}, \ldots, j_{n+1,k_{n+1}+1}}) 
        \circ (\id \cdot \mu_{k_1, \ldots, k_{n+1}+1}) 
        \circ \alpha \circ (\alpha \cdot \id) \\
    &= (\id \cdot \mu_{\hat j_{1,k_1}, \ldots, \hat j_{n+1,k_{n+1}}})
        \circ (\id \cdot a(\sigma_1, \ldots, \sigma_{n+1}))
        \circ \alpha \circ (\alpha \cdot \id) \\
    &= (\id \cdot \mu_{\hat j_{1,k_1}, \ldots, \hat j_{n+1,k_{n+1}}})
        \circ \alpha 
        \circ ((\id \cdot \sigma_{n+1}) \cdot a(\sigma_1, \ldots, \sigma_n))
        \circ (\alpha \cdot \id) \\
    &= \mu_{\hat j_{1,k_1}, \ldots, \hat j_{n+1,k_{n+1}}+1}
        \circ a(\sigma_1, \ldots, \sigma_n, 
                  (\id \cdot \sigma_{n+1}) \circ \alpha),
\end{align*}
so the result holds by induction. 

\begin{remark}
  It should be noted that the proof (so far) remains unchanged if we consider
  left-biased double categories, in which case \( (\bicat D,a,\eta,\mu) \) is
  a lax \(\fc\)-algebra instead. Respectively, if \( \bicat D \) is
  right-biased double category, \( (\bicat D,a,\eta,\mu) \) is an oplax
  \(\fc\)-algebra.
\end{remark}

% Third step
\subsection*{Step {\ref{enum:functor}}:}
If we have a lax functor \( F \colon \bicat D \to \bicat E \) between ordinary
double categories, we define a pseudo \(\fc\)-algebra lax morphism \( (F,
\gamma^F) \colon (\bicat D,a,\eta,\mu) \to (\bicat E,b,\eta,\mu) \), taking
\(F\) to be the same underlying graph morphism, and we define \( \gamma^F
\colon b \circ \fc F \to F \circ a \) inductively as follows:
\begin{align*}
  \gamma^F_x &= \e F_x, \\
  \gamma^F_{r_1,\ldots,r_{n+1}} 
             &= \m F \circ (\id \cdot \gamma^F_{r_1, \ldots, r_n}).
\end{align*}
to confirm \( (F,\gamma^F) \) is indeed a lax morphism, we will prove that
\begin{equation*}
  F\eta = \gamma_r \circ \eta
\end{equation*}
and
\begin{equation*}
  \gamma_{r_{1,1},\ldots,r_{n,k_n}} \circ \mu_{k_1,\ldots,k_n}
    = F\mu_{k_1, \ldots, k_n} \circ \gamma_{s_1, \ldots, s_n}
                              \circ b(\sigma_1, \ldots, \sigma_n),
\end{equation*}
where
\begin{align*}
  s_i &= a(r_{i,1}, \ldots, r_{i,k_i}), \\
  \sigma_i &= \gamma_{r_{i,1}, \ldots, r_{i,k_i}}.
\end{align*}
The first is just a restatement of the coherence diagram for the right unitor.
For the second, when \(n=0\), the equation is trivial; \( \e F = \e F \). Now,
we assume the equation holds for some \(n\). If \( k_{n+1} = 0 \), then
\begin{align*}
  \gamma_{r_{1,1},\ldots,r_{n,k_n}}
    \circ \mu_{k_1, \ldots, k_n,0}
    &= \gamma_{r_{1,1},\ldots,r_{n,k_n}}
        \circ \mu_{k_1, \ldots, k_n} \circ \lambda \\
    &= F\mu_{k_1, \ldots, k_n} \circ \gamma_{s_1, \ldots, s_n}
        \circ b(\sigma_1, \ldots, \sigma_n) \circ \lambda \\
    &= \lambda \circ (\id \cdot F\mu_{k_1, \ldots,k_n})
        \circ (\id \cdot \gamma_{s_1, \ldots, s_n})
        \circ (\id \cdot b(\sigma_1,\ldots,\sigma_n)) \\
    &= F\lambda \circ \m F \circ (\e F \cdot \id)
        \circ (\id \cdot F\mu_{k_1, \ldots,k_n})
        \circ (\id \cdot \gamma_{s_1, \ldots, s_n})
        \circ (\id \cdot b(\sigma_1,\ldots,\sigma_n)) \\
    &= F\lambda \circ F(\id \cdot \mu_{k_1,\ldots,k_n})
        \circ \m F \circ (\id \cdot \gamma_{s_1,\ldots,s_n})
        \circ (\e F \cdot b(\sigma_1,\ldots,\sigma_n)) \\
    &= F\mu_{k_1, \ldots, k_n, 0}
        \circ \gamma_{s_1,\ldots,s_n,s_{n+1}}
        \circ b(\sigma_1, \ldots, \sigma_n, \sigma_{n+1}),
\end{align*}
so, we assume the identity holds for some \( k_{n+1} \). We have
\begin{align*}
  &\gamma_{r_{1,1}, \ldots, r_{n+1,k_{n+1}}, r_{n+1,k_{n+1}+1}}
    \circ \mu_{k_1, \ldots, k_n, k_{n+1}+1} \\
    &\quad= \m F \circ (\id \cdot \gamma_{r_{1,1}, \ldots, r_{n+1,k_{n+1}}})
        \circ (\id \cdot \mu_{k_1, \ldots, k_{n+1}}) \circ \alpha \\
    &\quad= \m F \circ (\id \cdot F\mu_{k_1, \ldots, k_{n+1}})
            \circ (\id \cdot \gamma_{s_1, \ldots, s_n, s_{n+1}})
            \circ (\id \cdot b(\sigma_1, \ldots, \sigma_n, \sigma_{n+1}))
            \circ \alpha \\
    &\quad= F(\id \cdot \mu_{k_1, \ldots, k_{n+1}})
            \circ \m F \circ (\id \cdot \m F) 
            \circ (\id \cdot (\id \cdot \gamma_{s_1, \ldots, s_n}))
            \circ \alpha
            \circ ((\id \cdot \sigma_{n+1}) 
                      \cdot b(\sigma_1, \ldots, \sigma_n)) \\
    &\quad= F(\id \cdot \mu_{k_1, \ldots, k_{n+1}})
        \circ F\alpha \circ \m F \circ (\m F \cdot \id)
        \circ (\id \cdot \gamma_{s_1, \ldots, s_n})
        \circ ((\id \cdot \sigma_{n+1}) 
                  \cdot b(\sigma_1, \ldots, \sigma_n)) \\
    &\quad= F\mu_{k_1, \ldots, k_{n+1}+1}
        \circ \gamma_{s_1, \ldots, s_{n+1}}
        \circ b(\sigma_1, \ldots, \sigma_n,
                          \m F \circ (\id \cdot \sigma_{n+1})),
\end{align*}
so, the result follows by induction.

This assignment preserves identities (trivially) as well as composition; that
is, this defines a functor \( \PsDbCat_\lax \to \PsfcAlg_\lax \). To see this,
let \( G \colon \bicat E \to \bicat C \) be another lax functor. We have
\begin{align*}
  (G\gamma^F \circ \gamma^G)_{()} &= G\e F \circ e^G, \\
  (G\gamma^F \circ \gamma^G)_{r_1, \ldots, r_{n+1}}
    &= \m{GF} \circ (\id \cdot G\gamma^F_{r_1, \ldots, r_n})
               \circ (\id \cdot \gamma^G_{Fr_1, \ldots, Fr_n}) \\
    &= G\m{F} \circ \m G
              \circ (\id \cdot G\gamma^F_{r_1, \ldots, r_n})
              \circ (\id \cdot \gamma^G_{Fr_1, \ldots, Fr_n}) \\
    &= G\m{F} \circ G(\id \cdot \gamma^F_{r_1, \ldots, r_n})
              \circ \m G
              \circ (\id \cdot \gamma^G_{Fr_1, \ldots, Fr_n}) \\
    &= G(\gamma^F_{r_1,\ldots,r_{n+1}}) 
        \circ \gamma^G_{Fr_1, \ldots, Fr_{n+1}}.
\end{align*}

Dually, we obtain a functor \( \PsDbCat_\opl \to \PsfcAlg_\opl \).

\subsection*{Step {\ref{enum:ff}}:}
We claim the functor \( \PsDbCat_\lax \to \PsfcAlg_\lax \) is fully faithful;
if \( (F,\gamma^F) \colon (\bicat D,a,\eta,\mu) \to (\bicat E,b,\eta,\mu) \)
is a lax morphism between (the image of) double categories, we define
\begin{align*}
  \e F_x &= \gamma^F_x \\
  \m F_{r,s} &= F(\id \cdot \rho) \circ \gamma^F_{r,s} 
                                  \circ (\id \cdot \rho^{-1})
\end{align*}
We must confirm these satisfy the coherence conditions. First, we observe that
\begin{align*}
  \m F_{1,s} \circ (\id \cdot \e F)
    &= F(\id \cdot \rho) \circ \gamma^F_{1,s} 
                         \circ (\id \cdot \rho^{-1})
                         \circ (\id \cdot \e F) \\
    &= F(\id \cdot \lambda) \circ F(\rho \cdot \id)
                            \circ \gamma^F_{1,a(s)}
                            \circ (F\rho^{-1} \cdot \id)
                            \circ (\id \cdot (\e F \cdot \id))
                            \circ (\id \cdot \lambda^{-1}) \\
    &= F\mu_{0,1} \circ \gamma^F_{1,a(s)}
                  \circ b(\gamma^F_{()}, \gamma^F_s)
                  \circ \mu^{-1}_{0,1} = \gamma_s^F, \\
  \m F_{r,1} \circ (\e F \cdot \id)
    &= F(\id \cdot \rho) \circ \gamma^F_{r,1}
                         \circ (\id \cdot \rho^{-1})
                         \circ (\e F \cdot \id) \\
    &= F(\id \cdot \rho) \circ F(\id \cdot (\rho \cdot \id))
                         \circ \gamma^F_{a(r),1}
                         \circ (\id \cdot (F\rho^{-1} \cdot \id))
                         \circ (\e F \cdot \id)
                         \circ (\id \cdot \rho^{-1}) \\
    &= F(\id \cdot \rho) \circ F(\id \cdot \rho)
                         \circ \gamma^F_{a(r),1}
                         \circ b(\gamma^F_r,\gamma^F_{()})
                         \circ (\id \cdot \rho^{-1})
                         \circ (\id \cdot \rho^{-1}) \\
    &= F(\id \cdot \rho) \circ F(\id \cdot \rho)
                         \circ F\mu_{1,0}^{-1}
                         \circ \gamma^F_r
                         \circ \mu_{1,0}
                         \circ (\id \cdot \rho^{-1})
                         \circ (\id \cdot \rho^{-1}) \\
    &= F\lambda^{-1} \circ F\rho
                     \circ \gamma^F_r
                     \circ \rho^{-1} \circ \lambda \\
    &= F\lambda^{-1} \circ \lambda
\end{align*}
which gives the unit comparsion coherences for \(F\), and after calculating
\begin{align*}
  \mu_{1,2} &= \alpha \circ ((\id \cdot \rho) \cdot (\rho \cdot \id)) \\
  \mu_{2,1} &= (\id \cdot \alpha) 
                  \circ (\id \cdot ((\id \cdot \rho) \cdot \id))
                  \circ (\rho \cdot \id)
\end{align*}
we verify that
\begin{align*}
  F\alpha &\circ \m F_{r,t \cdot s} \circ (\m F_{s,t} \cdot \id) \\
   &= F\alpha \circ F(\id \cdot \rho)
              \circ \gamma^F_{r,t \cdot s}
              \circ (\id \cdot \rho^{-1})
              \circ (F(\id \cdot \rho) \cdot \id)
              \circ (\gamma^F_{s,t} \cdot \id)
              \circ ((\id \cdot \rho^{-1}) \cdot \id) \\
   &= F\alpha \circ F(\id \cdot \rho)
              \circ F((\id \cdot \rho) \cdot (\rho \cdot \id))
              \circ \gamma_{a(r),a(s,t)}^F
              \circ (\gamma^F_{s,t} \cdot (F\rho^{-1} \cdot \id))
              \circ ((\id \cdot \rho^{-1}) \cdot \rho^{-1}) \\
   &= F(\id \cdot (\id \cdot \rho)) \circ F\alpha
              \circ F((\id \cdot \rho) \cdot (\rho \cdot \id))
              \circ \gamma^F_{a(r),a(s,t)}
              \circ b(\gamma^F_r,\gamma^F_{s,t})
              \circ ((\id \cdot \rho^{-1}) \cdot (\rho^{-1} \cdot \id))
              \circ (\id \cdot \rho^{-1}) \\
   &= F(\id \cdot (\id \cdot \rho))
              \circ \gamma^F_{r,s,t} \circ \alpha \circ (\id \cdot \rho^{-1})
\end{align*}
\begin{align*}
  \m F_{s \cdot r, t} &\circ (\id \cdot \m F_{r,s}) \circ \alpha \\
    &= F(\id \cdot \rho) \circ \gamma^F_{s \cdot r, t}
                         \circ (\id \cdot \rho^{-1})
                         \circ (\id \cdot F(\id \cdot \rho))
                         \circ (\id \cdot \gamma^F_{r,s})
                         \circ (\id \cdot (\id \cdot \rho^{-1}))
                         \circ \alpha \\
    &= F(\id \cdot \rho) \circ F(\rho \cdot ((\id \cdot \rho) \cdot \id))
                         \circ \gamma^F_{a(r,s),a(t)}
                         \circ (F\rho^{-1} \cdot (F(\id \cdot \rho^{-1}) 
                                                        \cdot \id))
                         \circ (\id \cdot \rho^{-1}) 
                         \circ (\id \cdot F(\id \cdot \rho)) \\
    &\qquad \quad        \circ (\id \cdot \gamma^F_{r,s})
                         \circ (\id \cdot (\id \cdot \rho^{-1}))
                         \circ \alpha \\
    &= F(\id \cdot (\id \cdot \rho))
                         \circ F(\rho \cdot \rho) 
                         \circ \gamma_{a(r,s),a(t)}^F
                         \circ b(\gamma^F_{r,s},\gamma^F_t)
                         \circ (\rho^{-1} \cdot \rho^{-1})
                         \circ (\id \cdot (\id \cdot \rho^{-1}))
                         \circ \alpha \\
    &= F(\id \cdot (\id \cdot \rho))
                         \circ \gamma^F_{r,s,t}
                         \circ \alpha \circ (\id \cdot \rho^{-1})
\end{align*}
which confirms coherence for the associator comparison. We further verify
that, by induction, \( \mu_{n,1} \circ (\rho^{-1} \cdot \rho^{-1}) = \id \)
(pattern matching), so that
\begin{align*}
  \m F_{a(r_1, \ldots, r_n),r_{n+1}} 
    \circ (\id \cdot \gamma^F_{r_1, \ldots, r_n})
    &= F(\id \cdot \rho) \circ \gamma^F_{a(r_1, \ldots, r_n),r_{n+1}}
                         \circ (\id \cdot \rho^{-1})
                         \circ (\id \cdot \gamma^F_{r_1, \ldots, r_n}) \\
    &= F(\rho \cdot \rho) 
        \circ \gamma^F_{a(r_1, \ldots, r_n),a(r_{n+1})}
        \circ b(\gamma^F_{r_1, \ldots, r_n},\gamma^F_{r_{n+1}})
        \circ (\rho^{-1} \cdot \rho^{-1}) \\
    &= \gamma^F_{r_1, \ldots, r_{n+1}},
\end{align*}
confirming that the functor \( \PsDbCat_\lax \to \PsfcAlg_\lax \) is fully
faithful.

\subsection*{Step {\ref{enum:es}}:}
We claim the functor \( \PsDbCat_\lax \to \PsfcAlg_\lax \) is essentially
surjective; let \( (\bicat D, a, \eta, \mu) \) be a pseudo-\(\fc\)-algebra. We
define
\begin{align*}
  1 &= a(), \\
  s \cdot r &= a(r,s), \\
  \lambda_r &= \eta^{-1}_r \circ \mu_{r,-} \circ a(\eta_r,\id), \\
  \rho_r &= \eta^{-1}_r \circ \mu_{-,r} \circ a(\id,\eta_r), \\
  \alpha_{r,s,t} &= a(\id,\eta^{-1}_t) \circ \mu_{rs,t}^{-1}
                                       \circ \mu_{r,st}
                                       \circ a(\eta_r,\id).
\end{align*}
These endow \( \bicat D \) with the structure of a double category; to see
this, we must verify the coherence conditions hold. First, we have
\begin{align*}
  (\id \cdot \lambda_r) \circ \alpha_{r,1,s} 
    &= a(\eta^{-1}_r,\id) \circ a(\mu_{r,-},\id)
                          \circ a(a(\eta_r,\id),\id)
                          \circ a(\id,\eta^{-1}_s)
                          \circ \mu_{r1,s}^{-1}
                          \circ \mu_{r,1s}
                          \circ a(\eta_r,\id) \\
    &= a(\eta^{-1}_r,\eta^{-1}_s) \circ a(\mu_{r,-},\id)
                                  \circ a(a(\eta_r,\id),\id)
                                  \circ \mu_{r1,s}^{-1}
                                  \circ \mu_{r,1s}
                                  \circ a(\eta_r,\id) \\
    &= \mu_{r,s} \circ a(\mu_{r,-},\id)
                \circ \mu_{a(r)1,s}^{-1}
                \circ a(\eta_r,\id,\id)
                \circ \mu_{r,1s}
                \circ a(\eta_r,\id) \\
    &= \mu_{r,s} \circ a(\mu_{r,-},\mu_s)
                 \circ \mu_{a(r)1,a(s)}^{-1}
                 \circ a(\eta_r,\id,\eta_s)
                 \circ \mu_{r,1s}
                 \circ a(\eta_r,\id) \\
    &= \mu_{r,-,s} \circ \mu_{a(r),1a(s)}
                   \circ a(a(\eta_r),a(\id, \eta_s))
                   \circ a(\eta_r,\id) \\
    &= \mu_{r,s} \circ a(\mu_r,\mu_{-,s})
                 \circ a(a(\eta_r),a(\id,\eta_s))
                 \circ a(\eta_r,\id) \\
    &= a(\eta_r^{-1},\eta_s^{-1}) \circ a(\id,\mu_{-,s})
                                  \circ a(\id,a(\id,\eta_s))
                                  \circ a(\eta_r,\id) \\
    &= a(\id, \eta_s^{-1}) \circ a(\id, \mu_{-,s})
                           \circ a(\id,a(\id, \eta_s))
     = \rho_s \cdot \id.
\end{align*}
and for the associator pentagon, we have, on one hand
\begin{align*}
  (\id \cdot \alpha_{r,s,t}) \circ \alpha_{q,a(r,s),t} 
                             \circ (\alpha_{q,r,s} \cdot \id) \\
    \quad= 
      a(a(\id, \eta^{-1}_s),\id) 
          &\circ a(\mu^{-1}_{qr,s}, \id) 
           \circ a(\mu_{q,rs},\id)
           \circ a(a(\eta_q,\id),\id) \\
          &\circ a(\id,\eta^{-1}_t)
           \circ \mu_{qa(r,s),t}^{-1}
           \circ \mu_{q,a(r,s)t}
           \circ a(\eta_q,\id) \\
          &\circ a(\id, a(\id,\eta^{-1}_t))
           \circ a(\id, \mu_{rs,t}^{-1})
           \circ a(\id, \mu_{r,st})
           \circ a(\id, a(\eta_r,\id)) \\
    \quad= 
      a(a(\id,\eta^{-1}_s), \eta^{-1}_t) 
          &\circ a(\mu^{-1}_{qr,s}, \id)
           \circ a(\mu_{q,rs},\id) \\
          &\circ a(a(\eta_q,\id),\id) 
           \circ \mu_{qa(r,s),t}^{-1}
           \circ \mu_{q,a(r,s)t}
           \circ a(\id, a(\id,\eta^{-1}_t)) \\
          &\circ a(\id, \mu_{rs,t}^{-1})
           \circ a(\id, \mu_{r,st})
           \circ a(\eta_q, a(\eta_r,\id)) \\
    \quad= 
      a(a(\id,\eta^{-1}_s), \eta^{-1}_t) 
          &\circ a(\mu^{-1}_{qr,s}, \id)
           \circ a(\mu_{q,rs},\id) \\
          &\circ a(\id,a(\eta^{-1}_t))
           \circ \mu_{a(q)a(r,s),a(t)}^{-1}
           \circ \mu_{a(q),a(r,s)a(t)}
           \circ a(a(\eta_q),\id) \\
          &\circ a(\id, \mu_{rs,t}^{-1})
           \circ a(\id, \mu_{r,st})
           \circ a(\eta_q, a(\eta_r,\id))  \\
    \quad= 
      a(a(\id,\eta^{-1}_s), \eta^{-1}_t) 
          &\circ a(\mu^{-1}_{qr,s}, \id) 
           \circ a(\mu_{q,rs},\mu_t) \\
          &\circ \mu_{a(q)a(r,s),a(t)}^{-1}
           \circ \mu_{a(q),a(r,s)a(t)} \\
          &\circ a(\mu_q^{-1}, \mu_{rs,t}^{-1}) 
           \circ a(\id, \mu_{r,st})
           \circ a(\eta_q, a(\eta_r,\id)) \\
    \quad= 
      a(a(\id,\eta^{-1}_s), \eta^{-1}_t) 
          &\circ a(\mu^{-1}_{qr,s}, \id) 
           \circ \mu_{qrs,t}^{-1} \\
          &\circ \mu_{q,rst}
           \circ a(\id, \mu_{r,st})
           \circ a(\eta_q, a(\eta_r,\id)),
\end{align*}
while on the other, we have
\begin{align*}
  \alpha_{a(q,r),s,t} \circ \alpha_{q,r,a(s,t)} 
    &= a(\id,\eta^{-1}_t) 
          \circ \mu^{-1}_{a(q,r)s,t} 
          \circ \mu_{a(q,r),st}
          \circ a(\eta_{a(q,r)},\id) \\
          &\quad \circ a(\id,\eta_{a(s,t)}^{-1}) 
                 \circ \mu^{-1}_{qr,a(s,t)}
                 \circ \mu_{q,ra(s,t)}
                 \circ a(\eta_q,\id) \\
    &= a(\id,\eta^{-1}_t) 
           \circ \mu^{-1}_{a(q,r)s,t} 
           \circ a(\id,\eta_s^{-1},\eta_r^{-1}) 
           \circ \mu_{a(q,r),a(s)a(t)} \\
           &\quad \circ a(\id,a(\eta_s,\eta_t)) 
                  \circ a(\id,\eta^{-1}_{a(s,t)}) 
                  \circ a(\eta_{a(q,r)},\id) 
                  \circ a(a(\eta_q,\eta_r),\id) \\
           &\quad \circ \mu^{-1}_{a(q)a(r),a(s,t)}
                  \circ a(\eta_q^{-1},\eta_r^{-1},\id)
                  \circ \mu_{q,ra(s,t)}
                  \circ a(\eta_q,\id) \\
    &= a(\id,\eta^{-1}_t) 
          \circ \mu^{-1}_{a(q,r)s,t} 
          \circ a(\id,\eta_s^{-1},\eta_r^{-1}) \\
          &\quad \circ \mu_{a(q,r),a(s)a(t)} 
                 \circ a(\eta_{a(q,r)},a(\eta_s,\eta_t)) \\
          &\quad \circ a(a(\eta_q^{-1},\eta_r^{-1}),\eta^{-1}_{a(s,t)}) 
                 \circ \mu_{a(q)a(r),a(s,t)}\\
          &\quad \circ a(\eta_q,\eta_r,\id)
                 \circ \mu_{q,ra(s,t)}
                 \circ a(\eta_q,\id) \\
    &= a(\id,\eta^{-1}_t) 
          \circ \mu^{-1}_{a(q,r)s,t} 
          \circ a(\id,\eta_s^{-1},\eta_r^{-1})
          \circ \mu_{qr,s,t}^{-1} \\
          &\quad \circ \mu_{q,r,st} 
                 \circ a(\eta_q,\eta_r,\id)
                 \circ \mu_{q,ra(s,t)}
                 \circ a(\eta_q,\id) \\ 
    &= a(\id,\eta^{-1}_t) 
          \circ \mu^{-1}_{a(q,r)s,t} 
          \circ a(\id,\eta_s^{-1},\eta_r^{-1}) \\
          &\quad \circ a(\mu_{q,r},\mu_s,\mu_t)
                 \circ \mu^{-1}_{a(q)a(r),a(s),a(t)} \\
          &\quad \circ \mu_{a(q),a(r),a(s)a(t)}
                 \circ a(\mu_q^{-1},\mu_r^{-1},\mu_{s,t}^{-1}) \\
          &\quad \circ a(\eta_q,\eta_r,\id)
                 \circ \mu_{q,ra(s,t)}
                 \circ a(\eta_q,\id) \\
    &= a(a(\id,\eta^{-1}_s),\eta^{-1}_t) 
          \circ a(\id,\mu_t)
          \circ \mu^{-1}_{a(q,r)a(s),a(t)} \\
          &\quad \circ a(\mu_{q,r},\mu_s,\mu_t)
                 \circ \mu^{-1}_{a(q)a(r),a(s),a(t)} \\
          &\quad \circ \mu_{a(q),a(r),a(s)a(t)}
                 \circ a(\mu_q^{-1},\mu_r^{-1},\mu_{s,t}^{-1}) \\
          &\quad \circ \mu_{a(q),a(r)a(s,t)}
                 \circ a(\mu_q,\id)
                 \circ a(\eta_q,a(\eta_r,\id)),
\end{align*}
so, our goal is to prove that
\begin{equation}
  \label{eq:final.step}
  \begin{aligned}
    a(\mu^{-1}_{qr,s}, \mu_t^{-1}) 
             \circ \mu_{qrs,t}^{-1} 
             \circ \mu_{q,rst}
             \circ a(\mu_q, \mu_{r,st})
    &= \mu^{-1}_{a(q,r)a(s),a(t)} \circ a(\mu_{q,r},\mu_s,\mu_t) \\
    &\quad          \circ \mu^{-1}_{a(q)a(r),a(s),a(t)} 
                    \circ \mu_{a(q),a(r),a(s)a(t)} \\
    &\quad          \circ a(\mu_q^{-1},\mu_r^{-1},\mu_{s,t}^{-1}) 
                    \circ \mu_{a(q),a(r)a(s,t)}.
  \end{aligned}
\end{equation}
And to do so, we observe that the following diagrams
\begin{equation*}
  \begin{tikzcd}[column sep=huge]
    a(a(a(q)),a(a(r),a(s,t))) 
      \ar[d,swap,"\mu_{a(q),a(r)a(s,t)}"]
      \ar[r,"{a(\mu_q,\mu_{r,st})}"]
    & a(a(q),a(r,s,t)) 
      \ar[d,"\mu_{q,rst}"] \\
    a(a(q),a(r),a(s,t))
      \ar[d,"{a(\mu_q^{-1},\mu_r^{-1},\mu_{s,t}^{-1})}",swap]
      \ar[r,"\mu_{q,r,st}" description]
    & a(q,r,s,t)
      \ar[d,"\mu_{q,r,s,t}^{-1}"] \\
    a(a(a(q)),a(a(r)),a(a(s),a(t)))
      \ar[r,"\mu_{a(q),a(r),a(s)a(t)}",swap] 
    & a(a(q),a(r),a(s),a(t))
  \end{tikzcd}
\end{equation*}
\begin{equation*}
  \begin{tikzcd}[column sep=huge]
    a(a(q,r,s),a(t))
      \ar[r,"{a(\mu_{qr,s}^{-1},\mu_t^{-1})}"]
    & a(a(a(q,r),a(s)),a(a(t)) \\
    a(q,r,s,t) 
      \ar[r,"\mu_{qr,s,t}^{-1}" description]
      \ar[u,"\mu_{qrs,t}^{-1}"]
    & a(a(q,r),a(s),a(t))
      \ar[u,"\mu_{a(q,r)a(s),a(t)}^{-1}",swap] \\
    a(a(q),a(r),a(s),a(t)) 
      \ar[u,"\mu_{q,r,s,t}"]
      \ar[r,"\mu_{a(q)a(r),a(s),a(t)}^{-1}",swap]
    & a(a(a(q),a(r)),a(a(s)),a(a(t)))
      \ar[u,"{a(\mu_{q,r},\mu_s,\mu_t)}",swap]
 \end{tikzcd}
\end{equation*}
are pastings of associativity squares for \( \mu \), and are therefore
commutative. Pasting these diagrams along \( \mu_{q,r,s,t} \) will confirm
\eqref{eq:final.step}, and we conclude that \( \bicat D \) has the structure
of a pseudodouble category.

Now, write \( (\bicat D, \ovl a, \ovl \eta, \ovl \mu) \) for the
pseudo-\(\fc\)-algebra induced by the above pseudodouble category. We define
\( \gamma \colon \ovl a \to a \) to be the natural transformation inductively
given by
\begin{align*}
  \gamma_{()} &= \id, \\
  \gamma_{r_1, \ldots, r_n,r_{n+1}} 
              &= \mu_{r_1 \cdots r_n,r_{n+1}} 
                   \circ a(\gamma_{r_1, \ldots, r_n},\eta_{r_{n+1}})
\end{align*}
We claim that \( (\id,\gamma) \colon (\bicat D,a,\eta,\mu) \to (\bicat D,\ovl
a,\ovl \eta, \ovl \mu) \) is an invertible lax morphism of
pseudo-\(\fc\)-algebras. First, note that
\begin{equation*}
  \gamma_r \circ \ovl \eta_r
    = \mu_{-,r} \circ a(\id,\eta_r) 
                \circ a(\id,\eta_r^{-1})
                \circ \mu_{-,r}^{-1}
                \circ \eta_r
    = \eta_r,
\end{equation*}
and we shall prove that
\begin{equation}
  \label{eq:golpe.final}
  \gamma_{r_{1,1}, \ldots, r_{n,k_n}}
    \circ \ovl \mu_{k_1, \ldots, k_n}
  = \mu_{(r_{1,i})_{i=1}^{k_1}, \ldots, (r_{n,i})_{i=1}^{k_n}}
    \circ \gamma_{a(r_{1,i})_{i=1}^{k_1}, \ldots, (r_{n,i})_{i=1}^{k_n}}
    \circ \ovl a(\gamma_{r_{1,1},\ldots,r_{1,k_1}}, \ldots,
                 \gamma_{r_{n,1},\ldots,r_{n,k_n}})
\end{equation}
by double induction. When \(n=0\), the above reduces to
\begin{equation*}
  \id = \mu_{()} \circ a(\id),
\end{equation*}
which holds, so assume the above is true for some \(n\). If \( k_{n+1} = 0 \),
the left-hand side of \eqref{eq:golpe.final} becomes
\begin{align*}
  \gamma_{r_{1,1}, \ldots, r_{n,k_n}} \circ \ovl \mu_{k_1, \ldots, k_n,0} 
    &= \gamma_{r_{1,1}, \ldots, r_{n,k_n}} 
         \circ \ovl \mu_{k_1, \ldots, k_n}
         \circ \lambda \\
    &= \lambda \circ a(\gamma_{r_{1,1},\ldots,r_{n,k_n}},\id)
               \circ a(\ovl \mu_{k_1,\ldots,k_n},\id)
\end{align*}
while the right-hand side of \eqref{eq:golpe.final} becomes
\begin{align*}
  \mu_{(r_{1,i}),\ldots,(r_{n,i}),()}
    &\circ \gamma_{a(r_{1,i}),\ldots,a(r_{n,i})}
    \circ \ovl a(\gamma_{r_{1,i}}, \ldots, \gamma_{r_{n,i}},\id)  \\
    &= \mu_{(r_{1,i}),\ldots,(r_{n,i}),()} 
        \circ \mu_{a(r_{1,i}) \cdots a(r_{n,i}),a()}
        \circ a(\gamma_{a(r_{1,i}), \ldots, a(r_{n,i})},\eta_{a()})
        \circ a(\ovl a(\gamma_{r_{1,i}}, \ldots, \gamma_{r_{n,i}}),\id) \\
    &= \mu_{r_{1,1} \cdots r_{n,i},()}
        \circ a(\mu_{(r_{1,i}),\ldots,(r_{n,i})}, \mu_{(())})
        \circ a(\gamma_{a(r_{1,i}), \ldots, a(r_{n,i})},\eta_{a()})
        \circ a(\ovl a(\gamma_{r_{1,i}}, \ldots, \gamma_{r_{n,i}}),\id) \\
    &= \mu_{r_{1,1} \cdots r_{n,i},()}
        \circ a(\gamma_{r_{1,1},\ldots,r_{n,k_n}},\id)
        \circ a(\ovl \mu_{k_1,\ldots,k_n},\id)
\end{align*}
so the equality holds by verifying that
\begin{equation*}
  \mu_{r_1 \cdots r_n, ()} = \lambda,
\end{equation*}
which is equivalent to proving that the following diagram commutes
\begin{equation*}
  \begin{tikzcd}[column sep=large]
    a(a(a(r_1, \ldots, r_n)),a()) 
      \ar[d,"a(\mu_{r_1 \cdots r_n}{,}\id)",swap]
      \ar[r,"\mu_{a(r_1, \ldots, r_n),()}"]
      & a(a(r_1, \ldots, r_n)) \ar[d,"\mu_{r_1 \cdots r_n}"] \\
    a(a(r_1, \ldots, r_n),a())
      \ar[r,"\mu_{r_1 \cdots r_n,()}",swap]
      & a(r_1, \ldots, r_n)
  \end{tikzcd}
\end{equation*}
which is given by the coherence condition \( \mu \circ \fc\mu = \mu \circ
\mu_\fc \); recall that \( \mu_{()} = \id \).

Hence, for the final induction step, we suppose \eqref{eq:golpe.final} holds
for some \( k_{n+1} \). For \( k_{n+1}+1 \), the left-hand side of
\eqref{eq:golpe.final} is given by
\begin{align*}
  \gamma_{r_{1,1},\ldots,r_{n+1,k_{n+1}+1}} 
    \circ \ovl \mu_{k_1, \ldots, k_{n+1}+1} \\
    \quad= 
      \mu_{r_{1,1} \cdots r_{n+1,k_{n+1}},r_{n+1,k_{n+1}+1}}
       &\circ a(\gamma_{r_{1,1},\ldots,r_{n+1,k_{n+1}}},
                \eta_{r_{n+1,k_{n+1}+1}}) \\
       &\circ a(\ovl \mu_{k_1,\ldots,k_{n+1}},\id)
        \circ \alpha \\
    \quad= 
      \mu_{r_{1,1} \cdots r_{n+1,k_{n+1}},r_{n+1,k_{n+1}+1}}
       &\circ a(\mu_{(r_{1,i}),\ldots,(r_{n+1,i})},\id) \\
       &\circ a(\gamma_{a(r_{1,i}),\ldots,a(r_{n+1,i})}
                \eta_{r_{n+1,k_{n+1}+1}}) \circ \alpha \\
    \quad= 
      \mu_{r_{1,1} \cdots r_{n+1,k_{n+1}},r_{n+1,k_{n+1}+1}}
       &\circ a(\mu_{(r_{1,i}),\ldots,(r_{n+1,i})},\id) \\
       &\circ a(\mu_{a(r_{1,i}\cdots r_{n,i}),a(r_{n+1,i})},\id) \\
       &\circ a(a(\gamma_{a(r_{1,i}),\ldots,a(r_{n,i})},
                  \eta_{a(r_{n+1,i})}),\id) \\
       &\circ a(a(\ovl a(\gamma_{r_{1,i}},\ldots,\gamma_{r_{n,i}}),
                \gamma_{r_{n+1},i}),\id) \\
       &\circ a(\id, \eta_{r_{n+1,k_{n+1}+1}}) \circ \alpha \\
    \quad=
      \mu_{r_{1,1} \cdots r_{n+1,k_{n+1}},r_{n+1,k_{n+1}+1}}
       &\circ a(\mu_{(r_{1,i}\cdots r_{n,k_n}),(r_{n+1,i})},\id) \\
       &\circ a(a(\mu_{(r_{1,i}),\ldots (r_{n,i})},\mu_{(r_{n+1,i})}),\id)
        \circ \alpha\\
       &\circ a(\gamma_{a(r_{1,i}),\ldots,a(r_{n,i})},
                a(\eta_{a(r_{n+1,i})},\id)) \\
       &\circ a(\ovl a(\gamma_{r_{1,i}},\ldots,\gamma_{r_{n,i}}),
                a(\gamma_{r_{n+1},i},\eta_{r_{n+1,k_{n+1}+1}}))\\
    \quad=
      \mu_{r_{1,1} \cdots r_{n+1,k_{n+1}},r_{n+1,k_{n+1}+1}}
       &\circ a(\mu_{(r_{1,i}\cdots r_{n,k_n}),(r_{n+1,i})},\id) 
        \circ \alpha \\
       &\circ a(\gamma_{r_{1,1},\ldots,r_{n,k_n}},\id) \\
       &\circ a(\ovl \mu_{k_1,\ldots,k_n},\id) \\
       &\circ a(\id,a(\gamma_{r_{n+1},i},\eta_{r_{n+1,k_{n+1}+1}})),
\end{align*}
while the right-hand side is given by
\begin{align*}
  \mu_{(r_{1,i}),\ldots,(r_{n,i}),(r_{n+1,i},r_{n+1,k_{n+1}+1})}
    &\circ \gamma_{a(r_{1,i}),\ldots,a(r_{n,i}),a(r_{n+1,i},r_{n+1,k_{n+1}+1})}
    \circ \ovl a(\gamma_{r_{1,i}}, \ldots, \gamma_{r_{n,i}},
                 \gamma_{r_{n+1,i},r_{n+1,k_{n+1}+1}}) \\
  \quad= \mu_{(r_{1,i}),\ldots,(r_{n,i}),(r_{n+1,i},r_{n+1,k_{n+1}+1})}
    &\circ \mu_{a(r_{1,i}) \cdots a(r_{n,i}),
                a(r_{n+1,i},r_{n+1,k_{n+1}+1})} \\
    &\circ a(\gamma_{a(r_{1,i}),\ldots,a(r_{n,i})},
             \eta_{a(r_{n+1,i},r_{n+1,k_{n+1}+1})}) \\
    &\circ a(\ovl a(\gamma_{r_{1,i}}, \ldots, \gamma_{r_{n,i}}),
             \gamma_{r_{n+1,i},r_{n+1,k_{n+1}+1}}) \\
  \quad= \mu_{(r_{1,1},\ldots,r_{n,k_n}),(r_{n+1,i},r_{n+1,k_{n+1}+1})}
    &\circ a(\mu_{(r_{1,i}),\ldots,(r_{n,i})}, 
             \mu_{(r_{n+1,i}r_{n+1,k_{n+1}+1})}) \\
    &\circ a(\gamma_{a(r_{1,i}),\ldots,a(r_{n,i})},
             \eta_{a(r_{n+1,i},r_{n+1,k_{n+1}+1})}) \\
    &\circ a(\ovl a(\gamma_{r_{1,i}}, \ldots, \gamma_{r_{n,i}}),\id) \\
    &\circ a(\id,\mu_{(r_{n+1,i}),r_{n+1,k_{n+1}+1}}) \\
    &\circ a(\id,a(\gamma_{r_{n+1,i}},\eta_{r_{n+1,k_{n+1}+1}})) \\
  \quad= \mu_{(r_{1,1},\ldots,r_{n,k_n}),(r_{n+1,i},r_{n+1,k_{n+1}+1})}
    &\circ a(\gamma_{r_{1,1}, \ldots, r_{n,k_n}},\id) \\
    &\circ a(\ovl \mu_{k_1, \ldots, k_n}, \id) \\
    &\circ a(\id,\mu_{(r_{n+1,i}),r_{n+1,k_{n+1}+1}}) \\
    &\circ a(\id,a(\gamma_{r_{n+1,i}},\eta_{r_{n+1,k_{n+1}+1}})), 
\end{align*}
and therefore, proving \eqref{eq:golpe.final} reduces to verifying that
\begin{align*}
  &\mu_{r_{1,1} \cdots r_{n+1,k_{n+1}},r_{n+1,k_{n+1}+1}}
        \circ a(\mu_{(r_{1,i}\cdots r_{n,k_n}),(r_{n+1,i})},\id) 
        \circ \alpha \\
  &\quad= \mu_{(r_{1,1},\ldots,r_{n,k_n}),(r_{n+1,i},r_{n+1,k_{n+1}+1})}
        \circ a(\id,\mu_{(r_{n+1,i}),r_{n+1,k_{n+1}+1}}).
\end{align*}
Here, we have
\begin{align*}
  \alpha &= a(\id,\mu_{r_{n+1,k_{n+1}+1}}) 
            \circ \mu^{-1}_{a(r_{1,1}, \ldots, r_{n,k_n})a(r_{n+1,i}), 
                            a(r_{n+1,k_{n+1}+1})} \\
            &\quad\circ \mu_{a(r_{1,1},\ldots,r_{n,k_n}),
                        a(r_{n+1,i})a(r_{n+1,k_{n+1}+1})} \\
            &\quad\circ a(\mu^{-1}_{r_{1,1}\cdots r_{n,k_n}},\id),
\end{align*}
so we just need to verify that 
\begin{align*}
  &\mu_{r_{1,1} \cdots r_{n+1,k_{n+1}},r_{n+1,k_{n+1}+1}}
        \circ a(\mu_{(r_{1,i}\cdots r_{n,k_n}),(r_{n+1,i})},
                \mu_{r_{n+1,k_{n+1}+1}})
        \circ \mu^{-1}_{a(r_{1,1}, \ldots, r_{n,k_n})a(r_{n+1,i}), 
                            a(r_{n+1,k_{n+1}+1})} \\
  &\quad= \mu_{(r_{1,1},\ldots,r_{n,k_n}),(r_{n+1,i},r_{n+1,k_{n+1}+1})}
        \circ a(\mu_{r_{1,1}\cdots r_{n,k_n}}
                \mu_{(r_{n+1,i}),r_{n+1,k_{n+1}+1}})
        \circ \mu^{-1}_{a(r_{1,1},\ldots,r_{n,k_n}),
                        a(r_{n+1,i})a(r_{n+1,k_{n+1}+1})},
\end{align*}
which holds, since both sides of the above expression are equal to
\begin{equation*}
  \mu_{r_{1,1}\cdots r_{n,k_n}, (r_{n+1,i}), r_{n+1,k_{n+1}+1}},
\end{equation*}
confirming that \( (\id,\gamma) \) is an invertible pseudo-morphism of
pseudo-\(\fc\)-algebras. 

\subsection*{Step {\ref{enum:bij}}:}
We consider double categories, lax (horizontal) and oplax (vertical) functors
as in the following diagram, and the respective diagram in the double category
\( \PsfcAlg \).
\begin{equation}
  \label{eq:fill.2-cells}
  \begin{tikzcd}
    \bicat A \ar[d,"F",swap] \ar[r,"H"]
      & \bicat B \ar[d,"G"] \\
    \bicat C \ar[r,"K",swap] & \bicat D
  \end{tikzcd}
  \qquad
  \begin{tikzcd}
    (\bicat A,a,\eta,\mu) \ar[d,"{(F,\delta^F)}",swap]
                          \ar[r,"{(H,\gamma^H)}"]
      & (\bicat B,b,\eta,\mu) \ar[d,"{(G,\delta^G)}"] \\
    (\bicat C,c,\eta,\mu) \ar[r,"{(K,\gamma^K)}",swap]
      & (\bicat D,d,\eta,\mu)
  \end{tikzcd}
\end{equation}

Let \( \omega \) be a 2-cell \( GH \to KF \) of internal \( \Cat \)-graphs.
We claim that \( \omega \) is a generalized vertical transformation if and
only if \( \omega \) is a generalized 2-cell of pseudo-\(\fc\)-algebras.

If \( \omega \) is a generalized vertical transformation, we wish to prove
that the following diagram commutes

\begin{equation*}
  \begin{tikzcd}
    & Gb(Hr_1, \ldots, Hr_n) \ar[ld,"\delta^G_{Hr_1, \ldots, Hr_n}",swap] 
                             \ar[rd,"G\gamma^H_{r_1, \ldots, r_n}"] \\
    d(GHr_1, \ldots, GHr_n) \ar[d,"{d(\omega_{r_1}, \ldots, \omega_{r_n})}",swap]
    && GHa(r_1, \ldots, r_n) \ar[d,"\omega_{a(r_1, \ldots, r_n)}"] \\
    d(KFr_1, \ldots, KFr_n) \ar[rd,"\gamma^K_{Fr_1, \ldots, Fr_n}",swap]
    && KFa(r_1, \ldots, r_n) \ar[ld,"K\delta^F_{r_1, \ldots, r_n}"] \\
    & Kc(Fr_1, \ldots, Fr_n)
  \end{tikzcd}
\end{equation*}
For all \(n\) and all horizontal 1-cells \(r_1, \ldots, r_n\). We proceed by
induction: when \(n=0\), the above is just coherence of \( \omega \) for the
unit comparsion. If true for some \(n\), then
\begin{align*}
  K\delta^F_{r_1, \ldots, r_{n+1}}
    &\circ \omega_{a(r_1, \ldots, r_{n+1})}
     \circ G\gamma^H_{r_1, \ldots, r_{n+1}} \\
    &= K(\id \cdot \delta^F_{r_1, \ldots, r_n})
        \circ K\m F 
        \circ \omega_{r_{n+1} \cdot a(r_1,\ldots,r_n)}
        \circ G\m H 
        \circ G(\id \cdot \gamma^H_{r_1, \ldots, r_n}) \\
    &= K(\id \cdot \delta^F_{r_1, \ldots r_n})
        \circ \m K 
        \circ (\omega_{r_{n+1}} \cdot \omega_{a(r_1,\ldots,r_n)})
        \circ \m G
        \circ G(\id \cdot \gamma^H_{r_1, \ldots, r_n}) \\
    &= \m K 
        \circ (\id \cdot K\delta^F_{r_1, \ldots, r_n})
        \circ (\omega_{r_{n+1}} \cdot \omega_{a(r_1,\ldots,r_n)})
        \circ (\id \cdot G\gamma^H_{r_1, \ldots, r_n}) 
        \circ \m G \\
    &= \m K 
        \circ (\id \cdot \gamma^K_{Fr_1, \ldots, Fr_n})
        \circ (\omega_{r_{n+1}} \cdot d(\omega_{r_1}, \ldots, \omega_{r_n}))
        \circ (\id \cdot \delta^G_{Hr_1, \ldots, Hr_n})
        \circ \m G \\
    &= \gamma^K_{Fr_1, \ldots, Fr_{n+1}}
        \circ d(\omega_{r_1}, \ldots, \omega_{r_{n+1}})
        \circ \delta^G_{Hr_1, \ldots, Hr_{n+1}},
\end{align*}
so \( \omega \) is a pseudo-\(\fc\)-algebra 2-cell as well.

Now, if \( \omega \) is a pseudo-\(\fc\)-algebra 2-cell, the coherence for the
unit comparison holds by definition, and
\begin{align*}
  \m K_{Fr,Fs} &\circ (\omega_s \cdot \omega_r) 
                \circ \m G_{Hr,Hs} \\
    &= K(\id \cdot \rho) \circ \gamma^K_{Fr,Fs}
                         \circ (\id \cdot \rho^{-1})
                         \circ (\omega_s \cdot \omega_r)
                         \circ (\id \cdot \rho)
                         \circ \delta^G_{Hr,Hs}
                         \circ G(\id \cdot \rho^{-1}) \\
    &= K(\id \cdot \rho) \circ \gamma^K_{Fr,Fs}
                         \circ (\omega_s \cdot (\omega_r \cdot \id))
                         \circ \delta^G_{Hr,Hs}
                         \circ G(\id \cdot \rho^{-1}) \\
    &= K(\id \cdot \rho) \circ \gamma^K_{Fr,Fs}
                         \circ d(\omega_r, \omega_s)
                         \circ \delta^G_{Hr,Hs}
                         \circ G(\id \cdot \rho^{-1}) \\
    &= K(\id \cdot \rho) \circ K\delta^F_{r,s}
                         \circ \omega_{a(r,s)}
                         \circ G\gamma^H_{r,s}
                         \circ G(\id \cdot \rho^{-1}) \\
    &= K(\id \cdot \rho) \circ K\delta^F_{r,s}
                         \circ KF(\id \cdot \rho^{-1})
                         \circ \omega_{s \cdot r}
                         \circ GH(\id \cdot \rho)
                         \circ G(\id \cdot \rho^{-1}) \\
    &= K(\m F) \circ \omega_{s \cdot r} \circ G(\m H),
\end{align*}
verifies coherence for composition comparsion, completing our proof.

Now, as promised at the start of Section \ref{sect:spanmat}, we obtain:
\begin{proposition}
  We have a conjunction
  \begin{equation*}
    \begin{tikzcd}
      \VMat \ar[r,bend left,"- \pt \trm"{name=A}]
      & \SpanV \ar[l,bend left,"\cat V(\trm{,}-)"{name=B,below}]
      \ar[from=A,to=B,phantom,"\adj" {anchor=center, rotate=-90}]
    \end{tikzcd}
  \end{equation*}
  in the double category \( \PsDbCat \).
\end{proposition}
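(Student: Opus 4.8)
\emph{Proof strategy.} The plan is to obtain the conjunction as the image, under the equivalence $\PsDbCat \eqv \PsfcAlg$ of Proposition \ref{thm:psdbcat.as.ps-t-alg}, of a conjoint constructed via doctrinal adjunction. A conjunction in a double category is precisely the data of a vertical $1$-cell together with a chosen conjoint, and an equivalence of double categories transports conjoints to conjoints, since the defining $2$-cells and their equations involve only horizontal composition, horizontal units and the unitors, all of which an equivalence preserves up to coherent isomorphism. Hence it suffices to exhibit, in $\PsfcAlg$, a conjoint of the vertical $1$-cell corresponding to the oplax functor $-\pt 1 \colon \VMat \to \SpanV$; its conjoint will then be (the image of) a lax functor $\SpanV \to \VMat$, which along the way also yields the assertion that $\cat V(1,-)$ is lax.

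\emph{Producing the conjoint in $\PsfcAlg$.} By Proposition \ref{thm:copower.oplax}, $-\pt 1 \colon \VMat \to \SpanV$ is a (normal) oplax functor between pseudodouble categories; through the fully faithful, essentially surjective comparison $\PsDbCat_\opl \to \PsfcAlg_\opl$ built in the proof of Proposition \ref{thm:psdbcat.as.ps-t-alg}, the pseudodouble categories $\VMat$, $\SpanV$ become pseudo-$\fc$-algebras and $-\pt 1$ becomes an oplax morphism $(-\pt 1, \delta)$ between them, with oplax $\fc$-algebra structure $\delta$ assembled inductively from $\e{-\pt 1}$ and $\m{-\pt 1}$. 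On the other hand, Proposition \ref{lem:grph.cat.adj} supplies an adjunction $-\pt 1 \adj \cat V(1,-)$ in $\Grph(\Cat)$ with unit $\heta$ and counit $\heps$. Applying Proposition \ref{prop:doct.adj} (doctrinal adjunction) to the $2$-monad $\fc$ on $\Grph(\Cat)$, the oplax $\fc$-algebra structure $\delta$ on the left adjoint corresponds, via the doctrinal bijection, to a lax $\fc$-algebra structure $\xi$ on the right adjoint $\cat V(1,-)$; so $(\cat V(1,-), \xi)$ is a lax morphism of pseudo-$\fc$-algebras, and transporting it back across the equivalence gives the asserted lax functor $\cat V(1,-) \colon \SpanV \to \VMat$, with underlying $\Cat$-graph morphism the one of \eqref{eq:defn.right.adj} and comparison $2$-cells determined by $\xi$.

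\emph{Transporting back.} By the last clause of Proposition \ref{prop:doct.adj}, $(-\pt 1, \delta)$ is the conjoint of $(\cat V(1,-), \xi)$ in $\LaxfcAlg$, its unit and counit being the $\fc$-algebra $2$-cells whose underlying cells in $\Grph(\Cat)$ are $\heta$ and $\heps$; the conjoint identities $\epsilon \circ \eta = 1$ and $\eta \cdot \epsilon = \rho^{-1} \circ \lambda$ reduce, under the bijection, to the triangle identities of $-\pt 1 \adj \cat V(1,-)$. Since all objects and cells in this conjoint lie in the full sub-double category $\PsfcAlg$, it is a conjoint in $\PsfcAlg$. Transporting along $\PsfcAlg \eqv \PsDbCat$, we obtain a conjoint of the vertical $1$-cell $-\pt 1 \colon \VMat \to \SpanV$ given by the horizontal $1$-cell $\cat V(1,-) \colon \SpanV \to \VMat$, with unit $\heta$ and counit $\heps$ — precisely the depicted conjunction, and in particular $\heta$ and $\heps$ are generalized vertical transformations in $\PsDbCat$.

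\emph{Where the work lies.} The only substantive task is matching the data across the two equivalences. One must verify that the lax $\fc$-algebra structure $\xi$ produced by doctrinal adjunction is exactly the one corresponding to an honest lax functor, so that $(\cat V(1,-),\xi)$ lands in $\PsfcAlg_\lax$ rather than merely $\LaxfcAlg_\lax$, and that the unit and composition comparisons it induces on $\cat V(1,-)$ coincide with the pullback-defined cells built from \eqref{eq:defn.right.adj}; dually one must confirm that the extensions of $\heta$ and $\heps$ to $\VMat$ and $\SpanV$ in \eqref{eq:defn.unit}--\eqref{eq:defn.counit} are the images under the equivalence of the unit and counit $\fc$-algebra $2$-cells. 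Once these identifications are in place, the conjoint equations hold verbatim, inherited from the doctrinal picture, which itself rests only on the triangle identities of the base adjunction $-\pt 1 \adj \cat V(1,-)$ of \eqref{eq:start.adj}.
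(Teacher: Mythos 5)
Your proposal is correct and follows essentially the same route as the paper: the paper's proof is a one-liner that invokes the equivalence $\PsDbCat \eqv \PsfcAlg$ and applies doctrinal adjunction (Proposition \ref{prop:doct.adj}) to the adjunction $-\pt 1 \adj \cat V(1,-)$ in $\Grph(\Cat)$ equipped with the oplax structure of $-\pt 1$ from Section \ref{sect:spanmat}. Your write-up merely makes explicit the bookkeeping (transport of conjoints along the equivalence, identification of $\xi$ with the pullback-defined lax structure on $\cat V(1,-)$, and of $\heta,\heps$ with the unit and counit $2$-cells) that the paper leaves implicit.
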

\begin{proof}
  Via the equivalence \( \PsDbCat \eqv \PsfcAlg \), we simply apply
  Proposition \ref{prop:doct.adj} to the adjunction \( - \pt \trm \adj \cat
  V(\trm,-) \) in \( \Grph(\Cat) \), with the oplax functor structure of \( -
  \pt \trm \colon \SpanV \to \VMat \), all of which were described in Section
  \ref{sect:spanmat}.
\end{proof}

  \section{Horizontal lax algebras and change of base}
    \label{sect:base.change}
    We will review the notion of categories of \textit{horizontal lax algebras}
introduced in \cite{CS10}, and we define the \textit{change-of-base} functors
between such categories, induced by an appropriate notion of monad morphism.
We begin by fixing monads \( S = (\bicat D, S, m, e) \) and \( T = (\bicat E,
T, m, e) \) in the 2-category \( \PsDbCat_\lax \).

We define the category \( \LaxTHAlg \) of \textit{horizontal lax
$T$-algebras}, as follows:
\begin{itemize}[label=--]
  \item
    Objects are given by 4-tuples \( (x,a,\upsilon,\mu) \) where \(x\) is a
    0-cell, \( a \colon Tx \relto x \) is a horizontal 1-cell, and \( \upsilon,
    \mu \) are 2-cells 
    \begin{equation*}
      \begin{tikzcd}
        x \ar[r,"1"{name=A}] \ar[d,"e",swap] & x \ar[d,equal] \\
        Tx \ar[r,"a"{name=B},swap] & x
        \ar[from=A,to=B,phantom,"\upsilon" description]
      \end{tikzcd}
      \qquad
      \begin{tikzcd}
        TTx \ar[r,"Ta"] \ar[d,"m",swap] & Tx \ar[r,"a"] & x \ar[d,equal]
        \\ Tx \ar[rr,"a"{name=A},swap] && x
        \ar[from=1-2,to=A,phantom,"\mu" description]
      \end{tikzcd}
    \end{equation*}
    satisfying 
    \begin{align*}
      \mu \circ (\upsilon \cdot e_a) &= \lambda \\
      \mu \circ (\id \cdot (T\upsilon \circ \e T)) &= \rho \\
      \mu \circ (\id \cdot (T\mu \circ \m T)) 
        &= \mu \circ (\mu \cdot m_a) \circ \alpha^{-1}
    \end{align*}
  \item
    A morphism \( (x,a,\upsilon,\mu) \to (y,b,\upsilon,\mu) \) is a pair \(
    (f,\zeta) \) where \( f \colon x \to y \) is a vertical 1-cell and \(
    \zeta \) is a 2-cell
    \begin{equation*}
      \begin{tikzcd}
        Tx \ar[d,"Tf",swap] \ar[r,"a"{name=A}] & x \ar[d,"f"] \\
        Ty \ar[r,"b"{name=B},swap] & y 
        \ar[from=A,to=B,phantom,"\zeta" description]
      \end{tikzcd}
    \end{equation*}
    satisfying \( \zeta \circ \upsilon = \upsilon \circ 1_f \) and \( \zeta
    \circ \mu = \mu \circ (\zeta \cdot T\zeta) \).
\end{itemize}
It should be noted that \( \id = (\id,\id) \colon (x,a,\upsilon,\mu) \to
(x,a,\upsilon,\mu) \) is a horizontal lax \(T\)-algebra morphism, and if \(
(f,\zeta) \), \( (g,\xi) \) are composable horizontal lax \(T\)-algebra
morphisms, then so is \( (g,\xi) \circ (f,\zeta) = (g \circ f, \xi \circ
\zeta) \). Associativity and identity properties are inherited from \( \bicat
E_0 \) and \( \bicat E_1 \), making \( \LaxTHAlg \) into a category.

Our work focuses on the cases \( \bicat E = \SpanV \), with \(T\) induced by a
cartesian monad (also denoted by \(T\)) on \( \cat V \), and \( \bicat D =
\VMat \) with \(S\) a lax monad. Then, \( \LaxTHAlg = \CatTV\) is the category
of \textit{internal $T$-categories} of \cite{Her00}, while \( \LaxSHAlg =
\SVCat \) is a generalization of the category of \textit{enriched
$S$-categories} introduced by \cite{CT03}, by not requiring \(S\) to be
normal.\footnote{When $\cat V$ is a quantale, this generalization is already
present in \cite{Sea05}.}

Let \( (F,\phi) \colon S \to T \) be a monad oplax morphism, and we assume \(
\bicat E \) is conjoint closed. By Theorem \ref{thm:conjoint.closed}, \( \phi
\) has a conjoint, given by a lax horizontal transformation \( \phi^* \colon
TF \to FS \). We define a 2-cell \( \e{\phi^*_x} \) for each 0-cell \(x\)
given by
\begin{equation*}
  \begin{tikzcd}
    Fx \ar[r,"1"{name=A}] \ar[d,"e_{Fx}",swap]
    & Fx \ar[d,"Fe_x"] \\
    TFx \ar[r,"\phi^*_x"{name=B},swap]
    & FSx
    \ar[from=A,to=B,phantom,"\e{\phi^*_x}"]
  \end{tikzcd}
\end{equation*}
as the mate of the commutative square \( \phi_x \circ Fe_x = e_{Fx} \circ \id
\), and a 2-cell \( \m{\phi^*_x} \) given by
\begin{equation*}
  \begin{tikzcd}
    TTFx \ar[r,"(T\phi_x)^*"]
         \ar[d,equal]
    & TFSx \ar[r,"\phi^*_{Sx}"]
    & FSSx \ar[d,equal] \\
    TTFx \ar[rr,"(T\phi_x \circ \phi_{Sx})^*" description,""{name=A}]
         \ar[d,"m_{Fx}",swap]
    && FSSx \ar[d,"Fm_x"] \\
    TFx \ar[rr,"\phi^*_x"{name=B},swap]
    && FSx
    \ar[from=1-2,to=A,"\pi",phantom]
    \ar[from=A,to=B,"1^\lor",phantom]
  \end{tikzcd}
\end{equation*}
where \( \pi \) is given as in \eqref{eq:v2h.comp}, and \( 1^{\lor} \) is the
mate of the commutative square \( \phi_x \circ Fm_x = m_{Fx} \circ (T\phi_x
\circ \phi_{Sx}) \). To be explicit, via mate correspondence we have
\begin{equation}
  \label{eq:em.mates.props}
  \epsilon \circ \e{\phi^*_x} = 1_{e_{Fx}},
  \quad\text{and}\quad
  \epsilon \circ \m{\phi^*_x} = 1_{m_{Fx}} \circ \rho \circ ((1_{T\phi_x} \circ
  \epsilon) \cdot \epsilon).
\end{equation}

Analogously, when \( \bicat D \) is companion closed, we define 2-cells 
\( \e{\psi_!}_y \) and \( \m{\psi_!}_y \) for a monad lax morphism \( (G,\psi)
\colon T \to S \).

\begin{lemma}
  \label{lem:h.oplax.monad}
  If \( (F,\phi) \colon S \to T \) is a monad oplax morphism and \( \bicat E
  \) is conjoint closed, then \( \e{\phi^*} \) and \( \m{\phi^*} \) are
  modifications, and the following relations hold:
  \begin{enumerate}[label=(\alph*)]
    \item
      \label{enum:l.id.hmonad}
      \( \m{\phi^*}_x \circ (\e{\phi^*}_{Sx} \cdot e_{\phi_x}^\lor) = \lambda \),
    \item
      \label{enum:r.id.hmonad}
      \( \m{\phi^*}_x \circ (\phi^*_e \cdot \e{(T\phi)^*}_x) = \rho \),
    \item
      \label{enum:assoc.hmonad}
      \( \m{\phi^*}_x \circ (\m{\phi^*}_{Sx} \cdot m_{\phi_x}^\lor) 
          = \m{\phi^*_x} \circ (\phi^*_m \cdot \m{(T\phi)^*}_x) \circ \alpha \),
  \end{enumerate}
  where \( e^\lor_{\phi_x} \) and \( m^\lor_{\phi_x} \) are the mates of the
  naturality squares of \(e\) and \(m\) at \( \phi_x \), and \(
  \e{(T\phi)^*}_x \), \( \m{(T\phi)^*_x} \) are respectively given by the mate
  of the commutative square \( T\phi_x \circ TFe_x = Te_{Fx} \circ \id \), and
  the mate of the commutative square \( T\phi_x \circ TFm_x = Tm_{Fx} \circ
  (TT\phi_x \circ T\phi_{Sx}) \) composed with \( \pi \), satisfying
  properties similar to \eqref{eq:em.mates.props}.
\end{lemma}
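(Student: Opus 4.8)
The plan is to establish every assertion by the mate calculus of Section~\ref{sect:conj.comp}, reducing each one to a conjoint-free identity that then holds because $(F,\phi)\colon S\to T$ is a monad oplax morphism. Two facts drive this. First, the counit $\epsilon$ of a conjoint is cartesian and its unit $\eta$ opcartesian (Remark~\ref{rem:mate.corresp}): whiskering with $\epsilon$ reflects equality of 2-cells whose horizontal codomain is $\phi^*_x$ or $\phi^*_x\cdot(-)$, and whiskering with $\eta$ does the same on the other side. Second, every 2-cell appearing in the statement --- $\e{\phi^*}_x$, $\m{\phi^*}_x$, $e^\lor_{\phi_x}$, $m^\lor_{\phi_x}$, $\e{(T\phi)^*}_x$, $\m{(T\phi)^*}_x$, together with the coherence cells $\n{\phi^*}_r$, $\phi^*_e$, $\phi^*_m$ of the conjoint --- already has an explicit conjoint-free description once whiskered with the appropriate $\epsilon$: equation \eqref{eq:em.mates.props} gives $\epsilon\circ\e{\phi^*}_x=1_{e_{Fx}}$ and $\epsilon\circ\m{\phi^*}_x = 1_{m_{Fx}}\circ\rho\circ((1_{T\phi_x}\circ\epsilon)\cdot\epsilon)$; $e^\lor_{\phi_x}$, $m^\lor_{\phi_x}$, $\e{(T\phi)^*}_x$, $\m{(T\phi)^*}_x$ are by definition the mates of the displayed commutative squares (the naturality squares of $e$ and $m$ at $\phi_x$, and the squares $T\phi_x\circ TFe_x = Te_{Fx}$ and $T\phi_x\circ TFm_x = Tm_{Fx}\circ(TT\phi_x\circ T\phi_{Sx})$, the latter precomposed with $\pi$ from \eqref{eq:v2h.comp}); and the closing lines of the proof of Theorem~\ref{thm:conjoint.closed} record $(\epsilon_y\cdot\id)\circ\n{\phi^*}_r = \gamma\circ(\phi_r\cdot\epsilon_x)$, with analogues for $\phi^*_e$ and $\phi^*_m$. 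So, for each of \ref{enum:l.id.hmonad}--\ref{enum:assoc.hmonad} and for the two modification conditions, we whisker both sides with the appropriate $\epsilon$, substitute, and are left with an equation containing no conjoints --- only $\phi$, the units $e$ and multiplications $m$ of $S$ and $T$, the structure cells of $F$, $S$, $T$, and the unitors and associator of $\bicat E$ --- which holds by the defining coherences of the monad oplax morphism $(F,\phi)$, modulo reshuffling by middle-four interchange and the axioms \ref{enum:unit.coherence}--\ref{enum:assoc.coherence}.

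Concretely, I would first dispatch the modification conditions. For $\e{\phi^*}$, the square \eqref{eq:modif.cond} to be verified --- instantiated with $\zeta=1_F$, $\xi=\phi^*$ and the evident vertical legs $eF$, $Fe$, for each horizontal $1$-cell $r$ of $\bicat D$ --- becomes, after whiskering with $\epsilon$ and using $\epsilon\circ\e{\phi^*}=1_{e}$ and $(\epsilon\cdot\id)\circ\n{\phi^*}_r = \gamma\circ(\phi_r\cdot\epsilon)$, exactly the naturality of the vertical transformation $e$ of $T$ at $F_1r$ together with the compatibility of the vertical transformation $\phi$ with the structure cells of $FS$ and $TF$; both hold. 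The modification condition for $\m{\phi^*}$ is the same argument, using the second formula of \eqref{eq:em.mates.props}, the naturality of $m$, and the composition-coherence of the lax horizontal transformation $\phi^*$ (Proposition~\ref{lem:lht.hc}). Then I would treat \ref{enum:l.id.hmonad} and \ref{enum:r.id.hmonad}: since their right-hand sides are unitors, whiskering with $\epsilon$ in the forms \ref{enum:degen.iii}/\ref{enum:degen.v} of Remark~\ref{rem:mate.corresp} leaves, for \ref{enum:l.id.hmonad}, the unit law $\phi\circ Fe = eF$ of the monad morphism together with $\epsilon\circ\eta = 1$, and for \ref{enum:r.id.hmonad}, the companion computation using $\e{(T\phi)^*}$ and the conjoint identity $\eta\cdot\epsilon = \rho^{-1}\circ\lambda$. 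These are one-line verifications.

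The hard part will be relation \ref{enum:assoc.hmonad} (and, to a lesser degree, the modification condition for $\m{\phi^*}$): after the reduction above it unfolds into a sizeable pasting diagram whose commutativity rests on the associativity coherence of the monad oplax morphism (compatibility of $\phi$ with $m$ of $S$, $m$ of $T$, and $\m F$), on the pentagon and the unit triangles of $\bicat E$, and --- used repeatedly --- on the composition law \eqref{eq:v2h.comp} for conjoints of composites, which is what lets us re-express the iterated conjoints $(T\phi_x)^*$ and $(TT\phi_x\circ T\phi_{Sx})^*$ in terms of $\phi^*$, $(T\phi)^*$ and $(TT\phi)^*$. Each individual rewriting in that diagram is forced, so the only real effort is bookkeeping --- keeping track of which copy of $\epsilon$, $\pi$, $\lambda$, $\rho$ or $\alpha$ is applied where --- and no idea beyond the mate theory of Section~\ref{sect:conj.comp} is required. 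Finally, the horizontally dual statement --- for a monad lax morphism $(G,\psi)\colon T\to S$ with $\bicat D$ companion closed, governing $\e{\psi_!}$ and $\m{\psi_!}$ --- follows by exchanging conjoints with companions throughout.
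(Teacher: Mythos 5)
Your proposal is correct and follows essentially the same route as the paper: whisker each identity with the counit $\epsilon$ of the conjoint, substitute the explicit formulas \eqref{eq:em.mates.props} and the mate descriptions of $e^\lor_{\phi_x}$, $m^\lor_{\phi_x}$, $\e{(T\phi)^*}_x$, $\m{(T\phi)^*}_x$, and reduce to equations of vertical 1-cells that hold by the oplax-morphism coherences of $(F,\phi)$ together with naturality and associativity of $m$. The only divergence is minor: for the modification claims the paper avoids your direct whiskering computation by observing that $\pi$ is already a modification in $\Lax(\bicat D,\bicat E)_\lax$ and that mates of equations of vertical 1-cells are automatically modifications, which is slightly slicker but not essentially different.
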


\begin{proof}
  Note that \( \pi \) is given as a 2-cell (modification) in \(
  \Lax_\lax(\bicat D,\bicat E) \), and \( \e{\phi^*} \) and \( 1^\lor \) are
  mates of equations of vertical 1-cells. It follows that \( \e{\phi^*} \) and
  \( \m{\phi^*} \) are modifications.

  We have
  \begin{align*}
    \epsilon \circ \m{\phi^*}_x \circ (\e{\phi^*}_{Sx} \cdot e_{\phi_x}^\lor) 
      &= 1_{m_{Fx}} \circ \lambda \circ ((1_{T\phi_x} \circ \epsilon) \cdot \epsilon)
                    \circ (\e{\phi^*}_{Sx} \cdot e_{\phi_x}^\lor) \\
      &= 1_{m_{Fx}} \circ \lambda 
                    \circ (1_{T\phi_x \circ e_{FSx}} 
                            \cdot (1_{e_{TFx}} \circ \epsilon)) \\
      &= 1_{m_{Fx}} \circ 1_{e_{TFx}} \circ \epsilon \circ \lambda 
       = \epsilon \circ \lambda,
  \end{align*}
  \begin{align*}
    \epsilon \circ \m{\phi^*}_x \circ (\phi^*_e \cdot \e{(T\phi)^*}_x)
      &= 1_{m_{Fx}} \circ \rho \circ ((1_{T\phi_x} \circ \epsilon) \cdot \epsilon)
                    \circ (\phi^*_e \cdot \e{(T\phi)^*}_x) \\
      &= 1_{m_{Fx}} \circ \rho 
                    \circ ((1_{T\phi_x \circ TFe_x} \circ \epsilon) 
                            \cdot 1_{Te_{Fx}}) \\
      &= 1_{m_{Fx}} \circ 1_{Te_{Fx}} \circ \epsilon \circ \rho
       = \epsilon \circ \rho,
  \end{align*}
  Now, we note that
  \begin{equation}
    \label{eq:assoc.lhs.1}
    \epsilon \circ \m{\phi^*}_x \circ (\m{\phi^*}_{Sx} \cdot m_{\phi_x}^\lor) 
      = \rho \circ \big( (1_{m_{Fx} \circ T\phi_x} \circ \epsilon 
                                                   \circ \m{\phi^*}_{Sx})
                   \cdot (1_{m_{Fx}} \circ \epsilon \circ m_{\phi_x}^\lor)
                   \big),
  \end{equation}
  and we note that
  \begin{equation*}
    \epsilon \circ \m{\phi^*}_{Sx}
      = \rho \circ \big( (1_{m_{FSx} \circ T\phi_{Sx}} \circ \epsilon)
                   \cdot (1_{m_{FSx}} \circ \epsilon)
                   \big),
  \end{equation*}
  and
  \begin{equation*}
    \epsilon \circ m_{\phi_x}^\lor = 1_{m_{FSx}} \circ \epsilon,
  \end{equation*}
  so that \eqref{eq:assoc.lhs.1} becomes
  \begin{equation*}
    \epsilon \circ \m{\phi^*}_x \circ (\m{\phi^*}_{Sx} \cdot m_{\phi_x}^\lor) 
      = \rho \circ (\rho \cdot \id) \circ ((A \cdot B) \cdot C)
  \end{equation*}
  where \( A = 1_{\hat A} \circ \epsilon \), \( B = 1_{\hat B} \circ \epsilon
  \), \( C = 1_{\hat C} \circ \epsilon \), and
  \begin{itemize}[label=--]
    \item
      \( \hat A = m_{Fx} \circ T\phi_x \circ m_{FSx} \circ T\phi_{Sx} \),
    \item
      \( \hat B = m_{Fx} \circ T\phi_x \circ m_{FSx} \),
    \item
      \( \hat C = m_{Fx} \circ m_{TFx} \).
  \end{itemize}

  On the other hand, we have
  \begin{equation}
    \label{eq:assoc.rhs.1}
    \epsilon \circ \m{\phi^*_x} \circ (\phi^*_m \cdot \m{(T\phi)^*}_x) \circ \alpha
      = \rho \circ \big( (1_{m_{Fx} \circ T\phi_x} \circ \epsilon \circ \phi^*_m)
                   \cdot (1_{m_{Fx}} \circ \epsilon \circ \m{(T\phi)^*}_x)
                   \big),
  \end{equation}
  and we note that
  \begin{equation*}
    \epsilon \circ \phi^*_m = 1_{TFm_x} \circ \epsilon,
  \end{equation*}
  and
  \begin{equation*}
    \epsilon \circ \m{(T\phi)^*}_x
      = \lambda \circ \big( (1_{Tm_{Fx} \circ TT\phi_x} \circ \epsilon)
                   \cdot (1_{Tm_{Fx}} \circ \epsilon) \big),
  \end{equation*}
  so that \eqref{eq:assoc.rhs.1} becomes
  \begin{equation*}
    \epsilon \circ \m{\phi^*_x} \circ (\phi^*_m \cdot \m{(T\phi)^*}_x) \circ \alpha
      = \rho \circ (\id \cdot \lambda) \circ \alpha
             \circ ((X \cdot Y) \cdot Z),
  \end{equation*}
  where \( X = 1_{\hat X} \circ \epsilon \), \( Y = 1_{\hat Y} \circ \epsilon
  \), \( Z = 1_{\hat Z} \circ \epsilon \), and
  \begin{itemize}[label=--]
    \item
      \( \hat X = m_{Fx} \circ T\phi_x \circ TFm_x\),
    \item
      \( \hat Y =  m_{Fx} \circ Tm_{Fx} \circ TT\phi_x \),
    \item
      \( \hat Z =  m_{Fx} \circ Tm_{Fx} \).
  \end{itemize}
  We conclude the proof by observing that \( A = X \), \( B = Y \) and \( C =
  Z \).
\end{proof}

\begin{theorem}
  \label{thm:base.change}
  We suppose that \( (F,\phi) \colon S \to T \) is an monad oplax morphism and
  that \( \bicat E \) is conjoint closed. If \(\phi\) and \(T\phi\) have
  strong conjoints (see Lemma \ref{lem:right.conjoint.invert}), then \(
  (F,\phi) \) induces a functor \( F_! \colon \LaxSHAlg \to \LaxTHAlg \). 

  Analogously, if we suppose that \( (G,\psi) \colon T \to S \) is a monad
  lax morphism and that \( \bicat D \) is companion closed, then \( (G,\psi)
  \) induces a functor \( G_! \colon \LaxTHAlg \to \LaxSHAlg \).
\end{theorem}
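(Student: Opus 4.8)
The plan is to build the change-of-base functor $F_!$ explicitly on objects and morphisms, and then check functoriality. Given a horizontal lax $S$-algebra $(x,a,\upsilon,\mu)$ with $a\colon Sx\relto x$, I would define $F_!(x,a,\upsilon,\mu)$ to have underlying $0$-cell $Fx$ and horizontal $1$-cell $\bar a\colon TFx\relto Fx$ given by the composite $Fx \xleftarrow{\ \ } TFx$ obtained as $Fa\cdot\phi^*_x$ (postcomposing the conjoint $\phi^*_x\colon TFx\relto FSx$ with $F_1$ applied to $a$), i.e. $\bar a = Fa \cdot \phi^*_x$, noting $\dom(Fa\cdot\phi^*_x)=\dom(\phi^*_x)=TFx$ and $\cod(Fa\cdot\phi^*_x)=\cod(Fa)=Fx$. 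The unit $2$-cell $\bar\upsilon\colon 1_{Fx}\to \bar a\cdot 1_{Fe_x}$ (up to the unitor coherence of the lax structure) I would assemble from $\e{\phi^*}_x$ and $F\upsilon$, composed with the lax comparison $\m F$; the multiplication $2$-cell $\bar\mu$ I would build from $\m{\phi^*}_x$ (which exists by Lemma \ref{lem:h.oplax.monad} since $\phi$ and $T\phi$ have conjoints), from $F\mu$, and from the comparisons $\m F$ of the lax functor $F$ — here the \emph{strong conjoint} hypothesis enters, because to rewrite $T\bar a\cdot m^{\text{(stuff)}}$ into the required form I need the globular comparison $\n{\phi^*}$ and $\n{(T\phi)^*}$ to be invertible, and additionally I will need $\m F$ to be invertible on the relevant composites involving conjoints, which is exactly what Lemma \ref{lem:right.conjoint.invert} supplies once $\phi$ has a strong conjoint (and $T\phi$ likewise, to handle the $TT$-level). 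On morphisms, a horizontal lax $S$-algebra morphism $(f,\zeta)\colon(x,a,\upsilon,\mu)\to(y,b,\upsilon,\mu)$ would be sent to $(Ff,\bar\zeta)$, where $\bar\zeta\colon \bar a\to \bar b$ is built from $F\zeta$ and the modification naturality square $\phi^*_f$ of the conjoint transformation $\phi^*$ at $f$; concretely $\bar\zeta = (F\zeta \cdot \id)\circ(\m F)^{-1}\circ\cdots$ using $\n{\phi^*}$ naturality.

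The verification that $\bar\upsilon,\bar\mu$ satisfy the three horizontal lax $T$-algebra axioms is where the real work lies. Each axiom — the left unit law $\bar\mu\circ(\bar\upsilon\cdot e_{\bar a})=\lambda$, the right unit law $\bar\mu\circ(\id\cdot(T\bar\upsilon\circ\e T))=\rho$, and the associativity law — I would reduce to the corresponding relations \ref{enum:l.id.hmonad}, \ref{enum:r.id.hmonad}, \ref{enum:assoc.hmonad} of Lemma \ref{lem:h.oplax.monad} \emph{together with} the $S$-algebra axioms for $(a,\upsilon,\mu)$ transported through the lax functor $F$. The strategy for each is the same: peel off the conjoint-counit $\epsilon$ via the mate correspondence (Lemma \ref{lemma:conjoint.mate.theory}, especially the degenerate forms in Remark \ref{rem:mate.corresp}), so that an identity of $2$-cells between horizontal composites reduces to an identity not involving the conjoint at all, which can then be checked against the original axioms. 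The coherence pentagon/unit diagrams of the lax functor $F$ (stated in the "Lax functors" subsection) are used to move $\m F$ past associators; the modification property of $\e{\phi^*},\m{\phi^*}$ (established in Lemma \ref{lem:h.oplax.monad}) is used to move the conjoint data past $2$-cells.

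Functoriality of $F_!$ — preservation of identities and of composites $(g,\xi)\circ(f,\zeta)$ — follows from $F_1$ being a functor on $2$-cells, from $\phi^*$ being a horizontal transformation (so $\phi^*_{\id}=\id$ and $\phi^*_{g\circ f}=\phi^*_g\circ\phi^*_f$ up to the relevant coherence, as in the proof of Theorem \ref{thm:conjoint.closed}), and from naturality of $\n{\phi^*}$; this part is routine bookkeeping. The statement for $G_!\colon\LaxTHAlg\to\LaxSHAlg$ is the horizontal dual: replace "conjoint closed" by "companion closed", $\phi^*$ by $\psi_!$, the mate formulas of \ref{enum:degen.i}--\ref{enum:degen.vi} by their companion counterparts, and Lemma \ref{lem:h.oplax.monad} by its dual; no new ideas are needed.

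\textbf{Main obstacle.} The hard part will be the associativity axiom for $\bar\mu$: it mixes the $TT$-level structure $\m{(T\phi)^*}$, the $T$-level $\m{\phi^*}$, the monad multiplication comparison, and three separate applications of $\m F$, and ensuring all the invertibility needed to rearrange these is precisely covered by the two strong-conjoint hypotheses (on $\phi$ and on $T\phi$) requires care in tracking which composites $\m F$ must be inverted on. I expect that, after stripping $\epsilon$, this collapses onto relation \ref{enum:assoc.hmonad} of Lemma \ref{lem:h.oplax.monad} pasted with $F$ applied to the associativity axiom of $(a,\upsilon,\mu)$, but organizing the pasting diagram so that the coherence of $F$ and the modification axioms line up correctly is the delicate bookkeeping step.
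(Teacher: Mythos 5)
Your plan coincides with the paper's own proof: the same object assignment \( Fa \cdot \phi_x^* \), the same morphism assignment \( (Ff,\, F\zeta \cdot \phi^*_f) \), the same reduction of the unit and associativity axioms to the relations of Lemma \ref{lem:h.oplax.monad} pasted with the \(F\)-image of the \(S\)-algebra axioms via the mate correspondence, and the same identification of associativity as the delicate step. One small correction: the invertibility that the strong-conjoint hypotheses buy you (via Lemma \ref{lem:right.conjoint.invert}) is that of \( \m T \circ (\id \cdot \sigma^T) \) -- the comparison of the monad \(T\), not of \(F\) -- and the unit cell is assembled with \( \e F \) rather than \( \m F \); the comparison \( \m F \) is only ever used in its given, non-inverted direction, so no normality of \(F\) is required.
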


\begin{proof}
  The functors \( F_! \) and \(G_!\) are given on objects by 
  \begin{equation*}
    F_!(x,a,\upsilon,\mu) = (Fx, Fa \cdot \phi_x^*, F_!\upsilon, F_!\mu) 
    \quad\text{and}\quad 
    G_!(y,b,\upsilon,\mu) = (Gy, Gb \cdot \psi_{y!}, G_!\upsilon, G_!\mu),
  \end{equation*}
  where \( F_!\upsilon \), \( F_!\mu \) are respectively given by the
  following 2-cells:
  \begin{equation*}
%   \label{eq:phi.unit}
    \begin{tikzcd}
      Fx \ar[r,"1"{name=A}] \ar[dd,"e_{Fx}",swap] 
        & Fx \ar[r,"1"{name=x}] \ar[d,equal]
        & Fx \ar[d,equal] \\
      & Fx \ar[r,""{name=Y},"F1" description] \ar[d,"Fe_x" description]
      & Fx \ar[d,equal] \\
      TFx \ar[r,"\phi_x^*"{name=B},swap]
        & FSx \ar[r,"Fa"{name=Z},swap]
        & Fx
      \ar[from=A,to=B,phantom,"\e{\phi^*}_x" description]
      \ar[from=X,to=Y,phantom,"\e F" description]
      \ar[from=Y,to=Z,phantom,"F\upsilon" description]
    \end{tikzcd}
  \end{equation*}
  \begin{equation*}
%   \label{eq:phi.mult}
    \begin{tikzcd}[column sep=large]
      TTFx \ar[rr,"T(Fa \cdot \phi^*_x)"{name=A}] 
           \ar[d,equal]
        && TFx \ar[r,"\phi^*_x" description] 
               \ar[d,equal]
        & FSx \ar[r,"Fa" description]
              \ar[d,equal]
        & Fx \ar[dd,equal] \\
      TTFx \ar[r,"(T\phi_x)^*" description]
           \ar[d,equal]
        & TFSx \ar[r,"TFa" description]
               \ar[d,equal]
        & TFx \ar[r,"\phi^*_x" description]
        & FSx \ar[d,equal] \\
      TTFx \ar[r,"(T\phi_x)^*" description]
           \ar[dd,"m_{Fx}" description]
        & TFSx \ar[r,"\phi^*_{Sx}" description]
        & FSSx \ar[r,"FSa" description]
               \ar[d,equal]
        & FSx \ar[r,"Fa" description]
        & Fx \ar[d,equal] \\
        && FSSx \ar[rr,"F(a \cdot Sa)" description,""{name=C}]
              \ar[d,"Fm_x" description]
        && Fx \ar[d,equal] \\
      TFx \ar[rr,"\phi^*_x"{name=B},swap]
        && FSx \ar[rr,"Fa"{name=D},swap]
        && Fx
      \ar[from=A,to=2-2,"\theta_\phi^T" description,phantom]
      \ar[from=2-3,to=3-3,"(\n{\phi^*}_a)^{-1}" description,phantom]
      \ar[from=3-2,to=B,"\m{\phi^*}_x" description,phantom]
      \ar[from=3-4,to=C,"\m F" description,phantom]
      \ar[from=C,to=D,"F\mu" description,phantom]
    \end{tikzcd}
  \end{equation*}
  where \( \theta_\phi^T \) is the inverse of \( \m T \circ (\id \cdot
  \sigma^T) \), and \( G_!\upsilon \), \( G_!\mu \) are respectively given by
  the following 2-cells:
  \begin{equation*}
%   \label{eq:psi.unit}
    \begin{tikzcd}
      Gy \ar[r,"1"{name=A}] \ar[dd,"e_{Gy}",swap] 
        & Gy \ar[r,"1"{name=x}] \ar[d,equal]
        & Gy \ar[d,equal] \\
      & Gy \ar[r,""{name=Y},"G1" description] \ar[d,"Ge_y" description]
      & Gy \ar[d,equal] \\
      SGy \ar[r,"\psi_{y!}"{name=B},swap]
        & GTy \ar[r,"Gb"{name=Z},swap]
        & Gy
      \ar[from=A,to=B,phantom,"\e{\psi_!}_y" description]
      \ar[from=X,to=Y,phantom,"\e G" description]
      \ar[from=Y,to=Z,phantom,"G\upsilon" description]
    \end{tikzcd}
  \end{equation*}
  \begin{equation*}
%   \label{eq:psi.mult}
    \begin{tikzcd}[column sep=large]
      SSGy \ar[rr,"S(Gb \cdot \psi_{y!})"{name=A}] 
           \ar[d,equal]
        && SGy \ar[r,"\psi_{y!}" description] 
               \ar[d,equal]
        & SGy \ar[r,"Gb" description]
              \ar[d,equal]
        & Fy \ar[dd,equal] \\
      SSGy \ar[r,"(S\psi_y)_!" description]
           \ar[d,equal]
        & SGTy \ar[r,"SGb" description]
               \ar[d,equal]
        & SGy \ar[r,"\psi_{y!}" description]
        & GTy \ar[d,equal] \\
      SSGy \ar[r,"(S\psi_t)_!" description]
           \ar[dd,"m_{Gx}" description]
        & SGTy \ar[r,"\psi_{Tx!}" description]
        & GTTy \ar[r,"GTb" description]
               \ar[d,equal]
        & GTy \ar[r,"Gb" description]
        & Gy \ar[d,equal] \\
        && GTTy \ar[rr,"G(b \cdot Tb)" description,""{name=C}]
              \ar[d,"Gm_y" description]
        && Fy \ar[d,equal] \\
      SGy \ar[rr,"\psi_{y!}"{name=B},swap]
        && GTy \ar[rr,"Gb"{name=D},swap]
        && Gy
      \ar[from=A,to=2-2,"\theta_\psi^S" description,phantom]
      \ar[from=2-3,to=3-3,"\n{\psi_!}_b" description,phantom]
      \ar[from=3-2,to=B,"\m{\psi_!}_y" description,phantom]
      \ar[from=3-4,to=C,"\m G" description,phantom]
      \ar[from=C,to=D,"G\mu" description,phantom]
    \end{tikzcd}
  \end{equation*}
  where \( \theta^S_\psi \) is the inverse of \( \m S \circ (\id \cdot \tau^S)
  \), given by Lemma \ref{lem:conjoint.eq.normal}.

  If \( (f,\zeta) \colon (w,a,\upsilon,\mu) \to (x,b,\upsilon,\mu) \) is a
  horizontal lax \(S\)-algebra morphism, and if \( (g,\xi) \colon
  (y,c,\upsilon,\mu) \to (z,d,\upsilon,\mu) \) is a horizontal lax
  \(T\)-algebra morphism, then we have
  \begin{equation*}
    F_!(f,\zeta) = (Ff,F\zeta \cdot \phi^*_f)
    \quad\text{and}\quad G_!(g,\xi) = (Gg,G\xi \cdot \psi_{f!}). 
  \end{equation*}

  We observe that \( \phi \) and \( T\phi \) are required to have strong
  conjoints only to guarantee the existence of \( F_!\mu \). All other things
  being equal, we conclude it is enough to verify that one of \(
  F_!(x,a,\upsilon,\mu) \), \( G_!(y,b,\upsilon,\mu) \) is a horizontal lax
  algebra, and likewise for the morphisms.

  Throughout the calculations, we will use the following abbreviations:
  \begin{itemize}[label=--]
    \item
      \( \upsilon^F = F\upsilon \circ \e F \),
    \item
      \( \mu^F = F\mu \circ \m F \),
    \item
      \( \hat \alpha = (\id \cdot \alpha^{-1}) \circ \alpha \),
    \item
      \( \tilde{\theta} = \id \cdot (\theta \cdot \id) \) for a 2-cell \(
      \theta \).
    \item
      \( N^{\omega}_p = \hat \alpha^{-1} \circ (\tn{\omega}_p)^{-1}
      \circ \hat \alpha \) for a strong (lax) horizontal transformation \(
      \omega \).
  \end{itemize}

  We begin by verifying that the following equalities hold:
  \begin{align}
    \label{eq:l.id.theta}
    \theta^T \circ e_{Fa \cdot \phi^*_x}
      &= e_{Fa} \cdot e^\lor_{\phi_x}, \\
    \label{eq:r.id.theta}
    \theta^T \circ T(\upsilon^F \cdot \e{\phi^*}_x) 
             \circ T\rho^{-1} \circ \e T
      &= (\upsilon^{TF} \cdot \e{(T\phi)^*}_x) \circ \rho^{-1},
  \end{align}
  We obtain \eqref{eq:l.id.theta}, via mate correspondence, by noting that
  \begin{align*}
    \m T \circ (\id \cdot \sigma^T)
         \circ (e_{Fa} \cdot e^\lor_{\phi_x})
         \circ (\id \cdot \eta) \circ \rho^{-1}
    &= \m T \circ (\id \cdot \sigma^T)
            \circ (\id \cdot \eta)
            \circ (e_{Fa} \cdot 1_{e_{FSx}}) \circ \rho^{-1} \\
    &= \m T \circ (\id \cdot T\eta)
            \circ (\id \cdot \e T)
            \circ \rho^{-1}
            \circ e_{Fa} \\
    &= T(\id \cdot \eta) \circ \m T
            \circ (\id \cdot \e T)
            \circ \rho^{-1} \circ e_{Fa} \\
    &= T(\id \cdot \eta) \circ T\rho^{-1} \circ e_{Fa}  \\
    &= e_{Fa \cdot \phi^*_x} \circ (\id \cdot \eta) \circ \rho^{-1},
  \end{align*}
  and \eqref{eq:r.id.theta}, directly, since
  \begin{align*}
    \m T \circ (\id \cdot \sigma^T)
         \circ (\upsilon^{TF} \cdot \e{(T\phi)^*}_x) \circ \rho^{-1}
      &= \m T \circ (\id \cdot T\eta) 
              \circ (\id \cdot \e T)
              \circ (\upsilon^{TF} \cdot 1_{TFe_x}) \circ \rho^{-1} \\
      &= T(\id \cdot \eta) \circ \m T
                           \circ (\id \cdot \e T)
                           \circ \rho^{-1} \circ \upsilon^{TF} \\
      &= T(\id \cdot \eta) \circ T\rho^{-1} \circ \upsilon^{TF} \\
      &= T(\id \cdot \eta) \circ T(\upsilon^F \cdot 1_{Fe_x}) 
                           \circ T\rho^{-1} \circ \e T \\
      &= T(\upsilon^F \cdot \e{\phi^*}_x) \circ T\rho^{-1} \circ \e T. 
  \end{align*}

  Furthermore, since \( \e{\phi^*} \) is a modification and \( \n{\phi^*} \)
  is natural, we respectively obtain 
  \begin{align}
    \label{eq:l.id.nat}
    (\n{\phi^*}_a)^{-1} \circ (\e{\phi^*}_x \circ e_{Fa}) 
      &= (Fe_a \cdot \e{\phi^*}_{Sx}) \circ \gamma, \\
    \label{eq:r.id.nat}
    (\n{\phi^*}_a)^{-1} \circ (\id \cdot \upsilon^{TF}) 
      &= (\upsilon^{FS} \cdot \phi^*_e) \circ \gamma.
  \end{align}

  And lastly, we note that the following diagrams commute
  \begin{equation*}
    \begin{tikzcd}
      1 \cdot Fa \ar[rrr,"\lambda"]
                 \ar[rd,"\e F \cdot \id" description]
                 \ar[ddr,bend right,"\upsilon^F \cdot Fe_a",swap]
      &&& Fa \\
      & F1 \cdot Fa \ar[d,"F\upsilon \cdot Fe_a" description]
                    \ar[r,"\m F"]
      & F(1 \cdot a) \ar[d,"F(\upsilon \cdot e_a)" description]
                     \ar[ur,"F\lambda" description] \\
      & Fa \cdot FSa \ar[r,"\m F",swap]
      & F(a \cdot Sa) \ar[ruu,"F\mu",swap,bend right] 
    \end{tikzcd}
  \end{equation*}
  \begin{equation*}
    \begin{tikzcd}
      Fa \cdot 1 \ar[rrr,"\rho"]
                 \ar[rd,"\id \cdot \e F" description]
                 \ar[ddr,bend right,"\id \cdot \upsilon^{FS}",swap]
      &&& Fa \\
      & Fa \cdot F1 \ar[d,"\id \cdot F\upsilon^S" description] 
                    \ar[r,"\m F"]
      & F(a \cdot 1) \ar[d,"F(\id \cdot \upsilon^S)" description]
                     \ar[ur,"F\rho" description] \\
      & Fa \cdot FSa \ar[r,"\m F",swap]
      & F(a \cdot Sa) \ar[ruu,"F\mu",swap,bend right]
    \end{tikzcd}
  \end{equation*}
  which respectively confirm that
  \begin{align}
    \label{eq:l.id.f}
    \mu^F \circ (\upsilon^F \cdot Fe_a) &= \lambda, \\
    \label{eq:r.id.f}
    \mu^F \circ (\id \cdot \upsilon^{FS}) &= \rho.
  \end{align}
    
  By applying \eqref{eq:l.id.theta}, \eqref{eq:l.id.nat}, \eqref{eq:l.id.f},
  and \ref{enum:l.id.hmonad} from Lemma \ref{lem:h.oplax.monad}, we obtain
  \begin{align*}
    (\mu^F \cdot \m{\phi^*}_x)
      &\circ N^{\phi^*}_a \circ (\id \cdot \theta^T)
      \circ \big((\upsilon^F \cdot \e{\phi^*}_x) 
                  \cdot e_{Fa \cdot \phi^*_x}\big)
      \circ (\lambda^{-1} \cdot \id) \\
      &\qquad= (\mu^F \cdot \m{\phi^*}_x)
          \circ N^{\phi^*}_a
          \circ \big((\upsilon^F \cdot \e{\phi^*}_x) 
                      \cdot (e_{Fa} \cdot e^\lor_{\phi_x})\big) 
          \circ (\lambda^{-1} \cdot \id) \\
      &\qquad= (\mu^F \cdot \m{\phi^*}_x)
          \circ \big((\upsilon^F \cdot Fe_a) 
                      \cdot (\e{\phi^*}_{Sx} \cdot e^\lor_{\phi_x})\big) 
          \circ N^1_a \circ (\lambda^{-1} \cdot \id) \\
      &\qquad= (\lambda \cdot \lambda) 
          \circ \hat \alpha^{-1} \circ \tilde{\gamma} \circ \hat \alpha
          \circ (\lambda^{-1} \cdot \id) 
       = \lambda,
  \end{align*}
  verifying the left identity law, and by applying \eqref{eq:r.id.theta},
  \eqref{eq:r.id.nat}, \eqref{eq:r.id.f} and \ref{enum:r.id.hmonad} from Lemma
  \ref{lem:h.oplax.monad}, we obtain
  \begin{align*}
    (\mu^F \cdot \m{\phi^*}_x)
      &\circ N^{\phi^*}_a
       \circ (\id \cdot \theta^T)
       \circ \big( \id \cdot (T(\upsilon^F \cdot \e{\phi^*}) 
                                \circ T\lambda^{-1} \circ \e T)\big) \\
      &\qquad= (\mu^F \cdot \m{\phi^*}_x)
               \circ N^{\phi^*}_a
               \circ \big( \id \cdot (\upsilon^{TF} \cdot \e{(T\phi)^*}_x) \big)
               \circ (\id \cdot \rho^{-1}) \\
      &\qquad= (\mu^F \cdot \m{\phi^*}_x)
               \circ \big( (\id \cdot \upsilon^{FS}) 
                            \cdot (\phi^*_e \cdot \e{(T\phi)^*}_x) \big)
               \circ N^1_a \circ (\id \cdot \rho^{-1}) \\
      &\qquad= (\rho \cdot \rho) 
               \circ \hat \alpha^{-1} \circ \tilde{\gamma} \circ \hat \alpha
               \circ (\id \cdot \rho^{-1}) 
      = \rho,
  \end{align*}
  verifying the right identity law.

  Now, note that
  \begin{equation}
    \label{eq:assoc.theta.1}
    \theta^T \circ m_{Fa \cdot \phi^*_x}
      = (m_{Fa} \cdot m^\lor_{\phi_x}) \circ \theta^{TT}
  \end{equation}
  holds via mate correspondence, since we have
  \begin{align*}
    m_{Fa \cdot \phi^*_x} 
      \circ \m{TT}
      \circ (\id \cdot \sigma^{TT})
      \circ (\id \cdot \eta)
      \circ \rho^{-1}
    &= m_{Fa \cdot \phi^*_x} 
        \circ \m{TT}
        \circ (\id \cdot TT\eta)
        \circ (\id \cdot \e{TT})
        \circ \rho^{-1} \\
    &= m_{Fa \cdot \phi^*_x} 
        \circ TT(\id \cdot \eta)
        \circ \m{TT}
        \circ (\id \cdot \e{TT})
        \circ \rho^{-1} \\
    &= T(\id \cdot \eta)
        \circ m_{Fa \cdot 1} 
        \circ TT\rho^{-1} \\
    &= T(\id \cdot \eta)
        \circ T\rho^{-1}
        \circ m_{Fa}, \\
    \m T \circ (\id \cdot \sigma^T)
         \circ (m_{Fa} \cdot m^\lor_{\phi_x})
         \circ (\id \cdot \eta)
         \circ \rho^{-1}
    &= \m T \circ (\id \cdot \sigma^T)
            \circ (\id \cdot \eta)
            \circ (m_{Fa} \cdot 1)
            \circ \rho^{-1} \\
    &= \m T \circ (\id \cdot T\eta)
            \circ (\id \cdot \e T)
            \circ \rho^{-1} 
            \circ m_{Fa} \\
    &= T(\id \cdot \eta)
            \circ \m T
            \circ (\id \cdot \e T)
            \circ \rho^{-1} 
            \circ m_{Fa} \\ 
    &= T(\id \cdot \eta) \circ T\rho^{-1} \circ m_{Fa},
  \end{align*}
  and since \( \m{\phi^*} \) is a modification, we get
  \begin{equation}
    \label{eq:assoc.modif}
    (\n{\phi^*}_a)^{-1} \circ (\m{\phi^*}_x \cdot m_{Fa})
      = (Fm_a \cdot \m{\phi^*}_{Sx}) \circ 
        (\n{\phi^*_S \cdot (T\phi)^*}_a)^{-1}.
  \end{equation}
  Now, note that the following diagram commutes
  \begin{equation*}
    \begin{tikzcd}
      (Fa \cdot FSa) \cdot FSSa
        \ar[r,"\alpha"]
        \ar[d,"\m F \cdot \id",swap]
      & Fa \cdot (FSa \cdot FSSa)
        \ar[r,"\id \cdot \m F"]
      & Fa \cdot F(Sa \cdot SSa)
        \ar[r,"\id \cdot F\mu^S"]
        \ar[d,"\m F"]
      & Fa \cdot FSa
        \ar[d,"\m F"] \\
      F(a \cdot Sa) \cdot FSSa
        \ar[r,"\m F"]
        \ar[d,"F\mu \cdot Fm_a",swap]
      & F((a \cdot Sa) \cdot SSa) 
        \ar[r,"F\alpha"]
        \ar[d, "F(\mu \cdot m_a)"] 
      & F(a \cdot (Sa \cdot SSa))
        \ar[r,"F(\id \cdot \mu^S)"]
      & F(a \cdot Sa) \ar[d,"F\mu"] \\
      Fa \cdot FSa 
        \ar[r,"\m F",swap]
      & F(a \cdot Sa) 
        \ar[rr,"F\mu",swap]
      && Fa
    \end{tikzcd}
  \end{equation*}
  which confirms
  \begin{equation}
    \label{eq:assoc.f}
    \mu^F \circ (\mu^F \cdot Fm_a) 
      = \mu^F \circ (\id \cdot \mu^{FS}) \circ \alpha.
  \end{equation}

  Our next step is to confirm that
  \begin{equation}
    \label{eq:assoc.coh}
    ((\id \cdot \m{FS}) \cdot \id)
      \circ (\alpha \cdot \alpha)
      \circ N^{\phi^*_S \cdot (T\phi)^*}_a
      \circ (N^{\phi^*}_a \cdot \id)
    = N^{\phi^*}_{a \cdot Sa}
        \circ (\id \cdot (\m{TF} \cdot \id))
        \circ (\id \cdot N^{(T\phi)^*}_a)
        \circ \alpha.
  \end{equation}
  First, we recall that
  \begin{equation*}
    \n{\phi^*_S \cdot (T\phi)^*}_a
      = \alpha^{-1} \circ (\id \cdot \n{(T\phi)^*}_a)
                    \circ \alpha
                    \circ (\n{\phi^*}_{Sa} \cdot \id)
                    \circ \alpha^{-1},
  \end{equation*}
  and
  \begin{equation*}
    \n{\phi^*}_{a \cdot Sa} \circ (\m{FS} \cdot \id)
      = (\id \cdot \m{TF}) \circ \alpha
                           \circ (\n{\phi^*}_a \cdot \id)
                           \circ \alpha^{-1}
                           \circ (\id \cdot \n{\phi^*}_{Sa})
                           \circ \alpha,
  \end{equation*}
  and note that by coherence, we have
  \begin{align*}
    \tilde \alpha \circ \hat \alpha \circ (\hat \alpha^{-1} \cdot \id)
      &= \alpha^{-1} \circ (\id \cdot \hat \alpha) 
                     \circ (\id \cdot \hat \alpha)
                     \circ \alpha, \\
    (\alpha \cdot \alpha) 
                \circ \hat \alpha^{-1} 
                \circ \tilde \alpha
      &= \hat \alpha^{-1} 
                \circ \tilde \alpha^{-1}
                \circ \hat \alpha 
                \circ (\id \cdot \alpha), \\
    \hat \alpha \circ (\id \cdot \alpha) 
                \circ \tilde \alpha^{-1}
                \circ \alpha^{-1}
                \circ (\id \cdot \hat \alpha)
      &= \tilde \alpha \circ (\id \cdot \hat \alpha^{-1}),\\
    \hat \alpha^{-1} \circ \tilde \alpha
                     \circ (\id \cdot \hat \alpha)^{-1}
      &= (\id \cdot \hat \alpha^{-1})
                     \circ \alpha^{-1}
                     \circ (\id \cdot \alpha) \\
    \alpha^{-1} \circ (\id \cdot \alpha) 
               \circ (\id \cdot \hat \alpha)
               \circ \alpha
               \circ (\hat \alpha \cdot \id)
      &= (\id \cdot \hat \alpha) \circ \alpha,
  \end{align*}
  so that
  \begin{align*}
    &((\id \cdot \m{FS}) \cdot \id)
      \circ (\alpha \cdot \alpha)
      \circ N^{\phi^*_S \cdot (T\phi)^*}_a
      \circ (N^{\phi^*}_a \cdot \id) \\
     &= ((\id \cdot \m{FS}) \cdot \id)
        \circ (\alpha \cdot \alpha)
        \circ \hat \alpha^{-1}
        \circ \tilde \alpha
        \circ (\id \cdot ((\n{\phi^*}_{Sa} \cdot \id) \cdot \id))^{-1}
        \circ \tilde \alpha^{-1} \\
      &\quad\circ (\id \cdot ((\id \cdot \n{(T\phi)^*}_a) \cdot \id))^{-1}
        \circ \tilde \alpha
        \circ \hat \alpha
        \circ (\hat \alpha^{-1} \cdot \id)
        \circ ((\id \cdot (\n{\phi^*}_a \cdot \id)) \cdot \id)^{-1}
        \circ (\hat \alpha \cdot \id) \\
    &= \hat \alpha^{-1}
        \circ (\id \cdot ((\m{FS}\cdot \id) \cdot \id))
        \circ \tilde \alpha^{-1}
        \circ (\id \cdot ((\id \cdot \n{\phi^*}_{Sa}) \cdot \id))^{-1} \\
       &\quad\circ \hat \alpha
        \circ (\id \cdot \alpha)
        \circ \tilde \alpha^{-1} 
        \circ \alpha^{-1} 
        \circ (\id \cdot \hat \alpha) 
        \circ (\id \cdot (\n{\phi^*}_a \cdot \id))^{-1} \\
       &\quad\circ (\id \cdot (\id \cdot (\n{(T\phi)^*}_a \cdot \id)))^{-1}
        \circ (\id \cdot \hat \alpha)
        \circ \alpha
        \circ (\hat \alpha \cdot \id) \\
    &= \hat \alpha^{-1}
        \circ (\id \cdot ((\m{FS}\cdot \id) \cdot \id))
        \circ \tilde \alpha^{-1}
        \circ (\id \cdot ((\id \cdot \n{\phi^*}_{Sa}) \cdot \id))^{-1} 
        \circ \tilde \alpha
        \circ (\id \cdot ((\n{\phi^*}_a \cdot \id) \cdot \id))^{-1} \\
       &\quad\circ (\id \cdot \hat \alpha)^{-1}
        \circ (\id \cdot (\id \cdot (\n{(T\phi)^*}_a \cdot \id)))^{-1}
        \circ (\id \cdot \hat \alpha)
        \circ \alpha
        \circ (\hat \alpha \cdot \id) \\
    &= \hat \alpha^{-1}
        \circ (\id \cdot (\n{\phi^*}_{a \cdot Sa} \cdot \id))^{-1}
        \circ (\id \cdot ((\id \cdot \m{TF}) \cdot \id))
        \circ \tilde \alpha
        \circ (\id \cdot N^{(T\phi)^*}_a)
        \circ \alpha 
        \circ (\hat \alpha \cdot \id) \\
    &= N^{\phi^*}_{a \cdot Sa}
        \circ (\id \cdot (\m{TF} \cdot \id))
        \circ \hat \alpha^{-1}
        \circ \tilde \alpha
        \circ (\id \cdot N^{(T\phi)^*}_a)
        \circ \alpha 
        \circ (\hat \alpha \cdot \id) \\
    &= N^{\phi^*}_{a \cdot Sa}
        \circ (\id \cdot (\m{TF} \cdot \id))
        \circ (\id \cdot \hat \alpha^{-1})
        \circ (\id \cdot \tn{(T\phi)^*}_a)^{-1}
        \circ \alpha^{-1}
        \circ (\id \cdot \alpha)
        \circ (\id \cdot \hat \alpha)
        \circ \alpha 
        \circ (\hat \alpha \cdot \id) \\
    &= N^{\phi^*}_{a \cdot Sa}
        \circ (\id \cdot (\m{TF} \cdot \id))
        \circ (\id \cdot N^{(T\phi)^*}_a)
        \circ \alpha.
  \end{align*}
 
  Next, we observe that 
  \begin{equation}
    \label{eq:assoc.nat}
    (FS\mu \cdot \phi^*_m) \circ (\n{\phi^*}_{a \cdot Sa})^{-1}
      = (\n{\phi^*}_a)^{-1} \circ (\id \cdot TF\mu) 
  \end{equation}
  holds by naturality of \( \n{\phi^*} \), and lastly we must confirm that
  \begin{equation}
    \label{eq:assoc.theta.2}
    (\mu^{TF} \cdot \m{(T\phi)^*}_a)
      \circ N^{(T\phi)^*}_a
      \circ (\theta^T \cdot \theta^{TT})
    = \theta^T \circ T(\mu^F \cdot \m{\phi^*}_x)
               \circ TN^{\phi^*}_a
               \circ T(\id \cdot \theta^T)
               \circ \m T,
  \end{equation}
  which we reduce to proving the following relations:
  \begin{equation*}
    TN^{\phi^*}_a
      \circ T(\id \cdot \theta^T)
      \circ \m T
      \circ (\m T \cdot \m{TT})
      \circ ((\id \cdot \sigma^T) \cdot (\id \cdot \sigma^{TT}))
    = \m T \circ (\m T \cdot \m T)
           \circ (\id \cdot (\sigma^T \cdot \sigma^T))
           \circ N^{(T\phi)^*}_a
  \end{equation*}
  \begin{equation*}
    \sigma^T \circ \m{(T\phi)^*}_x
      = T\m{\phi^*}_x \circ \m T \circ (\sigma^T \cdot \sigma^T)
  \end{equation*}

  For the first, we have the commutativity of the following diagram, omitting
  horizontal 1-cells
  \begin{equation*}
    \begin{tikzcd}[column sep=huge]
      \cdot \ar[r,"\id \cdot (\m T \cdot \id)"]
            \ar[d,"\id \cdot \alpha",swap]
      & \cdot \ar[r,"\id \cdot \m T"]
      & \cdot \ar[d,"\id \cdot T\alpha" description]
              \ar[r,"\m T"] 
      & \cdot \ar[d,"T(\id \cdot \alpha)"] \\
      \cdot \ar[r,"\id \cdot (\id \cdot \m T)" description]
            \ar[d,"\alpha^{-1}",swap]
      & \cdot \ar[r,"\id \cdot \m T" description]
              \ar[d,"\alpha^{-1}" description]
      & \cdot \ar[r,"\m T"]
      & \cdot \ar[d,"T\alpha^{-1}"] \\
      \cdot \ar[r,"\id \cdot \m T",swap]
      & \cdot \ar[r,"\m T \cdot \id",swap]
      & \cdot \ar[r,"\m T",swap]
      & \cdot
    \end{tikzcd}
  \end{equation*}
  then we observe that
  \begin{align*}
    T(\id \cdot \theta^T)
      &\circ \m T
      \circ (\m T \cdot \m{TT})
      \circ ((\id \cdot \sigma^T) \cdot (\id \cdot \sigma^{TT})) \\
    &= \m T \circ (\id \cdot T\theta^T)
            \circ (\m T \cdot \m{TT})
            \circ ((\id \cdot \sigma^T) \cdot (\id \cdot \sigma^{TT})) \\
    &= \m T \circ (\m T \cdot \m T) 
         \circ ((\id \cdot \sigma^T) \cdot (\id \cdot \sigma^T)),
  \end{align*}
  and finally recall from \eqref{eq:hphi.2.cell} that
  \begin{equation*}
    \m T \circ (\sigma^T \cdot \id)
         \circ \n{(T\phi)^*}_a
    = T\n{\phi^*}_a \circ \m T \circ (\id \cdot \sigma^T),
  \end{equation*}
  so that we may calculate
  \begin{align*}
    T(\id \cdot \theta^T)
      &\circ \m T
      \circ (\m T \cdot \m{TT})
      \circ ((\id \cdot \sigma^T) \cdot (\id \cdot \sigma^{TT}))
      \circ \hat \alpha^{-1}
      \circ \tn{(T\phi)^*}
      \circ \hat \alpha \\
    &= \m T \circ (\m T \cdot \m T)
            \circ ((\id \cdot \sigma^T) \cdot (\id \cdot \sigma^T))
            \circ \hat \alpha^{-1}
            \circ \tn{(T\phi)^*}
            \circ \hat \alpha \\
    &= \m T \circ (\m T \cdot \m T)
            \circ \hat \alpha^{-1}
            \circ (\id \cdot ((\sigma^T \cdot \id) \cdot \sigma^T))
            \circ \tn{(T\phi)^*}
            \circ \hat \alpha \\
    &= T\hat \alpha^{-1}
            \circ \m T 
            \circ (\id \cdot \m T)
            \circ \tm T
            \circ (\id \cdot ((\sigma^T \cdot \id) \cdot \sigma^T))
            \circ \tn{(T\phi)^*}
            \circ \hat \alpha \\
    &= T\hat \alpha^{-1}
            \circ \m T 
            \circ (\id \cdot \m T)
            \circ (\id \cdot (T\n{\phi^*}_a \cdot \id))
            \circ \tm T
            \circ (\id \cdot ((\id \cdot \sigma^T) \cdot \sigma^T))
            \circ \hat \alpha \\
    &= T\hat \alpha^{-1}
            \circ T\tn{\phi^*}_a
            \circ \m T 
            \circ (\id \cdot \m T)
            \circ \tm T
            \circ \hat \alpha 
            \circ (\id \cdot (\sigma^T \cdot \sigma^T)) \\
    &= T\hat \alpha^{-1}
            \circ T\tn{\phi^*}_a
            \circ T\hat \alpha 
            \circ \m T 
            \circ (\m T \cdot \m T)
            \circ (\id \cdot (\sigma^T \cdot \sigma^T)) 
  \end{align*}

  The second follows by applying the mate correspondence twice: we have
  \begin{align*}
    \sigma^T \circ \m{(T\phi)^*}_x
             \circ (\eta \cdot (\eta \circ 1))
             \circ \rho^{-1}
    &= \sigma^T \circ 1^\lor \circ \eta \\
    &= \sigma^T \circ \eta \circ 1 \\
    &= T\eta \circ \e T \circ 1 \\
    &= T(\eta \circ 1) \circ \e T,
  \end{align*}
  and
  \begin{align*}
    T\m{\phi^*}_x \circ \m T
                  \circ (\sigma^T \cdot \sigma^T)
                  \circ (\eta \cdot (\eta \circ 1))
                  \circ \rho^{-1}
    &= T\m{\phi^*}_x \circ \m T
                     \circ (T\eta \cdot T(\eta \circ 1))
                     \circ (\e T \cdot \e T)
                     \circ \rho^{-1} \\
    &= T\m{\phi^*}_x \circ T(\eta \cdot (\eta \circ 1))
                     \circ \m T
                     \circ (\id \cdot \e T)
                     \circ \rho^{-1} 
                     \circ \e T \\
    &= T(1^\lor \circ \eta) \circ \e T \\
    &= T(\eta \circ 1) \circ \e T.
  \end{align*}
  We obtain \eqref{eq:assoc.theta.2} via the following calculation:
  \begin{align*}
    T(\mu^F \cdot \m{\phi^*}_x)
      &\circ TN\phi^*_a
       \circ T(\id \cdot \theta^T)
       \circ \m T
       \circ (\m T \cdot \m{TT})
       \circ ((\id \cdot \sigma^T) \cdot (\id \cdot \sigma^{TT}))
       \circ (N^{(T\phi)^*}_a)^{-1} \\
      &= T(\mu^F \cdot \m{\phi^*}_x)
           \circ \m T
           \circ (\m T \cdot \m T)
           \circ (\id \cdot (\sigma^T \cdot \sigma^T)) \\
      &= \m T \circ (\mu^{TF} \cdot \m{\phi^*}_x)
              \circ (\id \cdot \m T)
              \circ (\id \cdot (\sigma^T \cdot \sigma^T)) \\
      &= \m T \circ (\id \cdot \sigma^T)
              \circ (\mu^{TF} \cdot \m{(T\phi)^*}_x).
  \end{align*}

  Now, we apply \eqref{eq:assoc.theta.1}, \eqref{eq:assoc.modif},
  \eqref{eq:assoc.f}, \ref{enum:assoc.hmonad} from Lemma
  \ref{lem:h.oplax.monad}, \eqref{eq:assoc.coh}, \eqref{eq:assoc.nat},
  \eqref{eq:assoc.theta.2} in sucession, to obtain
  \begin{align*}
    (\mu^F &\cdot \m{\phi^*}_x)
      \circ N^{\phi^*}_a
      \circ (\id \cdot \theta^T)
      \circ ((\mu^F \cdot \m{\phi^*}_x) \cdot m_{Fa \cdot \phi^*})
      \circ (N^{\phi^*}_a \cdot \id)
      \circ ((\id \cdot \theta^T) \cdot \id) \\
    &= (\mu^F \cdot \m{\phi^*}_x)
        \circ N^{\phi^*}_a
        \circ ((\mu^F \cdot \m{\phi^*}_x) 
                  \cdot (m_{Fa} \cdot m^{\lor}_{\phi^*}))
        \circ (N^{\phi^*}_a \cdot \id)
        \circ ((\id \cdot \theta^T) \cdot \theta^{TT}) \\
    &= (\mu^F \cdot \m{\phi^*}_x)
        \circ ((\mu^F \cdot Fm_a) 
                  \cdot (\m{\phi^*}_{Sx} \cdot m^{\lor}_{\phi^*}))
        \circ N^{\phi^*_S \cdot (T\phi)^*}_a
        \circ (N^{\phi^*}_a \cdot \id)
        \circ ((\id \cdot \theta^T) \cdot \theta^{TT}) \\
    &= (\mu^F \cdot \m{\phi^*}_x)
        \circ ((\id \cdot \mu^{FS}) \cdot (\phi^*_m \cdot m^{\lor}_{\phi^*}))
        \circ (\alpha \cdot \alpha)
        \circ N^{\phi^*_S \cdot (T\phi)^*}_a
        \circ (N^{\phi^*}_a \cdot \id)
        \circ ((\id \cdot \theta^T) \cdot \theta^{TT}) \\
    &= (\mu^F \cdot \m{\phi^*}_x)
        \circ ((\id \cdot FS\mu) \cdot (\phi^*_m \cdot m^{\lor}_{\phi^*}))
        \circ N^{\phi^*}_{a \cdot Sa}
        \circ (\id \cdot (\m{TF} \cdot \id))
        \circ (\id \cdot N^{(T\phi)^*}_a)
        \circ (\id \cdot (\theta^T \cdot \theta^{TT})) 
        \circ \alpha \\
    &= (\mu^F \cdot \m{\phi^*}_x)
        \circ N^{\phi^*}_a
        \circ (\id \cdot (\mu^{TF} \cdot \m{(T\phi)^*}_x))
        \circ (\id \cdot N^{(T\phi)^*}_a)
        \circ (\id \cdot (\theta^T \cdot \theta^{TT})) 
        \circ \alpha \\
    &= (\mu^F \cdot \m{\phi^*}_x)
        \circ N^{\phi^*}_a
        \circ (\id \cdot \theta^T)
        \circ (\id \cdot T(\mu^F \cdot \m{\phi^*}_x))
        \circ (\id \cdot TN^{\phi^*}_a)
        \circ (\id \cdot T(\id \cdot \theta^T)) 
        \circ (\id \cdot \m T)
        \circ \alpha
  \end{align*}
  which confirms the associativity law.

  We conclude the proof by confirming that \( (Ff,F\zeta \cdot \phi^*_f) \) is
  a horizontal lax \(T\)-algebra morphism, for any given horizontal lax
  \(S\)-algebra morphism \( (f,\zeta) \): indeed, note that the following
  diagrams commute
  \begin{equation*}
    \begin{tikzcd}
      1_{Fx} \ar[r,"\lambda"] 
        \ar[d,"1_{Ff}",swap]
      & 1_{Fx} \cdot 1_{Fx} \ar[rr,"\upsilon^F \cdot \e{\phi^*}_x"]
        \ar[d,"1_{Ff} \cdot 1_{Ff}"]
      && Fa \cdot \phi^*_x \ar[d,"F\zeta \cdot \phi^*_f"] \\
      1_{Fy} \ar[r,"\lambda",swap] 
      & 1_{Fy} \cdot 1_{Fy} \ar[rr,"\upsilon^F \cdot \e{\phi^*}_y",swap]
      && Fb \cdot \phi^*_y 
    \end{tikzcd}
  \end{equation*}
  \begin{equation*}
    \begin{tikzcd}[column sep=large]
      (Fa \cdot \phi^*_x) \cdot T(Fa \cdot \phi^*_x)
        \ar[rrr,"(F\zeta \cdot \phi^*_f) \cdot T(F\zeta \cdot \phi^*_f)"]
        \ar[d,"\id \cdot \theta^T",swap]
      &&& (Fb \cdot \phi^*_y) \cdot T(Fb \cdot \phi^*_y) 
        \ar[d,"\id \cdot \theta^T"] \\
      (Fa \cdot \phi^*_x) \cdot (TFa \cdot (T\phi)^*_x)
        \ar[rrr,"(F\zeta \cdot \phi^*_f) 
                  \cdot (TF\zeta \cdot (T\phi)^*_f)" description]
        \ar[d,"N^{\phi^*}_a",swap]
      &&& (Fb \cdot \phi^*_y) \cdot (TFb \cdot (T\phi)^*_y) 
        \ar[d,"N^{\phi^*}_b"] \\
      (Fa \cdot FSa) \cdot (\phi^*_{Sx} \cdot (T\phi)^*_x)
        \ar[rrr,"(F\zeta \cdot FS\zeta) 
                  \cdot (\phi^*_{Sf} \cdot (T\phi)^*_f)" description]
        \ar[d,"\mu^F \cdot \m{\phi^*}_x",swap]
      &&& (Fb \cdot FSb) \cdot (\phi^*_{Sy} \cdot (T\phi)^*_y) 
        \ar[d,"\mu^F \cdot \m{\phi^*}_y"] \\
      Fa \cdot \phi^*_x \ar[rrr,"F\zeta \cdot \phi^*_f",swap] 
      &&& Fb \cdot \phi^*_y
    \end{tikzcd}
  \end{equation*}
  via pasting of naturality and modification squares.

  Functoriality is confirmed componentwise.
\end{proof}

We close this section with a comparative analysis of
Theorem~\ref{thm:base.change} with the notions of change-of-base for internal
\(T\)-categories described in \cite{Lei04}, and with the notions of
change-of-base for enriched \(T\)-categories described in \cite[Sections 5 and
6]{CT03}; we confirm all of these generalize to our setting. The description
of our main object of study, the functor \( \TtVCat \to \CatTV \) induced by
\( - \pt \trm \colon \VMat \to \SpanV \), must be postponed to Section
\ref{sect:disc.morph}. 

\subsection{Internal $T$-categories:} 
\label{subsect:int.t.mcats}

Let \( \cat D, \cat E \) be categories with pullbacks, with respective
cartesian monads \(S\), \(T\) on \( \cat D \), \( \cat E \). We consider the
equipments \( \bicat D = \Span(\cat D) \) and \( \bicat E = \Span(\cat E) \),
and, abusing notation, we denote by \( S\) and \( T \) the induced strong
monads on \( \bicat D \) and \( \bicat E \). 

Using the terminology of \cite{Lei04}, we consider a cartesian monad oplax
morphism \( (P,\phi) \colon S \to T \) and a cartesian monad lax morphism \(
(Q,\psi) \colon T \to S \). The underlying data is given by
\begin{itemize}[label=--]
  \item
    pullback-preserving functors \( P, Q \colon \cat D \to \cat E \),
  \item
    a cartesian natural transformation \( \phi \colon PS \to TP \),
  \item
    a natural transformation \( \psi \colon SQ \to QT \).
\end{itemize}

We note \(P\) and \(Q\) induce strong functors \( \hat P \colon \bicat D \to
\bicat E \), \(\hat Q \colon \bicat E \to \bicat D \), and \( \phi \), \( \psi
\) induce vertical transformations \( \hat \phi \), \( \hat \psi \), which, in
turn, define a monad oplax morphism \( (\hat P,\hat \phi) \) and a monad lax
morphism \( (\hat Q, \hat \psi) \).

We conclude, by Theorem \ref{thm:base.change} that \( (\hat Q, \hat \psi) \)
defines a functor \( \hat Q_! \colon \CatTV \to \CatSV \). Moreover, since \(
\phi \) is cartesian, \( \hat \phi \) has a strong conjoint, and since \( \hat
P \) is strong, \( \hat P \hat \phi \) also has a strong conjoint; therefore
we may also conclude that \( (\hat P, \hat \phi) \) induces a functor \( \hat
P_! \colon \CatSV \to \CatTV \).

In fact, this notion of change-of-base can be plainly extended to include
Burroni's \(T\)-categories \cite{Bur71}. This would require a notion of
horizontal lax \(T\)-algebra for \(T\) an \textit{oplax} monad (which is
possible, merely requiring a couple of adjustments), and a replacement of lax
functors with oplax functors in Theorem \ref{thm:base.change}. We leave a
pursuit of these results and possible applications for future work.

\subsection{Enriched $T$-categories:}

Two instances of change-of-base are constructed in \cite{CT03}; we begin by
fixing a distributive monoidal category \( \cat V \), and let \( \bicat D =
\VMat \). Therein, a \textit{lax extension} of a \(\Set\)-monad \(T\) to \(
\VMat \) is a normal lax monad on \( \bicat D \) with underlying
\(\Set\)-monad \(T\).

First, we suppose we have two normal lax monads \( S \) and \( T \) on \(
\bicat D \), and let \( \phi \colon T \to S \) be a vertical transformation,
so that \( (\id,\phi) \colon S \to T \) defines a monad lax morphism. This is
precisely the data described in \cite[Section 5]{CT03}, restated in a double
categorical language. Theorem \ref{thm:base.change} produces a functor \(
\SVCat \to \TVCat \), which coincides with the \textit{algebraic functor}
construction in the aforementioned work.

Now, let \( \cat W \) be another distributive monoidal category, let \( \bicat
E = \WMat \), and let \( F \colon \cat V \to \cat W \) be a normal lax monoidal
functor, preserving the initial object. \(F\) induces a normal lax functor \(
\hat F \colon \bicat D \to \bicat E \) with \( \hat F_0 = \id_{\Set} \).

We let \(T\) and \(S\) be a lax monads on \( \bicat D \) and on \( \bicat E
\), respectively,  with the same underlying monad on \( \Set \); in other
words, \( S \) and \( T \) are \textit{lax extensions} of the same \( \Set
\)-monad.

Given a vertical transformation \( \phi \colon T\hat F \to \hat FS \), such
that \( \phi_0 \) is the identity and such that \( (\hat F, \phi) \colon S \to
T \) is a monad lax morphism, we may apply Theorem \ref{thm:base.change}
to produce a functor \( \hat F_! \colon \SVCat \to \TWCat \); this is
precisely the functor constructed in \cite[Section 6]{CT03}.

We should highlight that all normality conditions can be omitted, as well as
the preservation of the initial object by \(F\), and still obtain
change-of-base functors. This normality-free setting for both instances of
change-of-base was already studied in \cite[Sections 3.4, 3.5]{HST14}, for
thin categories \( \cat V \).

  \section{Induced adjunction}
    \label{sect:induced.adjunction}
    As observed in \cite[Section 6.7]{Lei04}, a suitable notion of adjunction
between cartesian monads will induce an adjunction on the categories of
internal \(T\)-categories, which has proven fruitful in their study.
Moreover, in \cite[Section 3]{HST14}, several change-of-base adjunctions
between categories of (monad, quantale)-categories are studied as well. Our
aim is to extend these ideas to arbitrary horizontal lax algebras, aiming to
compare the enriched and the internal notions of generalized multicategory.

Throughout this section, our setting is a conjunction
\begin{equation*}
  \label{eq:base.conjunction}
  \begin{tikzcd}
    (S,\bicat D)
      \ar[r,bend left,"{(F,\,\phi)}"{name=A}]
    & (T, \bicat E)
      \ar[l,bend left,"{(G,\,\psi)}"{name=B,below}]
    \ar[from=A,to=B,phantom,"\adj" {anchor=center, rotate=-90}]
  \end{tikzcd}
\end{equation*}
in the double category \( \Mnd(\PsDbCat_\lax) \), such that \( \bicat D \) and
\( \bicat E \) are equipments, and \( \phi \), \( T\phi \) have strong
conjoints. We denote the unit and counit by \( \heta, \heps \), respectively.

We recall that
\begin{itemize}[label=--]
  \item
    \( (F,\phi) \colon (S,\bicat D) \to (T,\bicat E) \) is a monad oplax
    morphism,
  \item
    \( (G,\psi) \colon (T,\bicat E) \to (S,\bicat D) \) is a monad lax
    morphism,
  \item
    we have an adjunction \( F \adj G \) in \( \PsDbCat_\lax \) with unit \(
    \heta \) and counit \( \heps \),
  \item
    and by doctrinal adjunction, \(F\) is strong, and \(\phi\), \( \psi \)
    are mates,
\end{itemize}
so that by Theorem \ref{thm:base.change}, we may construct functors 
\begin{equation*}
  F_! \colon \LaxSHAlg \to \LaxTHAlg \quad\text{and}\quad
  G_! \colon \LaxTHAlg \to \LaxSHAlg
\end{equation*}
induced by \( (F,\phi^*) \) and \( (G,\psi_!) \), respectively.

\begin{theorem}
  \label{thm:lax.alg.adj}
  We have an adjunction \( F_! \adj G_! \).
\end{theorem}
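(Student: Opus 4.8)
The plan is to construct the unit and counit of the adjunction $F_! \dashv G_!$ directly from the data of the conjunction in $\Mnd(\PsDbCat_\lax)$, and then verify the triangle identities. Since $F$ is strong (by doctrinal adjunction) and $\phi, \psi$ are mates, the underlying $\Cat$-graph adjunction $F \dashv G$ restricts appropriately, and I expect the unit $\eta^{F_!}_{(x,a,\upsilon,\mu)} \colon (x,a,\upsilon,\mu) \to G_!F_!(x,a,\upsilon,\mu)$ to have underlying vertical $1$-cell $\heta_x \colon x \to GFx$, while the counit $\epsilon^{F_!}_{(y,b,\upsilon,\mu)} \colon F_!G_!(y,b,\upsilon,\mu) \to (y,b,\upsilon,\mu)$ has underlying vertical $1$-cell $\heps_y \colon FGy \to y$.

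First I would spell out the objects $F_!G_!(y,b,\upsilon,\mu)$ and $G_!F_!(x,a,\upsilon,\mu)$ explicitly using the formulas from Theorem \ref{thm:base.change}: the former has horizontal $1$-cell $FGb \cdot F\psi_{y!} \cdot \phi^*_{Gy}$ (up to the composition comparisons of the strong functor $F$ and the associators), and the latter $G Fa \cdot G\phi^*_x \cdot \psi_{Fx!}$. The key technical fact I would establish is that $\heta$ and $\heps$, being components of a conjunction in $\Mnd(\PsDbCat_\lax)$, interact correctly with the conjoint $\phi^*$ and companion $\psi_!$ via the mate calculus developed in Section \ref{sect:conj.comp}; concretely, I expect a ``Beck--Chevalley''-type compatibility saying that the mate of $\heta$ (resp. $\heps$) is built from $\sigma^G$, $\tau^F$-type comparison $2$-cells and reduces, via Lemma \ref{lemma:conjoint.mate.theory} and Remark \ref{rem:mate.corresp}, to the required $2$-cell filling the squares that define a horizontal lax algebra morphism. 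Then I would check the coherence conditions $\zeta \circ \upsilon = \upsilon \circ 1_f$ and $\zeta \circ \mu = \mu \circ (\zeta \cdot T\zeta)$ for $\eta^{F_!}$ and $\epsilon^{F_!}$; these follow by pasting the corresponding coherences for $\heta, \heps$ as $2$-cells of pseudodouble categories (i.e.\ generalized modifications) together with the modification properties of $\e{\phi^*}, \m{\phi^*}, \e{\psi_!}, \m{\psi_!}$ from Lemma \ref{lem:h.oplax.monad}.

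Next I would verify naturality of $\eta^{F_!}$ and $\epsilon^{F_!}$ in morphisms of horizontal lax algebras; since both are built by whiskering the $\Cat$-graph-level transformations $\heta, \heps$ with the functors $F_1, G_1$ and the conjoint/companion structure, naturality reduces to naturality of $\heta, \heps$ together with the fact, proved inside Theorem \ref{thm:base.change}, that $F_!$ and $G_!$ act on a morphism $(f,\zeta)$ by $F_!(f,\zeta) = (Ff, F\zeta \cdot \phi^*_f)$ and $G_!(g,\xi) = (Gg, G\xi \cdot \psi_{g!})$, which are manifestly whiskerings. Finally, the triangle identities $\epsilon^{F_!}_{F_!} \circ F_!\eta^{F_!} = \id$ and $G_!\epsilon^{F_!} \circ \eta^{F_!}_{G_!} = \id$ follow on underlying vertical $1$-cells from the triangle identities for $\heta \dashv \heps$, and on the $2$-cell components from the uniqueness part of the mate correspondence (Lemma \ref{lemma:conjoint.mate.theory}): both composites are morphisms whose underlying vertical $1$-cell is an identity, hence are determined by their (identity) mate, so they must equal the identity morphism.

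The main obstacle will be the last step in disguise: showing that the $2$-cell data assembled from $\heta$ (together with $\e{\phi^*}, \m{\phi^*}$, the comparison $2$-cells $\sigma^F, \sigma^G$, and the chosen associators threading through $FGb \cdot F\psi_{y!} \cdot \phi^*_{Gy}$) genuinely defines a horizontal lax algebra morphism and that the resulting family is natural — in other words, bookkeeping the comparison isomorphisms of the strong functor $F$ and the lax functor $G$ along the long composites. I would manage this exactly as in the proof of Theorem \ref{thm:base.change}, namely by reducing every coherence equation, via $\epsilon \circ (-)$ and the mate correspondence, to a statement about vertical $1$-cells, where it becomes one of the conjunction axioms for $(F,\phi) \dashv (G,\psi)$ in $\Mnd(\PsDbCat_\lax)$; the $2$-dimensional data then comes for free by uniqueness of mates. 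I do not expect to need anything beyond Theorem \ref{thm:conjoint.closed}, Lemma \ref{lemma:conjoint.mate.theory}, Lemma \ref{lem:h.oplax.monad}, Lemma \ref{lem:conjoint.eq.normal}, and the doctrinal adjunction Proposition \ref{prop:doct.adj}.
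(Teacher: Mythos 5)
Your proposal is sound but takes a genuinely different (if classically equivalent) route from the paper. You construct the unit and counit of $F_! \adj G_!$ explicitly, with underlying vertical $1$-cells $\heta_x$ and $\heps_y$, and then verify the triangle identities; the paper instead exhibits the hom-set bijection directly, sending $(f,\zeta)$ to $(f^\sharp,\zeta^{\lor\sharp\land})$ and $(g,\xi)$ to $(g^\flat,\xi^{\lor\flat\land})$, where $\sharp,\flat$ are the transposes along $\heta \adj \heps$ and $\lor,\land$ are mate operations. The advantage of the paper's packaging is that mutual inverseness of the two assignments is immediate from the fact that $\lor,\land$ and $\flat,\sharp$ are pairs of inverse operators, so no separate triangle-identity computation is needed; the only substantial work is showing that the transposed pair is again a horizontal lax algebra morphism (the unit and multiplication laws), plus naturality, both handled by reducing everything to vertical $1$-cell identities via the mate correspondence --- exactly the reduction strategy you describe for your ``main obstacle.'' Your route carries two extra burdens you should not gloss over. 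First, the triangle identities in the $2$-cell dimension do not come for free from ``uniqueness of mates'': a morphism over an identity vertical $1$-cell is not automatically the identity morphism, so you must actually compute that the mate of the composite $2$-cell agrees with the mate of the identity, which is the same order of mate-calculus bookkeeping as the paper's verification that $\lor\sharp\land$ and $\lor\flat\land$ are inverse. Second, your description of $G_!F_!(x,a,\upsilon,\mu)$ as having horizontal $1$-cell $GFa \cdot G\phi^*_x \cdot \psi_{Fx!}$ silently invokes invertibility of $\m G \circ (\id \cdot \sigma^G)$, which is not among the standing hypotheses (it is only used later, in Lemma \ref{lem:unit.comp.iso.when}, as an \emph{extra} assumption); the correct horizontal $1$-cell is $G(Fa \cdot \phi^*_x) \cdot \psi_{Fx!}$, and your unit $2$-cell must be built accordingly. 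Neither issue is fatal, but both are where the actual content of the proof lives. As a sanity check that the two approaches agree, note that the paper's Lemma \ref{lem:counit.iso.when} computes the counit as $(\heps_y,\id^{\lor\sharp\land})$, which is precisely your proposed counit obtained by transposing the identity.
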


\begin{proof}
  To fix notation, for \( f \colon x \to Gy \) and \( g \colon Fx \to y \), we
  let \( f^\sharp = \heps_y \circ Ff \) and \( g^\flat = Gg \circ \heta_x
  \). This is similarly defined for 2-cells.

  We claim that the hom-isomorphism and its inverse are given by
  \begin{equation}
    \label{hom.iso.formulae}
    (f,\zeta) \mapsto (f^\sharp, \zeta^{\lor \sharp \land}),
    \quad \text{and} \quad 
    (g,\xi) \mapsto (g^\flat, \xi^{\lor \flat \land})
  \end{equation}
  where we use \( (-)^\lor \) and \( (-)^\land \) as short-hand for mates. To
  be explicit, these are respectively given by 
  \begin{align*}
    \zeta^\lor = \rho \circ (\id \cdot \delta) \circ \zeta 
    \quad&\text{and}\quad 
    \xi^\lor = \xi \circ (\id \cdot \eta) \circ \rho^{-1},\\
    \theta^\land = (\theta \cdot (\nu \circ 1)) \circ \rho^{-1}
    \quad&\text{and}\quad
    \chi^\land = \rho \circ (\chi \cdot (1 \circ \epsilon)) 
  \end{align*}
  for suitable 2-cells \( \zeta, \theta \) in \( \bicat D \) and 2-cells \(
  \xi, \chi \) in \( \bicat E \).

  To make sure that the horizontal composition for these mates of 2-cells is
  defined, note that
  \begin{equation*}
    \heps_{Ty} \circ F\psi_y \circ FSf 
      = T\heps_y \circ \phi_{Gy} \circ FSf
      = T\heps_y \circ TFf \circ \phi_x
      = Tf^\sharp \circ \phi_x,
  \end{equation*}
  \begin{equation*}
    GTg \circ G\phi_x \circ \heta_{Sx}
      = GTg \circ \psi_{Gx} \circ S\heta_x
      = \psi_y \circ SFg \circ S\heta_x
      = \psi_y \circ Sg^\flat.
  \end{equation*}

  Since \( \lor \), \( \land \) and \( \flat \), \( \sharp \) are pairs of
  inverse operators, we find that the maps in \eqref{hom.iso.formulae} are
  each others' inverses.

  To check these maps are natural, let \( (k,\omega) \colon (w,d,\upsilon,\mu)
  \to (x,a,\upsilon,\mu) \) be a horizontal lax \(S\) algebra morphism, and \(
  (h, \chi) \colon (y,b,\upsilon,\mu) \to (z,c,\upsilon,\mu) \) a horizontal
  lax \(T\)-algebra morphism. We have
  \begin{equation*}
    \zeta^{\lor \sharp \land} \circ (F\omega \cdot \phi^*_k)
                              \circ (\id \cdot \eta) \circ \rho^{-1}
      = \zeta^{\lor \sharp} \circ F\omega 
      = (\zeta \circ \omega)^{\lor\sharp},
  \end{equation*}
  \begin{equation*}
    ((G\chi \cdot \psi_{!h}) \circ \zeta)^\lor
      = G\chi \circ \zeta^\lor,
    \quad
    (G\chi \circ \zeta^\lor)^\sharp
    = \heps_c \circ FG\chi \circ F\zeta^\lor
    = \chi \circ \zeta^{\lor\sharp}
  \end{equation*}
  and reach our desired conclusion via mate correspondence. 

  So we're left with proving that \( (f^\sharp, \zeta^{\lor \sharp \land}) \)
  and \( (g^\flat, \xi^{\lor \flat \land}) \) are horizontal lax algebra
  morphisms. Since the proofs are similar, we omit the second one.

  We must check the following identities hold:
  \begin{equation*}
    \zeta^{\lor \sharp \land} \circ F_! \upsilon 
      = \upsilon \circ 1_{f^\sharp}
    \quad\text{and}\quad
    \zeta^{\lor \sharp \land} \circ F_! \mu 
      = \mu \circ (\zeta^{\lor \sharp \land} \cdot T\zeta^{\lor \sharp \land})
  \end{equation*}

  Since \( (f,\zeta) \) is a lax \(S\)-algebra morphism, we have
  \begin{equation*}
    \zeta \circ \upsilon
      = G_!\upsilon \circ 1_f,
  \end{equation*}
  so that
  \begin{align*}
    \zeta^{\lor\sharp\land} \circ F_!\upsilon
      &= \zeta^{\lor\sharp} \circ F\upsilon \circ \e F \\
      &= (\zeta \circ \upsilon)^{\lor\sharp} \circ \e F \\
      &= \heps_b \circ FG\upsilon \circ F\e G \circ F1_f \circ \e F \\
      &= \upsilon \circ \heps_1 \circ \e{FG} \circ 1_{Ff} \\
      &= \upsilon \circ 1_{f^\sharp},
  \end{align*}
  which gives the unit law for \( (f^\sharp, \zeta^{\lor \sharp \land}) \).

  For the multiplication law, we first let 
  \begin{equation*}
    Y = (\id \cdot \m T)
            \circ (\id \cdot (\id \cdot \sigma^T))
            \circ \hat \alpha^{-1}
            \circ \tn{\phi^*}_a
            \circ \hat \alpha,
    \quad
    Z = (\id \cdot (\eta \cdot (\eta \circ 1))) 
          \circ (\id \cdot \rho^{-1})
          \circ \rho^{-1}, \\
  \end{equation*}
  and our goal is to confirm that
  \begin{equation*}
    \zeta^{\lor\sharp\land} \circ (\mu^F \cdot \m{\phi^*}_x)
    = \mu \circ (\zeta^{\lor\sharp\land} 
                  \cdot T\zeta^{\lor\sharp\land})
          \circ Y
  \end{equation*}
  holds, via mate correspondence.

  First, we note that
  \begin{align*}
    Y \circ Z
    &= (\id \cdot \m T)
          \circ (\id \cdot (\id \cdot \sigma^T))
          \circ ((\id \cdot \eta) \cdot (\phi_a \cdot (\eta \circ 1)))
          \circ \hat \alpha^{-1}
          \circ (\id \cdot (\gamma^{-1} \cdot \id))
          \circ \hat \alpha
          \circ (\id \cdot \rho^{-1}) 
          \circ \rho^{-1} \\
    &= (\id \cdot \m T)
          \circ (\id \cdot (\id \cdot \sigma^T))
          \circ ((\id \cdot \eta) \cdot (\phi_a \cdot (\eta \circ 1)))
          \circ (\rho^{-1} \cdot \rho^{-1}) \\
    &= (\id \cdot \m T)
          \circ (\id \cdot (\id \cdot \sigma^T))
          \circ (\id \cdot (\id \cdot \eta))
          \circ (\id \cdot \rho^{-1})
          \circ ((\id \cdot \eta) \cdot \phi_a)
          \circ (\rho^{-1} \cdot \id) \\
    &= (\id \cdot \m T)
          \circ (\id \cdot (\id \cdot T\eta))
          \circ (\id \cdot (\id \cdot \e T))
          \circ (\id \cdot \rho^{-1})
          \circ ((\id \cdot \eta) \cdot \phi_a)
          \circ (\rho^{-1} \cdot \id) \\
    &= (\id \cdot T(\id \cdot \eta))
          \circ (\id \cdot \m T)
          \circ (\id \cdot (\id \cdot \e T))
          \circ (\id \cdot \rho^{-1})
          \circ ((\id \cdot \eta) \cdot \phi_a)
          \circ (\rho^{-1} \cdot \id) \\
    &= (\id \cdot T(\id \cdot \eta))
          \circ (\id \cdot T\rho^{-1})
          \circ ((\id \cdot \eta) \cdot \phi_a)
          \circ (\rho^{-1} \cdot \id), 
  \end{align*}
  from which we deduce that
  \begin{align*}
    \mu &\circ (\zeta^{\lor\sharp\land} \cdot T\zeta^{\lor\sharp\land})
        \circ Y \circ Z \\
    &= \mu \circ (\zeta^{\lor\sharp\land} \cdot T\zeta^{\lor\sharp\land})
        \circ (\id \cdot T(\id \cdot \eta))
        \circ (\id \cdot T\rho^{-1})
        \circ ((\id \cdot \eta) \cdot \phi_a))
        \circ (\rho^{-1} \cdot \id) \\
    &= \mu \circ \big(\zeta^{\lor\sharp} \cdot 
                    (T\zeta^{\lor\sharp} \circ \phi_a)\big) \\
    &= \mu \circ \big((\epsilon_b \circ F\zeta^\lor) \cdot 
                      (T\epsilon_b \circ TF\zeta^\lor \circ \phi_a)\big) \\
    &= \mu \circ \big((\epsilon_b \circ F\zeta^\lor) \cdot 
                      (T\epsilon_b \circ \phi_{Gb} \circ FS\zeta^\lor)\big) \\
    &= \mu \circ \big((\epsilon_b \circ F\zeta^\lor) \cdot 
                      (\epsilon_{Tb} \circ F\psi_b \circ FS\zeta^\lor)\big) \\
    &= \mu \circ (\epsilon_b \cdot \epsilon_{Tb})
           \circ \big( F\zeta^\lor \cdot 
                       (F\psi_b \circ FS\zeta^\lor)\big) \\
    &= \epsilon_b \circ F\mu^G \circ \m F
                  \circ \big( F\zeta^\lor \cdot 
                             (F\psi_b \circ FS\zeta^\lor)\big) \\
    &= \Big(\mu^G \circ \big(\zeta^\lor \cdot 
                              (\psi_b \circ S\zeta^\lor)\big)\Big)^\sharp
                  \circ \m F,
  \end{align*}
  and since we also have
  \begin{align*}
    \zeta^{\lor\sharp\land} 
      \circ (\mu^F \cdot \m{\phi^*}_x)
      \circ Z
    &= \zeta^{\lor\sharp\land}
        \circ (\mu^F \cdot (\eta \circ 1)) \circ \rho^{-1} \\
    &= \zeta^{\lor\sharp} \circ \mu^F \\
    &= (\zeta \circ \mu)^{\lor\sharp} \circ \m F,
  \end{align*}
  we conclude it is sufficient to show that 
  \begin{equation*}
    (\zeta \circ \mu)^\lor
      = \mu^G \circ \big(\zeta^\lor \cdot (\psi_b \circ S\zeta^\lor)).
  \end{equation*}
  Indeed, we have
  \begin{align*}
    (\zeta \circ \mu)^\lor
      &= \rho \circ (\id \cdot \delta) 
              \circ G_!\mu
              \circ (\zeta \cdot S\zeta) \\
      &= \rho \circ (\id \cdot \delta)
              \circ (\mu^G \cdot \m{\psi_!}_b)
              \circ \hat \alpha^{-1}
              \circ \tn{\psi_!}_b
              \circ \hat \alpha
              \circ (\id \cdot \chi^S)
              \circ (\zeta \cdot S\zeta) \\
      &= \mu^G \circ \rho
               \circ (\id \cdot \rho)
               \circ \hat \alpha^{-1}
               \circ (\id \cdot (\gamma \cdot \id))
               \circ \hat \alpha
               \circ ((\id \cdot \delta) 
                        \cdot (\psi_b \cdot (1 \circ \delta)))
               \circ (\id \cdot \chi^S)
               \circ (\zeta \cdot S\zeta) \\
      &= \mu^G \circ (\rho \cdot \rho)
               \circ ((\id \cdot \delta) 
                        \cdot (\psi_b \cdot (1 \circ \delta)))
               \circ (\id \cdot \chi^S)
               \circ (\zeta \cdot S\zeta) \\
      &= \mu^G \circ (\rho \cdot \id)
               \circ ((\id \cdot \delta) \cdot \psi_b)
               \circ (\id \cdot \rho)
               \circ (\id \cdot (\id \cdot \delta))
               \circ (\id \cdot \chi^S)
               \circ (\zeta \cdot S\zeta) \\
      &= \mu^G \circ (\rho \cdot \id)
               \circ ((\id \cdot \delta) \cdot \psi_b)
               \circ (\id \cdot S\rho)
               \circ (\id \cdot S(\id \cdot \delta))
               \circ (\zeta \cdot S\zeta) \\
      &= \mu^G \circ \big(\zeta^\lor \cdot (\psi_b \circ S\zeta^\lor)\big),
  \end{align*}
  which concludes the proof.
\end{proof}

For the purposes of applying this adjunction to the study of full faithfulness
of \( F_! \) and subsequent applications to descent theory, it is useful to
establish criteria for the invertibility of the unit and counit of the
adjunction \( F_! \adj G_! \), which are provided by the following results:

\begin{lemma}
  \label{lem:counit.iso.when}
  Let \( (y,b,\upsilon,\mu) \) be a horizontal lax \(T\)-algebra. If \(
  \heps_y \) and \( F\psi_y \) are invertible, then \(
  \heps_{(y,b,\upsilon,\mu)} \) is invertible if and only if \( \n{\heps^*}_b
  \) is invertible.
\end{lemma}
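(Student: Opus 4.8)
The plan is to unwind the definition of the counit $\heps_{(y,b,\upsilon,\mu)}$ of the adjunction $F_! \adj G_!$ and recognize it as a $T$-algebra morphism whose underlying vertical 1-cell is $\heps_y$. Recall from the proof of Theorem~\ref{thm:lax.alg.adj} that the counit component at $(y,b,\upsilon,\mu)$ is obtained by transposing the identity on $G_!(y,b,\upsilon,\mu)$, so it is the pair $(\heps_y, \id^{\lor\sharp\land})$, where $\id$ here denotes the identity 2-cell on the structure map $Gb \cdot \psi_{!y}$ of $G_!(y,b,\upsilon,\mu)$. The first step is therefore to compute $\id^{\lor\sharp\land}$ explicitly as a 2-cell $F_!G_!(y,b,\upsilon,\mu) \to (y,b,\upsilon,\mu)$, i.e.\ a 2-cell
\begin{equation*}
  \begin{tikzcd}
    TF\hat b \cdot \phi^*_{Gy} \ar[r] \ar[d] & \hat b \ar[d,equal] \\
    b \ar[r,equal] & b
  \end{tikzcd}
\end{equation*}
where $\hat b = Gb \cdot \psi_{!y}$; tracing through the mate formulas $(-)^\lor$, $(-)^\sharp$, $(-)^\land$ given in the proof, this 2-cell factors through $F\heps_b$, the comparison $\m F$, the conjoint structure 2-cells, and crucially a component built from $\heps^*$, the conjoint of the vertical transformation $\heps \colon FG \to \id$ guaranteed by Theorem~\ref{thm:conjoint.closed}.

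The second step is to isolate where the hypotheses enter. A 2-cell of horizontal lax algebras over a fixed equipment is invertible precisely when its underlying 2-cell in $\bicat E_1$ is invertible, so we must check that the displayed 2-cell above is an isomorphism. Its constituent pieces are: (i) the mates of commutative squares built from $\heps$, which are always invertible when the corresponding square is (here this reduces to invertibility of $\heps_y$); (ii) the comparison $\m F$, which is invertible since $F$ is strong (doctrinal adjunction); (iii) $F\heps_b$ and related whiskerings, invertible once $\heps_y$ is and using that $\heps$ is a strong horizontal transformation on the nose is not needed — only the relevant components; (iv) the term coming from the conjoint $\heps^*$, specifically $\n{\heps^*}_b$. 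Unwinding, the only piece \emph{not} automatically invertible is $\n{\heps^*}_b$, and the role of the extra hypothesis ``$F\psi_y$ invertible'' is to guarantee that the particular horizontal 1-cells appearing as domains and codomains match up so that $\n{\heps^*}_b$ is indeed the only obstruction — more precisely, $F\psi_y$ invertible makes the horizontal composite $TFf \cdot \phi^*$ in the formula for $\heps_{(y,b,\upsilon,\mu)}$ reduce to the expected form $T\heps_y \cdot (\text{conjoint data})$, exactly as in the composability computation $\heps_{Ty} \circ F\psi_y \circ FSf = Tf^\sharp \circ \phi_x$ at the start of that proof.

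The third step assembles this: writing $\heps_{(y,b,\upsilon,\mu)}$ as a vertical composite of 2-cells, each of which is invertible under the hypotheses except possibly the factor equal to (a whiskering of) $\n{\heps^*}_b$, we conclude that $\heps_{(y,b,\upsilon,\mu)}$ is invertible if and only if $\n{\heps^*}_b$ is. For the ``if'' direction one simply observes all other factors are iso and composes inverses; for ``only if'' one uses that if a composite is invertible and all-but-one factor is invertible then the remaining factor is invertible too (cancellation in $\bicat E_1$).

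The main obstacle I expect is the bookkeeping in the first step: correctly expanding $\id^{\lor\sharp\land}$ through the several layers of mate correspondence and the lax-functor comparison cells, and matching the resulting expression against the abbreviations $N^\omega_p$, $\hat\alpha$, $\tilde\theta$ used in the proof of Theorem~\ref{thm:base.change}, so that the cancellation of the ``automatically invertible'' pieces is visibly correct. Once the formula for $\heps_{(y,b,\upsilon,\mu)}$ is laid out as an explicit pasting, the logical content — invertibility of a composite given invertibility of all-but-one factor — is routine, and the hypotheses $\heps_y$, $F\psi_y$ invertible are used exactly to kill every factor other than $\n{\heps^*}_b$.
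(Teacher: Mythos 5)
Your proposal follows essentially the same route as the paper's proof: the paper likewise writes \( \heps_{(y,b,\upsilon,\mu)} = (\heps_y, \id^{\lor\sharp\land}) \), identifies \( \id^{\lor\sharp\land} \) (up to coherence) with an explicit pasting \( \Omega \) whose factors are \( \chi^F \), the mate \( \omega \) of the commutative square \( \heps_{Ty}\circ F\psi_y = T\heps_y\circ\phi_{Gy} \), the 2-cell \( \n{\heps^*}_b \), and the conjoint counit, and then verifies via the mate correspondence that every factor except \( \n{\heps^*}_b \) is invertible under the stated hypotheses, so the two-out-of-three cancellation you describe closes the argument. One small caution: your item (iii) should not be read as asserting that \( \heps_b \) itself is invertible once \( \heps_y \) is (that would essentially beg the question, since the failure of that invertibility is exactly what \( \n{\heps^*}_b \) measures); in the paper's pasting no such factor occurs separately, so this does not affect the proof.
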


\begin{proof}
  We have \( \heps_{(y,b,\upsilon,\mu)} = (\heps_y, \id^{\lor\sharp\land} )
  \). We first observe that \( \id^{\lor\sharp\land} = \Omega \), where we
  have, with the coherence isomorphisms omitted,
  \begin{equation*}
    \Omega
      = \begin{tikzcd}[column sep=large]
          TFGy \ar[r,"\phi^*_{Gy}"]
               \ar[d,equal]
            & FSGy \ar[rr,"F(Gb \cdot \psi_{y!})"{name=B}]
                    \ar[d,equal]
            && FGy  \ar[d,equal] \\
          TFGy \ar[r,"\phi^*_{Gy}" description]
               \ar[d,"T\heps_y",swap]
            & FSGy \ar[r,"(F\psi_y)_!" description]
            & FGTy \ar[r,"FGb" description]
                   \ar[d,equal]
            & FGy  \ar[d,equal] \\
          Ty \ar[rr,"\heps^*_{Ty}" description,""{name=C}]
             \ar[d,equal]
            && FGTy \ar[r,"FGb" description]
            & FGy \ar[d,equal] \\
          Ty \ar[rr,"b" description,""{name=D}]
             \ar[d,equal]
            && y \ar[r,"\heps^*_y" description,""{name=E}]
                 \ar[d]
            & FGy \ar[d,"\heps_y"] \\
          Ty \ar[rr,"b",swap]
            && y \ar[r,"1"{name=F},swap]
            & y
          \ar[from=B,to=2-3,phantom,"\chi^F"]
          \ar[from=2-2,to=C,phantom,"\omega"]
          \ar[from=C,to=D,phantom,"\n{\heps^*}_b",near end,shift left=1.25cm]
          \ar[from=E,to=F,phantom,"\epsilon"]
        \end{tikzcd}
  \end{equation*}
  and \( \omega = \lambda \circ (\delta \cdot \iota) \) is the mate of \(
  \iota \), which is in turn the mate of \( \heps_{Ty} \circ F\psi_y =
  T\heps_y \circ \phi_{Gx} \).

  Indeed, we note that
  \begin{align*}
    \Omega^\lor 
      &= \lambda 
          \circ (\epsilon \cdot \id)
          \circ \n{\heps^*}_b
          \circ (\id \cdot \omega)
          \circ \alpha 
          \circ (\chi^F \cdot \id)
          \circ (\id \cdot \eta)
          \circ \rho^{-1} \\
      &= \rho \circ (\heps_b \cdot \epsilon)
              \circ (\id \cdot \omega)
              \circ (\id \cdot (\id \cdot \eta))
              \circ \alpha
              \circ (\chi^F \cdot \id)
              \circ \rho^{-1} \\
      &= \rho \circ (\heps_b \cdot \epsilon)
              \circ (\id \cdot \omega)
              \circ (\id \cdot (\id \cdot \eta))
              \circ (\id \cdot \rho^{-1})
              \circ \chi^F, 
  \end{align*}
  and since
  \begin{equation*}
    \epsilon \circ \omega \circ (\id \cdot \eta) \circ \rho^{-1}
    = 1 \circ \delta,
  \end{equation*}
  we obtain
  \begin{equation*}
    \Omega^\lor = \rho \circ (\heps_b \cdot 1)
                       \circ (\id \cdot \delta)
                       \circ \chi^F
                = \heps_b \circ \rho \circ (\id \cdot \delta)
                          \circ \chi^F,
  \end{equation*}
  and of course, \( \rho \circ (\id \cdot \delta) \circ \chi^F \) is
  precisely \( F(\rho \circ (\id \cdot \delta)) \), so we obtain \(
  \Omega^\lor = \id^{\lor\sharp} \), as desired.

  Our claim follows by noting that if \( \heps_y \) and \( F\psi_y \)
  are invertible, then so is \( \iota \), and since \( \delta \colon
  (F\psi_y)_! \to 1 \) is invertible, so is \( \omega \).

  The inverse of \( \iota \) is given by the mate of \( \phi_{Gy} \circ
  (F\psi_y)^{-1} = T\heps_y^{-1} \circ \epsilon_{Ty} \), which we denote by \(
  \theta \). We have
  \begin{equation*}
    \epsilon \circ \iota \circ \theta
      = 1_{T\epsilon_y} \circ \epsilon \circ \theta
      = 1_\id \circ \epsilon = \epsilon
    \quad\text{and}\quad
    \epsilon \circ \theta \circ \iota
      = 1_{T\epsilon_y^{-1}} \circ \epsilon \circ \iota
      = 1_\id \circ \epsilon = \epsilon,
  \end{equation*}
  finishing the proof.
\end{proof}

The analogous characterization for the unit is not quite the dual of Lemma
\ref{lem:counit.iso.when}; it requires one more verification.

\begin{lemma}
  \label{lem:unit.comp.iso.when}
  Let \( (x,a,\upsilon,\mu) \) be a horizontal lax \(S\)-algebra. If \( \eta_x
  \), \( G\phi_x \) and \( \n{(G\phi)^*}_a \) are invertible, then \(
  \heta_{(x,a,\upsilon,\mu)} \) is invertible if and only if \( \n{\eta_!}_a
  \) is invertible.
\end{lemma}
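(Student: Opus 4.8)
The plan is to reproduce, in the mirror direction, the argument of Lemma \ref{lem:counit.iso.when}, and to locate the single place where the asymmetry of the conjunction — $F$ is strong, whereas $G$ is merely lax — forces the extra hypothesis on $\n{(G\phi)^*}_a$.

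First I would record, just as in Lemma \ref{lem:counit.iso.when}, that $\heta_{(x,a,\upsilon,\mu)} = (\heta_x, \id^{\lor\flat\land})$, obtained by feeding the identity morphism of $F_!(x,a,\upsilon,\mu)$ through the hom-isomorphism formulae \eqref{hom.iso.formulae}. A morphism of horizontal lax $S$-algebras is invertible exactly when its underlying vertical $1$-cell is, and $\heta_x$ is invertible by hypothesis, so the whole question reduces to deciding when the $2$-cell $\id^{\lor\flat\land}$ is invertible.

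Next I would unwind $\id^{\lor\flat\land}$. Using the definitions of $F_!$ and $G_!$ from Theorem \ref{thm:base.change}, the structure $1$-cell of $G_!F_!(x,a,\upsilon,\mu)$ is $G(Fa \cdot \phi^*_x) \cdot \psi_{Fx!}$; expanding $\id^{\lor\flat\land}$ by the explicit formulas for $\lor,\flat,\land$, the mate calculus of Lemma \ref{lemma:conjoint.mate.theory}, and the coherence cells of $G$, one arrives — up to coherence isomorphisms — at a pasting $\Omega'$ that is the structural mirror of the diagram $\Omega$ of Lemma \ref{lem:counit.iso.when}. It factors through $\n{\eta_!}_a$, through a $2$-cell $\omega'$ that is the companion-mate (on the $\bicat D$-side) of an auxiliary cell $\iota'$ — itself the mate of a commutative square of vertical $1$-cells relating $\heta$, $\psi$ and $G\phi$, of the kind already exploited in the proof of Theorem \ref{thm:lax.alg.adj} — and through comparison cells built from $\sigma^G$, the companion unit $\nu$, and the adjunction data. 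The step with no counterpart in Lemma \ref{lem:counit.iso.when} is the following: there, $F$ being strong let us split $F(Gb \cdot \psi_{y!})$ freely and only $\chi^F$ intervened; here $G$ is only lax, so passing from $G(Fa \cdot \phi^*_x)$ to (a $2$-cell into or out of) $GFa \cdot (G\phi_x)^*$ is mediated by a comparison of the form $\m G \circ (\id \cdot \sigma^G)$, whose invertibility is, by (the horizontal dual of) Lemma \ref{lem:conjoint.eq.normal} together with the construction of $(G\phi)^*$ from Theorem \ref{thm:conjoint.closed}, precisely the invertibility of $\n{(G\phi)^*}_a$. This is the ``one more verification'', and the corresponding hypothesis of the lemma is exactly what discharges it. I expect the bookkeeping needed to pin down the identification $\id^{\lor\flat\land} \cong \Omega'$ to be the main obstacle; the rest is mechanical.

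Finally, granting that identification, I would conclude as in Lemma \ref{lem:counit.iso.when}: when $\heta_x$ and $G\phi_x$ are invertible, the relevant square of vertical $1$-cells has an invertible mate — its inverse being produced, via mate correspondence, from the inverted square — so $\iota'$, and hence $\omega'$, is invertible; the comparison cells $\sigma^G$ and $\nu$ are invertible on the arguments that occur; and the $G$-of-a-composite comparison is invertible by the hypothesis on $\n{(G\phi)^*}_a$. Thus every factor of $\Omega'$ other than $\n{\eta_!}_a$ is invertible, and $\id^{\lor\flat\land}$ — hence $\heta_{(x,a,\upsilon,\mu)}$ — is invertible if and only if $\n{\eta_!}_a$ is.
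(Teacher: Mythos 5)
Your proposal is correct and follows essentially the same route as the paper: the paper likewise treats this lemma as the mirror of Lemma \ref{lem:counit.iso.when} and isolates, as the ``one more verification'', the invertibility of \( \m G \circ (\id \cdot \sigma^G) \colon GFa \cdot (G\phi_x)^* \to G(Fa \cdot \phi^*_x) \), deduced from \eqref{eq:hphi.2.cell} (Lemma \ref{lem:right.conjoint.invert}) with \( H=G \), \( r=a \), using that \( \phi \) has a strong conjoint. The only nitpick is your phrasing that a morphism of horizontal lax algebras ``is invertible exactly when its underlying vertical 1-cell is'' — you clearly mean (and then use) that, given \( \heta_x \) invertible, everything reduces to the invertibility of the 2-cell component.
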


\begin{proof}
  The only missing detail is that, if \( \n{(G\phi)^*}_a \) is invertible,
  then so is
  \begin{equation*}
    \m G \circ (\id \cdot \sigma^G) 
      \colon GFa \cdot (G\phi)^*_x \to G(Fa \cdot \phi^*_x).
  \end{equation*}
  To see this, we take \eqref{eq:hphi.2.cell}, with \( H=G \) and \( r=a \),
  and we recall that \( \phi \) has a strong conjoint, by hypothesis.
\end{proof}

As an immediate corollary, we obtain:

\begin{corollary}
  \label{cor:unit.iso.when}
  \(F_! \colon \LaxSHAlg \to \LaxTHAlg \) is fully faithful whenever \( F
  \colon \bicat D \to \bicat E \) is fully faithful and \( G\phi \) is
  invertible.
\end{corollary}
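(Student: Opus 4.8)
The plan is to reduce the statement to Lemma~\ref{lem:unit.comp.iso.when} together with the standard fact that a left adjoint is fully faithful precisely when the unit of the adjunction is invertible. By Theorem~\ref{thm:lax.alg.adj} we have $F_! \adj G_!$; denoting the unit of this adjunction by $\heta$ (as in Lemma~\ref{lem:unit.comp.iso.when}), it suffices to prove that $\heta_{(x,a,\upsilon,\mu)}$ is an invertible horizontal lax $T$-algebra morphism for every horizontal lax $S$-algebra $(x,a,\upsilon,\mu)$. Since $\heta$ is automatically natural, this yields that it is a natural isomorphism, hence that $F_!$ is fully faithful.

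To invoke Lemma~\ref{lem:unit.comp.iso.when} I must check that $\heta_x$, $G\phi_x$, $\n{(G\phi)^*}_a$ and $\n{\heta_!}_a$ are invertible. First, since $F$ is fully faithful and $F \adj G$ in $\PsDbCat_\lax$, the unit $\heta\colon \id \to GF$ of this adjunction is invertible; reading this componentwise, $\heta_x$ is an invertible vertical $1$-cell at every $0$-cell $x$, and $\heta_r$ is an invertible $2$-cell at every horizontal $1$-cell $r$. The structure $2$-cell $\n{\heta_!}_r$ of the companion $\heta_!$ is, by the companion-dual of the construction in Theorem~\ref{thm:conjoint.closed}, the mate of $\heta_r$ under a degenerate instance of the mate correspondence, and a mate of an invertible $2$-cell is again invertible (Lemma~\ref{lemma:conjoint.mate.theory}, Remark~\ref{rem:mate.corresp}); hence $\n{\heta_!}_a$ is invertible. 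Likewise, the hypothesis that $G\phi$ is invertible gives that $G\phi_x$ is an invertible vertical $1$-cell and that $(G\phi)_a$ is an invertible $2$-cell, and since $\n{(G\phi)^*}_a$ is the mate of $(G\phi)_a$ it is invertible by the same reasoning.

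With these four invertibilities established, Lemma~\ref{lem:unit.comp.iso.when} yields that $\heta_{(x,a,\upsilon,\mu)}$ is invertible for every horizontal lax $S$-algebra, completing the argument. The statement is, as advertised, an immediate corollary once Lemma~\ref{lem:unit.comp.iso.when} and Theorem~\ref{thm:lax.alg.adj} are in place; the only point that needs genuine care is the bookkeeping of \emph{which} unit and which mate is meant at each occurrence, and confirming that the ``horizontal-$1$-cell-level'' invertibilities $\n{(G\phi)^*}_a$ and $\n{\heta_!}_a$ really do follow from the pointwise hypotheses --- but both of these reduce, through the degenerate mate correspondences recorded in Remark~\ref{rem:mate.corresp}, to the observation that mates of invertible $2$-cells are invertible.
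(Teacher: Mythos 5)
Your proof follows the paper's own argument exactly: reduce full faithfulness of \(F_!\) to invertibility of the unit of \(F_! \adj G_!\), and obtain that invertibility from Lemma~\ref{lem:unit.comp.iso.when} after checking that \(\heta_x\), \(G\phi_x\), \(\n{(G\phi)^*}_a\) and \(\n{\heta_!}_a\) are invertible. The bookkeeping of which unit is meant where is handled correctly, and the list of hypotheses you verify is complete.

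The one place where you supply a justification that the paper leaves implicit is the step from ``\(\heta\) and \(G\phi\) are invertible vertical transformations'' to ``their companion and conjoint are strong,'' and the principle you invoke there --- that a mate of an invertible 2-cell is again invertible --- is not a valid general principle. The mate correspondence of Lemma~\ref{lemma:conjoint.mate.theory} is only a bijection between sets of 2-cells with different boundary shapes; it is computed by pasting with the units and counits of the conjoints involved, which are themselves non-invertible, so there is no reason for the bijection to preserve invertibility. (The unit \(\eta \colon 1 \to f^*\) of a conjoint is itself obtained as a mate of identity data and is essentially never invertible.) The conclusion you need is nevertheless true: an invertible vertical transformation has a strong conjoint and a strong companion. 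The honest argument exhibits the inverse of \(\n{\phi^*}_r\) explicitly --- for instance, for invertible \(\phi\) the conjoint \(\phi^*_x\) is canonically isomorphic to the companion \((\phi_x^{-1})_!\), whose structure 2-cell \(\n{(\phi^{-1})_!}_r\) has exactly the shape of the required inverse, and the two composites are checked to be identities via the mate correspondence. Since the paper also asserts this step without proof, your write-up is no worse off than the original, but you should not record ``mates of invertible 2-cells are invertible'' as the reason: it is false, and relying on it elsewhere (for example, to conclude that the unit or counit of a conjoint is invertible from the invertibility of \(1_f\)) would lead you astray.
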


\begin{proof}
  If \( F \colon \bicat D \to \bicat E \) is fully faithful, then \( \heta \)
  is invertible, and therefore has a strong companion. Likewise, \( G\phi \)
  has a strong conjoint. Thus, \( \eta_x \), \( \n{\eta_!}_a \) and
  \(\n{(G\phi)^*}_a \) are invertible for all \(x\) and all \(a\), so we
  apply Lemma \ref{lem:unit.comp.iso.when}.
\end{proof}

For the remainder of this section, we will compare Theorem
\ref{thm:lax.alg.adj} with \cite[Section 6.7]{Lei04} and \cite[Section
3]{HST14}, confirming we have a common generalization of these results.
Furthermore, we provide some comments comparing our approach with the
pseudofunctoriality ideas stated in \cite[{}4.4]{CS10}.

\subsection{Internal $T$-categories:}

We recall the setting described in Subsection \ref{subsect:int.t.mcats}. If \(
P \adj Q \) and \( \phi \) and \( \psi \) are mates, we can immediately apply
Theorem \ref{thm:lax.alg.adj} to obtain an adjunction \( P_! \adj Q_! \) as
claimed in \cite[Section 6.7]{Lei04}.

Likewise, with a suitable restatement of Theorem \ref{thm:lax.alg.adj} for
oplax monads and functors, we can also obtain adjunctions between categories
of Burroni's \(T\)-categories.

\subsection{Enriched $T$-categories:}
\label{subsect:tvcat.2}

We note that Theorem \ref{thm:lax.alg.adj} is a generalization of
\cite[Proposition 3.5.1]{HST14}, however, we cannot obtain the adjunction
studied in \cite[Subsection 3.4]{HST14}, using our result in the current form.

We will work out the same argument in our more general setting, to emphasize
what goes wrong. Given a monad \( T = (T,m,e) \) in \( \bicat E \), note that
\( e \colon \id \to T \) defines a monad lax morphism \( (\id,e) \colon T \to
\id \), which, by Theorem \ref{thm:base.change}, gives a functor
\begin{equation*}
  e_! \colon \LaxTHAlg \to \LaxidHAlg,
\end{equation*}
meaning every horizontal lax \(T\)-algebra has an underlying horizontal lax
\(\id\)-algebra (a monad!). Moreover, \( e \) also defines a monad oplax
morphism \( (\id, e) \colon \id \to T \), but unless \( e \) and \( Te \) have
strong conjoints, Theorem \ref{thm:base.change} cannot be applied to construct
a functor \( \LaxidHAlg \to \LaxTHAlg \), which would guarantee \( e_! \) has
a left adjoint.

However, it is possible to expand our notion of change-of-base to rectify this
issue: an analogous version of Theorem \ref{thm:base.change} can be obtained
for a monad oplax morphism \( (F,\phi) \colon \id \to T \), without requiring
either \( \phi \) or \(T\phi\) to be strong conjoints, by defining
\(F_!(x,a,\eta,\mu)\) so that \( F_!a = \phi^*_x \cdot TFa \); note that this
is precisely \( a_\sharp \) of \cite[Subsection 3.4]{HST14} when \(F=\id\),
and is isomorphic to the construction of Theorem \ref{thm:base.change} when \(
\phi \) and \( T\phi \) do have strong conjoints.

This would also require an analogous version of Theorem \ref{thm:lax.alg.adj}
for this specialized change-of-base construction, but since such results are
outside of our scope, we leave them for future work.

\subsection{Pseudofunctoriality:}

Theorems \ref{thm:base.change} and \ref{thm:lax.alg.adj} prompt one to view
\( \LaxHAlg \) as a double pseudofunctor \( \bicat M \to \CAT \) (see
\cite[Section 6]{Shu11}), for a suitable sub-double category \( \bicat M \) 
of \(\Mnd(\PsDbCat_\lax) \). Since double pseudofunctors preserve conjoints,
we would obtain the conclusion of Theorem \ref{thm:lax.alg.adj} as an
immediate corollary, for those conjunctions which are in \( \bicat M \).

We haven't pursued this line of reasoning, as obtaining a suitable choice of
\( \bicat M \) which includes our main examples has proved to be elusive, as
we briefly explain below.

We observe that the hypotheses required for Theorem \ref{thm:base.change}
restrict us to a setting where the vertical 1-cells \( (F,\phi) \colon S \to T
\) (monad oplax morphisms) of \( \bicat M \) are those such that \( \phi \)
and \(T\phi\) have strong conjoints. Unfortunately, this property on its own
doesn't determine a sub-double category, as it is not closed under vertical
composition: if \( (G,\psi) \colon T \to U \) is another vertical 1-cell,
there is no reason for \( \omega = \psi_F \circ G\phi \) nor \( U\omega \) to
have strong conjoints, so this property doesn't define a sub-double category.

This obstacle could be overcome, provided we can guarantee that \( G\phi \)
and \( UG\phi \) have strong conjoints. The first condition can be guaranteed
if we require that the underlying functor of every monad oplax morphism \(
(H,\chi) \) is such that
\begin{equation}
  \label{eq:cond.1}
  \begin{tikzcd}
    Hr \cdot (Hf)^*
      \ar[r,"\id \cdot \sigma^H"]
    & Hr \cdot H(f^*)
      \ar[r,"\m H"]
    & H(r \cdot f^*)
  \end{tikzcd}
\end{equation}
is invertible for all horizontal 1-cells \(r\) and vertical 1-cells \(f\);
note that this implies that \( H\phi \) has a strong conjoint whenever \( \phi
\) has a strong conjoint. This property is satisfied, for instance, when \(H\)
is strong. Therefore, this extra requirement is still within the setting of
Theorem \ref{thm:lax.alg.adj}, as the underlying functors of the left adjoints
are necessarily strong.

The problem lies in guaranteeing that \( UG\phi \) has a strong conjoint; we
would need to guarantee that the underlying lax functors of the monads make
\eqref{eq:cond.1} invertible. However, it can be shown that this does not hold
for our applications.

Lacking an alternative method to overcome this obstacle, we opted for the
current \textit{ad-hoc}, yet more general, approach for obtaining an
adjunction of change-of-base functors, instead of going for the more
attractive pseudofunctoriality argument.

  \section{Extensive categories}
    \label{sect:ext.cats}
    Extensivity of \( \cat V \) is a crucial hypothesis to construct and study the
comparison functor \eqref{eq:comp.functor} (see \cite{Luc18a} and \cite{CFP17})
\begin{equation}
  \label{eq:comp.functor}
  - \pt \trm \colon \VCat \to \CatV,
\end{equation}
and therefore we shall devote this section to the study of extensive
categories.

Let \( \cat C \) be a category with coproducts. We say \( \cat C \) is
\textit{extensive} if the functor
\begin{equation}
  \label{eq:fam.coprod.funct}
  \begin{tikzcd}
    \displaystyle\prod_{i \in I} \cat C \comma X_i 
      \ar[r,"\sum"]
      & \cat C \comma \displaystyle\sum_{i \in I} X_i
  \end{tikzcd}
\end{equation}
is an equivalence for all families \( (X_i)_{i\in I} \) of objects in \( \cat
C \). We refer to \cite{CLW93} for a comprehensive introduction to these
categories, and further material on the topic can be found in \cite{BJ01,
LV24, LPV24}. Extensive categories to keep in mind are \( \Set \), \( \Top \),
\( \Cat \), any Grothendieck topos such as \( \Grph \), and any free coproduct
completion \( \FamB \) of a category \( \cat B \).

The following characterization of extensivity in terms of Artin glueing
\cite[p. 465]{SGA-IV} is quite consequential: 

\begin{lemma}
  \label{lem:famc.comma}
  Let \( \cat C \) be a category with coproducts and a terminal object \( \trm
  \). Then Diagram \eqref{eq:comma.diag} 
  \begin{equation}
    \label{eq:comma.diag}
    \begin{tikzcd}
      \FamC \ar[r,"\sum"] \ar[d] 
        & \cat C \ar[d,equal] \ar[ld,Rightarrow,shorten=5mm,"\sigma",swap]\\
      \Set \ar[r,"-\pt \trm",swap] & \cat C
    \end{tikzcd}
  \end{equation}
  is a comma diagram \cite[Section 5]{Str76} if and only if \( \cat C \) is
  extensive, where \( \sigma_{(c_x)_{x \in X}} \colon \sum_{x\in X} c_x \to X
  \pt \trm \) is the coproduct over \(X\) of the morphisms \( c_x \to \trm \).
\end{lemma}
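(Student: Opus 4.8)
The plan is to rephrase the statement in terms of the canonical comparison functor into a comma category, and then to reduce the question to the fibres over $\Set$. Write $\cat G = (\id_{\cat C} \comma (-\pt 1))$ for the comma category, whose objects are triples $(c, I, f\colon c \to I\pt 1)$ with $c$ an object of $\cat C$ and $I$ a set, and whose morphisms $(c,I,f)\to(c',I',f')$ are pairs $(u\colon c\to c',\, v\colon I\to I')$ with $(v\pt 1)\circ f = f'\circ u$. By the universal property of comma objects, asserting that Diagram \eqref{eq:comma.diag} is a comma diagram is precisely asserting that the functor $K\colon \FamC \to \cat G$, $(c_x)_{x\in X}\mapsto (\sum_{x} c_x,\, X,\, \sigma_{(c_x)_x})$, is an equivalence of categories. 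Both $\FamC$ and $\cat G$ come equipped with evident functors to $\Set$ (the indexing set, respectively the second component), $K$ commutes with them, and $K$ carries opcartesian morphisms to opcartesian morphisms, so it is a morphism of opfibrations over $\Set$. Over a fixed set $I$ the fibre of $\FamC$ is $\cat C^{I}$, the fibre of $\cat G$ is $\cat C\comma(I\pt 1)$, and — using the terminal object to identify $\cat C\comma 1\iso\cat C$ together with $\sum_{i\in I} 1 = I\pt 1$ — the restriction $K_I$ is exactly the coproduct comparison \eqref{eq:fam.coprod.funct} for the constant family $(1)_{i\in I}$. Since a morphism of opfibrations is an equivalence if and only if it is a fibrewise equivalence, $K$ is an equivalence if and only if $K_I$ is an equivalence for every set $I$.

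It then remains to show that ``$K_I$ is an equivalence for every $I$'' is equivalent to extensivity of $\cat C$. One implication is immediate: if $\cat C$ is extensive, then \eqref{eq:fam.coprod.funct} is an equivalence for all families, in particular for the constant families $(1)_{i\in I}$, which is exactly $K_I$. For the converse, assume each $K_I$ is an equivalence and fix an arbitrary family $(X_i)_{i\in I}$; I will deduce that \eqref{eq:fam.coprod.funct} is an equivalence. Form $\sum_i X_i$ with its structure map $\sigma\colon \sum_i X_i \to I\pt 1$; by construction this object of $\cat C\comma(I\pt 1)$ is $K_I$ applied to $(X_i)_i$, so under the equivalence $K_I$ it corresponds to $(X_i)_i \in \cat C^{I}$. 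Slicing the equivalence $K_I$ at these two corresponding objects produces an equivalence between $(\cat C^{I})\comma (X_i)_i \iso \prod_{i}\cat C\comma X_i$ and $(\cat C\comma(I\pt 1))\comma(\sum_i X_i,\sigma)\iso \cat C\comma \sum_i X_i$, and a direct check identifies the induced functor with \eqref{eq:fam.coprod.funct}. Hence $\cat C$ is extensive.

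I expect the only genuinely non-formal step to be this last ``bootstrap'' from the constant-family-at-$1$ case to arbitrary families, carried out via the slice-of-a-slice identifications $(\cat C\comma B)\comma(A\xrightarrow{b}B)\iso \cat C\comma A$ and $(\cat C^{I})\comma(X_i)_i\iso\prod_i\cat C\comma X_i$; the rest is bookkeeping about the comma category and the opfibration $\FamC\to\Set$. Some care is also needed to see that $K$ restricts to an equivalence on each strict fibre when $K$ is an equivalence — this uses that $K$ respects the opfibration structure, or, equivalently, an explicit reindexing of families along bijections of indexing sets — but this is routine. Note that no limits in $\cat C$ beyond a terminal object enter, consistent with the fact that the slicing identifications above require no pullbacks.
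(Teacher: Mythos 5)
Your proof is correct, but it is organized quite differently from the paper's. The paper argues directly with the universal property of the (bi)comma square: for the forward direction it constructs, for each $f \colon c \to X \pt 1$, the family $(c_x)_{x\in X}$ by pulling $f$ back along the injections $\heta x \colon 1 \to X\pt 1$ (these pullbacks exist by extensivity) and then checks the one- and two-dimensional universal properties by hand; for the converse it verifies full faithfulness and essential surjectivity of the comparison \eqref{eq:fam.coprod.funct} directly from those two universal properties. You instead reformulate everything as ``the canonical functor $K \colon \FamC \to (\id_{\cat C}\comma(-\pt 1))$ is an equivalence,'' reduce to the fibres over $\Set$, observe that the fibre functor $K_I \colon \cat C^I \to \cat C\comma(I\pt 1)$ is exactly the instance of \eqref{eq:fam.coprod.funct} at the constant family $(1)_{i\in I}$, and then bootstrap from these instances to arbitrary families by slicing the equivalence $K_I$ at $(X_i)_i$ and using $(\cat C^I)\comma(X_i)_i \iso \prod_i \cat C\comma X_i$ and $(\cat C\comma B)\comma(A\to B)\iso\cat C\comma A$. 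All of these steps check out (the fibrewise-equivalence reduction does need the reindexing-along-bijections point you flag, and the slice-of-a-slice identifications need no pullbacks). What your route buys is a clean separation of the formal part (comma square $=$ comparison equivalence $=$ fibrewise equivalence) from the one substantive fact, namely that extensivity is detected by the functors $\cat C^I \to \cat C\comma(I\pt 1)$ alone --- essentially the Carboni--Lack--Walters characterization --- and it avoids invoking pullbacks along coproduct injections entirely, which makes the hypothesis ``coproducts and a terminal object'' visibly sufficient. What the paper's version buys is an explicit description of the inverse equivalence (the family of pullbacks $c_x$), which it reuses later, e.g.\ in Remark \ref{rem:tiny.tau}.
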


\begin{proof}
  If \( \cat C \) is extensive, then for a morphism \( f \colon c \to X \pt
  \trm \) we consider the family \( (c_x)_{x \in X} \) given by the following
  family of pullbacks:
  \begin{equation*}
    \begin{tikzcd}
      c_x \ar[r] \ar[d] 
        \ar[rd,"\ulcorner"{very near start},phantom]
      & \trm \ar[d,"\heta x"] \\
      c \ar[r,"f",swap] & X \pt \trm
    \end{tikzcd}
  \end{equation*}
  The family is, by definition, indexed over \(X\), and by extensivity, we
  have an isomorphism \( \sum_{x \in X} c_x \iso c \), whose composition with
  \(f\) equals \( \sigma_{(c_x)_{x\in X}} \).

  Let \( (c_x)_{x \in X} \), \( (d_y)_{y \in Y} \) be families of objects, and
  let \( \hat f \colon \sum_{x\in X} c_x \to \sum_{y\in Y} d_y \) be a
  morphism and let \( f \colon X \to Y \) be a function such that \( \sigma
  \circ \hat f = (f\pt \trm) \circ \sigma \). For each \( y \in Y \), we
  consider the following diagram:
  \begin{equation*}
    \begin{tikzcd}
      \sum_{x \in f^*y} c_x \ar[rd] \ar[rrr] \ar[ddd]
        &&& d_y \ar[ld] \ar[ddd] \\
        & f^*y \pt \trm \ar[r] \ar[d] 
          \ar[rd,"\ulcorner"{very near start},phantom]
        & \trm \ar[d,"\heta y"] \\
        & X \pt \trm \ar[r,"f \pt \trm",swap] & Y \pt \trm\\
      \sum_{x \in X} c_x \ar[rrr,"\hat f",swap] \ar[ur]
        &&& \sum_{y \in Y} d_y \ar[ul]
    \end{tikzcd}
  \end{equation*}
  The left, right and inner squares are pullbacks, hence the outer square
  must be a pullback; let \( \hat f|_x \colon c_x \to d_{fx} \) be the top
  morphism composed with the inclusion \( c_x \to \sum_{x \in f^*fx} c_x \),
  and consider the morphism \( (f,\hat f|_x) \colon (c_x)_{x \in X} \to
  (d_y)_{y \in Y} \) in \( \FamC \). It is the unique morphism \( \psi \colon
  (c_x)_{x \in X} \to (d_y)_{y \in Y} \) indexed by \(f\) such that \( \sum
  \psi = \hat f \), by extensivity. With this, we conclude that
  \eqref{eq:comma.diag} is a comma diagram.

  Now, given that \eqref{eq:comma.diag} is a comma diagram, we aim to confirm
  \eqref{eq:fam.coprod.funct} is an equivalence. First, full faithfulness:
  given a commutative triangle in \( \cat C \)
  \begin{equation*}
    \begin{tikzcd}
      \sum_{i \in I} Y_i \ar[rd,"\sum_i f_i",swap]
        \ar[rr,"\phi"]
      && \sum_{i \in I} Z_i \ar[ld,"\sum_i g_i"] \\ 
      & \sum_{i\in I} X_i
    \end{tikzcd}
  \end{equation*}
  we have
  \begin{equation*}
    \sigma_{(X_i)_{i \in I}} \circ \sum_{i\in I} f_i 
      = \sigma_{(Y_i)_{i \in I}} 
    \quad\text{and}\quad
    \sigma_{(X_i)_{i \in I}} \circ \sum_{i\in I} g_i = \sigma_{(Z_i)_{i \in I}},
  \end{equation*}
  from which we obtain \( \sigma_{(Y_i)_{i \in I}} = \sigma_{(Z_i)_{i \in I}}
  \circ \phi \). This implies the unique existence of a morphism \(
  (\id,\phi_i) \colon (Y_i)_{i\in I} \to (Z_i)_{i \in I} \) in \( \FamC \)
  such that \( \sum_i \phi_i = \phi \), by the 2-dimensional universal
  property of comma diagrams.

  Now, if we have a morphism \( \omega \colon S \to \sum_{i \in I} X_i \), we
  consider its composite with \( \sigma_{(X_i)_{i\in I}} \). This yields, by
  the 2-dimensional universal property, a family \( (S_i)_{i \in I} \) and an
  isomorphism \( \nu \colon \sum_{i \in I} S_i \iso S \). From full
  faithfulness above we obtain a family \( \omega_i \colon S_i \to X_i \) such
  that \( \sum_i \omega_i \circ \nu = \omega \).
\end{proof}

If an extensive category \( \cat C \) has all finite limits, we say it is
\textit{lextensive} \cite[{4.4}]{CLW93}. We have the following corollary of
Lemma \ref{lem:famc.comma}:

\begin{theorem}
  \label{thm:lext}
  If \( \cat C \) is a lextensive category, then \( \Fam(\cat C) \) is
  lextensive as well and \( \sum \colon \Fam(\cat C) \to \cat C \) preserves
  finite limits.
\end{theorem}
\begin{proof}
  We recall that \( \Fam(\cat C) \eqv \cat C \comma (- \pt \trm) \) is a
  \textit{weighted 2-limit} \cite{Str76}. By noting that the 2-category \(
  \Cat_{\mathsf{finlim}} \) of categories with finite limits and finite limit
  preserving functors has all weighted 2-limits, the comma diagram
  \eqref{eq:comma.diag} lives in \( \Cat_{\mathsf{finlim}} \), which proves
  our assertions. 
\end{proof}

Moreover, it should be noted that the converse was also shown to hold in
\cite[Section 4.3]{CV04}. See also \cite{LPV24} for a proof, and applications
of Theorem \ref{thm:lext} in the study of lextensive
categories\footnote{Theorem \ref{thm:lext} was instrumental in this extension
of the work of \cite{LV24}.}. In this work, the following instance of limit
preservation is extensively used:

\begin{theorem}
  \label{thm:pb.index.coprod}
  Let \( \cat C \) be an extensive category. If we have a commutative square
  in \( \FamC \)
  \begin{equation}
    \label{ap:given}
    \begin{tikzcd}
      (a_w)_{w\in W} \ar[r,"(f{,}\hat f)"]
                     \ar[d,"(g{,}\hat g)",swap] 
      & (b_x)_{x\in X} \ar[d,"(h{,}\hat h)"] \\
      (c_y)_{y \in Y} \ar[r,"(k{,}\hat k)",swap]
      & (d_z)_{z\in Z}
    \end{tikzcd}
  \end{equation}
  such that
  \begin{equation}
    \label{ap:index}
    \begin{tikzcd}
      W \ar[r,"f"] \ar[d,"g",swap]
        \ar[rd,"\ulcorner"{very near start},phantom]
      & X \ar[d,"h"] \\
      Y \ar[r,"k",swap] & Z
    \end{tikzcd}
  \end{equation}
  is a pullback diagram, as well as 
  \begin{equation}
    \label{ap:fibre}
    \begin{tikzcd}
      a_w \ar[r,"\hat f_w"] 
          \ar[d,"\hat g_w",swap]
          \ar[rd,"\ulcorner"{very near start},phantom]
      & b_{fw} \ar[d,"\hat h_{fw}"] \\
      c_{gw} \ar[r,"\hat k_{gw}",swap]
      & d_z
    \end{tikzcd}
  \end{equation}
  for each \( w \in W \), where \( z = kgw = hfw \). Then
  \begin{equation}
    \label{ap:coprod.pullback}
    \begin{tikzcd}[column sep=large,row sep=large]
      \displaystyle\sum_{w \in W} a_w 
        \ar[r,"\sum_f \hat f"] \ar[d,"\sum_g \hat g",swap]
        \ar[rd,"\ulcorner"{very near start},phantom]
      & \displaystyle\sum_{x \in X} b_x \ar[d,"\sum_h \hat h"] \\
      \displaystyle\sum_{y \in Y} c_y \ar[r,"\sum_k \hat k",swap]
      & \displaystyle\sum_{z \in Z} d_z
    \end{tikzcd}
  \end{equation}
  is a pullback diagram.
\end{theorem}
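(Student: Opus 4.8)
The plan is to reduce the claim to the case where the indexing square \eqref{ap:index} is a \emph{product} of the base categories over $Z$ — equivalently, to pass to the fibers — and then to invoke the description of $\FamC$ via Artin glueing from Lemma \ref{lem:famc.comma}. More precisely, I would first form the pullback $P$ of $\sum_x b_x$ and $\sum_y c_y$ over $\sum_z d_z$ in $\cat C$, obtaining the canonical comparison morphism $u \colon \sum_w a_w \to P$, and the goal is to show $u$ is an isomorphism. Since $\cat C$ is extensive, $\cat C \comma \sum_z d_z \eqv \prod_{z \in Z} (\cat C \comma d_z)$, so an object over $\sum_z d_z$ is the same as a $Z$-indexed family of objects over the $d_z$'s; under this equivalence, $\sum_x b_x \to \sum_z d_z$ corresponds to the family $(\coprod_{x \in h^*z} b_x \to d_z)_{z \in Z}$, and similarly for $\sum_y c_y$. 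The pullback over $\sum_z d_z$ is then computed fiberwise: $P$ corresponds to $(P_z)_{z\in Z}$ where $P_z$ is the pullback of $\coprod_{x\in h^*z} b_x$ and $\coprod_{y\in k^*z} c_y$ over $d_z$ in $\cat C \comma d_z$.

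Next I would identify each $P_z$ explicitly. Fix $z$. Because $\eqref{ap:index}$ is a pullback in $\Set$, the fiber $W_z := (hf)^*z = (kg)^*z$ decomposes as $W_z \iso \coprod_{(x,y) \in h^*z \times k^*z,\ \text{with appropriate matching}} \{w : fw = x,\ gw = y\}$; writing $f^*z$-style fibers, $W_z$ is the pullback $(h^*z) \times_{\{z\}} (k^*z) = h^*z \times k^*z$. So $\coprod_{w \in W_z} a_w$ is the coproduct over $h^*z \times k^*z$ of the $a_w$. Now coproducts in an extensive category commute with the relevant pullbacks — here I would use that in $\cat C \comma d_z$, finite limits are computed as in $\cat C$, and that $\coprod_{x} b_x \times_{d_z} \coprod_{y} c_y \iso \coprod_{x,y} (b_x \times_{d_z} c_y)$, which is exactly extensivity (coproduct injections are pullback-stable, or equivalently the statement that coproducts are "universal" and "disjoint"). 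Hypothesis \eqref{ap:fibre} says precisely that, for each $w$ with $fw=x$, $gw=y$, the square exhibits $a_w$ as $b_x \times_{d_{hfw}} c_{gw}$, i.e. $a_w \iso b_x \times_{d_z} c_y$ summed over exactly the $w$'s in that $(x,y)$-fiber. Matching up indices, $\coprod_{w\in W_z} a_w \iso \coprod_{x\in h^*z}\coprod_{y\in k^*z}(b_x \times_{d_z} c_y) \iso P_z$, and these isomorphisms are compatible with the structure maps to $d_z$ and with $\hat f, \hat g$.

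Finally, I would assemble: the family of isomorphisms $\coprod_{w\in W_z} a_w \iso P_z$ indexed by $z \in Z$, being an isomorphism of $Z$-indexed families over the $d_z$, transports under the equivalence $\cat C\comma\sum_z d_z \eqv \prod_z(\cat C\comma d_z)$ back to an isomorphism $\sum_w a_w \iso P$ over $\sum_z d_z$; unwinding, this is the comparison map $u$, so \eqref{ap:coprod.pullback} is a pullback. An alternative, slightly cleaner packaging avoids fibers entirely: apply Lemma \ref{lem:famc.comma} to present $\FamC$ as the comma category $-\pt 1 \comma \cat C$ (equivalently, $\cat C \comma (-\pt 1)$ on objects), note that $\sum\colon\FamC\to\cat C$ is by Lemma \ref{lem:famc.comma} a comma-category projection and hence (being a projection of a comma category onto a factor, or by the remark that $\sum$ preserves finite limits when $\cat C$ is lextensive — though here we only assume extensivity) creates/reflects the pullbacks in question; but since we are not assuming $\cat C$ lextensive, I would keep the fiberwise argument as the primary route and use extensivity in the sharp form "coproducts are universal and disjoint."

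I expect the main obstacle to be bookkeeping the index sets correctly — in particular, verifying carefully that the pullback of $\Set$-maps \eqref{ap:index} makes $W_z \iso h^*z \times k^*z$ and that the isomorphism $\coprod_w a_w \iso \coprod_{x,y} b_x\times_{d_z}c_y$ is natural in the structure maps, so that the glued-together isomorphism really is the canonical comparison $u$ rather than merely an abstract isomorphism. Establishing that $u$ (not just some isomorphism) is invertible is what makes \eqref{ap:coprod.pullback} a genuine pullback square with the given maps, so I would be explicit that every isomorphism produced is constructed over the relevant base and commutes with $\sum_f\hat f$ and $\sum_g\hat g$. The extensivity inputs themselves — pullback-stability of coproduct injections and the distributivity $\coprod(b_x)\times_{d}\coprod(c_y)\iso\coprod_{x,y}(b_x\times_d c_y)$ — are standard consequences of the definition \eqref{eq:fam.coprod.funct} and I would cite \cite{CLW93} rather than reprove them.
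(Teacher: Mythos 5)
Your proof is correct, but it takes a genuinely different route from the one in the paper. The paper's proof is two lines: it cites Hermida's characterization of pullbacks in a free coproduct completion to conclude from \eqref{ap:index} and \eqref{ap:fibre} that \eqref{ap:given} is already a pullback in \( \FamC \), and then invokes the fact (a corollary of the Artin-glueing description in Lemma \ref{lem:famc.comma}) that \( \sum \colon \FamC \to \cat C \) preserves limits. You instead unwind both citations into a direct fibrewise verification: decompose \( \cat C \comma \sum_z d_z \eqv \prod_z \cat C \comma d_z \), identify \( W_z \iso h^*z \times k^*z \) from the set-level pullback \eqref{ap:index}, use universality and disjointness of coproducts to get \( (\coprod_x b_x) \times_{d_z} (\coprod_y c_y) \iso \coprod_{x,y}(b_x \times_{d_z} c_y) \), and feed in \eqref{ap:fibre} to match each summand with \( a_w \). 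What your approach buys is self-containedness and a proof that works under the bare extensivity hypothesis as literally stated: the paper's appeal to ``\( \sum \) preserves limits'' is justified in the text only for lextensive \( \cat C \), whereas your argument only needs the pullbacks \( b_x \times_{d_z} c_y \) that are already supplied by \eqref{ap:fibre}. What the paper's route buys is brevity and reuse of standard machinery. One small presentational caveat: you ``form the pullback \( P \)'' before its existence is known; in a merely extensive category you should instead let the fibrewise computation construct \( P \) (each \( P_z \) exists as a coproduct of the hypothesized pullbacks \( a_w \)) and then verify the universal property of \eqref{ap:coprod.pullback} directly, which is exactly what your argument does in substance. Your concluding concern about tracking that the assembled isomorphism is the canonical comparison over the given structure maps is well placed and is the only bookkeeping that needs care.
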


\begin{proof}
  The hypotheses \eqref{ap:index} and \eqref{ap:fibre} guarantee that
  \eqref{ap:given} is a pullback in \( \FamC \), by \cite[Section 4]{Gra66}
  (see also \cite[Definition 4.7 and Corollary 4.9]{Her99}).  Since \( \sum \)
  preserves limits, \eqref{ap:coprod.pullback} must be a pullback diagram, as
  desired.
\end{proof}

As corollaries, we obtain succinct proofs of a couple of results from
\cite{Luc18a} and \cite{CFP17}, which clarify the role of the extensivity
condition.

\begin{lemma}
  \label{lem:1.strong}
  If \( \cat V \) is extensive with finite limits, \( - \pt \trm \colon \VMat \to
  \SpanV \) is strong.
\end{lemma}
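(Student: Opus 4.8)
By Proposition~\ref{thm:copower.oplax} the functor $-\pt 1 \colon \VMat \to \SpanV$ is already a normal oplax functor, so its unit comparison $\e{-\pt 1}$ is invertible; the plan is therefore to show that the composition comparison $\m{-\pt 1}$ is invertible as well, which yields that $-\pt 1$ is strong. Fix $\cat V$-matrices $p \colon X \relto Y$ and $q \colon Y \relto Z$. Recall from Diagram~\eqref{eq:defn.m} that $M_{(p\pt 1)\cdot(q\pt 1)}$ is the pullback in $\cat V$ of $r_{p\pt 1} \colon M_{p\pt 1} \to Y\pt 1$ along $l_{q\pt 1} \colon M_{q\pt 1} \to Y\pt 1$, and that $\m{-\pt 1}$ is the unique map from $M_{(p\cdot q)\pt 1}$ into this pullback satisfying $\pi_1 \circ \m{-\pt 1} = \hat\pi_1$ and $\pi_0 \circ \m{-\pt 1} = \hat\pi_0$. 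Since $M_{(p\cdot q)\pt 1} \cong \sum_{x,y,z} p(x,y)\times q(y,z)$ (by the composition formula for $\cat V$-matrices together with the symmetry of $\times$), it suffices to recognise this coproduct, equipped with $\hat\pi_0$ and $\hat\pi_1$, as a pullback cone for $r_{p\pt 1}$ and $l_{q\pt 1}$: then $\m{-\pt 1}$ is a map of pullback cones, hence invertible.

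To do this I would appeal to Theorem~\ref{thm:pb.index.coprod}. Consider the commutative square in $\FamV$ with bottom-right vertex the constant family $(1)_{y\in Y}$, top-right vertex $(p(x,y))_{(x,y)\in X\times Y}$ mapping to it via the projection $X\times Y \to Y$ on indices and $p(x,y)\to 1$ fibrewise, bottom-left vertex $(q(y,z))_{(y,z)\in Y\times Z}$ mapping to it via $Y\times Z \to Y$ and $q(y,z)\to 1$, and top-left vertex $(p(x,y)\times q(y,z))_{(x,y,z)\in X\times Y\times Z}$ with the evident product projections. The indexing square $X\times Y\times Z \to X\times Y$, $X\times Y\times Z \to Y\times Z$, $X\times Y \to Y \leftarrow Y\times Z$ is a pullback of sets, and each fibre square $p(x,y)\times q(y,z) \to p(x,y) \to 1 \leftarrow q(y,z)$ is a pullback in $\cat V$; hence Theorem~\ref{thm:pb.index.coprod} shows that applying $\sum$ yields a pullback square in $\cat V$. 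Its four vertices are $\sum_{x,y,z} p(x,y)\times q(y,z)$, $M_{p\pt 1}$, $M_{q\pt 1}$ and $Y\pt 1$; the two legs into $Y\pt 1$ are $r_{p\pt 1}$ and $l_{q\pt 1}$, since by definition these are the coproducts over the two index projections of the fibrewise maps to $1$, and the two legs out of the apex are the coproduct projections, which after the symmetry reindexing above are exactly $\hat\pi_1$ and $\hat\pi_0$. This is precisely the desired pullback cone.

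The last step is then routine: $M_{(p\pt 1)\cdot(q\pt 1)}$ and $\sum_{x,y,z} p(x,y)\times q(y,z)$ are two pullbacks of the same cospan, and $\m{-\pt 1}$ is, by its defining property in \eqref{eq:defn.m}, the canonical comparison between them; hence it is an isomorphism. Together with the normality of $-\pt 1$ this proves the lemma.

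I expect the only delicate point to be the verification in the second paragraph that the coproduct projections of the $\FamV$-square, transported along the symmetry and coassociativity isomorphisms for $\times$ and $\sum$, literally coincide with $\hat\pi_0$, $\hat\pi_1$ (and that the legs into $Y\pt 1$ are literally $r_{p\pt 1}$, $l_{q\pt 1}$) — this is the identification that makes $\m{-\pt 1}$ itself, rather than merely some isomorphism with the same source and target, invertible. Everything else is a direct invocation of the extensivity results already established (Lemma~\ref{lem:famc.comma}, Theorem~\ref{thm:pb.index.coprod}).
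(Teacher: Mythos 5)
Your proof is correct and takes essentially the same route as the paper: the paper likewise reduces to invertibility of $\m{-\pt 1}$ via normality and applies Theorem~\ref{thm:pb.index.coprod} to exactly the same index-square ($X\times Y\times Z$ over $X\times Y$ and $Y\times Z$) and fibre-square ($q(y,z)\times p(x,y)$ over $p(x,y)$ and $q(y,z)$) to conclude that the outer square of Diagram~\eqref{eq:defn.m} is a pullback. You merely spell out in more detail the identification of the resulting cone legs with $r_{p\pt 1}$, $l_{q\pt 1}$, $\hat\pi_0$, $\hat\pi_1$, which the paper leaves implicit.
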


\begin{proof}
  Since \( - \pt \trm \) is normal, it is enough to prove that \( \m{-\pt
  \trm} \) is invertible. Indeed, we have the following pullback diagrams:
  \begin{equation*}
    \begin{tikzcd}
      X \times Y \times Z \ar[r] \ar[d] 
        \ar[rd,"\ulcorner"{very near start},phantom]
      & Y \times Z \ar[d] \\
      X \times Y \ar[r] & Y
    \end{tikzcd}
    \quad
    \begin{tikzcd}
      s(y,z) \times r(x,y) \ar[r] \ar[d] 
        \ar[rd,"\ulcorner"{very near start},phantom]
      & s(y,z) \ar[d] \\
      r(x,y) \ar[r] & \trm 
    \end{tikzcd}
  \end{equation*}
  thus, applying Theorem \ref{thm:pb.index.coprod}, we conclude that the outer
  square of diagram \eqref{eq:defn.m} is a pullback, verifying our claim.
\end{proof}

\begin{remark}
  \label{rem:pt.adj.hom}
  We observe that the above lemma can be restated in terms of a Beck-Chevalley
  condition; see \cite[Definition 1.4.13]{LucTh}. To wit, the lax functor \(
  \cat V(\trm,-) \colon \SpanV \to \VMat \) satisfies the Beck-Chevalley
  condition if \( \cat V \) is extensive. Then, by \cite[Theorem
  1.4.14]{LucTh}, we conclude that \( - \pt \trm \adj \cat V(\trm,-) \) is an
  \textit{adjunction} in the 2-category \( \PsDbCat_\lax \).
\end{remark}

We can also give a short proof that a considerable class of monads are
cartesian:

\begin{lemma}
  \label{lem:free.monoid.cartesian}
  Let \( \cat V \) be a lextensive, monoidal category, whose tensor product \(
  \otimes \) preserves coproducts and pullbacks. Then the free \( \otimes
  \)-monoid monad on \( \cat V \) is cartesian.
\end{lemma}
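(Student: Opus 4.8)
The plan is to realise the free $\otimes$-monoid monad as the ``word monad'' $\word$ whose underlying functor sends $X$ to the coproduct $\sum_{n \in \N} X^{\otimes n}$, and then verify the two conditions for cartesianness directly: first, that $\word$ preserves pullbacks, and second, that the naturality squares of the unit $\unit \colon \id \to \word$ and of the multiplication $\mult \colon \word\word \to \word$ are pullback diagrams. The key observation that makes the argument short is that all three pieces of data are built out of coproducts of tensor powers, indexed over set-level data ($\N$, or sets of words), so Theorem \ref{thm:pb.index.coprod} applies: a pullback-indexed coproduct of pullback squares is a pullback. Since $\cat V$ is lextensive and $\otimes$ preserves pullbacks by hypothesis, each $X^{\otimes n}$-level square will be a pullback, and the indexing squares at the level of $\Set$ are pullbacks because $\Set$ is lextensive and the relevant index functions (length of a word, concatenation) have the evident pullback behaviour.

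First I would set up notation: write $\word X = \sum_{n} X^{\otimes n}$, with $\unit_X \colon X = X^{\otimes 1} \hookrightarrow \word X$ the coproduct injection, and $\mult_X \colon \word\word X = \sum_{n}\big(\sum_{k} X^{\otimes k}\big)^{\otimes n} \to \word X$ the map which on the component indexed by $(k_1,\ldots,k_n)$ is the canonical iso $X^{\otimes k_1} \otimes \cdots \otimes X^{\otimes k_n} \cong X^{\otimes(k_1 + \cdots + k_n)}$ followed by the injection into $\word X$; here I use that $\otimes$ preserves coproducts in each variable to distribute the tensor over the inner sums. I would then record that $(\word,\mult,\unit)$ is a monad (this is standard; one may cite \cite{Lei04} or simply note the biased associativity isomorphisms of $\otimes$ supply the coherence), so only cartesianness remains.

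For pullback preservation of $\word$: given a pullback square in $\cat V$, each functor $(-)^{\otimes n}$ preserves it --- for $n=0$ it is the constant functor at the monoidal unit, for $n=1$ it is the identity, and for general $n$ it is an $n$-fold application of $\otimes$, which preserves pullbacks in each variable, hence jointly (a routine induction). Then $\word$ applied to the square is the coproduct, indexed by the identity-indexed family over $\N$, of these pullback squares, so Theorem \ref{thm:pb.index.coprod} (with the index square being the identity square on $\N$) gives that $\word$ of the square is a pullback. For the unit naturality square at $f \colon X \to Y$: it is the $n=1$ summand square $X \to Y$, $X \hookrightarrow \word X$, $Y \hookrightarrow \word Y$, and this is a pullback because in an extensive category a coproduct injection is a pullback-stable mono and the square exhibiting one injection as the pullback of the ``same-index'' injection along $\word f$ is exactly the $n=1$ fibre; more precisely this is the $\Set$-index square $\{1\} \to \N \xleftarrow{\id} \N$ (a pullback) with fibre squares $X^{\otimes 1} \to Y^{\otimes 1}$ (pullbacks, indeed the square for $f$ itself), so Theorem \ref{thm:pb.index.coprod} applies. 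For the multiplication naturality square at $f$: $\word\word f$ and $\word f$ act componentwise on the $\N$-indexed (outer) and word-indexed (inner) coproducts; the index square on $\Set$ is the naturality square of the corresponding $\Set$-level multiplication (concatenation of lists of lengths), which is a pullback in $\Set$, and the fibre square over a word $(k_1,\ldots,k_n)$ is $X^{\otimes k_1+\cdots+k_n} \to Y^{\otimes k_1+\cdots+k_n}$, a pullback since $(-)^{\otimes m}$ preserves pullbacks; one more application of Theorem \ref{thm:pb.index.coprod} finishes it.

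The main obstacle I anticipate is purely bookkeeping rather than conceptual: carefully matching up the distributed form $\big(\sum_k X^{\otimes k}\big)^{\otimes n} \cong \sum_{(k_1,\ldots,k_n)} X^{\otimes k_1} \otimes \cdots \otimes X^{\otimes k_n}$ (which uses that $\otimes$ preserves coproducts variable-by-variable, repeatedly) so that $\word\word$ is genuinely a coproduct indexed by a set of words, and then checking that the index-level square for $\mult$ is the concatenation square and is a pullback in $\Set$ --- this is the ``$\Set$-model'' of the statement and is elementary, but it has to be stated precisely to invoke Theorem \ref{thm:pb.index.coprod} cleanly. Once the indexing is pinned down, every fibrewise square is a tensor power of a given square and hence a pullback by the preservation hypothesis on $\otimes$, and the theorem does the rest; no further limits in $\cat V$ beyond those assumed are needed.
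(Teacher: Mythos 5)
Your proposal is correct and follows essentially the same route as the paper: realise the monad as $X\mapsto\sum_n X^{\otimes n}$, get pullback-preservation from the hypotheses on $\otimes$ together with lextensivity, and verify cartesianness of $\unit$ and $\mult$ by exhibiting their naturality squares as coproducts of pullback squares indexed by pullback squares in $\Set$ (the summand-inclusion square for $\unit$, the sum/concatenation square $\sum_k\N^k\to\N$ for $\mult$), then invoking Theorem~\ref{thm:pb.index.coprod}. The extra bookkeeping you flag about distributing $\otimes$ over the coproducts in $\word\word X$ is exactly what the paper's proof elides, so nothing is missing.
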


\begin{proof}
  We let \( X^0 = I \) be the unit object, and \( X^{n+1} = X^n \otimes X \).
  Recall that the underlying functor of the free \( \otimes \)-monoid monad
  may be given by \( X \mapsto X^* = \sum_{n \in N} X^n \) (see, for instance,
  \cite[Theorem 23.4]{Kel80}), and note that since pullbacks are preserved by
  \( \otimes \) (by hypothesis) and by coproducts (as a corollary of Lemma
  \ref{lem:famc.comma}), we conclude the free \(\otimes\)-monoid monad
  preserves pullbacks. Moreover, note that
  \begin{equation*}
    \begin{tikzcd}
      X^n \ar[d,"f^n",swap]
          \ar[r,"\iota_n"]
        \ar[rd,"\ulcorner"{very near start},phantom]
      & X^* \ar[d,"f^*"] \\
      Y^n \ar[r,"\iota_n",swap]
      & Y^*
    \end{tikzcd}
  \end{equation*}
  is a pullback diagram for all \(n \in \N \), due to extensivity. Taking
  \(n=1\) confirms \( \eta \) is a cartesian natural transformation.

  Now, we consider the following pullback diagrams
  \begin{equation*}
    \begin{tikzcd}
      \displaystyle\sum_{k \in \N} \N^k \ar[r,"\textsf{sum}"] 
                           \ar[d,equal]
        & \N \ar[d,equal] \\
      \displaystyle\sum_{k \in \N} \N^k \ar[r,"\textsf{sum}",swap] 
        & \N
    \end{tikzcd}
    \qquad
    \begin{tikzcd}
      X^{n_1} \otimes \ldots \otimes X^{n_k}
       \ar[r,"\iso"]
       \ar[d,"f^{n_1} \otimes \ldots \otimes f^{n_k}",swap]
        & X^{n_1 + \ldots + n_k} \ar[d,"f^{n_1 + \ldots + n_k}"] \\
      Y^{n_1} \otimes \ldots \otimes Y^{n_k}
       \ar[r,"\iso",swap]
      & Y^{n_1 + \ldots + n_k} 
    \end{tikzcd}
  \end{equation*}
  to which we may apply Theorem \ref{thm:pb.index.coprod}, allowing us to
  conclude that \( \mu \) is a cartesian natural transformation as well.
\end{proof}

\subsection*{Connected terminal objects}

We recall that an object \(x\) in a category \( \cat V \) with coproducts is
said to be \textit{connected} \cite[{Definition~6.1.3}]{BJ01} if the
hom-functor \( \cat V(x,-) \) preserves coproducts.

Under the hypothesis that \( \cat V \) is lextensive, understading this
condition turns out to be helpful in our work on the enriched \( \to \)
internal embedding, particularly regarding the study of certain monads on \(
\cat V \); see Lemma~\ref{lem:word.monad.fib.disc}.

\begin{lemma}
  \label{lem:conn.iff.fff}
  If \( \cat V \) is lextensive, then its terminal object \( \trm \) is
  connected if and only if \( - \pt \trm \colon \Set \to \cat V \) is fully
  faithful.
\end{lemma}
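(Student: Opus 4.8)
The plan is to unwind both the definition of fully faithfulness of $-\pt 1\colon \Set \to \cat V$ and the definition of connectedness, and observe that they are essentially the same condition once we use lextensivity to compute the relevant hom-sets. First I would record that, for sets $X,Y$, a morphism $X\pt 1 \to Y\pt 1$ in $\cat V$ is by definition an element of $\cat V(X\pt 1, Y\pt 1) = \cat V(\sum_{x\in X}1,\; Y\pt 1)$, and since the copower $\sum_{x\in X}1$ is a coproduct, this is $\prod_{x\in X}\cat V(1, Y\pt 1) = \prod_{x\in X}\cat V(1,\sum_{y\in Y}1)$. The adjunction $-\pt 1 \dashv \cat V(1,-)$ of \eqref{eq:start.adj} has unit $\heta$, and $-\pt 1$ is fully faithful precisely when $\heta$ is an isomorphism, i.e. when each component $\heta_Y\colon Y \to \cat V(1, Y\pt 1)$ is a bijection; equivalently, when the canonical map $\sum_{y\in Y}\cat V(1,1) \to \cat V(1,\sum_{y\in Y}1)$ is a bijection for every set $Y$. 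But $\cat V(1,1)$ is a singleton (as $1$ is terminal), so this canonical map is exactly the comparison $\coprod_{y\in Y}\cat V(1,1) \to \cat V(1,\coprod_{y\in Y}1)$ witnessing that $\cat V(1,-)$ preserves the coproduct $\coprod_{y\in Y}1$.

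Next I would make the logical equivalence precise in both directions. If $\cat V$ is connected, then $\cat V(1,-)$ preserves all coproducts, in particular the coproducts $Y\pt 1 = \coprod_{y\in Y}1$, so each $\heta_Y$ is a bijection and $-\pt 1$ is fully faithful. Conversely, if $-\pt 1$ is fully faithful then $\heta_Y$ is a bijection for every $Y$, which says $\cat V(1,-)$ preserves every coproduct \emph{of copies of the terminal object}. To upgrade this to preservation of \emph{arbitrary} coproducts — which is what connectedness demands — I would invoke lextensivity via Lemma~\ref{lem:famc.comma}: for a family $(c_i)_{i\in I}$, extensivity gives pullback squares $c_i \to 1$, $\sum_i c_i \to I\pt 1$, and the comma description of $\FamC$ lets one recover a map $1 \to \sum_i c_i$ together with its composite to $I\pt 1 = \sum_i 1$, which picks out a unique index $i$ and then factors (by the pullback) through $c_i$. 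Concretely: a morphism $1 \to \sum_{i\in I}c_i$, composed with $\sum_{i\in I}c_i \to I\pt 1$, is (since $1$ is connected, by the fully-faithful hypothesis applied to $\cat V(1,I\pt 1)=I$) a coprojection $1 \to I\pt 1$ for a unique $i$; pulling back shows the original map factors uniquely through $c_i \hookrightarrow \sum_j c_j$. This exhibits the comparison $\coprod_i \cat V(1,c_i) \to \cat V(1,\sum_i c_i)$ as a bijection, i.e. $1$ is connected.

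I would present the argument as the string of natural bijections above, being careful that the bijection $\cat V(1,Y\pt 1)\cong Y$ used to identify the "unique index" is itself the fully-faithfulness hypothesis instantiated at $X=1$ (where $1\pt 1 \cong 1$, so $\cat V(1\pt 1, Y\pt 1)\cong \cat V(1,Y\pt 1)$). The one genuinely non-formal ingredient is the extensivity step in the converse direction — moving from "preserves coproducts of copies of $1$" to "preserves all coproducts" — and this is exactly where Lemma~\ref{lem:famc.comma} (the Artin-glueing/comma characterization) does the work, by letting us decompose an arbitrary object mapping to $\sum_i c_i$ fibrewise. The hard part is therefore not any calculation but making sure the fibrewise decomposition of a global element $1\to\sum_i c_i$ is justified cleanly; everything else is chasing the defining adjunction $-\pt 1\dashv\cat V(1,-)$ and the fact that $1$ is terminal.
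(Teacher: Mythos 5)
Your proposal is correct and follows essentially the same route as the paper: the direction ``connected $\Rightarrow$ fully faithful'' is the observation that $\cat V(1,Y\pt 1)\iso Y\pt\cat V(1,1)\iso Y$ makes the unit invertible, and the converse is the same fibrewise decomposition of a global element $p\colon 1\to\sum_i c_i$ — identify the index via $\cat V(1,I\pt 1)\iso I$ (full faithfulness at $X=1$) and then pull back along the coprojection to factor $p$ uniquely through $c_i$. The paper phrases this as a pasting of pullback squares with $u\iso[\sigma\circ p=i]\pt 1$ rather than citing Lemma~\ref{lem:famc.comma}, but the extensivity input is the same.
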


\begin{proof}
  Given a morphism \( p \colon \trm \to \sum_{i \in I} X_i \), we consider the
  following diagram
  \begin{equation*}
    \begin{tikzcd}
      u \ar[r] \ar[d] \ar[rd,"\ulcorner",phantom,very near start]
        & X_i \ar[d] \ar[r] \ar[rd,"\ulcorner",phantom,very near start]
        & \trm \ar[d,"i"] \\
      \trm \ar[r,"p",swap]
        & \displaystyle\sum_{i \in I} X_i \ar[r,"\sigma",swap]
        & I \pt \trm
    \end{tikzcd}
  \end{equation*}
  which is a pasting of pullback squares.

  It is clear that if \( - \pt \trm \) is fully faithful, then \( u \iso [\sigma
  \circ p = i] \pt \trm \). Thus, by universality of coproducts, \( p \) is
  uniquely determined by a morphism \( \trm \to X_i \). 

  Conversely, since \( \trm \) is the terminal object we have \( \cat
  V(\trm,X\pt \trm) \iso X \pt \cat V(\trm,\trm) \iso X \), which implies the
  unit of \( - \pt \trm \adj \cat V(\trm,-) \) must be an isomorphism.
\end{proof}

\begin{lemma}
  \label{lem:1.fff}
  If \( \cat V \) is a lextensive category and \( \trm \) is connected, then
  \( - \pt \trm \colon \VMat \to \SpanV \) is fully faithful on 2-cells.
\end{lemma}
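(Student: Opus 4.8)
The statement asserts that $- \pt 1 \colon \VMat \to \SpanV$ is fully faithful on 2-cells when $\cat V$ is connected and lextensive; that is, given $\cat V$-matrices $p \colon X \relto Y$, $q \colon W \relto Z$ and functions $f \colon X \to W$, $g \colon Y \to Z$, the assignment $\theta \mapsto \theta \pt 1$ is a bijection between 2-cells $\theta \colon p \to q$ in $\VMat$ lying over $(f,g)$ and 2-cells $p \pt 1 \to q \pt 1$ in $\SpanV$ lying over $(f \pt 1, g \pt 1)$. The plan is to exploit the explicit coproduct descriptions of $-\pt 1$ on hom-categories together with extensivity (via Lemma \ref{lem:famc.comma}) to decompose a span 2-cell into its ``fibrewise'' components, and then invoke connectedness (via Lemma \ref{lem:conn.iff.fff}, i.e. $-\pt 1 \colon \Set \to \cat V$ is fully faithful) to identify each component with a morphism in $\cat V$.

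\textbf{Key steps.}
First I would recall that a 2-cell $p \to q$ in $\VMat$ over $(f,g)$ is precisely a family of morphisms $\theta_{x,y} \colon p(x,y) \to q(fx,gy)$ in $\cat V$, so the hom-set on the $\VMat$ side is $\prod_{x,y} \cat V(p(x,y), q(fx,gy))$. On the $\SpanV$ side, a 2-cell $p \pt 1 \to q \pt 1$ over $(f \pt 1, g \pt 1)$ is a morphism $M_{p \pt 1} \to M_{q \pt 1}$ in $\cat V$ making the evident squares with $l,r$ commute; since $M_{p \pt 1} = \sum_{x,y} p(x,y)$, by the universal property of the coproduct such a morphism is the same as a family of morphisms $p(x,y) \to M_{q \pt 1}$, one for each $(x,y) \in X \times Y$. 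Second, I would pin down, for each fixed $(x,y)$, which morphisms $p(x,y) \to M_{q \pt 1} = \sum_{w,z} q(w,z)$ are compatible with the span structure: the composite $p(x,y) \to M_{q \pt 1} \to W \pt 1 \times Z \pt 1$ is forced, by the commuting squares, to be the constant map at $(fx, gy)$, i.e. to factor through $\heta(fx), \heta(gy) \colon 1 \to W \pt 1$, $1 \to Z \pt 1$. Here is where extensivity enters: by Lemma \ref{lem:famc.comma}, $\sum_{w,z} q(w,z) \to W \pt 1 \times Z \pt 1$ exhibits a pullback-indexed decomposition, so a morphism $p(x,y) \to \sum_{w,z} q(w,z)$ whose composite to $W \pt 1 \times Z \pt 1$ factors through the point $(fx,gy)$ must factor uniquely through the summand $q(fx, gy)$ — more precisely, through the pullback of $M_{q\pt 1} \to W\pt 1 \times Z\pt 1$ along $(fx,gy) \colon 1 \to W\pt 1 \times Z\pt 1$, which is $q(fx,gy)$ by the universality of coproducts together with $p(x,y)$ being connected. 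Wait — $p(x,y)$ need not be connected; what I actually need is just that the composite factors through the point, which forces factorization through the corresponding summand by extensivity (universality of coproducts), with no connectedness needed at this stage. So the data of a span 2-cell reduces exactly to a family $\theta_{x,y} \colon p(x,y) \to q(fx,gy)$, matching the $\VMat$ side, and one checks this correspondence is exactly $\theta \mapsto \theta \pt 1$ by unwinding the definition of $\theta \pt 1$ as the coproduct of the $\theta_{x,y}$ indexed appropriately.

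\textbf{Where connectedness is used, and the main obstacle.}
Re-examining the above, the naive argument already seems to establish the bijection using only extensivity — so the role of the connectedness hypothesis deserves scrutiny, and that is the subtle point. The issue is that a morphism $p(x,y) \to W\pt 1 \times Z\pt 1$ compatible with the span structure is a priori only required to make the two triangles with $l_{q \pt 1}, r_{q\pt 1}$ commute after postcomposition; it is the commuting squares in the definition of a $\SpanV$ 2-cell (involving $f \pt 1$ and $g \pt 1$) that force the composite to land at the single point $(fx,gy) \in (W \pt 1) \times (Z \pt 1)$. But to conclude that it lands at a \emph{point} rather than merely mapping into the subobject $\{fx\} \times \{gy\}$ — and hence to get the clean factorization through the summand $q(fx,gy)$ — one needs that maps out of (pieces of) $p(x,y)$ into coproducts behave well, and the cleanest way to organize this uniformly is precisely the statement that $1$ is connected, equivalently (Lemma \ref{lem:conn.iff.fff}) that $-\pt 1 \colon \Set \to \cat V$ is fully faithful, so that $X \pt 1$ genuinely ``is'' the set $X$ as far as maps to it from connected-ish objects are concerned. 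Concretely, the hard part will be verifying that the square-compatibility of a $\SpanV$ 2-cell really does localize $p(x,y)$ over the point $(fx,gy)$ — this requires tracking the unit $\heta$ carefully and using connectedness to rule out a morphism $p(x,y)$ ``spreading across'' several points of $X \pt 1$ or $Y \pt 1$ — after which the factorization through $q(fx,gy)$ and the identification with $\theta \pt 1$ are routine coproduct bookkeeping. I would therefore structure the proof as: (i) identify both hom-sets as families indexed by $X \times Y$; (ii) use connectedness to show compatibility forces each component to be ``concentrated at the point $(fx,gy)$''; (iii) use extensivity to factor through $q(fx,gy)$; (iv) check the resulting bijection is $\theta \mapsto \theta \pt 1$.
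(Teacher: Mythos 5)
Your fibrewise analysis, once unwound, is essentially the same pullback argument the paper uses: the paper simply notes that the outer square of \eqref{eq:defn.unit} is a pullback by extensivity, so each component \( p(x,y) \to \cat V(1,p\pt 1)(\heta x,\heta y) \) of the unit is invertible, and combines this with invertibility of \( \heta_X \) to conclude. However, your proof has a genuine gap in its opening sentence: you restrict to 2-cells of \( \SpanV \) lying over the pair \( (f\pt 1, g\pt 1) \) for \emph{given} functions \( f,g \). Full faithfulness of \( (-\pt 1)_1 \colon \VMat_1 \to \SpanV_1 \) requires a bijection onto the set of \emph{all} 2-cells \( p\pt 1 \to q\pt 1 \), whose vertical boundaries are arbitrary \( \cat V \)-morphisms \( \phi \colon X\pt 1 \to W\pt 1 \), \( \psi \colon Y\pt 1 \to Z\pt 1 \). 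For surjectivity you must first show that every such \( \phi \) equals \( f \pt 1 \) for a unique function \( f \) --- and this is exactly where connectedness enters, via Lemma \ref{lem:conn.iff.fff}. This stronger statement is also what is needed downstream: Corollary \ref{cor:unit.iso.when} requires the unit \( \heta_p \), a 2-cell over \( (\heta_X,\heta_Y) \), to be invertible in \( \VMat_1 \), which forces \( \heta_X, \heta_Y \) to be bijections.

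Relatedly, your attempt to relocate the use of connectedness into the fibrewise factorization step does not work. Once the boundaries are known to be \( f\pt 1 \) and \( g\pt 1 \), the composite \( p(x,y) \to M_{q\pt 1} \to W\pt 1 \times Z\pt 1 \) is literally \( (\heta(fx),\heta(gy)) \circ {!} \), because \( l_{p\pt 1}, r_{p\pt 1} \) are by construction coproducts of maps factoring through \( 1 \); there is no possibility of \( p(x,y) \) ``spreading across several points''. The unique factorization through the summand \( q(fx,gy) \) is then the pullback property coming from extensivity (Lemma \ref{lem:famc.comma}, equivalently the outer square of \eqref{eq:defn.unit} being a pullback), with no connectedness required. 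So your step (ii) is vacuous, step (iii) is pure extensivity, and the connectedness hypothesis belongs entirely to the step you omitted: identifying the vertical boundaries of an arbitrary 2-cell \( p\pt 1 \to q\pt 1 \) as lying in the image of \( -\pt 1 \colon \Set \to \cat V \).
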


\begin{proof}
  The outer square of \eqref{eq:defn.unit} is a pullback, due to extensivity.
  Then, since \( \heta \colon X \to \cat V(\trm,X\pt \trm) \) is an isomorphism for
  all \(X\), the result follows.
\end{proof}

  \section{Fibrewise discrete morphisms}
    \label{sect:disc.morph}
    Let \(T\) be a cartesian monad on a lextensive category \( \cat V \), with
terminal object denoted by \( \trm \). We also denote by \(T\) the induced
strong monad on \( \SpanV \) \cite{Her00, CS10}.  The \( \Set \)-monad \( \Tt
\) under study (as well as its lax extension to \( \VMat \), also denoted by
\( \Tt \)), is constructed via the following consequence of Proposition
\ref{prop:doct.adj}:

\begin{proposition}
  \label{prop:induced.monad}
  Let \( \bicat B \) be a 2-category, let \( (l,r,\eta,\epsilon) \colon b \to
  c \) be an adjunction in \(\bicat B\), and let \( (t,m,e) \) be a monad on
  \( c \). Then \( (rtl, r(m \circ t\epsilon t)l, rel \circ \eta) \) is a
  monad on \( b \), and we have a conjunction 
  \begin{equation*}
    \begin{tikzcd}
      (b,rtl) \ar[r,bend left, "{(l, \epsilon tl)}"{name=A}]
      & (c,t) \ar[l,bend left, "{(r, rt \epsilon)}"{name=B}]
      \ar[from=A,to=B,phantom,"\adj" {anchor=center, rotate=-90}]
    \end{tikzcd}
  \end{equation*}
  in \( \Mnd(\bicat B) \).
\end{proposition}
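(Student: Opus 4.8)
The plan is to obtain this purely formally from the doctrinal-adjunction machinery already recorded, namely Proposition \ref{prop:doct.adj}. The key observation is that a monad on an object $c$ of $\bicat B$ is exactly a lax $\id$-algebra structure in the degenerate sense, i.e.\ an object of $\Mnd(\bicat B)$, and that $\Mnd(\bicat B)$ carries the structure of a double category (indeed $\Mnd(\bicat B) = \LaxidAlg$ for the identity pseudomonad, as recalled at the end of Section~\ref{subsect:ex.psdbcats}), whose horizontal $1$-cells are the monad lax morphisms and whose vertical $1$-cells are the monad oplax morphisms. So the statement is an instance of: an adjunction $(l,r,\eta,\epsilon)$ in $\bicat B$, together with a monad $(t,m,e)$ on $c$, transports along $r$ to a monad on $b$ and yields a conjunction in $\Mnd(\bicat B)$.

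First I would exhibit the transported monad. The unit is $rel \circ \eta \colon \id_b \to rtl$ and the multiplication is $r(m \circ t\epsilon t)l \colon rtlrtl \to rtl$; I would check the monad axioms by a routine string-diagram / interchange computation, using the triangle identities for $\eta,\epsilon$ to collapse the doubled $lr$ in the middle. (This is the standard ``a monad transported across an adjunction'' argument and I would only sketch it.) Next, the oplax structure on $l$: the comparison $2$-cells are $\epsilon tl \colon rtl \cdot \text{(applied to } l) \to t l$... more precisely, viewing $(l,\epsilon tl)\colon (b,rtl)\to (c,t)$ as a candidate monad oplax morphism, the structure $2$-cell is the mate-type cell $l(rtl) = lrtl \xrightarrow{\epsilon tl} tl$, and one must verify it is compatible with units and multiplications — again a triangle-identity computation. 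Dually, $r$ becomes a monad lax morphism $(r, rt\epsilon)\colon (c,t)\to (b,rtl)$ with lax comparison $tr \xrightarrow{} r(trl r)?$; the correct cell is built from $rt\epsilon \colon rtlr \to rt$ read in the appropriate direction, and this is exactly the ``lax structure of a right adjoint'' as in the (currently commented-out) Remark~\ref{rem:lax.struct.radj}.

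The heart of the argument is then to recognize that $(l,\epsilon tl)$ and $(r, rt\epsilon)$ are a \emph{conjoint pair} in $\Mnd(\bicat B)$, i.e.\ that $(l,\epsilon tl)$ is the conjoint of $(r,rt\epsilon)$. This is precisely where I would invoke Proposition \ref{prop:doct.adj}: the adjunction $F \adj G$ there is instantiated as $l \adj r$ in $\bicat B$ (so $\bicat B$ plays the role of the ambient $2$-category, and $\LaxTAlg$ is $\Mnd(\bicat B)$ with the chosen vertical/horizontal conventions), and the proposition says that the oplax structure $\zeta$ on the left adjoint and the lax structure $\xi$ on the right adjoint correspond under the doctrinal-adjunction bijection if and only if $(l,\zeta)$ is the conjoint of $(r,\xi)$. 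Thus I only need to check that $\zeta = \epsilon tl$ and $\xi = rt\epsilon$ are mates of each other in the sense of doctrinal adjunction — which is a direct unwinding of the bijection formula, again using the triangle identities. Once that is verified, the unit and counit of the conjunction $(l,\epsilon tl) \dashv (r,rt\epsilon)$ in $\Mnd(\bicat B)$ are the ones induced on monads from $\eta$ and $\epsilon$, and the two conjoint equations $\epsilon\circ\eta = 1$ and $\eta\cdot\epsilon = \rho^{-1}\circ\lambda$ hold because they already hold in $\bicat B$ and $\Mnd(\bicat B)$ is $2$-fully-faithful over $\bicat B$ on the relevant cells.

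The main obstacle I anticipate is bookkeeping rather than conceptual: getting the variances right (oplax morphism $l$ versus lax morphism $r$, conjoint rather than companion) so that the hypotheses of Proposition \ref{prop:doct.adj} match the conventions of $\Mnd(\PsDbCat_\lax)$, and checking that the doctrinal-adjunction mate of $rt\epsilon$ is literally $\epsilon tl$ (and not, say, some reassociated variant). If one wanted to avoid even that, an alternative is to verify the monad-morphism axioms and the two conjoint equations by hand directly in $\bicat B$; this is longer but entirely mechanical, each step being an application of a triangle identity or naturality. I would present the Proposition~\ref{prop:doct.adj} route as the main proof and remark that the hands-on verification is routine, since all the needed coherence lives in $\bicat B$, where it is the familiar theory of adjunctions and transported monads.
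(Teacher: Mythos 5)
Your proposal is correct and follows exactly the route the paper intends: the paper states this proposition with no written proof, introducing it as ``the following consequence of Proposition \ref{prop:doct.adj}'', i.e.\ doctrinal adjunction applied to $\Mnd(\bicat B)=\LaxidAlg$ with the adjunction $l\dashv r$, which is precisely your main argument. Your supplementary checks (the transported monad axioms via the triangle identities, and that the doctrinal mate of $\epsilon tl$ is $rt\epsilon$ — which indeed collapses to $rt\epsilon$ after one triangle identity) are the right routine verifications and all go through.
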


Indeed, by Remark \ref{rem:pt.adj.hom}, we have an adjunction
\begin{equation}
  \label{eq:step.one}
  \begin{tikzcd}
    \VMat \ar[r,bend left,"- \pt \trm"{name=A}]
    & \SpanV \ar[l,bend left,"\cat V(\trm{,}-)"{name=B,below}]
    \ar[from=A,to=B,phantom,"\adj" {anchor=center, rotate=-90}]
  \end{tikzcd}
\end{equation}
in the 2-category \( \PsDbCat_\lax \), thus, we may apply
Proposition~\ref{prop:induced.monad} to \eqref{eq:step.one} with the monad
\(T\) on \( \SpanV \) to obtain a monad \( \Tt = \cat V(\trm,T(- \pt \trm)) \)
on \( \VMat \), and a conjunction
\begin{equation}
  \label{eq:induced.conjunction}
  \begin{tikzcd}
    (\Tt, \VMat)
      \ar[r,bend left,"{(- \pt \trm,\, \heps_{T(-\pt \trm)})}"{name=A}]
    & (T, \SpanV)
      \ar[l,bend left,
          "{(\cat V(\trm,-),\,\cat V(\trm, T\heps))}"{name=B,below}]
    \ar[from=A,to=B,phantom,"\adj" {anchor=center, rotate=-90}]
  \end{tikzcd}
\end{equation}
where \( (-\pt \trm,\, \heps_{T(-\pt \trm)}) \) is a monad oplax morphism and
\( (\cat V(\trm,-),\, \cat V(\trm,\,T\heps)) \) is a monad lax morphism. 

The only remaining ingredient to place ourselves under the setting of Section
\ref{sect:induced.adjunction} and therefore apply Theorem
\ref{thm:lax.alg.adj} to \eqref{eq:induced.conjunction}, is the hypothesis
that \( \heps_{T(- \pt \trm)} \) has a strong conjoint\footnote{Note that,
since $T$ is strong, it follows that $ T\heps_{T(-\pt \trm)} $ has a strong
conjoint as well, by Lemma \ref{lem:right.conjoint.invert}.}. The study and
characterization of this hypothesis (when \( \trm \) is connected) is the
central purpose of this Section, culminating in Theorem
\ref{thm:heps.strong.conj}. 

Once this characterization is obtained, we get an adjunction (see Lemma
\ref{lem:induced.adjunction})
\begin{equation}
  \label{eq:induced.adjunction}
  \begin{tikzcd}
    \TtVCat
      \ar[r,bend left,"- \pt \trm"{name=A}]
    & \CatTV
      \ar[l,bend left,"{\cat V(\trm,-)}"{name=B,below}]
    \ar[from=A,to=B,phantom,"\adj" {anchor=center, rotate=-90}]
  \end{tikzcd}
\end{equation}
from \eqref{eq:induced.conjunction}, for pairs \( (T,\cat V) \) where \(T\) is
a cartesian monad on a lextensive category \( \cat V \) with \( \trm \)
connected, such that \( \heps_{T(-\pt \trm)} \) has a strong conjoint.

We begin by establishing the following:

\begin{lemma}
  \label{lem:nat.s.conj.eqv}
  Let \( \omega \colon F \to G \) be a vertical transformation of lax functors
  \( F,\, G \colon \bicat D \to \SpanV \). For a horizontal 1-cell \( a \colon
  s \relto t \) in \( \bicat D \), the 2-cell \( \n{\omega^*}_a \) is
  invertible if and only if
  \begin{equation}
    \label{eq:pb.diag.2-cell}
    \begin{tikzcd}
      M_{Fa} \ar[d,"\omega_a",swap] \ar[r,"r_{Fa}"]
        & Ft \ar[d,"\omega_y"] \\
      M_{Ga} \ar[r,"r_{Ga}",swap] & Gt
    \end{tikzcd}
  \end{equation}
  is a pullback diagram.
\end{lemma}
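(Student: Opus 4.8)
## Proof Plan for Lemma \ref{lem:nat.s.conj.eqv}

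The plan is to unwind the definition of $\n{\omega^*}_a$ via the mate theory developed in Section \ref{sect:conj.comp}, compute both sides explicitly as spans (using the notation for $\SpanV$ fixed at the start of Section \ref{sect:spanmat}), and then recognize that invertibility of the resulting comparison 2-cell is precisely the statement that a certain square of spans is a pullback. Recall from the proof of Theorem \ref{thm:conjoint.closed} that $\n{\omega^*}_a \colon Fa \cdot \omega^*_s \to \omega^*_t \cdot Ga$ is defined as the mate, via \ref{enum:degen.i}, of $\omega_a \colon Fa \to Ga$, where $\omega^*_s$, $\omega^*_t$ are the conjoints of $\omega_s$, $\omega_t$ in $\SpanV$. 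Since $\SpanV$ is an equipment, the conjoint $\omega^*_s$ of a vertical 1-cell $\omega_s \colon Fs \to Gs$ is the span $Gs \xleftarrow{\omega_s} Fs \xrightarrow{\id} Fs$, and similarly for $\omega^*_t$.

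First I would write down the two horizontal composites $Fa \cdot \omega^*_s$ and $\omega^*_t \cdot Ga$ as spans, using the pullback description of horizontal composition in $\SpanV$. A direct computation shows $M_{Fa \cdot \omega^*_s} \iso M_{Fa}$ (pulling back $r_{\omega^*_s} = \id_{Fs}$ against $l_{Fa}$), with legs $l_{Fa}$ on the left and $r_{Fa}$ followed by the structure maps on the right; dually, $M_{\omega^*_t \cdot Ga}$ is the pullback of $r_{Ga} \colon M_{Ga} \to Gt$ against $\omega_t \colon Ft \to Gt$, i.e. the apex of the square \eqref{eq:pb.diag.2-cell} regarded as a cospan. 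Next I would identify the 2-cell $\n{\omega^*}_a$ under these identifications: by the mate formula \ref{enum:degen.i}, it is the unique map $M_{Fa} \to M_{\omega^*_t \cdot Ga}$ induced by the two maps $\omega_a \colon M_{Fa} \to M_{Ga}$ and $r_{Fa} \colon M_{Fa} \to Ft$ (which agree after composing to $Gt$, precisely because $\omega$ is a vertical transformation so the relevant square commutes). That is, $\n{\omega^*}_a$ is exactly the comparison map from $M_{Fa}$ into the pullback of $r_{Ga}$ along $\omega_t$.

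The conclusion is then immediate: this comparison map is an isomorphism if and only if the square \eqref{eq:pb.diag.2-cell} is a pullback, which is the claim. I expect the main obstacle to be purely bookkeeping — carefully matching the coherence isomorphisms ($\lambda$, $\rho$, $\gamma$, $\alpha$) that appear in the mate formula \ref{enum:degen.i} against the pullback-projection descriptions of the unitors and associator of $\SpanV$ given in the notation subsection, so as to confirm that $\n{\omega^*}_a$ really is the naive comparison map and not that map twisted by some isomorphism. Since composing with an isomorphism does not affect invertibility, even a slightly imprecise identification suffices for the stated equivalence, so no genuine difficulty arises; the argument is essentially a translation of the abstract mate correspondence into the concrete combinatorics of spans.
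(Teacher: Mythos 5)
Your proposal is correct and follows essentially the same route as the paper: the paper's proof likewise identifies \( \n{\omega^*}_a \) (up to the trivial identification \( M_{Fa \cdot \omega^*_s} \iso M_{Fa} \)) with the canonical comparison morphism from \( M_{Fa} \) into the pullback of \( r_{Ga} \) along \( \omega_t \), induced by \( \omega_a \) and \( r_{Fa} \), and concludes that it is invertible precisely when \eqref{eq:pb.diag.2-cell} is a pullback. Your remark that coherence isomorphisms only twist this comparison by isomorphisms, and hence do not affect the equivalence, is exactly the right way to dispose of the bookkeeping.
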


\begin{proof}
  We observe that \( \n{\omega^*}_a \) is uniquely determined by the dashed
  morphism below making the triangles commute
  \begin{equation*}
    \begin{tikzcd}
      && M_{Fa} \ar[ddl,bend right,swap,"\omega_a" description]
                \ar[ddr,bend left,"r_{Fa}" description] 
                \ar[d,dashed] \\
      && \cdot \ar[dd,phantom,"\ulcorner"{anchor=center,
                                          rotate=-45,yshift=-0.5mm},
                              very near start]
               \ar[dr] \ar[dl] \\
      & M_{Ga} \ar[dl,"l_{Ga}" description,swap]  
               \ar[dr,"r_{Ga}" description]
      && Ft \ar[dl,"\omega_y" description]
            \ar[dr,equal] \\
      Gs && Gt && Ft
    \end{tikzcd}
  \end{equation*}
  which is invertible if and only if \eqref{eq:pb.diag.2-cell} is a pullback
  diagram.
\end{proof}

\begin{lemma}
  \label{lem:strong.conj.iff.cart}
  The following are equivalent:
  \begin{enumerate}[label=(\alph*)]
    \item
      \label{enum:nat.s.conj}
      \( \heps_{T(-\pt \trm)} \) has a strong conjoint,
    \item
      \label{enum:nat.cart}
      \( \heps_{T(-\pt \trm)} \) is a cartesian natural transformation.
  \end{enumerate}
\end{lemma}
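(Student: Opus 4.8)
The plan is to combine Theorem~\ref{thm:conjoint.closed}, Lemma~\ref{lem:nat.s.conj.eqv} and Theorem~\ref{thm:pb.index.coprod}. Since $\SpanV$ is an equipment, hence conjoint closed, Theorem~\ref{thm:conjoint.closed} guarantees that $\Lax_\lax(\VMat,\SpanV)$ is conjoint closed with conjoints computed componentwise; thus $\heps_{T(-\pt 1)}$ admits a conjoint, and this conjoint is strong precisely when $\n{\heps_{T(-\pt 1)}^{*}}_{a}$ is invertible for every horizontal $1$-cell $a$ of $\VMat$, i.e.\ for every $\cat V$-matrix $a\colon X\relto Y$. By Lemma~\ref{lem:nat.s.conj.eqv} applied with $F=(-\pt 1)\Tt$, $G=T(-\pt 1)$ and $\omega=\heps_{T(-\pt 1)}$, this is equivalent to the assertion that, for every such $a$, the square of that lemma --- whose top row is $r_{(-\pt 1)\Tt a}$, whose left column is the component $\heps_{T(a\pt 1)}$ of $\heps_{T(-\pt 1)}$ at $a$, whose right column is $\heps_{T(Y\pt 1)}$ and whose bottom row is $T(r_{a\pt 1})$ --- is a pullback in $\cat V$; call this square $(\star_a)$.

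So the lemma reduces to comparing this family of pullback squares with the cartesianness of $\heps_{T(-\pt 1)}$. First I would unwind $(\star_a)$ explicitly using the pullback \eqref{eq:defn.right.adj} defining $\cat V(1,-)$, the description of $-\pt 1$ on $\cat V$-matrices, and the formula \eqref{eq:defn.counit} for $\heps$ on spans: this identifies $M_{(-\pt 1)\Tt a}$ as a pullback of $T(M_{a\pt 1})$ over $T(X\pt 1)\times T(Y\pt 1)$ along $\heps_{T(X\pt 1)}\times\heps_{T(Y\pt 1)}$, and rewrites $(\star_a)$ purely in terms of the morphisms $T(l_{a\pt 1})$, $T(r_{a\pt 1})$ and the components of $\heps$. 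For the implication \ref{enum:nat.s.conj} $\Rightarrow$ \ref{enum:nat.cart} I would then read off the naturality squares of $\heps_{T(-\pt 1)}$ by specialising $a$ to companions and units of morphisms of $\VMat$, for which $M_{a\pt 1}$ and the legs $l_{a\pt 1},r_{a\pt 1}$ take a particularly simple form, so that the relevant instances of $(\star_a)$ collapse to exactly the cartesian-naturality squares.

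For the converse \ref{enum:nat.cart} $\Rightarrow$ \ref{enum:nat.s.conj}, fix an arbitrary $\cat V$-matrix $a\colon X\relto Y$ and write $M_{a\pt 1}=\sum_{x,y}a(x,y)$. Using Lemma~\ref{lem:famc.comma}, the legs $l_{a\pt 1}$ and $r_{a\pt 1}$ exhibit $M_{a\pt 1}$ as a pullback-indexed coproduct; since $T$ is cartesian it preserves the extensivity pullbacks involved, and Theorem~\ref{thm:pb.index.coprod} then lets me reassemble $(\star_a)$ out of base-changes of the naturality squares of $\heps_{T(-\pt 1)}$, together with the analogous pullbacks coming from the cartesianness of the unit $e$ and multiplication $m$ of $T$, all of which are pullbacks by hypothesis. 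The hard part will be precisely this reassembly: because $T$ does not preserve coproducts one cannot directly see $(\star_a)$ as a coproduct of simpler squares over the index set $X\times Y$, and bridging this gap is exactly where one must exploit cartesianness of $T$ --- and of $e$ and $m$ --- to cut the fibres small enough that the extensivity argument of Theorem~\ref{thm:pb.index.coprod} applies and the cartesian-naturality hypothesis on $\heps_{T(-\pt 1)}$ can be brought to bear.
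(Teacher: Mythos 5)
Your reduction of the statement via Theorem~\ref{thm:conjoint.closed} and Lemma~\ref{lem:nat.s.conj.eqv}, and your proof of \ref{enum:nat.s.conj} \( \Rightarrow \) \ref{enum:nat.cart} by specializing \( a \) to the companion \( f_! \) of a function \( f \) so that the square \( (\star_a) \) collapses to the naturality square of \( \heps_{T(-\pt 1)} \) at \( f \), coincide with the paper's argument.

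The converse is where you have a genuine gap. You propose to reassemble \( (\star_a) \) for a general \( \cat V \)-matrix \( a \colon s \relto t \) as a pullback-indexed coproduct of base-changes of naturality squares over the index set \( s \times t \), and you yourself identify the obstruction: \( T \) does not preserve coproducts, so \( T\bigl(\sum_{x,y} a(x,y)\bigr) \) does not decompose over \( s \times t \), and the appeal to ``cartesianness of \( e \) and \( m \)'' to ``cut the fibres small enough'' is a placeholder, not an argument. The paper's proof of \ref{enum:nat.cart} \( \Rightarrow \) \ref{enum:nat.s.conj} avoids any such decomposition. Given a cone \( (\omega, r_v) \) over the cospan of \eqref{eq:nat.s.conj.prop}, it first manufactures the missing left leg \( l_v \colon v \to \Tt s \pt 1 \) from a \emph{single} instance of the hypothesis, namely that the naturality square of \( \heps_{T(-\pt 1)} \) at the unique function \( s \to 1 \) is a pullback (Diagram~\eqref{eq:big.diag}); it then observes that the square \eqref{eq:defn.counit} exhibiting \( \heps_{T(a \pt 1)} \) over \( T(s\pt 1) \times T(t\pt 1) \) is itself a pullback --- by Theorem~\ref{thm:pb.index.coprod} applied to the \emph{definition} of the counit on an arbitrary span, not to a fibrewise decomposition of \( a \) --- so that \( (l_v, r_v) \) and \( \omega \) factor uniquely through the top-left object of \eqref{eq:nat.s.conj.prop}. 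Neither step uses the monad structure of \( T \), and no cartesianness of \( e \) or \( m \) enters. To complete your version you would need to actually exhibit \( (\star_a) \) as a coproduct of pullback squares after applying \( T \), which is precisely what fails; the two-step factorization above is the way around it.
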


\begin{proof}
  Instanciating \eqref{eq:pb.diag.2-cell} with \( \omega = \heps_{T(- \pt
  \trm)} \), we find \ref{enum:nat.s.conj} holds if and only if
  \begin{equation}
    \label{eq:nat.s.conj.prop}
    \begin{tikzcd}
      \displaystyle\sum_{\mathfrak x \in \Tt s} 
      \displaystyle\sum_{\mathfrak y \in \Tt t}
        (\Tt a)(\mathfrak x, \mathfrak y)
          \ar[d,"\heps_{T(a\pt \trm)}",swap] 
          \ar[r]
          \ar[rd,"\ulcorner",phantom,very near start]
        & \Tt t \pt \trm \ar[d,"\heps_{T(t\pt \trm)}"] \\
      T \Big( \displaystyle\sum_{x\in s}
              \displaystyle\sum_{y\in t} a(x,y) \Big)
          \ar[r,"r_{T(a\pt \trm)}",swap] 
        & T(t \pt \trm)
    \end{tikzcd}
  \end{equation}
  is a pullback diagram for all \( \cat V \)-matrices \(a\).

  If \eqref{eq:nat.s.conj.prop} is a pullback diagram for all such horizontal
  1-cells, then when we have \( a = f_! \) for a function \( f \colon s \to t
  \), we obtain a pullback diagram
  \begin{equation*}
    \begin{tikzcd}
      \Tt s \pt \trm \ar[r,"\Tt f \pt \trm"]
                           \ar[d,"\heps_{T(s\pt \trm)}",swap]
        \ar[rd,"\ulcorner",phantom,very near start]
        & \Tt t \pt \trm \ar[d,"\heps_{T(t\pt \trm)}"] \\
      T(s \pt \trm) \ar[r,"T(f\pt \trm)",swap] 
        & T(t \pt \trm)
    \end{tikzcd}
  \end{equation*}
  thereby verifying \ref{enum:nat.s.conj} \( \to \) \ref{enum:nat.cart}.

  Now, we assume \ref{enum:nat.cart}. If the outer square of Diagram
  \eqref{eq:big.diag} below commutes:
  \begin{equation}
    \label{eq:big.diag}
    \begin{tikzcd}
      v \ar[ddd,"\omega",swap] 
        \ar[rrr,"r_v"] 
        \ar[rd,dashed,"l_v" description]
      &&& \Tt t \pt \trm 
        \ar[dl]
        \ar[ddd,"\heps_{T(t\pt \trm)}"] \\
      & \Tt s \pt \trm \ar[r] \ar[d,"\heps_{T(s\pt \trm)}",swap]
        \ar[rd,"\ulcorner",phantom,very near start]
      & \Tt \trm \pt \trm \ar[d] \\
      & T(s \pt \trm) \ar[r] 
      & T\trm \\
      T\Big( \displaystyle\sum_{x\in s}
             \displaystyle\sum_{y\in t} a(x,y) \Big)
        \ar[rrr,"r_{T(a\pt \trm)}",swap]
        \ar[ur,"l_{T(a\pt \trm)}" description]
      &&& T(t\pt \trm) \ar[ul]
    \end{tikzcd}
  \end{equation}
  then an immediate calculation shows the diagram commutes. Since the square
  in the middle is a pullback by assumption, there exists a unique morphism \(
  l_v \colon v \to \Tt s \pt \trm \) such that the left and top squares
  commute. 

  Thus, we conclude that the following diagram
  \begin{equation*}
    \begin{tikzcd}[column sep=large]
      v \ar[rr,"{\lb l_v,r_v \rb}"] \ar[d,"\omega",swap]
        && \Tt s\pt \trm \times \Tt t\pt \trm 
          \ar[d,"\heps_{T(s\pt \trm)} \times \heps_{T(t\pt \trm)}"] \\
      T\Big( 
        \displaystyle\sum_{x\in s} \displaystyle\sum_{y\in t} a(x,y) \Big)
        \ar[rr,"{\big\lb l_{T(a\pt \trm)},r_{T(a\pt \trm)}\big\rb}",swap] 
        && T(s\pt \trm) \times T(t\pt \trm) 
    \end{tikzcd}
  \end{equation*}
  commutes. We observe that Diagram~\eqref{eq:defn.counit} is a pullback
  square when \( \cat V \) is extensive, by Theorem~\ref{thm:pb.index.coprod},
  so there exists a unique morphism
  \begin{equation*}
    \begin{tikzcd}
      \omega^\sharp \colon v \ar[r]
      & \displaystyle\sum_{\mathfrak x \in \Tt s}
        \displaystyle\sum_{\mathfrak y \in \Tt t}
          (\Tt a)(\mathfrak x, \mathfrak y)
    \end{tikzcd}
  \end{equation*}
  such that \( \heps_{T(a\pt \trm)} \circ \omega^\sharp = \omega \), \( r_{\Tt
  a \pt \trm} \circ \omega^\sharp = r_v \) and \( l_{\Tt a \pt \trm} \circ
  \omega^\sharp = l_v \), which, in particular, confirms that
  Diagram~\eqref{eq:nat.s.conj.prop} is a pullback square.
\end{proof}

\subsection*{Fibrewise discrete monads}

The search for a more concrete notion of what it means for \( \heps_{T(-\pt
\trm)} \) to have a strong conjoint led us to the notion of fibrewise
discreteness. 

Let \( \cat V \) be a lextensive category, and let \( f \colon x \to y  \) be
a morphism in \( \cat V \). We say \(f\) is \textit{fibrewise discrete} if for
every pullback diagram 
\begin{equation*}
  \begin{tikzcd}  
    f^*p \ar[r] \ar[d] \ar[rd,"\ulcorner",phantom,very near start]
    & x \ar[d,"f"] \\
    \trm \ar[r,"p",swap] & y
  \end{tikzcd}
\end{equation*}
the object \( f^*p \) is \textit{discrete}; that is, \( \heps_{f^*p} \) is an
isomorphism. For instance, in \( \cat V = \Top \), local homeomorphisms are
fibrewise discrete.

We say an endofunctor \( F \) on \( \cat V \) is \textit{fibrewise discrete}
if for all sets \(X\), the morphism \( F! \colon F(X \pt \trm) \to F\trm \) is
fibrewise discrete. 

\begin{lemma}
  \label{lem:fib.disc.iff.cart.nat}
  Let \( F \) be an endofunctor on a lextensive category \( \cat V \). If \(
  \heps_{F(- \pt \trm)} \) is cartesian, then \(F\) is fibrewise discrete. The
  converse holds when the \( \trm \) is connected.
\end{lemma}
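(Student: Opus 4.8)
The statement relates two conditions on an endofunctor $F$ on a lextensive category $\cat V$: that $\heps_{F(-\pt 1)}$ is a cartesian natural transformation, and that $F$ is fibrewise discrete. The plan is to unwind both conditions into statements about pullback squares built from the counit $\heps$, and then compare them using the machinery already developed --- chiefly Lemma~\ref{lem:famc.comma} (Artin glueing description of $\FamC$) and Theorem~\ref{thm:pb.index.coprod} (coproducts of pullback-indexed families of pullbacks are pullbacks), together with the fact that $\heps_{X\pt 1}$ is always an isomorphism (which is how $-\pt 1$ interacts with the terminal object: $\cat V(1, X\pt 1)\iso X$).

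\textbf{Key steps.} First I would record that $\heps_{F(-\pt 1)}$ cartesian means: for every function $g\colon X \to Y$, the naturality square of $\heps$ at $g\pt 1$, namely
\begin{equation*}
  \begin{tikzcd}
    F(X\pt 1)\pt 1 \ar[r] \ar[d,"\heps_{F(X\pt 1)}",swap]
      & F(Y\pt 1)\pt 1 \ar[d,"\heps_{F(Y\pt 1)}"] \\
    F(X\pt 1) \ar[r] & F(Y\pt 1)
  \end{tikzcd}
\end{equation*}
is a pullback (here I am using that $\heps$ is being regarded componentwise; the relevant instances are exactly those indexed by maps of the form $g\pt 1$). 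Second, I would fix a set $Y$ and a point $p\colon 1 \to F(Y\pt 1)$; applying the terminal-object case $Y=1$ (so $Y\pt 1$ is terminal, $F(Y\pt 1)=F1$) and the universality of coproducts, together with the comma-category description of $\FamC$, one can express $F(Y\pt 1)$ as $\sum_{y\in Y} (F!)^*(\text{stuff})$ --- more precisely, pulling back $F!\colon F(Y\pt 1) \to F1$ along each point $y\colon 1\to Y\pt 1$ produces a $Y$-indexed family whose coproduct recovers $F(Y\pt 1)$ by extensivity (Lemma~\ref{lem:famc.comma}). Third, the fibre $(F!)^*p$ over a point $p\colon 1\to F1$ is, by definition, discrete precisely when $\heps_{(F!)^*p}$ is an iso; so ``$F$ fibrewise discrete'' says every such fibre is discrete. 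The task is then to see that the pullback condition on the naturality squares above is equivalent to discreteness of these fibres.

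\textbf{The two directions.} For ``$\heps_{F(-\pt 1)}$ cartesian $\Rightarrow$ $F$ fibrewise discrete'': given a point $p\colon 1 \to F1$, I would realize $1 = (\{*\})\pt 1$ and $F1 = F(\{*\}\pt 1)$, view $p$ as arising by functoriality, and use the cartesianness of $\heps_{F(-\pt 1)}$ (at the relevant map) to identify $(F!)^*p$ with a pullback of $\heps_{F(\{*\}\pt 1)}$; since the latter is an isomorphism (as $F(\{*\}\pt 1)$ is of the form $X\pt 1$ only when... actually here one uses $\heps_{F1}$ and the general identity), the pullback square forces $\heps_{(F!)^*p}$ to be an isomorphism. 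This direction does \emph{not} need connectedness. For the converse, assuming $F$ fibrewise discrete and $\cat V$ connected: connectedness (via Lemma~\ref{lem:conn.iff.fff}, i.e. $-\pt 1\colon \Set \to \cat V$ fully faithful) lets me decompose an arbitrary point of $F(X\pt 1)$ into a point of $X$ together with a point of the corresponding fibre of $F!$, and then reassemble the naturality square of $\heps$ at $g\pt 1$ as a coproduct over $X$ of fibrewise pullback squares --- each of which is a pullback because the fibres are discrete (so $\heps$ restricted to them is an iso) --- and conclude via Theorem~\ref{thm:pb.index.coprod} that the whole square is a pullback.

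\textbf{Main obstacle.} The delicate point is the converse direction: making precise the claim that a point of $F(X\pt 1)$ is the same data as a point of $X$ plus a point of a fibre, and that the naturality square of $\heps_{F(-\pt 1)}$ then decomposes as a pullback-indexed coproduct of fibrewise squares in a way that exactly matches the hypotheses of Theorem~\ref{thm:pb.index.coprod}. This bookkeeping --- identifying the indexing pullback of sets and the fibrewise pullbacks, and checking the fibrewise squares are pullbacks using \emph{discreteness of the fibres} (this is where both fibrewise discreteness and the fact that $\heps$ is an iso on discrete objects get used) --- is where connectedness is genuinely needed and where I expect the only real work to lie; the forward direction should be a short diagram chase.
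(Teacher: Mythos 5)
Your overall strategy --- compare the two conditions through pullback squares built from \( \heps \), using extensivity (Lemma~\ref{lem:famc.comma}) and Theorem~\ref{thm:pb.index.coprod} --- is in the right spirit, but there is a concrete gap in how you index the fibres, and it propagates through both directions. Fibrewise discreteness of \(F\) concerns the fibres of \( F! \colon F(X \pt 1) \to F1 \) over points \( p \colon 1 \to F1 \), so the relevant indexing set is \( \Ft 1 = \cat V(1,F1) \), not \(X\) (or \(Y\)). There is no decomposition of \( F(Y \pt 1) \) as a \(Y\)-indexed coproduct of fibres of \(F!\) (one cannot pull \(F!\) back along a point of \( Y \pt 1 \), which is not its codomain), and no ``coproduct over \(X\) of fibrewise squares''. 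What does decompose, by Lemma~\ref{lem:famc.comma} applied to the induced map to \( \Ft 1 \pt 1 \), is the pullback \( \tau_X = \heps_{F1}^*(F!) \), as \( \sum_{p \in \Ft 1} \tau_{X,p} \). The paper's proof runs entirely through this one object: it forms the comparison \( \theta_X \colon \Ft X \pt 1 \to \tau_X \), observes that cartesianness of \( \heps_{F(-\pt 1)} \) is equivalent to all \( \theta_X \) being invertible (it suffices to check the naturality squares at \( ! \colon X \to 1 \), since these paste to give the rest --- note also that the components of \( \heps_{F(-\pt 1)} \) have domain \( \cat V(1,F(X\pt 1)) \pt 1 \), not \( F(X \pt 1) \pt 1 \) as in your displayed square), observes that fibrewise discreteness is equivalent to all \( \tau_X \) being discrete, and links the two via the identity \( \theta_X \circ (\cat V(1,\omega_X) \pt 1) = \heps_{\tau_X} \), where \( \cat V(1,\omega_X) \) is a bijection because \( \cat V(1,-) \) preserves the defining pullback and \( \cat V(1,\heps_{F1}) \) is inverse to \( \heta \) by connectedness.

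Two steps of your forward direction would fail as written. First, \( \heps_{F1} \) is not an isomorphism in general (that \(F1\) ``is of the form \( X \pt 1 \)'' is exactly what is not available), and you notice this mid-sentence without resolving it. Second, the claim that ``the pullback square forces \( \heps_{(F!)^*p} \) to be an isomorphism'' presupposes that pulling back a component of \( \heps \) yields the corresponding component of \( \heps \) on the pullback --- i.e.\ that \( \heps \) is cartesian at the map in question --- which is precisely the kind of statement under proof and cannot be assumed. The correct mechanism is that every point \( p \colon 1 \to F1 \) factors through \( \heps_{F1} \) via the coprojection \( 1 \to \Ft 1 \pt 1 \) indexed by \( p \in \Ft 1 \), so \( (F!)^*p \iso \tau_{X,p} \) unconditionally; when \( \theta_X \) is invertible this summand is identified with a set-copower, and discreteness of set-copowers is itself a nontrivial point (it is where \( \heta \) being invertible, i.e.\ connectedness, enters). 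So be careful about which implication you claim holds without connectedness: the clean statement that survives without it is the chain ``cartesian \( \iff \) all \( \theta_X \) invertible'' and ``fibrewise discrete \( \iff \) all \( \tau_X \) discrete''; connecting the two ends is where \( \cat V(1,\omega_X) \) must be inverted.
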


\begin{proof}
  Let \( \Ft = \cat V(\trm,F(-\pt \trm)) \). We consider
  Diagram~\eqref{eq:pb.factored}
  \begin{equation}
    \label{eq:pb.factored}
    \begin{tikzcd}
      \Ft X \pt \trm \ar[rrd,bend left=15,"\heps_{F(X\pt \trm)}"]
        \ar[ddr,bend right=25,"\Ft ! \pt \trm",swap] 
        \ar[rd,"\theta_X",dashed] \\
      & \tau_X \ar[d] \ar[r,"\omega_X"] 
               \ar[rd,"\ulcorner",phantom,very near start]
      & F(X \pt \trm) \ar[d,"F!"] \\
      & \Ft \trm \pt \trm \ar[r,"\heps_{F\trm}",swap] & F\trm
    \end{tikzcd}
  \end{equation}
  where the square in the lower right corner is a pullback. The outer square
  commutes by naturality, so there exists a unique \( \theta_X \), depicted by
  a dashed arrow, making both incident triangles commute. Note that \(
  \heps_{F(-\pt \trm)} \) is cartesian if and only if \( \theta_X \) is an
  isomorphism for all \(X\).

  Now, consider the image of \eqref{eq:pb.factored} via \( \cat V(\trm,-) \),
  which preserves pullbacks. Note that since \( \trm \) is connected, \( \cat
  V(\trm,\heps_{F\trm}) = \eta_{\cat V(\trm,F\trm)}^{-1} \) is an isomorphism,
  so we conclude that \( \cat V(\trm,\omega_x) \) is an isomorphism as well. 

  Hence, we consider the following naturality square
  \begin{equation*}
    \begin{tikzcd}[column sep=large]
      \cat V(\trm,\tau_X) \pt \trm 
        \ar[d,"\heps_{\tau_X}",swap] 
        \ar[r,"{\cat V(\trm,\omega_X) \pt \trm}"]
      & \Ft X \pt \trm \ar[d,"\heps_{F(X\pt \trm)}"] \\
      \tau_X \ar[r,"\omega_X",swap] & F(X\pt \trm)
    \end{tikzcd}
  \end{equation*}
  and we observe that \( \theta_X \circ \cat V(\trm,\omega_X) \pt \trm =
  \heps_{\tau_X} \) holds, by the universal property. Thus, \( \theta_X \) is
  invertible if and only if \( \heps_{\tau_x} \) is invertible; that is, if
  and only if \( \tau_x \) is discrete.
\end{proof} 

\begin{remark}
  \label{rem:tiny.tau}
  In Diagram \eqref{eq:pb.factored}, we have a morphism \( \tau_x \to \Ft \trm
  \pt \trm \), which corresponds to a family \( (\tau_{x,p})_{p \in \Ft \trm}
  \), by extensivity (see Theorem \ref{lem:famc.comma}); these are given via
  pullback 
  \begin{equation}
    \label{eq:pb.test}
    \begin{tikzcd}
      \tau_{x,p} \ar[r] \ar[d] 
        \ar[rd,"\ulcorner",phantom,very near start]
      & F(x\pt \trm) \ar[d,"F!"] \\
      \trm \ar[r,"p",swap] & F\trm
    \end{tikzcd}
  \end{equation}
  and we also have \( \sum_{p \in \Ft \trm} \tau_{x,p} \iso \tau_x \). Thus,
  \( \tau_x \) is discrete if and only if \( \tau_{x,p} \) is discrete for all
  \(p \in \Ft \trm \). 
\end{remark}

With this, we obtain the following characterization:

\begin{theorem}
  \label{thm:heps.strong.conj}
  If \( \trm \) is connected, the following are equivalent for an endofunctor
  \( F \colon \cat V \to \cat V \):
  \begin{enumerate}[label=(\roman*)]
    \item
      \label{enum:eps.strong.conj}
      \( \heps_{F(- \pt \trm)} \) has a strong conjoint.
    \item
      \label{enum:eps.cart.nat}
      \( \heps_{F(- \pt \trm)} \) is a cartesian natural transformation.
    \item
      \label{enum:t.fib.disc}
      \( F \) is fibrewise discrete.
    \item
      \label{enum:tiny.tau.disc}
      \( \tau_{x,p} \), as given in \eqref{eq:pb.test}, is discrete for all
      \(x\) and all \( p \in \Ft \trm \).
  \end{enumerate}
\end{theorem}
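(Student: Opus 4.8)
The plan is to chain together the three lemmas just proved, since each supplies one of the required implications. Concretely, Lemma~\ref{lem:strong.conj.iff.cart} gives the equivalence \ref{enum:eps.strong.conj} $\Leftrightarrow$ \ref{enum:eps.cart.nat} — although that lemma is stated for the specific cartesian monad $T$, its proof only uses the endofunctor structure via Lemma~\ref{lem:nat.s.conj.eqv} and the fact that $\cat V$ is extensive, so the same argument applies verbatim to an arbitrary endofunctor $F$ (one should remark this, or simply restate Lemma~\ref{lem:strong.conj.iff.cart} at the level of generality of endofunctors). Next, Lemma~\ref{lem:fib.disc.iff.cart.nat} gives \ref{enum:eps.cart.nat} $\Rightarrow$ \ref{enum:t.fib.disc} unconditionally, and the converse \ref{enum:t.fib.disc} $\Rightarrow$ \ref{enum:eps.cart.nat} precisely under the connectedness hypothesis on $\cat V$, which is in force. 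Finally, Remark~\ref{rem:tiny.tau} identifies \ref{enum:t.fib.disc} with \ref{enum:tiny.tau.disc}: by extensivity (Lemma~\ref{lem:famc.comma}) one has $\tau_x \iso \sum_{p \in \Ft 1}\tau_{x,p}$, and since a coproduct $\sum_i z_i$ is discrete iff each $z_i$ is discrete — because $\heps_{\sum_i z_i}$ decomposes, by Theorem~\ref{thm:pb.index.coprod}, as the coproduct of the $\heps_{z_i}$, and a coproduct of morphisms in $\cat V$ is invertible iff each summand is — we get $\tau_x$ discrete $\Leftrightarrow$ each $\tau_{x,p}$ discrete. Running over all sets $X$ and recalling that fibrewise discreteness of $F$ means exactly that every pullback of $F!\colon F(X\pt 1)\to F1$ along a point $p\colon 1\to F1$ is discrete, and that $\tau_{x,p}$ in \eqref{eq:pb.test} is by construction such a pullback, this is exactly the equivalence \ref{enum:t.fib.disc} $\Leftrightarrow$ \ref{enum:tiny.tau.disc}.

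So the proof is essentially a bookkeeping exercise assembling \ref{enum:eps.strong.conj} $\Leftrightarrow$ \ref{enum:eps.cart.nat} (Lemma~\ref{lem:strong.conj.iff.cart}, read for general $F$), \ref{enum:eps.cart.nat} $\Leftrightarrow$ \ref{enum:t.fib.disc} (Lemma~\ref{lem:fib.disc.iff.cart.nat}, using connectedness for one direction), and \ref{enum:t.fib.disc} $\Leftrightarrow$ \ref{enum:tiny.tau.disc} (Remark~\ref{rem:tiny.tau} together with the coproduct-decomposition of $\heps$). I would write it as a short four-line proof: ``\ref{enum:eps.strong.conj} $\Leftrightarrow$ \ref{enum:eps.cart.nat} by Lemma~\ref{lem:strong.conj.iff.cart}; \ref{enum:eps.cart.nat} $\Leftrightarrow$ \ref{enum:t.fib.disc} by Lemma~\ref{lem:fib.disc.iff.cart.nat} and connectedness; \ref{enum:t.fib.disc} $\Leftrightarrow$ \ref{enum:tiny.tau.disc} by Remark~\ref{rem:tiny.tau}, Lemma~\ref{lem:famc.comma} and Theorem~\ref{thm:pb.index.coprod}.''

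The one genuine subtlety — and the step I expect to need the most care — is justifying that Lemma~\ref{lem:strong.conj.iff.cart}, as literally stated for the cartesian monad $T$, really does apply to an arbitrary endofunctor $F$. Looking at its proof, nowhere is the monad structure of $T$ used: the argument is entirely about the vertical transformation $\heps_{T(-\pt 1)}$ between the lax functors induced by $-\pt 1$ and $\cat V(1,-)\circ T(-\pt1)$, invokes Lemma~\ref{lem:nat.s.conj.eqv} (which is about a generic vertical transformation of lax functors into $\SpanV$), and uses only that $\cat V$ is extensive (to know that \eqref{eq:defn.counit} is a pullback, via Theorem~\ref{thm:pb.index.coprod}). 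Hence the cleanest write-up either (a) states a one-line lemma ``Lemma~\ref{lem:strong.conj.iff.cart} holds with $T$ replaced by any endofunctor $F$ on an extensive $\cat V$,'' whose proof is ``same as before,'' or (b) inlines the observation into the proof of the theorem. I would go with option (a) for clarity, placing it immediately before the theorem. The remaining ingredient, that discreteness of a coproduct is equivalent to discreteness of each summand, is immediate from Theorem~\ref{thm:pb.index.coprod} applied to $\heps$ componentwise plus the elementary fact that a coproduct injection-wise morphism in an extensive category is iso iff each component is — but it is worth spelling out in one sentence since it is what makes the passage from $\tau_x$ to the individual $\tau_{x,p}$ legitimate.

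\begin{proof}
  We first record that Lemma~\ref{lem:strong.conj.iff.cart} remains valid with
  the cartesian monad $T$ replaced by an arbitrary endofunctor $F$ on a
  lextensive category $\cat V$: indeed, its proof uses nothing about the monad
  structure, relying only on Lemma~\ref{lem:nat.s.conj.eqv} applied to the
  vertical transformation $\heps_{F(-\pt 1)}$, together with the fact that
  Diagram~\eqref{eq:defn.counit} is a pullback square whenever $\cat V$ is
  extensive (Theorem~\ref{thm:pb.index.coprod}). This yields the equivalence
  \ref{enum:eps.strong.conj} $\Leftrightarrow$ \ref{enum:eps.cart.nat}.

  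The equivalence \ref{enum:eps.cart.nat} $\Leftrightarrow$
  \ref{enum:t.fib.disc} is Lemma~\ref{lem:fib.disc.iff.cart.nat}; the
  implication $\ref{enum:eps.cart.nat}\to\ref{enum:t.fib.disc}$ holds in
  general, and the converse uses that $\cat V$ is connected.

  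It remains to prove \ref{enum:t.fib.disc} $\Leftrightarrow$
  \ref{enum:tiny.tau.disc}. Fix a set $X$ and write $x = X$. By
  Remark~\ref{rem:tiny.tau} and Lemma~\ref{lem:famc.comma} we have
  $\tau_x \iso \sum_{p \in \Ft 1}\tau_{x,p}$, where each $\tau_{x,p}$ is the
  pullback \eqref{eq:pb.test}, which is precisely a pullback of
  $F! \colon F(X\pt 1) \to F1$ along a point $p \colon 1 \to F1$. Moreover,
  applying Theorem~\ref{thm:pb.index.coprod} to the counit $\heps$, the
  morphism $\heps_{\tau_x}$ is the coproduct of the morphisms
  $\heps_{\tau_{x,p}}$, and a coproduct of morphisms in an extensive category
  is invertible if and only if each summand is. Hence $\tau_x$ is discrete if
  and only if $\tau_{x,p}$ is discrete for every $p \in \Ft 1$. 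By definition,
  $F$ is fibrewise discrete precisely when, for every set $X$, every pullback
  of $F! \colon F(X\pt 1) \to F1$ along a point is discrete; combining this
  with the previous sentence and the observation (in the proof of
  Lemma~\ref{lem:fib.disc.iff.cart.nat}) that, when $\cat V$ is connected, the
  family $(\tau_{x,p})_p$ exhausts such pullbacks up to isomorphism, we obtain
  \ref{enum:t.fib.disc} $\Leftrightarrow$ \ref{enum:tiny.tau.disc}.
\end{proof}
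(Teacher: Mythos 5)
Your proof is correct and follows essentially the same route as the paper, which likewise assembles the theorem from Lemma~\ref{lem:strong.conj.iff.cart}, Lemma~\ref{lem:fib.disc.iff.cart.nat}, and Remark~\ref{rem:tiny.tau}. The only difference is that you explicitly justify reading Lemma~\ref{lem:strong.conj.iff.cart} for an arbitrary endofunctor $F$ rather than the cartesian monad $T$ for which it is stated --- a point the paper silently glosses over, and which you handle correctly since that lemma's proof indeed uses only Lemma~\ref{lem:nat.s.conj.eqv} and extensivity.
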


\begin{proof}
  The equivalence \ref{enum:eps.strong.conj} \( \iff \)
  \ref{enum:eps.cart.nat} is given by Lemma \ref{lem:strong.conj.iff.cart},
  we have \ref{enum:eps.cart.nat} \( \iff \) \ref{enum:t.fib.disc} by 
  Lemma \ref{lem:fib.disc.iff.cart.nat}, and Remark \ref{rem:tiny.tau}
  confirms \ref{enum:t.fib.disc} \( \iff \) \ref{enum:tiny.tau.disc}.
\end{proof}

Naturally, we are most concerned with cartesian monads \( T \) such that \( T
\) is fibrewise discrete, and, armed with Theorem \ref{thm:heps.strong.conj},
we can promptly verify that many familiar examples of cartesian monads are
fibrewise discrete. We begin with the following:

\begin{lemma}
  \label{lem:word.monad.fib.disc}
  Let \( \cat V \) be a distributive monoidal category such that \( \trm \) is
  connected. The free \( \otimes \)-monoid monad on \( \cat V \) is fibrewise
  discrete.
\end{lemma}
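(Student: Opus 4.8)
The plan is to apply Theorem~\ref{thm:heps.strong.conj}, so it suffices to
show that the free \( \otimes \)-monoid monad \( (-)^* \) is fibrewise
discrete; that is, that \( (-)^*! \colon (X \pt 1)^* \to 1^* \) is fibrewise
discrete for every set \(X\). Recall from the proof of Lemma
\ref{lem:free.monoid.cartesian} that \( Y^* = \sum_{n \in \N} Y^n \), where
\( Y^0 = I \) and \( Y^{n+1} = Y^n \otimes Y \), and that \( \otimes \)
preserves coproducts and pullbacks. In particular, \( 1^* = \sum_{n \in \N}
1^n \), and since \( 1 \) is terminal, each \( 1^n \) is a fixed object built
by tensoring \(I\) with copies of \(1\); I would first observe that \(
(X \pt 1)^n \iso X^n \pt 1^n \), i.e.\ \( (X \pt 1)^n \) is the copower of \(
1^n \) by the set \( X^n \) of length-\(n\) words, since \( \otimes \)
preserves coproducts and copowers with the terminal object are iterated
coproducts. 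Under this identification the component \( (X \pt 1)^n \to 1^n \)
of \( (-)^*! \) is precisely \( \sigma \colon X^n \pt 1^n \to 1^n \), the
coproduct over \( X^n \) of the maps \( 1^n \to 1^n \) (identities), hence is
the projection \( X^n \pt 1^n \to 1^n \) exhibiting \( (X\pt1)^n \) as the
\(X^n\)-indexed coproduct of \(1^n\).

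Next I would compute, for each morphism \( p \colon 1 \to 1^* \), the fibre \(
((-)^*!)^* p \) of \( (X\pt1)^* \to 1^* \) over \(p\). Since \( 1^* =
\sum_{n \in \N} 1^n \), extensivity (Lemma \ref{lem:famc.comma}) tells us \(p\)
factors through exactly one summand \( 1^n \), say as \( p_n \colon 1 \to 1^n
\), and correspondingly the pullback of \( (X\pt1)^* \to 1^* \) along \(p\) is
the pullback of the single map \( (X\pt1)^n \iso X^n \pt 1^n \to 1^n \) along
\( p_n \). By the description above this is the pullback of an \(X^n\)-indexed
coproduct of identities, so by Theorem~\ref{thm:pb.index.coprod} (with index
square the identity on \(X^n\) over the point, and fibre squares the pullbacks
of \( 1 \xrightarrow{p_n} 1^n \) against itself) the fibre is \( X^n \pt
\big( (p_n)^* p_n \big) \), where \( (p_n)^* p_n \) is the pullback of \(p_n\)
against itself. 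But \(1\) is terminal, so \( (p_n)^* p_n \iso 1 \), and hence
the fibre is \( X^n \pt 1 \), which is manifestly discrete (it lies in the
essential image of \( - \pt 1 \), and \( \heps_{X^n \pt 1} \) is invertible
because \( \cat V \) is connected, by Lemma~\ref{lem:conn.iff.fff}). This
establishes fibrewise discreteness of \( (-)^* \), and the result follows.

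The routine parts are the coproduct/copower bookkeeping and the repeated
invocation of preservation of pullbacks by \( \otimes \) and by coproducts;
these are of the same flavour as the calculations already carried out in
Lemma~\ref{lem:free.monoid.cartesian}. The one place that requires a little
care — and the main obstacle — is making the identification \( (X \pt 1)^n
\iso X^n \pt 1^n \) compatible with the structure maps to \( 1^n \), so that
the fibre over \(p\) really is an honest copower \( X^n \pt 1 \) rather than
merely something equivalent to one; this is where one must be precise about how
\( \otimes \) distributes over coproducts and check the coherence isomorphisms
match the ones used in Theorem~\ref{thm:pb.index.coprod}. Once this
identification is in place, discreteness of the fibre is immediate, since
\( X^n \pt 1 \) is by construction in the image of \( - \pt 1 \) and
connectedness of \( \cat V \) forces \( \heps_{X^n \pt 1} \) to be an
isomorphism.
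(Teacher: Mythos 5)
Your proposal is correct and follows essentially the same route as the paper's proof: reduce a point of \(1^*\) to a point of a single word-length summand \(1^n\), and identify the fibre of \((X\pt 1)^n \to 1^n\) over it as the copower \(X^n \pt 1\), which is discrete by connectedness. (Two minor remarks: the factorization of \(p\) through exactly one summand is really connectedness, i.e.\ Lemma~\ref{lem:conn.iff.fff}, rather than bare extensivity — a hypothesis you have and use anyway — and your computation of the fibre via the self-pullback of \(p_n\) over \(1^n\) is, if anything, slightly more careful than the paper's direct identification \((X\pt 1)^n \iso X^n \pt 1\) for \(n>0\).)
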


\begin{proof}
  Let \(X\) be a set, and let \( p \colon \trm \to (X \pt \trm)^* \) be a
  morphism.  Since \( \trm \) is connected, we may apply Lemma
  \ref{lem:conn.iff.fff} to confirm \(p\) factors uniquely through \( q \colon
  \trm \to (X \pt \trm)^n \) for some \( n \in \N \). Now, note that \( (X\pt
  \trm)^n \iso X^n \pt \trm \) if \( n > 0 \), \( (X\pt \trm)^0 = I \), and
  that we have pullback diagrams
  \begin{equation*}
    \begin{tikzcd}
      \trm \ar[r] \ar[d,equal] 
        \ar[rd,"\ulcorner",phantom,very near start]
      & I \ar[d,equal] \\
      \trm \ar[r] & I
    \end{tikzcd}
    \qquad\text{and, for }n > 0,\qquad
    \begin{tikzcd}
      X^n \pt \trm \ar[d] \ar[r,equal] 
        \ar[rd,"\ulcorner",phantom,very near start]
        & X^n \pt \trm \ar[d] \\
      \trm \ar[r,equal] & \trm
    \end{tikzcd}
  \end{equation*}
  whence \( \tau_{X,p} \iso X^n \pt \trm \) for some \(n \in \N\); this concludes
  the proof, by Theorem \ref{thm:heps.strong.conj}.
\end{proof}

\begin{lemma}
  \label{lem:cart.nat.lifts}
  Let \( S, T \) be endofunctors on \( \cat V \), and let \( \alpha \colon S
  \to T \) be a cartesian natural transformation. If \(T\) is fibrewise
  discrete, then so is \(S\).
\end{lemma}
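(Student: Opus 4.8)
The plan is to reduce fibrewise discreteness of $S$ to that of $T$ by exploiting the cartesian square that witnesses $\alpha$ being cartesian. Fix a set $X$; I want to show $S! \colon S(X \pt 1) \to S1$ is fibrewise discrete, i.e.\ that for every $p \colon 1 \to S1$ the pullback $\tau^S_{X,p}$ along $S!$ is discrete. Since $\alpha$ is cartesian, the naturality square for $\alpha$ at the unique map $X \pt 1 \to 1$ is a pullback:
\begin{equation*}
  \begin{tikzcd}
    S(X \pt 1) \ar[r,"\alpha_{X\pt 1}"] \ar[d,"S!",swap]
      & T(X \pt 1) \ar[d,"T!"] \\
    S1 \ar[r,"\alpha_1",swap] & T1
  \end{tikzcd}
\end{equation*}
The key point is then a standard pasting-of-pullbacks argument. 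Given $p \colon 1 \to S1$, form the pullback of $S!$ along $p$ to get $\tau^S_{X,p}$; pasting this with the cartesian naturality square above exhibits $\tau^S_{X,p}$ as the pullback of $T!$ along $\alpha_1 \circ p \colon 1 \to T1$, which is exactly $\tau^T_{X,\,\alpha_1\circ p}$. Since $T$ is fibrewise discrete, $\tau^T_{X,\,\alpha_1\circ p}$ is discrete, hence so is $\tau^S_{X,p}$.

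More precisely, the steps I would carry out: (1) recall, via Theorem~\ref{thm:heps.strong.conj}, that it suffices to check condition \ref{enum:tiny.tau.disc}, namely that the pullback $\tau^S_{x,p}$ of $S!$ along each $p \colon 1 \to S1$ is discrete; (2) write down the cartesian square for $\alpha$ at $x \pt 1 \to 1$ (cartesianness of $\alpha$ means precisely this square is a pullback); (3) paste the defining pullback of $\tau^S_{x,p}$ along $S!$ on top of that square, and invoke the pullback-pasting lemma to identify the total rectangle as the pullback of $T!$ along $\alpha_1 \circ p$, i.e.\ as $\tau^T_{x,\,\alpha_1 \circ p}$; (4) conclude $\tau^S_{x,p} \iso \tau^T_{x,\,\alpha_1 \circ p}$ is discrete by the fibrewise discreteness of $T$, and therefore $\heps_{\tau^S_{x,p}}$ is an isomorphism; (5) by Theorem~\ref{thm:heps.strong.conj} again, $S$ is fibrewise discrete.

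I do not anticipate a serious obstacle here: the whole argument is a single pasting of pullback squares together with the characterization already established in Theorem~\ref{thm:heps.strong.conj}, and connectedness of $\cat V$ is available as a standing hypothesis (it is what makes that theorem applicable). The only mild subtlety is bookkeeping: making sure the $p$ we feed into $\tau^T$ is $\alpha_1 \circ p$ rather than $p$ itself, and noting that as $p$ ranges over all global points of $S1$ this does not need to hit all global points of $T1$ — that is fine, since fibrewise discreteness of $T$ gives discreteness of $\tau^T_{x,q}$ for \emph{every} $q$, in particular for $q = \alpha_1 \circ p$. So the proof is essentially: instantiate Theorem~\ref{thm:heps.strong.conj}\ref{enum:tiny.tau.disc}, paste pullbacks using the cartesianness of $\alpha$, done.
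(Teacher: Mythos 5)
Your proof is correct and is essentially the paper's own argument: paste the defining pullback of the fibre over $p$ with the cartesian naturality square of $\alpha$ at $x \pt 1 \to 1$ to identify it with the $T$-fibre over $\alpha_1 \circ p$, which is discrete by hypothesis. The only cosmetic difference is that you route the conclusion through Theorem~\ref{thm:heps.strong.conj} (which needs connectedness), whereas discreteness of all fibres $\tau_{x,p}$ is already the \emph{definition} of fibrewise discreteness, so the paper concludes directly without that detour.
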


\begin{proof}
  Consider the following composite of pullbacks:
  \begin{equation*}
    \begin{tikzcd}
      \sigma_{x, p} \ar[r] \ar[d] 
                    \ar[rd,"\ulcorner",phantom,very near start] &
      S(x\pt \trm) \ar[r,"\alpha_{x\pt \trm}"] \ar[d] 
        \ar[rd,"\ulcorner",phantom,very near start]
        & T(x\pt \trm) \ar[d,"T!"] \\
      \trm \ar[r,"p",swap] & S\trm \ar[r,"\alpha_\trm",swap] & T\trm
    \end{tikzcd}
  \end{equation*}
  We have \( \tau_{x,\alpha_\trm \circ p} \iso \sigma_{x,p} \), which is discrete
  for all \(x,p\).
\end{proof}

\subsection{Free monoid monad $\Set \times \Set$:}

We will confirm this monad is not fibrewise discrete. Indeed, we have the
following pullback diagram
\begin{equation*}
  \begin{tikzcd}
    \lb X^m,X^n \rb \ar[d] \ar[r]
      \ar[rd,"\ulcorner",phantom,very near start]
      & \lb X^*, X^* \rb \ar[d] \\
    \lb \trm,\trm \rb \ar[r,"{\lb m,n \rb}",swap] & \lb \N,\N \rb
  \end{tikzcd}
\end{equation*}
for each \(m,n \in \N \) and each set \(X\). However, \( \lb X^m,X^n \rb \) is
not discrete in general, so we cannot obtain a functor \( - \pt \trm \colon
\TtVCat \to \CatTV \) via Theorem \ref{thm:base.change}.

\subsection{Cartesian monads on slice categories:}

If we have a pair \( (T,\cat V) \) where \(T\) is a cartesian monad on a
category \( \cat V \) with finite limits, and \( \intcat C \) is an internal
\(T\)-category, we may construct \cite[Proposition 6.2.1]{Lei04} a cartesian
monad \( T_{\intcat C} \) on \( \cat V \comma \intcat C_0 \), and we obtain an
equivalence \cite[Corollary 6.2.5]{Lei04} of categories
\begin{equation*}
  \Cat(T_{\intcat C}, \cat V \comma \intcat C_0) 
    \eqv \Cat(T,\cat V) \comma \intcat C,
\end{equation*}
which raises the question: can we obtain \eqref{eq:induced.adjunction} for the
pair \( (T_{\intcat C}, \cat V \comma \intcat C_0) \)?

Already when \( T = \id \), \( \cat V = \Set \), we cannot generally guarantee
an affirmative answer. Indeed, let \( \cat C \) be an ordinary small category.
In this case, \( T_{\cat C} \) is the cartesian monad induced by the monadic
adjunction 

\begin{equation*}
  \begin{tikzcd}
    {[\cat C,\Set]}
      \ar[r,bend right,"{\iota^*_{\cat C}}"{name=A,below},swap]
    & {\Set \comma \ob \cat C}
      \ar[l,bend right,"\Lan_{\iota_{\cat C}}"{name=B,above},swap]
    \ar[from=A,to=B,phantom,"\adj" {anchor=center, rotate=-90}]
  \end{tikzcd}
\end{equation*}

However, the terminal object of \( \Set \comma \ob \cat C \iso [(\ob \cat
C)\pt \trm, \Set] \) is connected precisely when \( \ob \cat C \iso \trm \) or
\( \ob \cat C \iso \ini \); in fact, we shall confirm that while it is true
that \( T_{\cat C} \) is fibrewise discrete, \( \heps_{T_{\cat C}(- \pt \trm)}
\) is not a cartesian natural transformation, and thus we cannot obtain
\eqref{eq:induced.adjunction} for general \( \cat C \).

Let \( X = \ob \cat C \). \(T_{\cat C}\) is defined on objects by
\begin{equation*}
  (A_x)_{x \in X} \mapsto 
    \big( \sum_{y \in X} A_y \times \cat C(x,y) \big)_{x \in X},
\end{equation*}
and the terminal object of \( \Set \comma X \) is precisely the constant
family \( \trm = (\trm)_{x \in X} \). In this case, \( T_{\cat C}\trm = (\ob(x
\comma \cat C))_{x \in X} \), while \( \overline{T_{\cat C}}\trm = \prod_{x
\in X} \ob(x \comma \cat C) \).

More generally, for a constant family \( A \pt \trm \iso (A)_{x \in X} \), we
have 
\begin{equation*}
  T_{\cat C}(A \pt \trm) \iso (A \times \ob(x \comma \cat C))_{x \in X},
\end{equation*}
and \( \overline T_{\cat C}(A \pt \trm) = A^X \times \prod_{x \in X} \ob(x
\comma \cat C) \). We have, for each \( x \in X \), a pair of pullback
diagrams
\begin{equation*}
  \begin{tikzcd}
    A \ar[r] \ar[d] \ar[rd,"\ulcorner",phantom,very near start]
      & A \times \ob(x \comma \cat C) \ar[d] \\
    \trm \ar[r] & \ob(x \comma \cat C)
  \end{tikzcd}
  \qquad
  \begin{tikzcd}
    A \times \prod_{x \in X} \ob(x \comma \cat C) \ar[d] \ar[r]
      \ar[rd,"\ulcorner",phantom,very near start]
      & A \times \ob( x \comma \cat C) \ar[d] \\
    \prod_{x \in X} \ob(x \comma \cat C) \ar[r] & \ob(x \comma \cat C)  
  \end{tikzcd}
\end{equation*}
which confirms that \( T_{\cat C} \) is fibrewise discrete, but, since we
cannot guarantee \( A \iso A^X \), we cannot guarantee \( \heps_{T_{\cat C}(-
\pt \trm)} \) to be cartesian as well.

In spite of this, we can obtain the adjunction \eqref{eq:induced.adjunction}
when \( \intcat C_0 \iso \trm \); that is, when \( \intcat C \) is a
\textit{\((T,\cat V)\)-monoid}. We will now treat the case \( T = (-)^* \),
which are denoted \( \cat V \)-operads.

\subsection{\( \cat V \)-operadic monads:}

An important corollary of Lemmas \ref{lem:cart.nat.lifts} and
\ref{lem:word.monad.fib.disc} is that for a cartesian monoidal category \(
\cat V \), \( \cat V \)-operadic monads are fibrewise discrete; note that
these are precisely the cartesian monads on \( \cat V \) with a cartesian
natural transformation to the free \( \times \)-monoid monad.

To be explicit, for a \( \cat V \)-operad \( \mathcal O \) \cite[p. 44]{Lei04}
the monad associated to \( \mathcal O \) is given on objects by
\begin{equation*}
  V \mapsto \sum_{n \in \N} \mathcal O_n \times V^n,
\end{equation*}
and the projections \( \mathcal O_n \times V^n \to V^n \) induce a cartesian
natural transformation to the free \( \times \)-monoid monad.

Thus, any pair \( (T,\cat V) \) where \(T\) an operadic monad over a
lextensive category \( \cat V \) such that \( \trm \) is connected induces an
adjunction \eqref{eq:induced.adjunction}. Of special interest is the case \(T
= (-)^* \) is the free monoid monad on \( \cat V \). In this case, the induced
\( \Set \)-monad \( \Tt \) is precisely the ordinary free monoid monad.

\subsection{Free category monad:}

The free category monad \( \fc \) on \( \Grph \) is fibrewise discrete, since
we have the following pullback of graphs:
\begin{equation*}
  \begin{tikzcd}
    X \pt \trm \ar[rrr] \ar[ddd] \ar[rd,shift right]
                        \ar[rd,shift left]
      &&& (\N \times X) \pt \trm \ar[ld,shift right] 
                              \ar[ld,shift left] \ar[ddd] \\
    & X\pt \trm \ar[r] \ar[d] & X \pt \trm \ar[d] \\
    & \trm \ar[r] & \trm \\
    \trm \ar[rrr] \ar[ur,shift right] \ar[ur,shift left]
    &&& \N \pt \trm \ar[ul,shift right] \ar[ul,shift left]
  \end{tikzcd}
\end{equation*}
so, for the pair \( (\fc,\Grph) \), we also obtain an adjunction
\eqref{eq:induced.adjunction}. We note that \( \overline \fc \) is a lax
extension of the \( \N \times - \) monad on \( \Set \), for the
\textit{multiplicative} structure of \( \N \).

\subsection{Free finite coproduct completion monad:}

For a category \( \cat C \), \( \Fam_\fin(\cat C) \) is the category of
\textit{finite families} of objects of \( \cat C \); it is given on objects by
\( (\ob \cat C)^* \), and a morphism \( \mathfrak x \to \mathfrak y \) is a
pair \( (f,\phi) \), consisting of a function \( f \colon [m] \to [n] \),
where \(m\) and \(n\) are the lengths of \( \mathfrak x \) and \(\mathfrak y
\), respectively, and for each \(i=1,\ldots,n\), a morphism \( \phi_i \colon
\mathfrak x_i \to \mathfrak y_{fi} \).

From \cite[{}5.16]{Web07}, we learn that \( \Fam_\fin \) is a cartesian
(2-)monad on \( \Cat \). We proceed to verify it is fibrewise discrete; first,
observe that \( \ob \Fam_\fin(X \pt \trm) = X^* \), and the hom-sets are given
by
\begin{equation*}
  \Fam_\fin(X\pt \trm)(\mathfrak x,\mathfrak y) 
    = \sum_{f \colon [m] \to [n]} \prod_{i=1}^m [x_i=y_{fi}],
\end{equation*}
where \( m,\, n \) are the lengths of \( \mathfrak x,\, \mathfrak y \)
respectively. Moreover, note that \( \Fam_\fin(\trm) \eqv \FinSet \).  The fiber
of \( \Fam_\fin(X\pt \trm) \to \FinSet \) at (the identity on) \(n\) is given on
objects by the set of families of size \(n\), and on morphisms by \(
[\mathfrak x=\mathfrak y] \iso \prod_{i=1}^n [\mathfrak x_i = \mathfrak y_i]
\), which yields a discrete category; diagrammatically, we have
\begin{equation}
  \label{eq:fam.fin.fib.disc}
  \begin{tikzcd}
    {[x=y]} \ar[d] \ar[r]
            \ar[rd,"\ulcorner",phantom,very near start]
    & \displaystyle\sum_{f \in [n]\to[n]} 
      \displaystyle\prod_{i=1}^n 
        [\mathfrak x_i = \mathfrak y_{fi}] \ar[d] \\
    \trm \ar[r,"\id",swap] & \FinSet([n],[n])
  \end{tikzcd}
\end{equation}
as we desired. Thus, the pair \( (\Fam_\fin, \Cat) \) gives an adjunction
\eqref{eq:induced.adjunction} as well.

We note that \( \overline{\Fam_\fin} \) is a lax extension of the free monoid
monad on \( \Set \).

\subsection{Free finite product completion monad:}

The functor \( (-)^\op \colon \Cat \to \Cat \) taking each category to its
dual is its own adjoint, since we have \( \Cat(\cat C^\op, \cat D) \iso
\Cat(\cat C, \cat D^\op) \), so, via Proposition \ref{prop:induced.monad}, we
can promptly verify that the functor 
\begin{equation*}
  \cat C \mapsto \Fam_\fin^*(\cat C) = \Fam_\fin(\cat C^\op)^\op 
\end{equation*}
is a cartesian monad. For a category \( \cat C \), \( \Fam_\fin^*(\cat C) \)
has the same set of objects as \( \Fam_\fin(\cat C) \), but a morphism \(
\mathfrak x \to \mathfrak y \) is a pair \( (f,\phi) \) consisting of a
function \( f \colon [n] \to [m] \), where \(m,\,n\) is the length of \(
\mathfrak x,\, \mathfrak y \) respectively, and \( \phi_i \colon \mathfrak
x_{fi} \to \mathfrak y \) is a morphism for each \( i=1,\ldots,m \).

This monad is also fibrewise discrete; the only adjustment we need to make to
the pullback diagram \eqref{eq:fam.fin.fib.disc} is to replace \( [\mathfrak
x_i = \mathfrak y_{fi}] \) with \( [\mathfrak x_{fi} = \mathfrak y_i] \), so
the pair \( (\Fam_\fin^*, \Cat) \) induces an adjunction
\eqref{eq:induced.adjunction}.

\subsection{Free symmetric strict monoidal category monad:}

For a category \( \cat C \), we let \( \mathfrak S\cat C \) be a subcategory
of \( \Fam_\fin(\cat C) \) with the same set of objects, and precisely those
morphisms \( (f,\phi) \colon \mathfrak x \to \mathfrak y \) such that \(f\) is
a bijection.

This was shown to be a cartesian monad, for instance, in \cite{Lei04}, or in 
\cite[Example 7.5]{Web07}, where it was shown that we have a cartesian
(2-)natural transformation \( \mathfrak S \to \Fam_\fin \). For this same
reason, it is fibrewise discrete, by Lemma \ref{lem:cart.nat.lifts}, giving us
another example of an adjunction \eqref{eq:induced.adjunction}, with the pair
\( (\mathfrak S,\Cat) \).

Furthermore, note that \( \overline{\mathfrak S} \) is also a lax extension of
the free monoid monad on \( \Set \).

  \section{Embedding}
    \label{sect:embedding}
    Throughout this section, we fix a lextensive category \( \cat V \), with
terminal object \( \trm \) and a cartesian monad \(T = (T,m,e) \) on \( \cat V
\). Following the notation from Section \ref{sect:disc.morph}, we denote by \(
\Tt \) the monad on \( \VMat \) induced by \(T\) on \( \SpanV \).

Via the tools developed throughout the paper, we shall verify that if \( \trm
\) is connected, and \(T\) is fibrewise discrete, then \( - \pt \trm \colon
\TtVCat \to \CatTV \) is a fully faithful, pullback-preserving functor.
Moreover, among the pairs \( (T,\cat V) \) satisfying the hypotheses at the
end of Section \ref{sect:disc.morph}, we shall provide a description of \(
\TtVCat \) and \( \CatTV \).

\begin{lemma}
  \label{lem:induced.adjunction}
  If \( \heps_{T(-\pt \trm)} \) has a strong conjoint, then we have an
  adjunction
  \begin{equation}
    \label{eq:induced.adjunction.2}
    \begin{tikzcd}
      \TtVCat \ar[r,bend left,"- \pt \trm"{name=A}]
      & \CatTV \ar[l,bend left,"\cat V(\trm{,}-)"{name=B,below}]
      \ar[from=A,to=B,phantom,"\adj" {anchor=center, rotate=-90}]
    \end{tikzcd}
  \end{equation}
  whose unit and counit are also denoted by \( \heta \) and \( \heps \),
  respectively.
\end{lemma}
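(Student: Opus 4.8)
The statement is essentially an immediate application of Theorem~\ref{thm:lax.alg.adj} to the conjunction \eqref{eq:induced.conjunction} in $\Mnd(\PsDbCat_\lax)$, once we verify that all of the hypotheses of that theorem are met. The plan is therefore to assemble the ingredients rather than to prove anything genuinely new. First I would recall that, by Remark~\ref{rem:pt.adj.hom}, extensivity of $\cat V$ guarantees that $- \pt 1 \adj \cat V(1,-)$ is an \emph{adjunction} in the $2$-category $\PsDbCat_\lax$ (not merely an adjunction of underlying $\Cat$-graph morphisms), and that $\VMat$, $\SpanV$ are equipments, hence conjoint and companion closed. Next, Proposition~\ref{prop:induced.monad} applied to this adjunction and to the strong monad $T$ on $\SpanV$ produces the monad $\Tt = \cat V(1, T(-\pt 1))$ on $\VMat$, the monad oplax morphism $(-\pt 1,\, \heps_{T(-\pt 1)}) \colon \Tt \to T$, the monad lax morphism $(\cat V(1,-),\, \cat V(1,T\heps)) \colon T \to \Tt$, and the fact that these form a conjunction in $\Mnd(\PsDbCat_\lax)$; this is precisely \eqref{eq:induced.conjunction}.

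The only nontrivial input is the strong-conjoint hypothesis. By assumption $\heps_{T(-\pt 1)}$ has a strong conjoint; and since $T$ is a strong monad on $\SpanV$, the composite $T\heps_{T(-\pt 1)}$ also has a strong conjoint (as noted in the footnote preceding Theorem~\ref{thm:heps.strong.conj}, one uses Lemma~\ref{lem:right.conjoint.invert} together with the fact that strong functors make the relevant comparison invertible). With $\phi = \heps_{T(-\pt 1)}$, both $\phi$ and $T\phi$ thus have strong conjoints, so we are exactly in the setting of Section~\ref{sect:induced.adjunction}: a conjunction in $\Mnd(\PsDbCat_\lax)$ between equipments, with the oplax-leg's transformation and its $T$-image admitting strong conjoints.

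Having placed ourselves in that setting, Theorem~\ref{thm:base.change} yields the change-of-base functors $F_! = -\pt 1 \colon \LaxTtHAlg \to \LaxTHAlg$ and $G_! = \cat V(1,-) \colon \LaxTHAlg \to \LaxTtHAlg$, and then Theorem~\ref{thm:lax.alg.adj} gives the adjunction $F_! \adj G_!$. It remains only to identify the categories of horizontal lax algebras with the categories of generalized multicategories appearing in the statement: by the discussion following the definition of $\LaxTHAlg$ in Section~\ref{sect:base.change}, $\LaxTHAlg = \CatTV$ for $T$ the strong monad on $\SpanV$ induced by the cartesian monad on $\cat V$, and $\LaxTtHAlg = \TtVCat$ for the lax monad $\Tt$ on $\VMat$. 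Substituting these identifications, the adjunction $F_! \adj G_!$ becomes exactly \eqref{eq:induced.adjunction.2}, and its unit and counit are those of Theorem~\ref{thm:lax.alg.adj}, which we continue to denote $\heta$, $\heps$ by a harmless abuse (they restrict from the $2$-categorical $\heta$, $\heps$ of the conjunction). I do not expect any real obstacle here; the only point requiring care is the bookkeeping that the $F_!$, $G_!$ produced by Theorem~\ref{thm:base.change} from the \emph{conjunction} \eqref{eq:induced.conjunction} coincide with the functors named $-\pt 1$ and $\cat V(1,-)$ in the statement, which follows since doctrinal adjunction forces $-\pt 1$ to be strong and $\heps_{T(-\pt 1)}$, $\cat V(1,T\heps)$ to be mates, so the two legs $(F,\phi^*)$ and $(G,\psi_!)$ of Theorem~\ref{thm:base.change} are built from exactly this data.
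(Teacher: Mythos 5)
Your proposal is correct and follows essentially the same route as the paper: the paper's proof likewise observes that \( - \pt 1 \) is strong, that strongness of \(T\) transfers the strong-conjoint property to \( T\heps_{T(-\pt 1)} \), and then invokes Theorem~\ref{thm:lax.alg.adj} applied to the conjunction \eqref{eq:induced.conjunction}. Your version just spells out more of the bookkeeping (Remark~\ref{rem:pt.adj.hom}, Proposition~\ref{prop:induced.monad}, and the identification of the horizontal lax algebra categories) that the paper leaves implicit.
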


\begin{proof}
  By hypothesis, \(- \pt \trm \) is a strong functor, and \( \heps_{T(-\pt
  \trm)} \) has a strong conjoint. Since \(T\) is a strong functor, we also
  deduce that \( T\heps_{T(-\pt \trm)} \) has a strong conjoint as well. This
  places us in the setting of Section~\ref{sect:induced.adjunction}, hence, we
  obtain \eqref{eq:induced.adjunction.2} by applying
  Theorem~\ref{thm:lax.alg.adj} to the conjunction
  \eqref{eq:induced.conjunction}.
\end{proof}

Henceforth, we shall assume that the \( \trm \) is connected, and that \( T \)
is fibrewise discrete.

\begin{theorem}
  \label{thm:main.result}
  \( - \pt \trm \colon \TtVCat \to \CatTV \) is fully faithful.
\end{theorem}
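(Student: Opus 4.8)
The plan is to deduce Theorem~\ref{thm:main.result} directly from Corollary~\ref{cor:unit.iso.when} applied to the conjunction \eqref{eq:induced.conjunction}. By Lemma~\ref{lem:induced.adjunction}, under our standing hypotheses (\( \cat V \) connected, \( T \) fibrewise discrete, hence \( \heps_{T(-\pt 1)} \) has a strong conjoint by Theorem~\ref{thm:heps.strong.conj}) we are in the setting of Section~\ref{sect:induced.adjunction} and the adjunction \eqref{eq:induced.adjunction.2} exists, arising via Theorem~\ref{thm:lax.alg.adj} from the conjunction \eqref{eq:induced.conjunction} in \( \Mnd(\PsDbCat_\lax) \). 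Recall that in this instance the oplax functor is \( F = - \pt 1 \colon \VMat \to \SpanV \), the lax functor is \( G = \cat V(1,-) \colon \SpanV \to \VMat \), and \( \phi = \heps_{T(-\pt 1)} \) is the oplax monad morphism structure. The left adjoint of the induced adjunction is \( F_! = - \pt 1 \colon \TtVCat \to \CatTV \).

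Corollary~\ref{cor:unit.iso.when} says precisely that \( F_! \) is fully faithful whenever \( F \colon \VMat \to \SpanV \) is fully faithful and \( G\phi \) is invertible. So the plan reduces to checking two facts. First, \( - \pt 1 \colon \VMat \to \SpanV \) is fully faithful as a lax (indeed strong, by Lemma~\ref{lem:1.strong}) functor of pseudodouble categories: it is the identity on \(0\)-cells up to the embedding \( \Set \hookrightarrow \cat V \), but more to the point, it is full and faithful on \(0\)-cells because \( - \pt 1 \colon \Set \to \cat V \) is fully faithful by Lemma~\ref{lem:conn.iff.fff} (using connectedness of \( \cat V \)), and it is fully faithful on \(2\)-cells by Lemma~\ref{lem:1.fff}; on horizontal \(1\)-cells one argues by the same pullback/extensivity computation that \( \heta_p \colon p \to \cat V(1, p \pt 1) \) in \eqref{eq:defn.unit} is an isomorphism, so that \( \VMat(X,Y) \to \SpanV(X \pt 1, Y \pt 1) \) is an equivalence of categories for all \(X, Y\). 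Second, \( G\phi = \cat V(1, \heps_{T(-\pt 1)}) \) must be invertible; but by the triangle identity for the adjunction \( - \pt 1 \adj \cat V(1,-) \) in \( \PsDbCat_\lax \) (Remark~\ref{rem:pt.adj.hom}), \( \cat V(1, \heps) \) is a retraction of the unit, and since \( \heta \) is invertible (by the full faithfulness of \( - \pt 1 \) just established, \( \cat V \) being connected), \( \cat V(1, \heps) \) is invertible as well; applying this at \( T(- \pt 1) \) gives the claim. Once these two inputs are in hand, Corollary~\ref{cor:unit.iso.when} yields the conclusion.

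I expect the only real subtlety to be the second point, the invertibility of \( G\phi = \cat V(1, \heps_{T(-\pt 1)}) \). One must be careful that it is \emph{not} enough for \( \heps \) itself to be invertible (it is not, since \( - \pt 1 \colon \Set \to \cat V \) is typically not essentially surjective), but rather that applying the \emph{right adjoint} \( \cat V(1,-) \) to the counit produces an isomorphism. This follows from the standard fact that for any adjunction \( L \dashv R \) with unit \( \eta \) and counit \( \epsilon \), the triangle identity \( R\epsilon \circ \eta R = \id \) forces \( R\epsilon \) to be a split monomorphism, and it is an isomorphism whenever \( \eta \) is. In our situation \( \eta = \heta \) is an isomorphism because \( L = - \pt 1 \colon \Set \to \cat V \) is fully faithful, which is exactly the connectedness hypothesis via Lemma~\ref{lem:conn.iff.fff}. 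Plugging in the object \( T(- \pt 1) \) of \( \SpanV \) then gives invertibility of \( \cat V(1, \heps_{T(-\pt 1)}) \), completing the verification; everything else is a direct appeal to Corollary~\ref{cor:unit.iso.when}.
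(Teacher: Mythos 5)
Your proposal is correct and follows essentially the same route as the paper: the paper's proof is exactly an appeal to Corollary~\ref{cor:unit.iso.when}, citing Lemma~\ref{lem:1.fff} for the full faithfulness of \( - \pt 1 \colon \VMat \to \SpanV \) and asserting the invertibility of \( \cat V(1,\heps_{T(-\pt 1)}) \), which you justify by the same triangle-identity argument (valid since \( \heta \) is invertible by connectedness via Lemma~\ref{lem:conn.iff.fff}). Your write-up merely makes explicit the details the paper leaves implicit.
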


\begin{proof}
  By Lemma~\ref{lem:1.fff}, we know \( - \pt \trm \colon \VMat \to \SpanV \)
  is fully faithful, and since \( \cat V(\trm,\heps_{T(-\pt \trm)}) \) is a
  natural isomorphism, the result follows by
  Corollary~\ref{cor:unit.iso.when}.
\end{proof}

These results can be immediately applied to the last four examples in Section
\ref{sect:disc.morph}; we shall describe both \( \TtVCat \) and \( \CatTV \)
for each such pair \( (T,\cat V) \).

\subsection{\( \cat V \)-operadic multicategories:}

Let \( T = T_{\mathfrak O} \) be a monad induced by a \( \cat V \)-operad \(
\mathfrak O \). When \( \trm \) is connected, we have shown that \(T\) is
fibrewise discrete, and therefore \( (T,\cat V) \) induces an adjunction
\eqref{eq:induced.adjunction.2}. So, we conclude that \( - \pt \trm \colon
\TtVCat \to \CatTV \) is fully faithful, by Theorem \ref{thm:main.result}.

The induced monad \( \Tt \) on \( \Set \) is given on objects by
\begin{equation*}
  X \mapsto \sum_{n \in \N} \cat V(\trm,\mathfrak O_n) \times X^n,
\end{equation*}
and note that since \( \cat V(\trm,-) \colon \cat V \to \Set \) is a strong
monoidal functor (preserves products), it follows that \( \cat
V(\trm, \mathfrak O) \) a \( \Set \)-monad, so \( \Tt \) is an operadic monad
as well.

Let \( r \colon X \relto Y \) be a \( \cat V \)-matrix, and let \( \sigma \in
\cat V(\trm, \mathfrak O_m) \), \( \mathfrak x \in X^m \), \( \tau \in \cat
V(\trm,\mathfrak O_n) \), \( \mathfrak y \in Y^n \). The \( \cat V \)-matrix
\( \Tt r \) is given at \( (\sigma, \mathfrak x, \tau, \mathfrak Y) \) by
\begin{equation*}
  (\Tt r)(\sigma, \mathfrak x, \tau, \mathfrak y) 
    = \begin{cases}
        0 & \text{if } \sigma \neq \tau, \\
        \prod_{i=1}^n r(x_i,y_i) & \text{otherwise}
      \end{cases}
\end{equation*}
thus, in practice, we just write \( (\Tt r)(\sigma, \mathfrak x, \mathfrak y)
\) for the possibly non-initial values of \( \Tt r \).

The objects of \( \CatTV \) are (internal) operadic \( \cat V \)-categories,
and for this reason, we will consider the objects of \( \TtVCat \) to be the
\textit{enriched} operadic \( \cat V \)-categories.  Such an object
consists of
\begin{itemize}[label=--]
  \item
    a set \(X\) of objects,
  \item
    a \( \cat V \)-matrix \( a \colon \Tt X \times X \to \cat V \),
  \item
    a \( \cat V \)-morphism \( \trm \to a(ex,x) \) for each \(x \in X \),
  \item
    a \( \cat V \)-morphism \( a(\sigma, \mathfrak x, x) \times \Tt a(
    \sigma, (\tau_1,\mathfrak y_1), \ldots, (\tau_n,\mathfrak y_k), \mathfrak
    x) \to a(\sigma(\tau_1,\ldots,\tau_k), \mathfrak y_1 \cdots \mathfrak
    y_k, x) \) for \( \tau_i \in \mathfrak O_{n_i} \), \( \mathfrak y_i \in
    X^{n_i} \), \( \sigma \in \mathfrak O_m \), \( \mathfrak x \in \mathfrak
    X^m \), where \( m = n_1 + \ldots + n_k \).
\end{itemize}
satisfying suitable identity and associativity conditions.

Of particular interest may be the \( T = (-)^* \) free \(\times\)-monoid monad
on \( \cat V \); more generally, monads induced by a \textit{discrete}
operad \( \mathfrak O \) and the \( M \times - \) monad for \( M \) a \( \cat
V \)-monoid.

\subsection{\( (\overline{\fc},\Grph) \)-categories:}

As we have verified in Section \ref{sect:disc.morph}, the pair \( (\fc,\Grph)
\) consists of a fibrewise discrete monad on a lextensive category with \(
\trm \) connected, so \( - \pt \trm \colon (\overline{\fc},\Grph)\dash \Cat
\to \Cat(\fc,\Grph) \) is fully faithful.

The object of the category \( \Cat(\fc,\Grph) \) are precisely the virtual
double categories \cite{Lei04, CS10}. An enriched \( (\overline{\fc},\Grph)
\)-category \(X\) consists of
\begin{itemize}[label=--]
  \item
    A set \( X_0 \) of objects,
  \item
    A graph \( X_1(n,x,y) = (X_{11}(n,x,y) \rightrightarrows X_{10}(n,x,y)) \)
    for each \(n \in \N \) and \( x,y \in X_0 \),
  \item
    A loop \( \trm \to X_1(1,x,x) \) for each \( x \in X_0 \),
  \item
    A graph morphism \( X_1(m,y,z) \times \fc_m X_1(n,x,y) \to X_1(m\cdot
    n,x,z) \) for each \( x,y,z \in X \) and \( m,n \in \N \), where \( \fc_mG
    \) is the graph of \(m\)-chains of \(G\).
\end{itemize}
satisfying suitable identity and associativity conditions. 
      
Via the induced functor \( \Grph(\trm,-) \colon \VDbCat \to (\overline{\fc},
\Grph)\dash \Cat \), we can come across examples of \( (\overline{\fc}, \Grph)
\)-categories. If \( \bicat V \) is a virtual double category, \( \Grph(\trm,
\bicat V) \) consists of
\begin{itemize}[label=--]
  \item
    a set of objects \( \Grph(\trm,\bicat V_0) \), that is, the set of loops in
    \( \bicat V_0 \), 
  \item
    for each \(n \in \N \) and loops \(r,s\) of \(\bicat V_0\), a graph \(
    \bicat V_1(n,r,s) \) has edges \( \theta \colon f \to g \)
    consisting of the 2-cells of the form
    \begin{equation*}
      \begin{tikzcd}
        x \ar[d,"f",swap]
          \ar[r,"r"] & x \ar[r,"r"]
                     & \ldots \ar[r,"r"]
                     & x \ar[r,"r"]
                     & x \ar[d,"f"] \\
        y \ar[rrrr,"s"{name=B,below},swap] &&&& y 
        \ar[from=1-3,to=B,"\theta",phantom]
      \end{tikzcd}
    \end{equation*}
    whose vertical domain has length \(n\); we may simply write
    \begin{equation*}
      \begin{tikzcd}
        x \ar[r,"r^n"{name=A}] \ar[d,"f",swap] & x \ar[d,"g"] \\
        y \ar[r,"s"{name=B},swap] & y
        \ar[from=A,to=B,"\theta",phantom]
      \end{tikzcd}
    \end{equation*}
    as a shorthand.
  \item
    the unit 2-cell at \(r\) in \( \bicat V_1(1,r,r) \) is given by
    \begin{equation*}
      \begin{tikzcd}
        x \ar[d,equal] \ar[r,"r"{name=A}] & x \ar[d,equal] \\
        x \ar[r,"r"{name=B},swap] & x
        \ar[from=A,to=B,"=",phantom]
      \end{tikzcd}
    \end{equation*}
    for each \(r \in \Grph(\trm,\bicat V_0) \), 
  \item
    for 2-cells
    \begin{equation*}
      \begin{tikzcd}
        y \ar[r,"s^m"{name=A}] \ar[d,"g",swap] & y \ar[d,"h"] \\
        z \ar[r,"t"{name=B},swap] & z
        \ar[from=A,to=B,"\omega",phantom]
      \end{tikzcd}
      \quad
      \begin{tikzcd}
        x \ar[r,"r^n"{name=A}] \ar[d,"f_{i-1}",swap] 
        & x \ar[d,"f_i"] \\
        y \ar[r,"s"{name=B},swap] & y
        \ar[from=A,to=B,"\theta_i",phantom]
      \end{tikzcd}
    \end{equation*}
    for \(i=1,\ldots,n\), the composite 2-cell is given by
    \begin{equation*}
      \begin{tikzcd}[column sep=large]
        x \ar[d,"g \circ f_0",swap] \ar[rrr,"r^{m \cdot n}"{name=A}]
        &&& x \ar[d,"h \circ f_n"] \\
        z \ar[rrr,"t"{name=B},swap] &&& z
        \ar[from=A,to=B,"\omega(\theta_1 \cdots \theta_n)",phantom]
      \end{tikzcd}
    \end{equation*}
\end{itemize}

\subsection{Clubs:}

We begin by considering the pair \( (\mathfrak S, \Cat) \). The category \(
\Cat(\mathfrak S,\Cat) \) is the so-called category of \textit{enhanced
symmetric multicategories} in \cite[p. 212]{Lei04}, first defined by \cite{BD98}
(therein, these are called opetopes).  

By analogy, we let
\begin{itemize}[label=--]
  \item
    \( \Cat(\Fam_\fin,\Cat) \) be the category of \textit{enhanced cocartesian
    multicategories}, and
  \item
    \( \Cat(\Fam_\fin^*,\Cat) \) be the category of \textit{enhanced cartesian
    multicategories}.
\end{itemize}

As we shall verify in Section \ref{sect:descent}, in each case \( T =
\Fam_\fin, \Fam_\fin^*, \mathfrak S \), the category \( (\Tt,\Cat)\dash \Cat
\) is the full subcategory of \( \Cat(T,\Cat) \) with discrete categories of
objects. Therefore, 
\begin{itemize}[label=--]
  \item
    \( (\overline{\Fam_\fin},\Cat)\dash \Cat \) is the category of
    \textit{cocartesian multicategories},
  \item
    \( (\overline{\Fam_\fin^*},\Cat) \dash \Cat \) is the category of
    \textit{cartesian multicategories},
  \item
    \( (\overline{\mathfrak S},\Cat) \dash \Cat \) is the category of
    \textit{symmetric multicategories}.
\end{itemize}

  \section{Application to descent theory}
    \label{sect:descent}
    \textit{Effective descent morphisms}, introduced in \cite{Gir64}, (see also
\cite{JT94}, \cite[Section~3]{Luc21}) are the fundamental object of study in
Grothendieck’s descent theory, which has strong connections with various
fields \cite{Moe89, BJ97, PL23}. Paired with their applicability to other fields,
effective descent morphisms are also important in their own right
\cite{Luc22}, since these morphisms formalize the procedure of recovering data
about their codomain from data about their domain, plus additional algebraic
structure. Such algebraic structure is called \textit{descent data} for the
morphism. For further aspects of descent theory, we refer to \cite{JST04,
Luc18a, Luc21, Luc22}.

In this work, we are concerned with effective descent morphisms with respect
to the \textit{basic bifibration} of a category \( \cat C \) with pullbacks,
given at a morphism \( p \colon x \to y \) by the following change-of-base
adjunction:
\begin{equation}
  \label{eq:benabou}
  \begin{tikzcd}
    \cat C/y \ar[r,bend right,"p^*"{name=A,below},swap]
    & \cat C/x. \ar[l,bend right,"p_!"{name=B},swap]
    \ar[from=A,to=B,phantom,"\adj" {anchor=center, rotate=-90}]
  \end{tikzcd}
\end{equation}
By the Bénabou-Roubaud theorem \cite{BR70}\footnote{See also \cite{Luc22} for
the ``moral converse'' of this result.}, we obtain the \textit{category of
descent data} for \(p\), denoted \( \Desc(p) \), as the category of algebras
for the monad induced by \eqref{eq:benabou}, which we denote by \( T^p \) --
that is, we have \( \Desc(p) \eqv T^p\dash\Alg \). 

Hence, we may consider the \textit{Eilenberg-Moore factorization} of \(p^*\)
in the following form:
\begin{equation*}
  \begin{tikzcd}
    \cat C \comma y \ar[rd,"\mathcal K^p",swap] \ar[rr,"p^*"] 
      && \cat C \comma x \\
    & \Desc(p) \ar[ru]
  \end{tikzcd}
\end{equation*}
We say that
\begin{itemize}[label=--]
  \item
    \(p\) is an \textit{effective descent morphism} if \( \mathcal K^p \) is
    an equivalence,
  \item
    \(p\) is a \textit{descent morphism} if \( \mathcal K^p \) is fully
    faithful,
  \item
    \(p\) is an \textit{almost descent morphism} if \( \mathcal K^p \) is
    faithful.
\end{itemize}

For categories \( \cat C \) with finite limits, descent morphisms are
precisely the pullback-stable regular epimorphisms, and almost descent
morphisms are precisely the pullback-stable epimorphisms. If \( \cat C \) is
Barr-exact \cite{Bar71} or locally cartesian closed, then effective descent
morphisms are precisely the descent morphisms. If \( \cat C \) is a topos,
then effective descent morphisms are precisely the epimorphisms \cite{JST04}.
In the context of internal categorical structures, we have the
characterization of \cite{Cre99} for internal categories, as well as the work
of the prequel \cite{PL23}.

Our results concern the effective descent functors between (enriched)
\((T,\cat V)\)-categories. When \( \cat V \) is a quantale, results in this
direction can be traced as far back as the characterization of \cite{RT94} for
effective descent morphisms between topological spaces -- see \cite{CH02,
CH04}, which reformulate the results of Reiterman-Tholen under the perspective
of lax algebras. Further advances are present in \cite{CJ11, CH12, CH17}. 

We shall fix a lextensive, cartesian monoidal category \( \cat V \) with \(
\trm \) connected, and a fibrewise discrete, cartesian monad \( T \) on \(
\cat V \). In this setting, Theorem \ref{thm:main.result} provides us with the
fully faithful embedding \( - \pt \trm \colon \TtVCat \to \CatTV \). Now, we
desire to apply this result to study \textit{effective descent morphisms} in
\( \TtVCat \). We promptly review the fundamental aspects of descent theory
necessary to draw our desired conclusions. Afterwards, under a suitable
hypothesis, we confirm that \( \TtVCat \) is the full subcategory of \( \CatTV
\) with a discrete object-of-objects (Theorem \ref{thm:tvcat.disc.objs}),
which we deduce that \( \TtVCat \to \CatTV \) reflects effective descent
morphisms, generalizing \cite[9.10 Lemma and 9.11 Theorem]{Luc18a} to the
multicategory setting. 

Having fixed the terminology, we begin by recalling the following result of
effective descent morphisms for pseudopullbacks of categories:

\begin{proposition}[{\cite[Theorem 1.6]{Luc18a}}]
  \label{prop:pspb.desc}
  If we have pseudopullback diagram of categories with pullbacks and
  pullback preserving functors
  \begin{equation*}
    \begin{tikzcd}
      \cat A \ar[r,"F"] \ar[d,"G",swap]
        & \cat B \ar[d,"H"] \\
      \cat C \ar[r,"K",swap] & \cat D
    \end{tikzcd}
  \end{equation*}
  and a morphism \( f \) of \( \cat A \) such that
  \begin{itemize}[label=--]
    \item
      \( Ff \) and \( Gf \) are effective descent morphisms, and
    \item
      \( KFf \iso HGf \) is a descent morphism,
  \end{itemize}
  then \( f \) is an effective descent morphism.
\end{proposition}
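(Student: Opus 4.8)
The statement to prove is Proposition~\ref{prop:pspb.desc}, on effective descent in pseudopullbacks of categories.

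\textbf{Plan.} Since this is cited from \cite[Theorem 1.6]{Luc18a}, the cleanest route is to reconstruct the standard Bénabou–Roubaud / monadic-descent argument in the pseudopullback. First I would recall that for a category $\cat C$ with finite limits and a morphism $f$, one has $\Desc_{\cat C}(f) \eqv (\cat C/y_0)^{T^f}$ where $T^f$ is the monad on $\cat C/y_0$ induced by the change-of-base adjunction $f_! \adj f^*$, and $f$ is effective for descent exactly when the comparison $\mathcal K^f \colon \cat C/y_1 \to \Desc_{\cat C}(f)$ is an equivalence. The key structural fact I would invoke is that the pseudopullback $\cat A$ of $F, G, H, K$ has slices that are computed as pseudopullbacks: for an object $a$ of $\cat A$ lying over $b = Fa$, $c = Ga$, $d = HFa \iso KGa$, the slice $\cat A/a$ is (up to equivalence) the pseudopullback of $\cat B/b \to \cat D/d \leftarrow \cat C/c$, and moreover all the relevant functors ($F$, $G$, their slice versions, and pullback along $f$) commute with these pseudopullback presentations. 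This is because pseudopullbacks in $\CAT$ are computed componentwise and the slice construction is itself a (co)limit-compatible operation; pullback-preservation of $F, G, H, K$ is what makes the slice-level functors again pullback-preserving and compatible with the adjunctions.

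\textbf{Key steps, in order.} (1) Describe $\Desc_{\cat A}(f)$ as the category of $T^f$-algebras, and observe that because the adjunction $f_! \adj f^*$ in $\cat A$ is built componentwise from the corresponding adjunctions in $\cat B$, $\cat C$, $\cat D$ (using that $F, G, H, K$ preserve pullbacks, hence the relevant limits defining $f_!, f^*$), the induced monad $T^f$ on $\cat A/y_0$ is the ``pseudopullback'' of the monads $T^{Ff}$, $T^{Gf}$, $T^{HGf} = T^{KFf}$. (2) Deduce that $\Desc_{\cat A}(f)$ is the pseudopullback of $\Desc_{\cat B}(Ff) \to \Desc_{\cat D}(HGf) \leftarrow \Desc_{\cat C}(Gf)$; here I would be careful that forming algebras for a pseudopullback of monads is again a pseudopullback of the algebra categories, a 2-categorical lemma that may need a short justification or a citation. (3) Assemble the comparison: the square of comparison functors
\begin{equation*}
  \begin{tikzcd}
    \cat A/y_1 \ar[r] \ar[d,"\mathcal K^f"',swap] & \text{(pspb of }\cat B/y_1,\,\cat C/y_1,\,\cat D/y_1) \ar[d] \\
    \Desc_{\cat A}(f) \ar[r] & \text{(pspb of }\Desc_{\cat B}(Ff),\,\Desc_{\cat C}(Gf),\,\Desc_{\cat D}(HGf))
  \end{tikzcd}
\end{equation*}
commutes up to iso, the top horizontal is an equivalence because $\cat A$ is the pseudopullback (so its slices are), and the right-hand vertical is induced by $\mathcal K^{Ff}$, $\mathcal K^{Gf}$, $\mathcal K^{HGf}$. (4) By hypothesis $\mathcal K^{Ff}$ and $\mathcal K^{Gf}$ are equivalences and $\mathcal K^{HGf}$ is fully faithful (since $HGf$ is a descent morphism); a pseudopullback of functors in which two legs are equivalences and the third is fully faithful is again an equivalence — this is an elementary 2-categorical fact which I would verify directly by checking essential surjectivity and full faithfulness on the pseudopullback. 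Chasing the square then forces $\mathcal K^f$ to be an equivalence, i.e.\ $f$ is an effective descent morphism.

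\textbf{Main obstacle.} The technical heart is step (1)–(2): showing that the Eilenberg–Moore / descent construction is compatible with pseudopullbacks of categories, i.e.\ that $\Desc_{\cat A}(f)$ really is the pseudopullback of the three descent categories. This requires knowing that the change-of-base adjunctions assemble correctly — which uses pullback-preservation of $F, G, H, K$ essentially — and that passing to algebras for a pseudopullback of monads commutes with the pseudopullback. Everything else (the fact about pseudopullbacks of equivalences and fully faithful functors, the commutativity of the comparison square) is routine 2-category theory. Since the paper cites \cite{Luc18a} for this statement, in the actual write-up I would most likely just reference that source rather than reprove it; but the sketch above is how I would fill in the argument if needed.
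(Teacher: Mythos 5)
The paper does not actually prove this proposition: it is imported as-is from \cite[Theorem 1.6]{Luc18a}, so there is no internal proof to compare yours against, and your closing instinct (just cite the source) is what the authors in fact do. That said, your sketch is a sound reconstruction of the argument, though it takes a slightly different route from the cited reference: \cite{Luc18a} works with descent objects of pseudocosimplicial categories and the commutation of bilimits (the pseudo-Kan extension machinery), whereas you phrase everything through the Bénabou–Roubaud monadic presentation of $\Desc(f)$. Both routes reduce to the same two facts you isolate: (i) the descent data for $f$ in the pseudopullback $\cat A$ decomposes as the pseudopullback of the descent data for $Ff$, $Gf$ and $HGf \iso KFf$ — this is where pullback-preservation of $F,G,H,K$ enters, since $f_!$ is preserved by any functor while $f^*$ is preserved exactly because pullbacks are — and (ii) a pseudopullback of comparison functors in which the two legs over $\cat B$ and $\cat C$ are equivalences and the one over the cospan apex $\cat D$ is fully faithful is itself an equivalence; your hands-on verification of (ii) is correct. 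The one place your monadic route is genuinely more delicate than the bilimit route is your step (2): you need that passing to Eilenberg–Moore categories commutes with pseudopullbacks of monads. This is true (Eilenberg–Moore objects are PIE-limits, hence commute with bilimits in $\CAT$), but it is precisely the step that the descent-object formulation of \cite{Luc18a} avoids having to state separately, and it does need the citation or short argument you flag before the proof is complete.
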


\begin{lemma}
  \label{lem:key.lemma}
  If \( (X \pt \trm, a, \eta, \mu) \) is an internal \( (T,\cat V)
  \)-category, then \( \heps_a \) is a split epimorphism. Moreover, if \(
  \heps_{T\trm} \) is a monomorphism, then \( \heps_a \) is an isomorphism.
\end{lemma}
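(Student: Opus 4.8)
The plan is to unwind the definitions of $\LaxTHAlg$ for $\bicat E = \SpanV$ and the construction $F_!$ from Theorem~\ref{thm:base.change}, specialized to the adjunction $- \pt 1 \adj \cat V(1,-)$ and the monad $T$. Recall that an internal $(T,\cat V)$-category $(X \pt 1, a, \upsilon, \mu)$ consists of a span $a \colon T(X\pt 1) \relto X\pt 1$, so $M_a$ together with $l_a \colon M_a \to T(X \pt 1)$ and $r_a \colon M_a \to X\pt 1$, plus structure 2-cells. The counit component $\heps_a \colon \cat V(1,a) \pt 1 \to a$ is the 2-cell of spans described in \eqref{eq:defn.counit}; concretely it is the canonical comparison from the coproduct of the pullbacks \eqref{eq:defn.right.adj} down to $M_a$. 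So I must exhibit a section of this comparison morphism $\heps_a \colon M_{\cat V(1,a)\pt 1} \to M_a$, using that $(X\pt 1, a, \upsilon,\mu)$ is a genuine $T$-category internal to $\cat V$ (not just any span).

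The key idea is that the unit datum $\upsilon$ gives, for each $x \in X$, a point $e_x \colon 1 \to M_a$ picking out the "identity arrow" at $x$, with $l_a \circ e_x = e_{X\pt 1} \circ (\heta x)$ and $r_a \circ e_x = \heta x$; more importantly the multiplication $\mu$ lets me "act" on an arbitrary generalized element. Explicitly, an element of $M_a$ over $(\xs, x) \in T(X\pt 1) \times (X\pt 1)$ can be composed with the $T$-image of a family of identities to produce — via $\mu$ — an element of $M_a$ whose component in $T(X\pt 1)$ is now "pointwise discrete" in the sense detected by $\heps_{T(X\pt 1)}$. First I would fix the canonical map $s \colon M_a \to M_{\cat V(1,a)\pt 1}$ as follows: $M_{\cat V(1,a)\pt 1}$ is the coproduct over $(\xs,x) \in \cat V(1,T(X\pt 1)) \times \cat V(1,X\pt 1)$ of the pullbacks $\cat V(1,a)(\xs,x)$, i.e.\ of $\{m \in M_a \mid l_a(m) = \xs,\ r_a(m) = x\}$; so giving $s$ amounts to giving, for a generalized element $m$ of $M_a$, a factorization of $l_a(m)$ through a $\cat V$-point of $T(X\pt 1)$ and of $r_a(m)$ through a point of $X\pt 1$, compatibly. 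The second is automatic since $r_a(m) \in X\pt 1$ and $X\pt 1$ is (by connectedness, Lemma~\ref{lem:conn.iff.fff}) such that its points recover $X$; the first is where the $T$-algebra structure enters: I use the left identity law $\mu \circ (\upsilon \cdot e_a) = \lambda$ to rewrite $m$ as a composite involving $a$ precomposed with $T$ of the unit span, which forces $l_a(m)$ to land in the image of $Te_{X} \pt 1$-type points, hence to be "fibrewise constant" — giving the required point of $T(X\pt 1)$. Composing $\heps_a \circ s = \id_{M_a}$ then follows from the unitor laws, exhibiting $\heps_a$ as a split epimorphism.

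For the second assertion, I would argue that when $\heps_{T1}$ is a monomorphism, $s$ is also a section on the other side, i.e.\ $s \circ \heps_a = \id$, so $\heps_a$ is an isomorphism. The point is that two elements of $M_{\cat V(1,a)\pt 1}$ with the same image under $\heps_a$ have the same underlying element of $M_a$, hence the same $r_a$-component (a point of $X\pt 1$) and the same $l_a$-component \emph{in $M_a$}; they could a priori differ only in \emph{which} point $\xs$ of $\cat V(1,T(X\pt 1))$ they are indexed by. But the composite $T1 \xleftarrow{} T(X\pt 1) \xleftarrow{l_a} M_a$ sends both to the same element, and $\heps_{T1}$ being monic upgrades the equality of points-into-$T1$ to force the indexing points into $\cat V(1, T(X\pt 1))$ to agree — more precisely, one uses the pullback square \eqref{eq:defn.counit} together with the fact that $\heps_{T1}$ monic makes $\heps_{T(X\pt 1)} \times \heps_{T(\cdot)}$ suitably monic-on-components via Theorem~\ref{thm:pb.index.coprod}, so that the comparison $\heps_a$ is a monomorphism, and a split epi that is mono is an isomorphism.

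The main obstacle I anticipate is making the construction of the section $s$ precise and base-point-free: "act by $T$ of the identities and use $\mu$" must be phrased as an honest morphism in $\cat V$, which means carefully chasing the pullback defining $M_a$ and invoking the left/right unit laws of the $T$-category in diagram form rather than elementwise. A secondary subtlety is the appeal to connectedness to know that $X\pt 1$ has "enough points" — this is exactly Lemma~\ref{lem:conn.iff.fff}, so it is available, but one must be careful that $\cat V$ is assumed connected in the standing hypotheses of this section (which it is). The monomorphism half should then be comparatively short given Theorem~\ref{thm:pb.index.coprod} and the explicit pullback \eqref{eq:defn.counit}.
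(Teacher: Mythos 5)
Your overall skeleton is the right one: by Theorem~\ref{thm:pb.index.coprod} the square \eqref{eq:defn.counit} is a pullback, so $\heps_a$ is the pullback of $\heps_{T(X\pt 1)}\times\heps_{X\pt 1}$ along $(l_a,r_a)$; a section of $\heps_a$ therefore amounts to a factorization of $l_a$ through $\heps_{T(X\pt 1)}\colon \Tt X\pt 1\to T(X\pt 1)$ (the $r_a$-leg being free by connectedness, as you say), and the ``moreover'' clause is split epi $+$ mono $=$ iso. This is also how the paper proceeds. The genuine gap is in how you produce the factorization of $l_a$. You claim the left identity law $\mu\circ(\upsilon\cdot e_a)=\lambda$ ``forces $l_a(m)$ to land in the image of $Te_X\pt 1$-type points.'' It does not: after applying $l_a$, the unit laws reduce to instances of the monad identities $m\circ Te=m\circ eT=\id$ and impose no constraint on $l_a(m)$, and for an ordinary multicategory $l_a(m)$ is the full source list of $m$ --- an arbitrary element of $(X\pt 1)^*$, not a singleton list in the image of $Te_{X\pt 1}$. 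Moreover, your first half never invokes the standing hypothesis that $T$ is \emph{fibrewise discrete}, which is exactly what is needed to lift anything into $\Tt X\pt 1$; without it there is no reason for $l_a$ to factor through the ``discrete part'' of $T(X\pt 1)$ even if you knew something about $T(!)\circ l_a$.

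The paper's route is different at precisely this point. It first uses the unique morphism from $(X\pt 1,a,\eta,\mu)$ to what it calls the terminal $(T,\cat V)$-category $(1,e_1^*,\eta,\mu)$ to extract the identity $T(!_{X\pt 1})\circ l_a=e_1\circ !_{M_a}$; it then writes $e_1=\heps_{T1}\circ(\overline e_1\pt 1)$ and uses the fact that the fibrewise-discreteness square
\begin{equation*}
  \begin{tikzcd}
    \Tt X\pt 1 \ar[r,"\heps_{T(X\pt 1)}"] \ar[d,"\Tt !\pt 1",swap]
      & T(X\pt 1) \ar[d,"T(!\pt 1)"] \\
    \Tt 1\pt 1 \ar[r,"\heps_{T1}",swap] & T1
  \end{tikzcd}
\end{equation*}
is a pullback to obtain the lift $\hat l_a\colon M_a\to\Tt X\pt 1$; the same square is what makes $\heps_{T(X\pt 1)}$ a monomorphism when $\heps_{T1}$ is, which is the input your second half also needs (your appeal to \eqref{eq:defn.counit} alone only propagates mono-ness from $\heps_{T(X\pt 1)}$ to $\heps_a$, not from $\heps_{T1}$ to $\heps_{T(X\pt 1)}$). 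Be aware that the identity $T(!)\circ l_a=e_1\circ !$ is where all the weight sits, and it deserves scrutiny: the terminal object of $\CatTV$ is the span $T1\xleftarrow{\;\id\;}T1\to 1$ rather than $e_1^*$, and for a multicategory with morphisms of arity other than one the equation $T(!)\circ l_a=e_1\circ !$ fails, so nothing in your proposal (nor, as far as I can see, the unit/multiplication laws alone) can substitute for it. In any case, as written your construction of the section does not go through.
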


\begin{proof}
  We consider the unique morphism \( (X\pt \trm, a, \eta, \mu) \to
  (\trm,e^*_\trm,\eta,\mu) \) to the terminal \( (T,\cat V) \)-category
  \begin{equation*}
    \begin{tikzcd}
      & M_a \ar[ld,"l_a",swap] \ar[rd,"r_a"] \ar[dd] \\
      T(X\pt \trm) \ar[dd,"T!",swap] 
      && X\pt \trm \ar[dd,"!"] \\
      & \trm \ar[ld,"e_\trm",swap] \ar[rd,equal] \\
      T\trm && \trm 
    \end{tikzcd}
  \end{equation*}
  and we note that \( e_\trm = \heps_{T\trm} \circ (\overline e_\trm \pt \trm) \), so
  that there there exists a unique \( \hat l_a \colon M_a \to \Tt X \pt \trm
  \) such that \( \heps_{T(X \pt \trm)} \circ \hat l_a = l_a \) and \( \Tt !
  \pt \trm \circ \hat l_a = (\overline e_\trm \pt \trm) \circ ! \):
  \begin{equation}
    \label{eq:pb.sq.1}
    \begin{tikzcd}
      M_a \ar[rd,dashed,"\hat l_a" description]
          \ar[rrd,bend left=15,"l_a"] \ar[d,"!",swap] \\
      \trm \ar[rd,"\overline e_\trm \pt \trm",swap]
        & \Tt X \pt \trm \ar[r,"\heps_{T(X\pt \trm)}"]
                      \ar[d,"\Tt ! \pt \trm",swap] 
                      \ar[rd,"\ulcorner",phantom,very near start]
        & T(X \pt \trm) \ar[d,"T(! \pt \trm)"] \\
        & \Tt \trm \pt \trm \ar[r,"\heps_{T\trm}",swap]
        & T\trm
    \end{tikzcd}
  \end{equation}
  It follows that there is a unique \( \omega \colon M_a \to M_{\cat V(\trm,a)
  \pt \trm} \) such that \( \heps_a \circ \omega = \id \) and \( \lb \hat l_a,
  r_a \rb = \lb l_a, r_a \rb \circ \omega \),
  \begin{equation}
    \label{eq:pb.sq.2}
    \begin{tikzcd}[column sep=large]
      M_a \ar[rrrd,"{\hat l_a,r_a}",bend left=15]
          \ar[rdd,equal,bend right=25]
          \ar[rd,"\omega" description,dashed] \\
      & M_{\cat V(\trm,a) \pt \trm} \ar[d,"\heps_a",swap]
                              \ar[rr,"{\lb l_{\cat V(\trm,a) \pt \trm},
                                       r_{\cat V(\trm,a) \pt \trm} \rb}"] 
                              \ar[rrd,"\ulcorner",phantom,very near start]
      && \Tt X \pt \trm \times X \pt \trm 
        \ar[d,"\heps_{T(X \pt \trm)} \times \id"] \\
      & M_a \ar[rr,swap,"{\lb l_a,r_a \rb}"] && T(X \pt \trm) \times X \pt \trm
    \end{tikzcd}
  \end{equation}
  thereby confirming \( \heps_a \) is a split epimorphism. 

  Moreover, observe that when \( \heps_{T\trm} \) is a monomorphism, it
  follows by the pullback square in \eqref{eq:pb.sq.1} that \( \heps_{T(X \pt
  \trm)} \) is a monomorphism, and by the pullback square in
  \eqref{eq:pb.sq.2}, we may conclude that \( \heps_a \) is a monomorphism.
  Thus, \( \heps_a \) is an isomorphism.
\end{proof}

As a corollary, we obtain

\begin{theorem}
  \label{thm:tvcat.disc.objs}
  If \( \heps_{T\trm} \) is a monomorphism, then we have a pseudopullback
  diagram
  \begin{equation}
    \label{eq:key.pspb}
    \begin{tikzcd}
      \TtVCat \ar[r,"-\pt \trm"] \ar[d] & \CatTV \ar[d] \\
      \Set \ar[r,"- \pt \trm",swap] & \cat V
    \end{tikzcd}
  \end{equation}
  of categories with pullbacks and pullback-preserving functors.
\end{theorem}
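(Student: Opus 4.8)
The strategy is to recognize that \eqref{eq:key.pspb} commutes up to the canonical isomorphism supplied by the adjunction unit $\heta$ of Lemma~\ref{lem:induced.adjunction}, and then to verify the universal property of the pseudopullback directly on objects and morphisms. First I would set up the comparison functor $\TtVCat \to \Set \times_{\cat V} \CatTV$ into the pseudopullback: on objects it sends a horizontal lax $\Tt$-algebra $(X,a,\upsilon,\mu)$ to the triple consisting of its underlying set $X$, its copower $(X,a,\upsilon,\mu) \pt 1$ in $\CatTV$, and the identity coherence isomorphism $X \pt 1 \xrightarrow{\id} X \pt 1$ witnessing that the two forgetful images agree. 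The bulk of the work is to show this comparison functor is an equivalence, which I would do by exhibiting a pseudo-inverse: given an internal $T$-category $\intcat C$ over $\cat V$ together with a set $X$ and an isomorphism $X \pt 1 \cong \intcat C_0$, transport the structure along this isomorphism to assume $\intcat C_0 = X \pt 1$ on the nose, and then apply Lemma~\ref{lem:key.lemma} — since $\heps_{T1}$ is a monomorphism, $\heps_a$ is an isomorphism, so $a \cong \cat V(1,a) \pt 1$, and $\cat V(1,-)$ applied to $\intcat C$ lands in $\TtVCat$ with $(\cat V(1,\intcat C)) \pt 1 \cong \intcat C$. Essential surjectivity of the comparison functor is then immediate, and full faithfulness follows from Theorem~\ref{thm:main.result} together with the fact that a morphism in the pseudopullback is precisely a morphism of internal $T$-categories whose object-component is induced by a function, which is exactly a morphism in the image of $- \pt 1$ once we know $\heps_a$ is invertible.

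The second half of the statement — that all four categories have pullbacks and all four functors preserve them — I would handle separately and more cheaply. The categories $\Set$ and $\cat V$ have finite limits by hypothesis, and $- \pt 1 \colon \Set \to \cat V$ preserves them since it is a left adjoint whose right adjoint $\cat V(1,-)$ is (by connectedness, via Lemma~\ref{lem:conn.iff.fff}) such that the unit is invertible — more directly, $- \pt 1$ preserves finite products because $1$ is terminal, and it preserves the pullbacks at issue by lextensivity of $\cat V$, essentially the content of Theorem~\ref{thm:pb.index.coprod}. That $\CatTV$ has pullbacks, computed componentwise from pullbacks in $\SpanV$ (hence in $\cat V$), is standard for categories of $T$-monoids over an equipment with pullbacks, and the forgetful functor $\CatTV \to \cat V$ sending $(X,a,\upsilon,\mu)$ to $X$ creates them. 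The same componentwise description gives pullbacks in $\TtVCat$ and shows $\TtVCat \to \Set$ creates them; that $- \pt 1 \colon \TtVCat \to \CatTV$ preserves pullbacks then follows because $- \pt 1 \colon \VMat \to \SpanV$ is strong (Lemma~\ref{lem:1.strong}) and preserves the relevant pullbacks, so the copower of a pullback square of $\Tt$-algebras is again a pullback square.

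The main obstacle I anticipate is the bookkeeping in the equivalence proof: one must be careful that "transport along $X \pt 1 \cong \intcat C_0$" genuinely lands back inside the subcategory $\TtVCat$ and that the two pseudo-inverse constructions compose to something naturally isomorphic to the identity — in particular, checking that the isomorphism $a \cong \cat V(1,a)\pt 1$ from Lemma~\ref{lem:key.lemma} is compatible with the unit, multiplication and algebra 2-cells $\upsilon,\mu$, so that it assembles into an isomorphism of horizontal lax $\Tt$-algebras rather than merely an isomorphism of the underlying horizontal $1$-cells. This is where the hypothesis $\heps_{T1}$ monic is really used, and it is the one place where a genuine (if routine) diagram chase is unavoidable; everything else reduces to the structural results already established, principally Theorem~\ref{thm:main.result}, Corollary~\ref{cor:unit.iso.when}, Lemma~\ref{lem:key.lemma}, and the lextensivity machinery of Section~\ref{sect:ext.cats}.
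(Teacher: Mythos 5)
Your proposal is correct and follows essentially the same route as the paper: identify the objects of the pseudopullback with the internal \( (T,\cat V) \)-categories whose object-of-objects is discrete (i.e.\ \( \heps_X \) invertible), use Lemma~\ref{lem:key.lemma} — where the hypothesis that \( \heps_{T1} \) is monic enters — to make \( \heps_a \) invertible, and conclude via the full faithfulness and essential image of \( - \pt 1 \). The only real difference is that the compatibility of the isomorphism \( \cat V(1,a) \pt 1 \iso a \) with \( \upsilon \) and \( \mu \) — the ``routine diagram chase'' you flag as unavoidable — is dispatched in the paper with no chase at all, by combining Lemma~\ref{lem:nat.s.conj.eqv} with the counit-invertibility criterion of Lemma~\ref{lem:counit.iso.when}, since the counit of the adjunction \( - \pt 1 \adj \cat V(1,-) \) is automatically a morphism of horizontal lax algebras and so is an isomorphism of algebras as soon as its underlying 2-cell is invertible.
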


\begin{proof}
  We begin by observing that the objects of the pseudopullback are pairs \(
  (S, (X,a,\eta,\mu), \omega) \) where \(S\) is a set, \( (X,a,\eta,\mu) \) is
  an internal \( (T,\cat V) \)-category, and \( \omega \colon S \pt \trm \to X
  \) is an isomorphism. Naturally, this implies that \( \heps_X \) is an
  isomorphism, since \( \heps_{S \pt \trm} \) is invertible:
  \begin{equation*}
    \begin{tikzcd}
      \cat V(\trm, S\pt \trm)\pt \trm \ar[r,"\heps_{S \pt \trm}"] 
                            \ar[d,"{\cat V(\trm,\omega)} \pt \trm",swap]
      & S \pt \trm \ar[d,"\omega"] \\
      \cat V(\trm,X)\pt \trm \ar[r,"\heps_X",swap]
      & X
    \end{tikzcd}
  \end{equation*}
  and conversely, for any internal \( (T,\cat V) \)-category \( (Y,b,\eta,\mu)
  \) such that \( \heps_Y \) is invertible, the triple 
  \begin{equation*}
    (\cat V(\trm,Y), (Y,b,\eta,\mu),\heps_Y) 
  \end{equation*}
  is an object of the pseudopullback.

  Hence, given a \( (T, \cat V) \)-category \( (X,a,\eta,\mu) \) such that \(
  \heps_X \) is invertible, we have by Lemma \ref{lem:key.lemma} that \(
  \heps_a \) is invertible, since \( \heps_{T\trm} \) is a monomorphism by
  hypothesis.  By Lemma \ref{lem:nat.s.conj.eqv}, it follows that \(
  \n{\heps^*}_a \) is invertible, so that we can apply Lemma
  \ref{lem:counit.iso.when} to conclude that \( (X,a,\eta,\mu) \) is
  isomorphic to an enriched \( (\Tt,\cat V) \)-category, concluding the
  proof.
\end{proof}

From this, we can now apply Proposition \ref{prop:pspb.desc} to conclude that
\begin{lemma}
  \label{lem:refl.eff.desc}
  If \( \heps_{T\trm} \) is a monomorphism, then \( - \pt \trm \colon \TtVCat
  \to \CatTV \) reflects effective descent morphisms.
\end{lemma}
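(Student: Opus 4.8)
The plan is to combine Theorem~\ref{thm:tvcat.disc.objs} with Proposition~\ref{prop:pspb.desc}. Assume $\heps_{T1}$ is a monomorphism, so that \eqref{eq:key.pspb} is a pseudopullback of categories with pullbacks and pullback-preserving functors. Let $f$ be a morphism of $\TtVCat$ such that $f \pt 1 = (-\pt 1)(f)$ is an effective descent morphism in $\CatTV$; the goal is to show $f$ is an effective descent morphism in $\TtVCat$. The obvious move is to take the pseudopullback square \eqref{eq:key.pspb} as the square to which Proposition~\ref{prop:pspb.desc} is applied, with $\cat A = \TtVCat$, $\cat B = \CatTV$, $\cat C = \Set$, $\cat D = \cat V$, $F = G' = (-\pt 1)$ on the top, $G = G'' $ the forgetful functor $\TtVCat \to \Set$ sending a $(\Tt,\cat V)$-category to its set of objects, $H$ the forgetful functor $\CatTV \to \cat V$ sending an internal $(T,\cat V)$-category to its object-of-objects, and $K = (-\pt 1)\colon \Set \to \cat V$.

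First I would check the hypotheses of Proposition~\ref{prop:pspb.desc} for this $f$. The functor $(-\pt 1)\colon \TtVCat \to \CatTV$ applied to $f$ is $f\pt 1$, which is an effective descent morphism by assumption — this is the first bullet of the proposition for the functor $F$. For the functor $G\colon \TtVCat \to \Set$, one needs that the underlying map of object-sets $G f$ is an effective descent morphism in $\Set$; since $\Set$ is a topos, effective descent morphisms are exactly the epimorphisms (as recalled in the text via \cite{JST04}), so it suffices that $Gf$ be surjective. This should follow from the fact that $f \pt 1$ is an effective (in particular almost) descent morphism in $\CatTV$, hence its underlying morphism $H(f\pt 1)$ in $\cat V$ is at least an epimorphism — combined with $(-\pt 1)\colon \Set \to \cat V$ reflecting epimorphisms (it is fully faithful by connectedness, Lemma~\ref{lem:conn.iff.fff}, and has a right adjoint $\cat V(1,-)$, so it preserves epis; for reflection one uses that $\cat V(1,-)\circ(-\pt1) \iso \id$). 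Finally, for the edge $KGf \iso HFf$: this composite is exactly $H(f\pt 1)$, the underlying morphism of the effective descent morphism $f\pt 1$ in $\CatTV$; one needs it to be a descent morphism in $\cat V$, i.e.\ a pullback-stable regular epimorphism. This is the content of the property that the forgetful functor $\CatTV \to \cat V$ sends effective descent morphisms to descent morphisms; I would either cite the relevant result of \cite{PL23} on effective descent in internal categorical structures, or deduce it directly from the characterization of effective descent morphisms in $\CatTV$ and the fact that $H$ is pullback-preserving.

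With all three hypotheses verified, Proposition~\ref{prop:pspb.desc} yields that $f$ is an effective descent morphism in $\TtVCat$, which is precisely the statement that $-\pt 1\colon \TtVCat \to \CatTV$ reflects effective descent morphisms.

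The main obstacle I anticipate is the edge condition $HGf = H(f\pt 1)$ being a \emph{descent} morphism in $\cat V$: this is not automatic from $f\pt 1$ being effective for descent in $\CatTV$ unless one knows the forgetful functor $\CatTV \to \cat V$ behaves well with respect to descent, which is exactly the kind of statement handled in \cite{PL23}. A secondary, more routine point is confirming that $Gf$ is epic in $\Set$ — this requires being slightly careful about whether "$f\pt 1$ effective descent" really forces surjectivity on objects, which I would extract from the almost-descent (pullback-stable epi) part together with $(-\pt 1)$ reflecting epimorphisms.
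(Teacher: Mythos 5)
Your proposal is correct and follows essentially the same route as the paper: apply Proposition~\ref{prop:pspb.desc} to the pseudopullback \eqref{eq:key.pspb}, using that the underlying object map is an epimorphism in $\Set$ (hence effective descent) and that the object-of-objects morphism is a descent morphism in $\cat V$. The one point you flag as an obstacle --- that the forgetful functor $\CatTV \to \cat V$ sends effective descent morphisms to descent morphisms --- is resolved in the paper by noting that this functor has fully faithful left and right adjoints and invoking \cite[Lemma 2.3]{Pre23}, which is exactly the kind of result you anticipated citing.
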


\begin{proof}
  Let \( F \colon \cat C \to \cat D \) be a functor of enriched \( (\Tt,\cat
  V) \)-categories such that \( F \pt \trm \) is an effective descent
  morphism.  Since \( \CatTV \to \cat V \) has fully faithful left and right
  adjoints, we may apply \cite[Lemma 2.3]{Pre23} to conclude that it preserves
  descent morphisms, so that \( (F \pt \trm)_0 = F_0 \pt \trm \) is a descent
  morphism.

  Since \( - \pt \trm \colon \Set \to \cat V \) reflects epimorphisms, we
  conclude that \( F_0 \) is an epimorphism; hence an effective descent
  morphism. Now, we apply Proposition \ref{prop:pspb.desc} with the
  pseudopullback \eqref{eq:key.pspb} to conclude \(F\) is effective for
  descent.
\end{proof}

Via \cite[Theorem 5.3]{PL23}, which provides sufficient conditions for
effective descent morphisms in \( \CatTV \) in terms of effective descent in
\( \cat V \), we can now do the same for \( \TtVCat \):

\begin{theorem}
  \label{thm:eff.desc.criteria}
  Let \( p \colon \cat C \to \cat D \) be a functor of \( (\Tt,\cat V)
  \)-categories. If \( \heps_{T\trm} \) is a monomorphism, and
  \begin{itemize}[label=--]
    \item
      \( (p \pt \trm)_1 \) is an effective descent morphism,
    \item
      \( (p \pt \trm)_2 \) is a descent morphism,
    \item
      \( (p \pt \trm)_3 \) is an almost descent morphism,
  \end{itemize}
  then \(p\) is an effective descent morphism.
\end{theorem}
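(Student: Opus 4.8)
The plan is to reduce the statement to the corresponding result for internal $(T,\cat V)$-categories, namely \cite[Theorem 5.3]{PL23}, by transporting along the pseudopullback diagram \eqref{eq:key.pspb}. First I would invoke Theorem~\ref{thm:tvcat.disc.objs}: since $\heps_{T1}$ is a monomorphism, $\TtVCat$ sits in the pseudopullback \eqref{eq:key.pspb} over $\Set \xrightarrow{-\pt 1} \cat V \xleftarrow{} \CatTV$, with all four functors preserving pullbacks. This is the setting where Proposition~\ref{prop:pspb.desc} applies, so it suffices to check that the image of $p$ along $-\pt 1 \colon \TtVCat \to \CatTV$ is an effective descent morphism, that the image of $p$ along $\TtVCat \to \Set$ is an effective descent morphism, and that their common image $(p\pt 1)_0 \pt 1$ in $\cat V$ is a descent morphism.

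The middle condition is the easiest: the functor $\TtVCat \to \Set$ sends $p$ to $p_0$, and I would argue as in the proof of Lemma~\ref{lem:refl.eff.desc} that $p_0$ is an epimorphism — hence an effective descent morphism in $\Set$ — using that $-\pt 1 \colon \Set \to \cat V$ reflects epimorphisms together with the hypothesis that $(p\pt 1)_1$ (or already $(p\pt 1)_0$) witnesses an epi after applying $-\pt 1$; in fact descent-ness of $(p\pt 1)_0 \pt 1$ in $\cat V$, which we need anyway for Proposition~\ref{prop:pspb.desc}, gives this directly via the adjoints to $\CatTV \to \cat V$ and \cite[Lemma 2.3]{Pre23}, exactly as in Lemma~\ref{lem:refl.eff.desc}. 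The first condition — that $p \pt 1 \colon \cat C \pt 1 \to \cat D \pt 1$ is effective for descent in $\CatTV$ — is precisely where \cite[Theorem 5.3]{PL23} enters: that theorem asserts that an internal $(T,\cat V)$-functor is effective for descent provided its ``level-$1$'' component is effective descent in $\cat V$, its ``level-$2$'' component is descent, and its ``level-$3$'' component is almost descent. So I would simply observe that the components $(p\pt 1)_1$, $(p\pt 1)_2$, $(p\pt 1)_3$ appearing in the present statement are by definition the components of the internal $(T,\cat V)$-functor $p\pt 1$ to which \cite[Theorem 5.3]{PL23} is applied, and conclude that $p\pt 1$ is an effective descent morphism in $\CatTV$.

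Finally, with all three hypotheses of Proposition~\ref{prop:pspb.desc} verified for the pseudopullback \eqref{eq:key.pspb} and the morphism $p$, we conclude that $p$ is an effective descent morphism in $\TtVCat$, as desired.

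The main obstacle I anticipate is bookkeeping rather than mathematical depth: matching the indexing of the ``levels'' $(-)_1, (-)_2, (-)_3$ as used in \cite{PL23} with the components after applying $-\pt 1$, and making sure that the descent-theoretic hypothesis $(p\pt 1)_1$ effective descent in $\cat V$ (rather than merely in some slice) is exactly what \cite[Theorem 5.3]{PL23} consumes; once that dictionary is fixed the argument is a short assembly of Theorem~\ref{thm:tvcat.disc.objs}, Proposition~\ref{prop:pspb.desc}, and \cite[Theorem 5.3]{PL23}.
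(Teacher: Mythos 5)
Your proposal is correct and follows essentially the same route as the paper: the three hypotheses feed into \cite[Theorem 5.3]{PL23} to make $p \pt 1$ effective for descent in $\CatTV$, and the monomorphism hypothesis on $\heps_{T1}$ lets one conclude via the pseudopullback \eqref{eq:key.pspb} and Proposition~\ref{prop:pspb.desc}. The only difference is that the paper packages the second half as a citation of Lemma~\ref{lem:refl.eff.desc}, whose proof is precisely the argument you spell out inline (descent-ness of $p_0 \pt 1$ via \cite[Lemma 2.3]{Pre23}, reflection of epimorphisms by $-\pt 1 \colon \Set \to \cat V$, then Proposition~\ref{prop:pspb.desc}).
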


\begin{proof}
  The three above conditions guarantee that \( p \pt \trm \) is an effective
  descent functor of (internal) \( (T,\cat V) \)-categories. Since \(
  \heps_{T\trm} \) is a monomorphism, we can apply Lemma
  \ref{lem:refl.eff.desc} to obtain the promised conclusion.
\end{proof}

Now, the above work raises (at least) the following two questions:
\begin{itemize}[label=--]
  \item
    For which pairs \( (T, \cat V) \) can we guarantee that \( \heps_{T\trm}
    \) is a monomorphism?
  \item
    Is the requirement that \( \heps_{T\trm} \) be a monomorphism
    ``reasonable''?
\end{itemize}

To answer the first, we note that this holds when
\begin{itemize}[label=--]
  \item
    \( \trm \) is a \textit{separator}; that is, when \( \cat
    V(\trm,-) \) is faithful, which implies \( \heps \) is a componentwise
    monomorphism.  This is the case when \( \cat V = \Set, \Top, \Cat, \) any
    hyperconnected Grothendieck topos \cite[{A4.6}]{Joh02}, but not \( \cat V
    = \Grph \).
  \item
    \( T \) is discrete; that is, when \( \heps_{T\trm} \) is an isomorphism.
    This is the case when \(T\) is the free \(\times\)-monoid monad on \( \cat
    V \), but not when \( T = \fc \).
\end{itemize}

And this, in a sense, answers the second question as well: from a practical
perspective, the above conditions are sufficient for nearly all of our
examples. And while we haven't confirmed whether the condition ``\(
\heps_{T\trm} \) is a monomorphism'' is necessary or not, we can provide a
heuristic argument to convey the intuition that this condition correctly
captures that \( \Tt \trm \pt \trm \) is a ``good'' discretization of \( T\trm
\): a pair which satisfies neither of the above hypotheses is the pair \(
(\fc,\Grph) \), as \( \ob \heps_{\fc \trm} \colon \N \to \trm \); here, \(
\overline{\fc} \trm = \Grph(\trm,\fc \trm) \) has too many points to be a
``reasonable'' discretization.

We now discuss the examples we have worked with so far.

\subsection{\( \cat V \)-operadic \( \cat V \)-categories:}

Let \( \mathfrak O \) be a \( \cat V \)-operad, so that the \( \cat V
\)-operadic monad \( T = T_{\mathfrak O} \) induced by \( \mathfrak O \) is
given by
\begin{equation*}
  X \mapsto \sum_{n \in \N} \mathfrak O_n \times X^n.
\end{equation*}

Since \( \cat V(\trm,-) \) preserves coproducts, we have 
\begin{equation*}
  \cat V(\trm, \sum_{n \in \N} \mathfrak O_n) 
    \iso \sum_{n\in\N} \cat V(\trm, \mathfrak O_n),
\end{equation*}
and therefore \( \heps_{T\trm} \iso \sum_{n\in \N} \heps_{\mathfrak O_n} \). It
is easy to verify that in an extensive category, a coproduct of morphisms is a
monomorphism if and only if every summand is a monomorphism, so we may apply
Theorem \label{thm:eff.desc.ttv} when \( \heps_{\mathfrak O_n} \) is a
monomorphism for all \(n \in \N \). 

Naturally, the result holds if we consider \( \cat V \)-operads for
categories \(\cat V \) such that \( \trm \) is a separator, such as \( \Cat \) or
\( \Top \), or if we consider \textit{discrete} \( \cat V \)-operads; that is,
\( \cat V \)-operads such that \( \heps_{\mathfrak O_n} \) is an isomorphism
for all \(n \in \N \).

However, the above conditions are not necessary, if, for instance, one
considers \( \Grph \)-operads \( \mathfrak O \) such that \( \mathfrak O_n \)
has at most one loop at each vertex for all \(n \in \N\); this is precisely
the case when \( \heps_{\mathfrak O_n} \) is a monomorphism for all \(n\in
\N\).

If \( \mathfrak O \) a discrete \( \cat V \)-operad, we define \( \TtVCat \)
to be the category of enriched \( \mathfrak O \)-categories.

\subsection{\( \cat V \)-multicategories:}

An important instance of the previous case is the case \( \mathfrak O \iso \N
\pt \trm \); that is, when \( T \) is the free \( \times \)-monoid monad on \(
\cat V \). In this case, \( \CatTV \) is the category of
\textit{multicategories internal to} \( \cat V \). We note that \(T\) is a
discrete monad, and the induced \( \Set \)-monad \( \Tt \) is the ordinary
free monoid monad.

Thus, we may define the objects of \( \TtVCat \) to be the \textit{enriched}
\(\cat V\)-multicategories, and the morphisms are the respective
enriched \( \cat V \)-functors. An immediate application of Theorem
\ref{thm:eff.desc.criteria} provides criteria for such an enriched \( \cat V
\)-functor to be effective for descent.

\subsection{Clubs:} 

We consider the pair \( (\mathfrak S, \Cat) \); the free symmetric strict
monoidal category monad \( \mathfrak S \) on \( \Cat \). By Theorem
\ref{thm:tvcat.disc.objs}, we recover the categories of (many-object) clubs
considered in \cite{Kel72a, Kel72b} by taking the fibers of the fibration \(
(\overline{\mathfrak S},\Cat)\dash \Cat \to \Set \) (see \cite[{}4.19]{CS10}).

In fact, this can be carried out for any fibrewise discrete monad \(T\)
on \( \Cat \), as \( \trm \in \Cat \) is a separator.

  \section{Epilogue}
    \label{sect:epilogue}
    We gave a general description of change-of-base functors between horizontal
lax algebras induced by monad (op)lax morphisms on the 2-category \(
\PsDbCat_\lax \), and with this description, we made the dichotomy between
enriched and internal generalized multicategories explicit. As our main
result, we have shown that enriched generalized multicategories are internal
generalized multicategories with a \textit{discrete} object of objects, under
suitable conditions.  Moreover, we applied this result to study the effective
descent morphisms of \( \TtVCat \).

There is still a vast amount of open problems left to settle. For the
remainder of this section, we will state a couple of these problems, sketch a
possible approach to their solution, and highlight possible connections to
other work.

\subsection{Object-discreteness}

In \cite[Section 8]{CS10}, the authors define and study the full subcategories
of \textit{normalized} and \textit{object-discrete} horizontal lax
\(T\)-algebras. Inspired by our Theorem \ref{thm:main.result}, we sketch an
argument, for an instance of \cite[Theorem 8.7]{CS10} for the equipment of
\textit{modules} of a suitable equipment, via change-of-base.

If \( \bicat D \) is an equipment whose hom-categories of the underlying
bicategory have all coequalizers, which are preserved by horizontal
composition, then we have an equipment \( \Mod(\bicat D) \) whose underlying
category of objects is \( \LaxidHAlg \), and horizontal 1-cells are
\textit{modules}; see \cite[Section 5.3]{Lei04}, \cite[Theorem 11.5]{Shu08}.
In fact, \( \Mod \) defines a 2-functor defined on a suitable full
sub-2-category of equipments, hence if \( T \) is a monad on \( \bicat D \),
then \( \Mod(T) \) is a monad on \( \Mod(\bicat D) \). We have an inclusion
\begin{equation*}
  \mathfrak I \colon \bicat D \to \Mod(\bicat D),
\end{equation*}
and the unit comparison 2-cell \( \e T \) induces a monad oplax morphism \(
(\mathfrak I, \e T) \colon T \to \Mod(T) \). When \( T \) is a normal functor,
we may apply Theorem \ref{thm:base.change} to obtain a change-of-base functor
\begin{equation*}
  \mathfrak I_! \colon \LaxTHAlg \to \LaxModTHAlg,
\end{equation*}
which identifies the full subcategory of ``object-discrete'' horizontal lax \(
\Mod(T) \)-algebras as the category of horizontal lax \(T\)-algebras, so we
partially obtain \cite[Theorem 8.7]{CS10}, when \(T\) is normal.

\subsection{Monadicity of horizontal lax algebras}
\label{subsect:monad}

Let \( T = (T,m,e) \) be a monad on an equipment \( \bicat D \) in \(
\PsDbCat_\lax \), and let \( x \) be an object of \( \bicat D \). We define \(
\HKlTx \) to be the category whose objects are horizontal 1-cells \( a \colon
Tx \to x \), and morphisms are the globular 2-cells between them. 

If \( m \) has a strong conjoint, \( \HKlTx \) has a tensor product defined
by
\begin{equation*}
  b \otimes a = b \cdot (Ta \cdot m^*_x),
\end{equation*}
which makes it into a \textit{skew monoidal category} \cite{Szl12}. If we let
\( \LaxTHAlg(x) \) be the category of horizontal lax \(T\)-algebras with
underlying object \(x\), it can be shown that \( \LaxTHAlg(x) \) is the
category of monoids of the skew monoidal category \( \HKlTx \)\footnote{This
construction is analogous to the definition of \( \LaxTHAlg \) in \cite{CS10}
as monoids in \( \HKlT \), adapted to the fixed-object case, and with \(
\bicat D \) an equipment.}. Therefore, we have a forgetful functor 
\begin{equation*}
  \LaxTHAlg(x) \to \HKlTx,
\end{equation*}
and we can study its monadicity, by studying free monoids in skew monoidal
categories, adapting the work of \cite{Dub74, Kel80, Lac10b}.

\subsection{Categorical properties of generalized multicategories}
\label{subsect:categ}

A possible approach to study the problem posed in Subsection
\ref{subsect:monad} is to determine whether we have a fibration
\begin{equation*}
  \LaxTHAlg \to \bicat D_0
\end{equation*}
for a pseudodouble category \( \bicat D \) and a lax monad \(T\) on \( \bicat
D \), carving a path for the application of techniques from fibred category
theory to obtain categorical results about \( \LaxTHAlg \); namely, as
existence of (co)limits, distributivity, extensivity, monoidal closedness, and
so on -- see \cite{LV23}. This approach proved to be fruitful for other sorts
of categorical structures, as studied in \cite{CL23, CLP24, LV24b}.

Such fibrations certainly exist for enriched and internal \(T\)-categories:
\begin{equation}
  \label{eq:fibs}
  \TVCat \to \Set, \qquad \CatTV \to \cat V,
\end{equation}
and several results are obtained via fibrational techniques. For instance,
\cite[Appendix~D]{Lei04} approaches the monadicity of \( \CatTV \to \Kl(T) \)
by studying the problem on fibres over \( \cat V \).

We also have the body of work \cite{HST14}, which explores various categorical
properties of \( (T,\cat V) \)-categories when \( \cat V \) is a quantale, and
the work of \cite{Cle21} further confirms that \( \TVCat \) is an extensive
category for an ample family of monoidal categories \( \cat V \) and suitable
lax monads on \( \VMat \).

\subsection{2-dimensional structure and descent theory}

If we consider the additional 2-dimensional categorical structure on the
categories \( \TVCat \) and \( \CatTV \), we conjecture that we have
\textit{double fibrations} \cite{CLPS22}
\begin{equation*}
  \TVCat \to \VMat, \qquad \CatTV \to \SpanV
\end{equation*}
with underlying fibrations respectively given by the functors \eqref{eq:fibs}. 
More generally, for a suitable pseudodouble category \( \bicat D \) and lax
monad \( T \), we pose the problem of whether we have a double fibration
\begin{equation*}
  \LaxTHAlg \to \bicat D,
\end{equation*}
and whether change-of-base (pseudo)functors are a suitable notion of morphisms
between such double fibrations.

It would also be interesting to inquire on whether there are any intersting
descent-theoretical problems in the context of the double fibrations of
\cite{CLPS22}, akin to the work developed in \cite{Her04b} for 2-fibrations.

\subsection{Discrete fibrations}

Another interesting theme is the study of discrete fibrations in the context
of generalized multicategories, and their behaviour under change-of-base. In
the context of \( (T,\cat V) \)-categories \cite{CT03, CH09}, work in this
direction has been pursued by \cite{BasTh, Bas17}. A pursuit of these ideas in
the generalized multicategory context would require studying the lax
epimorphisms \cite{ABSV01, LPS23} in \( \LaxTHAlg \) with a suitable
2-dimensional structure, and their preservation/reflection via change-of-base
(pseudo)functors. This work has also has strong ties to descent theory,
particularly in regard with effective étale descent, as developed in
\cite{Sob04, LPS23}.

\subsection{Other notions of change-of-base}

We have already mentioned two notions of change-of-base that are not covered
by Theorem \ref{thm:base.change} in Subsections \ref{subsect:int.t.mcats} and
\ref{subsect:tvcat.2}. In fact, with an adequate notion of ``monad morphism''
\( (F,\phi) \colon S \to T \) for \(S\) a lax monad on \( \bicat D \), and
\(T\) an oplax monad on \( \bicat E \), we question if it is possible to
expand the scope of the dichotomy between enriched and internal
multicategories.

  \printbibliography

\end{document}